\def\row#1/#2!{#1_{\IfStrEq{#2}{}{n}{#2}} & \dynkin{#1}{#2}\\}
\numberwithin{equation}{section}
\theoremstyle{plain}
\newtheorem{theorem}{Theorem}[section]
\newtheorem{prop}[theorem]{Proposition}
\newtheorem{cor}[theorem]{Corollary}
\newtheorem{lemma}[theorem]{Lemma}
\newtheorem{conj}{Conjecture}
\newtheorem{mainthm}{Theorem}
\theoremstyle{definition}
\newtheorem{defn}[theorem]{Definition}
\newtheorem{example}[theorem]{Example}
\theoremstyle{remark}
\newtheorem{remark}[theorem]{Remark}
\newcommand{\To}{\longrightarrow}
\newcommand{\Minor}{\operatorname{Minor}}
\newcommand{\pitrop}{\pi_{trop}}
\renewcommand{\v}{{\mathrm v}}
\newcommand{\Rmin}{{\R_{\min}}}
\newcommand{\Rmininf}{{\R_{{\min},\infty}}}
\newcommand{\Imin}{\mathcal {IM}}
\newcommand{\Toeplitz}{\operatorname{Toep}}
\newcommand{\ToeplitzU}{\operatorname{UToep}}
\renewcommand{\L}{L}
\newcommand{\RR}{\mathcal R}
\newcommand{\ep}{\varepsilon}
\newcommand{\bigA}{\mathbf A}
\newcommand{\bigB}{\mathbf B}
\newcommand{\x}{\times}
\newcommand{\Q}{Q}
\newcommand{\inv}{^{-1}}
\newcommand{\pr}{\mathrm{pr}}
\newcommand{\R}{\mathcal{R}}
\newcommand{\C}{\mathbb{C}}
\renewcommand{\Q}{\mathbb{Q}}
\newcommand{\Z}{\mathbb{Z}}
\newcommand{\crit}{d_{\operatorname{crit}}}
\newcommand{\lc}{{\operatorname{lc}}}
\newcommand{\summ}{\operatorname{sum}}
                                                                                                                                                                                                                                                                                                                                                                                                                                                                                                                                                                                                                                                                                                                                                                                                                                                                                                                                                                                                                                                                                                                                                                                                                                                                                                                                                                                                                                                                                                                                                                                                                                                                                                                                                                                                                                                                                                                                                                                                                                                                                                                                                                                                                                                                                                                                                                                                                                                                                                                                                                                                                                                                                                                                                                                                                                                                                                                                                                                                                                                                                                                                                                                                                                                                                                                                                                                                                                                                                                                                                                                                                                                                                                                                                                                                                                                                                                                                                                                                                                                                                                                                                                                                                                                                                                                                                                                                                                                                                                                                                                                                                                                                                                                                                                                                                                                                                                                                                                                                                                                                                                                                                                                                                                                                                                                                                                                                                                                                                                                                                                                                                                                                                                                                                                                                                                                                                                                                                                                                                                                                                                                                                                                                                                                                                                                                                                                                                                                                                                                                                                                                                                                                                                                                                                                                                                                                                                                                                                                                                                                                                                                                                                                                                                                                                                                                                                                                                                                                                                                                                                                                                                                                                                                                                                                                                                                                                                                                                                                                                                                                                                                                                                                                                                                                                                                                                                                                                                                                                                                               \newcommand{\Mdelpos}{{\mathcal M_{>0}}}
\newcommand{\Odelpos}{{\mathcal O_{>0}}}
                                                                                                                                                                                                                                                                                                                                                                                                                                                                                                                                                                                                                                                                                                                                                                                                                                                                                                                                                                                                                                                                                                                                                                                                                                                                                                                                                                                                                                                                                                                                                                                                                                                                                                                                                                                                                                                                                                                                                                                                                                                                                                                                                                                                                                                                                                                                                                                                                                                                                                                                                                                                                                                                                                                                                                                                                                                                                                                                                                                                                                                                                                                                                                                                                                                                                                                                                                                                                                                                                                                                                                                                                                                                                                                                                                                                                                                                                                                                                                                                                                                                                                                                                                                                                                                         \newcommand{\Kdel} {{\mathcal C}}                                                                                                                                                                                                                                                                                                                                                                                                                                                                                                                                                                                                                                                                                                                                                                                                                                                                                                                                                                                                                                                                                                                                                                                                                                                                                                                                                                                                                                                                                                                                                                                                                                                                                                                                                                                                                                                                                                                                                                                                                                                                                                                                                                                                                                                                                                                                                                                                                                                                                                                                                                                                                                                                                                                                                                                                                                                                                                                                                                                                                                                                                                                                                                                                                                                                                                                                                                                                                                                                                                                                                                                                                                                                                                                                                                                                                                                                                                                                                                                                                                                                                                                                                                                                                                      \newcommand{\Kdelnn} {{\mathcal C_{\ge 0}}}
 \newcommand{\Kdelnnst}{{\mathcal C_{\ge 0}^{\rm{st}}}}
 \newcommand{\Kdelnnwk}{{\mathcal C_{\ge 0}^{\rm{wk}}}}
                                                                                                                                                                                                                                                                                                                                                                                                                                                                                                                                                                                                                                                                                                                                                                                                                                                                                                                                                                                                                                                                                                                                                                                                                                                                                                                                                                                                                                                                                                                                                                                                                                                                                                                                                                                                                                                                                                                                                                                                                                                                                                                                                                                                                                                                                                                                                                                                                                                                                                                                                                                                                                                                                                                                                                                                                                                                                                                                                                                                                                                                                                                                                                                                                                                                                                                                                                                                                                                                                                                                                                                                                                                                                                                                                                                                                                                                                                                                                                                                                                                                                                                                                                            \newcommand{\Kdelpos}{{\mathcal C_{>0}}}
 \newcommand{\Kdelposst}{{\mathcal C_{>0}^{\rm{st}}}}
                                                                                                                                                                                                                                                                                                                                                                                                                                                                                                                                                                                                                                                                                                                                                                                                                                                                                                                                                                                                                                                                                                                                                                                                                                                                                                                                                                                                                                                                                                                                                                                                                                                                                                                                                                                                                                                                                                                                                                                                                                                                                                                                                                                                                                                                                                                                                                                                                                                                                                                                                                                                                                                                                                                                                                                                                                                                                                                                                                                                                                                                                                                                                                                                                                                                                                                                                                                                                                                                                                                                                                                                                                                                                                                                                                                                                                                                                                                                                                                                                                                                                                                                                                                                                                                            \newcommand{\Kdelposwk}{{\mathcal C_{>0}^{\rm{wk}}}}
                                                                                                                                                                                                                                                                                                                                                                                                                                                                                                                                                                                                                                                                                                                                                                                                                                                                                                                                                                                                                                                                                                                                                                                                                                                                                                                                                                                                                                                                                                                                                                                                                                                                                                                                                                                                                                                                                                                                                                                                                                                                                                                                                                                                                                                                                                                                                                                                                                                                                                                                                                                                                                                                                                                                                                                                                                                                                                                                                                                                                                                                                                                                                                                                                                                                                                                                                                                                                                                                                                                                                                                                                                                                                                                                                                                                                                                                                                                                                                                                                                                                                                                                                                                                                                                             \newcommand{\Kdelz}{{\mathcal C_{\ge 0}}}                                                                                                                                                                                                                                                                                                                                                                                                                                                                                                                                                                                                                                                                                                                                                                                                                                                                                                                                                                                                                                                                                                                                                                                                                                                                                                                                                                                                                                                                                                                                                                                                                                                                                                                                                                                                                                                                                                                                                                                                                                                                                                                                                                                                                                                                                                                                                                                                                                                                                                                                                                                                                                                                                                                                                                                                                                                                                                                                                                                                                                                                                                                                                                                                                                                                                                                                                                                                                                                                                                                                                                                                                                                                                                                                                                                                                                                                                                                                                                                                                                                                                                                                                                                                                                                                                                                                                                                                                                                                                                                                                                                                                                                                                                                                                                                                                                                                                                                                                                                                                                                                                                                                                                                                                                                                                                                                                                                                                                                                                                                                                                                                                                                                                                                                                                                                                                                                                                                                                                                                                                                                                                                                                                                                                                                                                                                                                                                                                                                                                                                                                                                                                                                                                                                                                                                                                                                                                                                                                                                                                                                                                                                                                                                                                                                                                                                                                                                                                                                                                                                                                                                                                                                                                                                                                                                                                                                                                                                                                                                                                                                                                                                                                                                                                                                                                                                                                                                                                                                                                                                                                                                                                                                                                                                                                                                                                                                                                                                                                                                                                                                                                                                                                                                                                                                                                                                                                                                                                                                                                                                                                                                                                                                                                                                                                                                                                                                                                                                                                                                                                                                                                                                                                                                                                                                                                                                                                                                                                                                                                                                                                                                                                                                                                                                                                                                                                                                                                                                                                                                                                                                                                                                                                                                                                                                                                                                                                                                                                                                                                                                                                                                                                                                                                                                                                                                                                                                                                                                                                                                                                                                                                                                                                                                                                                                                                                                                                                                                                                                                                                                                                                                                                                                                                                                                                                                                  \newcommand{\boldalpha}{\boldsymbol{\alpha}}                                                                                                                                                                                                                                                                                                                                                                                                                                                                                                                                                                                                                                                                                                                                                                                                                                                                                                                                                                                                                                                                                                                                                                                                                                                                                                                                                                                                                                                                                                                                                                                                                                                                                                                                                                                                                                                                                                                                                                                                                                                                                                                                                                                                                                                                                                                                                                                                                                                                                                                                                                                                                                                                                                                                                                                                                                                                                                                                                                                                                                                                                                                                                                                                                                                                                                                                                                                                                                                                                                                                                                                                                                                                                                                                                                                                                                                                                                                                                                                                                                                                                                                                                                                                                         
\newcommand{\boldbeta}{\boldsymbol{\beta}}
                                                                                                                                                                                                                                                                                                                                                                                                                                                                                                                                                                                                                                                                                                                                                                                                                                                                                                                                                                                                                                                                                                                                                                                                                                                                                                                                                                                                                                                                                                                                                                                                                                                  \renewcommand{\S}{\mathbf S}
                                                                                                                                                                                                                                                                                                                                                                                                                                                                                                                                                                                                                                                                                                                                                                                                                                                                                                                                                                                                                                                                                                                                                                                                                                                                                                                                                                                                                                                                                                                                                                                                                                              \newcommand{\N}{\mathbb N} 
                                                                                                                                                                                                                                                                                                                                                                                                                                                                                                                                                                                                                                                                                                                                                                                                                                                                                                                                                                                                                                                                                                                                                                                                                                                                                                                                                                                                                                                                                                                                                                                                                                             \renewcommand{\R}{\mathbb R}
\newcommand{\MonSeq}{\operatorname{MonSeq}}
\newcommand{\SMonSeq}{\operatorname{SMonSeq}}
\newcommand{\KK}{\mathbf K}
\newcommand{\K}{\mathbf k}
\newcommand{\ValK}{\operatorname{Val}_{\KK}}
\newcommand{\Val}{\operatorname{Val}}
\newcommand{\Trop}{\operatorname{Trop}}
\newcommand{\val}{{\operatorname{val}}}                                                                                                                                                                                                                                                                                                                                                                                                                                                                                                                                                                                                                                                                                                                                                                                                                                                                                                                                                                                                                                                                                                                                                                                                                                                                                                                                                                                                                                                                                                                                                                                                                                                                                                                                                                                                                                                                                                                                                                                                                                                                                                                                                                                                                                                                                                                                                                                                                                                                                                                                                                                                                                                                                                                                                                                                                                                                                                                                                                                                                                                                                                                                                                                                                                                                                                                                                                                                                                                                                                                                                                                                                                                                                                                                                                                                                                                                                                                                                                                                                                                                                                                                                                                                                       
\newcommand{\Ktrop}{\R_{\min{}}}
\tikzset{labl/.style={anchor=south, rotate=90, inner sep=.5mm}}
\title{Tropical Toeplitz matrices and parametrizations}
\author{Konstanze Rietsch}
\thanks{The author was supported by EPSRC grant EP/V002546/1}
\begin{document}

\begin{abstract}
The set of infinite upper-triangular totally positive Toeplitz matrices $\Toeplitz_\infty(\R_{>0})$ has a classical parametrisation \cite{ASW,Edrei52} originally conjectured by Schoenberg, that involves pairs of sequences of positive real parameters $(\boldalpha,\boldbeta)$. These matrices (and their  parameters) are central for understanding characters of the infinite symmetric group \cite{Thoma}. On the other hand there is a very different parametrisation theorem that applies to the finite analogue  $\Toeplitz_{n+1}(\R_{>0})$ of this set \cite{rietschJAMS}. These finite Toeplitz  matrices and their parameters relate to quantum cohomology of flag varieties and mirror symmetry. 
In this paper we replace $\R_{>0}$ by a  semifield $\RR_{>0}$ with valuation $\Val:\RR_{>0}\to\Rmin$, to then construct tropical analogues for both parametrisation theorems. In the finite case we tropicalise using positive generalised Puiseaux series $\KK_{>0}$. This builds on work of Judd \cite{Judd:Flag} and L\"udenbach  \cite{Ludenbach}. In the infinite case we replace $\KK_{>0}$ by a new valued semifield of continuous functions $\Kdelpos$. We arrive at different natural infinite analogues of $\ToeplitzU_{n+1}(\Rmin)$, depending on a choice of topology on $\Kdelnn$. We then prove an asymptotic result relating the tropical parameters from the finite case to the tropicalisations of the Schoenberg parameters. Moreover, we show that our finite type tropical parametrisation map is given by Lusztig's weight map from the theory of canonical bases. This results in a surprising connection between the classical Edrei theorem with its Schoenberg parameters $(\boldalpha,\boldbeta)$ and Lusztig's canonical basis parametrisation.
\end{abstract}
\maketitle

\setcounter{tocdepth}{1}
\tableofcontents

\section{Introduction}
Consider a sequence $\mathbf c=(c_i)_{i=1}^\infty$ of real numbers, and let $c_0:=1$ and $c_{-n}=0$ for all $n\in\N$. We call the sequence $\mathbf c$ \textit{totally nonnegative} if the corresponding infinite upper-triangular Toeplitz matrix, 
\begin{equation}\label{e:InfToeplIntro}
u(\mathbf c)=(c_{i-j})_{i,j\in\N}=\begin{pmatrix}
1 &c_1 & c_2& c_3& c_4 &  & \quad\\
 & 1  & c_1& c_2 & c_3& \ddots  &\\
  &    & 1  & c_1 & c_2 &\ddots &\\
  &    &    &1 &c_1 &\ddots &\\
  &    &    &       &  \ddots  &\ddots &  \\
  &&&&&& 
\end{pmatrix},
\end{equation}
has all minors in $\R_{\ge 0}$. 
A classical theorem gives a parametrisation of the set $\ToeplitzU_{\infty}(\R_{\ge 0})$ 
of  such infinite totally nonnegative Toeplitz matrices. 
Namely, consider the following set of parameters, 
\[
\Omega_{S}=\{(\gamma,\boldalpha,\boldbeta)\in\R_{\ge 0}\x \R_{\ge 0}^\N\x \R_{\ge 0}^{\N}\mid \text{$\boldalpha$ and  $\boldbeta$ are monotonely decreasing with finite sum}\}.
\]  
\begin{theorem} [{\cite{ASW,Edrei52,Thoma}}]\label{t:EdreiIntro}
A sequence $\mathbf c=(c_i)_{i=1}^\infty$ of real numbers is a totally nonnegative sequence if and only if its generating function has a factorisation of the form
\[
1+c_1x+c_2x^2+c_3x^3+\dotsc =e^{\gamma x}\prod_{i=1}^\infty\frac{1+\beta_ix}{1-\alpha_ix},
\]
for some $(\gamma,\boldalpha,\boldbeta)\in\Omega_{S}$, where $\boldalpha=(\alpha_i)_{i\in\N}$ and $\boldbeta=(\beta_i)_{i\in\N}$.
\end{theorem}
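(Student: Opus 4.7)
The plan is to prove the two implications of the theorem separately, with sufficiency first since it is substantially easier than the converse.

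For sufficiency, I would verify the assertion factor by factor. Each of the three atomic building blocks produces a totally nonnegative upper-triangular Toeplitz matrix by direct calculation: for the geometric factor $1/(1-\alpha x)$ the minors of size $\ge 2$ vanish while the $1\times 1$ minors are powers of $\alpha$; for $1+\beta x$ the Toeplitz matrix has only two nonzero diagonals and its minors are sums of monomials in $\beta$ with nonnegative coefficients; and for $e^{\gamma x}$ the minor formula reduces to a Vandermonde-type positive determinant. The central multiplicative principle is then that the Toeplitz matrix of a Cauchy product of sequences equals the matrix product of the individual Toeplitz matrices, so by the Cauchy--Binet formula the minors of the product decompose as nonnegative combinations of products of minors of the factors. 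Summability of $\boldalpha$ and $\boldbeta$ ensures coefficientwise convergence of the infinite product, and continuity of minors in the matrix entries transports total nonnegativity to the limit.

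For necessity, the plan has three steps. First, total nonnegativity of $u(\mathbf c)$ implies Hadamard-type bounds on principal minors that constrain the growth of $c_n$, allowing one to continue $f(x)=1+\sum_{n\ge 1}c_n x^n$ meromorphically in a suitable domain with prospective poles only on $\R_{>0}$, at the points $1/\alpha_i$. Second, I would invoke the finite Aissen--Schoenberg--Whitney theorem: a polynomial has a totally nonnegative upper-triangular Toeplitz matrix if and only if all its roots are real and nonpositive. Applied to rational approximants of $f$ built from its truncations after clearing denominators, this yields polynomials whose zeros cluster on $\R_{\le 0}$, producing the $\boldbeta$ sequence, while the poles of the approximants produce the $\boldalpha$ sequence. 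Third, Hadamard factorization of the resulting numerator, which has order at most $1$ by the growth estimates of the first step, absorbs the residual information into a linear exponential factor $e^{\gamma x}$.

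The principal obstacle will be controlling the finite-to-infinite limit. One must show that the zero and pole distributions of the rational approximants converge to the discrete measures supported on $\{-1/\beta_i\}$ and $\{1/\alpha_i\}$, with no mass escaping to infinity except what is legitimately absorbed into the linear term of the Weierstrass factorization (thereby producing the coefficient $\gamma$). The standard route is a Helly-type selection argument combined with the growth estimates to rule out stray complex zeros; ensuring that every approximation stage remains compatible with total nonnegativity requires care. Uniqueness of $(\gamma,\boldalpha,\boldbeta)\in\Omega_{S}$ then follows from the uniqueness clause of the Hadamard factorization theorem.
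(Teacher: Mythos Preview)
This theorem is not proved in the paper; it is quoted as a classical result with citations to \cite{ASW,Edrei52,Thoma}, and the paper only offers a short historical gloss in the paragraph following the statement. So there is no ``paper's own proof'' to compare against in any detailed sense. That said, your outline is broadly consonant with the history the paper records: sufficiency is attributed to Schoenberg, and for necessity the paper says exactly that \cite{ASW} established the pole/zero structure of the generating function while \cite{Edrei52} supplied the missing identification of the residual factor as a degree-one exponential. Your three-step plan for necessity (growth bounds, the polynomial ASW theorem applied to approximants, Hadamard factorisation of order $\le 1$) is a faithful paraphrase of that division of labour.

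One small correction on the sufficiency side: for the factor $1/(1-\alpha x)$ the upper-triangular Toeplitz matrix has entries $\alpha^{j-i}$ above the diagonal, and its minors of size $\ge 2$ do \emph{not} all vanish---a $2\times 2$ minor straddling the diagonal, say rows $\{1,2\}$ and columns $\{1,2\}$, equals $1\cdot 1-\alpha\cdot 0=1$. What is true is that all minors are monomials in $\alpha$ (possibly zero), hence nonnegative; the matrix factors as an infinite product of elementary bidiagonals. This does not affect your overall argument, since Cauchy--Binet still carries total nonnegativity through the product, but the specific claim as stated is inaccurate.
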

The fact that the power series expansion above gives a totally nonnegative sequence for all $(\gamma,\boldalpha,\boldbeta)\in \Omega_S$ goes back to Schoenberg \cite{Schoenberg:48}, who conjectured the above theorem. For the other direction of the theorem, the result can be divided into two statements. The first, concerning the roots and poles of the generating function $\sum c_ix^i$, was proved in \cite{ASW}. The part that was left, identifying the remaining factor as a simple exponential function of degree~$1$, was then proved by Edrei \cite{Edrei52}, who thereby completed the overall proof of the theorem.  
This theorem was later proved again independently by Thoma \cite{Thoma} in connection with his work on characters of the infinite symmetric group. We will refer to \cref{t:EdreiIntro} simply as the Edrei theorem here, it is also sometimes referred to as the Edrei-Thoma theorem or the Thoma theorem in connection with $S_\infty$ character theory. And we will call the parameters `Schoenberg parameters'. For more background we refer to the books of Borodin and Olshanski \cite{BorodinOlshanskiBook} and Karlin \cite{Karlin}.

About fifty years after \cite{ASW,Edrei52} a different parametrisation theorem for totally nonnegative Toeplitz matrices that takes place in the finite setting was given in \cite{rietschJAMS}. We state it in its totally positive version below. Let $\ToeplitzU_{n+1}(\R_{>0})$ denote the set of $(n+1)\x(n+1)$ totally positive unipotent upper-triangular Toeplitz matrices, and let  $T_{SL_{n+1}}(\R_{>0})$ denote the positive subset of the diagonal torus in $SL_{n+1}(\R)$ (where the diagonal entries are all positive).  
\begin{theorem}[{\cite{rietschJAMS,rietschNagoya}}]\label{t:RealFinite}
For any $(n+1)\times (n+1)$ matrix $u$ let $\Delta_i(u)$ denote the minor with row set $[1,i]$ and column set $[n+2-i,n+1]$. 
We have a homeomorphism
\begin{eqnarray}\label{e:R03}
\ToeplitzU_{n+1}(\R_{>0})&\longrightarrow & T_{SL_{n+1}}(\R_{>0})
\end{eqnarray}
defined by sending $u\in \ToeplitzU_{n+1}(\R_{>0})$ to the  matrix $\Delta(u)\in T_{SL_{n+1}}(\R_{>0})$ with diagonal entries
\begin{equation}\label{e:IntroDelta}
d_1={\Delta_{1}(u)},\   
d_2=\frac{\Delta_{2}(u)}{\Delta_{1}(u)},\ 
\dotsc,\ d_n=\frac{\Delta_{n}(u)}{\Delta_{{n-1}}(u)},\ d_{n+1}=\frac{1}{\Delta_{n}(u)}.
\end{equation}
\end{theorem}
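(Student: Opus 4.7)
The plan is to prove that $\Delta: u \mapsto \mathrm{diag}(d_1,\dotsc,d_{n+1})$ is a homeomorphism by establishing bijectivity and smoothness separately. Well-definedness is immediate: each $\Delta_i(u)$ is a strictly positive minor of the totally positive $u$, and $\prod_{i=1}^{n+1} d_i = 1$ by telescoping, so the image lands in $T_{SL_{n+1}}(\R_{>0})$. Both source and target are connected smooth manifolds of dimension $n$: the source is parametrised by $(c_1,\dotsc,c_n)$ in the totally positive cone, and the target carries the constraint $\prod d_i = 1$ cutting the $(n+1)$-torus down to dimension $n$. A smooth bijection between equidimensional connected manifolds is then automatically a homeomorphism.

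For injectivity, I would use the Jacobi--Trudi-type expression $\Delta_i(u(\mathbf c)) = \det(c_{n+1-i+b-a})_{a,b=1}^{i}$, which identifies $\Delta_i$ with a Schur polynomial in auxiliary variables whose complete symmetric functions encode the $c_j$. The system of polynomial equations $\{\Delta_1,\dotsc,\Delta_n\}$ in $(c_1,\dotsc,c_n)$ can then be inverted recursively: $\Delta_1 = c_n$ yields $c_n$; then $\Delta_2 = c_{n-1}^2 - c_n c_{n-2}$ couples $c_{n-1}$ and $c_{n-2}$; and iterating in this fashion, at each stage the relevant polynomial in the new unknown has a unique positive root given the positivity of the $d_i$'s and the previously determined $c_j$'s. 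This establishes injectivity and simultaneously produces a candidate inverse map.

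The hard part is verifying that this candidate inverse actually lands in $\ToeplitzU_{n+1}(\R_{>0})$, i.e.\ that the reconstructed $u$ is \emph{totally} positive, not merely Toeplitz with the prescribed $\Delta_i$ positive. The recursion only forces the specific minors $\Delta_1,\dotsc,\Delta_n$ to be positive, and promoting this to positivity of \emph{all} minors requires a genuine global input. The natural route is via the quantum cohomology of the flag variety $SL_{n+1}/B$: one identifies $\ToeplitzU_{n+1}(\R_{>0})$ with the totally positive part of the critical locus of a Landau--Ginzburg superpotential (Peterson--Givental--Kim), under which the ratios $d_i/d_{i-1}$ correspond to quantum parameters. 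From this perspective, positivity of all minors of the reconstructed $u$ follows from the structure of the Peterson variety together with Lusztig's positivity results for the canonical basis. Continuity of the inverse then follows from the algebraicity of the reconstruction and smoothness of both sides on the open positive locus.
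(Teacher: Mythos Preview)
Your injectivity argument has a concrete gap. You write that $\Delta_1=c_n$ determines $c_n$, that $\Delta_2=c_{n-1}^2-c_nc_{n-2}$ ``couples $c_{n-1}$ and $c_{n-2}$'', and that ``at each stage the relevant polynomial in the new unknown has a unique positive root''. But each new $\Delta_i$ introduces \emph{two} fresh unknowns, not one: $\Delta_i$ is the determinant of the submatrix $(c_{n+1-i+b-a})_{a,b=1}^i$, whose entries range over $c_{n-2i+2},\dotsc,c_n$, so passing from $\Delta_{i-1}$ to $\Delta_i$ brings in both $c_{n-2i+3}$ and $c_{n-2i+2}$. The system of $n$ equations $\Delta_1,\dotsc,\Delta_n$ in the $n$ unknowns $c_1,\dotsc,c_n$ is therefore not triangular in any order, and you cannot solve it by back-substitution. (The constraint $c_0=1$, $c_{-k}=0$ only kicks in once $i>n/2$, and even then does not produce a triangular structure.) So your proposed recursion does not establish injectivity, let alone construct a candidate inverse.

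The paper does not attempt to separate injectivity from surjectivity in this way. Instead, as explained in Section~\ref{s:LieMirror}, the proof in \cite{rietschNagoya} identifies $\ToeplitzU_{n+1}(\R_{>0})\cap Z_d$ with the set of critical points of the superpotential $W_d$ lying in $B_+(\R_{>0})$ (Theorem~\ref{t:Zcrit}), and then shows that $W_d$, written in a fibered positive chart, is a Laurent polynomial with positive coefficients whose Newton polytope is full-dimensional with $0$ in the interior. Existence and uniqueness of the positive critical point then follow simultaneously from a convexity argument (the result cited as \cite{Galkin} for the real case). This gives both directions of the bijection at once, and continuity of the inverse comes from the construction of the critical point. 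Your invocation of quantum cohomology and Lusztig positivity for surjectivity is closer in spirit to the earlier proof in \cite{rietschJAMS}, but as written it is only a gesture toward an argument, not an argument; in particular, you have not said which statement about the Peterson variety or which positivity property of the canonical basis you would actually invoke, nor how it forces \emph{all} minors of the reconstructed $u$ to be positive.
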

For an equivalent parametrisation, \eqref{e:R03} is naturally composed with the homeomorphism $T_{SL_{n+1}}(\R_{>0})\overset \sim\longrightarrow \R_{>0}^{n}$ sending the diagonal matrix $(\delta_{ij}d_i)_{i=1}^{n+1}$ to $(\frac{d_{2}}{d_1},\frac{d_3}{d_2},\dotsc,\frac{d_{n+1}}{d_n})$. The composition then gives a homeomorphism
\begin{eqnarray}\label{e:R03-q-param-version}
\mathbf q=(q_1,\dotsc, q_n):\ToeplitzU_{n+1}(\R_{>0})&\longrightarrow & \R_{>0}^{n},
\end{eqnarray}
that can be considered as giving a parametrisation $\ToeplitzU_{n+1}(\R_{>0})$ with $n$ positive parameters. We note that the $q_i$ have an interpretation as quantum parameters \cite{Kos:QCoh} in the small quantum cohomology ring of the flag variety $\mathcal Fl_{n+1}=SL_{n+1}(\C)/B$, which was related to Toeplitz matrices by Dale Peterson \cite{peterson}. This interpretation was an important ingredient in the original proof of the above theorem \cite{rietschJAMS}. 

These two theorems, \cref{t:EdreiIntro} and \cref{t:RealFinite}, appear to be very different. For example the Edrei theorem provides a parametrisation going one way, associating a totally nonnegative Toeplitz matrix to some parameters
\[
\Omega_{S}\longrightarrow \ToeplitzU_{\infty}(\R_{\ge 0}),
\]
while in \cref{t:RealFinite} the parametrisation goes the other way, 
\begin{eqnarray*}
\ToeplitzU_{n+1}(\R_{>0})&\longrightarrow & \R_{>0}^n,
\end{eqnarray*}
associating parameters to a totally positive Toeplitz matrix. We note that the inverse of this latter map is not at all algebraic. The other obvious difference between the two parametrisation theorems is the structure of the parameter spaces. One containing a pair of infinite sequences, the other just a single $\R_{>0}^n$.

The main goal of this paper is to tropicalise both parametrisation theorems, \cref{t:EdreiIntro} and \cref{t:RealFinite}, and to relate them to one another. Along the way, we prove a version of \cref{t:RealFinite} over a field of generalised Puiseaux series. We also relate our tropical parametrisation theorems to Lusztig's parametrisation of the canonical basis. We will now explain our results in some more detail. 

\subsection{} The totally positive part of $U_+$, the unipotent radical of the upper-triangular Borel subgroup of a reductive algebraic group $G$, was described by Lusztig in \cite{Lusztig94} using certain equivalent `positive charts'. He also introduced a tropicalised version of $U_+$ and used it to parametrise his canonical basis of the (Langlands dual) quantized universal enveloping algebra $\mathcal U^\vee_-$. Let us write $U_+(\Rmin)$ for the tropical analogue of $U_+$, where $\Rmin=(\R,+,\min)$ is the `tropical semifield' of real numbers. We consider this tropicalisation to be constructed following \cite{Lusztig94} out of the totally positive part of $U_+$ over some `positive semifield' $\mathcal R_{>0}$ of a ring $\mathcal R$  with semifield homomorphism $\Val:\mathcal R_{>0}\twoheadrightarrow\Rmin$. Namely, we set
\begin{equation}\label{e:IntroTropToep}
U_+(\Rmin)=U_+(\mathcal R_{>0})/\sim,
\end{equation}
with equivalence relation given by the valuation map applied to a/any positive coordinate chart. For a key  example of a positive chart, see \cref{l:mijFormula}. In \cite{Lusztig94}, $\mathcal R_{>0}$ is taken to be the semifield in $\Z((t))$ consisting of Laurent series with positive leading term with its usual valuation, leading to $U_+(\Z_{{\min}})$. We work instead with semifields where the valuation surjects onto $\Rmin$, but otherwise follow the same principle. We note that Lusztig's canonical basis is nevertheless parametrised by elements in $U_+(\Rmin)$, just that he considers the subset with $\Z_{>0}$-coordinates. Our choice for $\mathcal R_{>0}$ in the finite case is  the semifield of generalised Puiseaux  series with positive leading term, see \cref{ex:genPuiseux}. We denote this semifield by $\KK_{>0}$. It has a topology given by the $t$-adic norm. The natural tropicalisation of $\ToeplitzU_{n+1}(\KK_{>0})$ is  then simply the subset of $U_+(\Rmin)$ described by
\[
\ToeplitzU_{n+1}(\Rmin)=\ToeplitzU_{n+1}(\KK_{>0})/\sim.
\]
 Recall the definition of the minors $\Delta_i$ from \cref{t:RealFinite}. Our first result is an analogue of \cref{t:RealFinite} with $\R_{>0}$ replaced by $\KK_{>0}$. However, while $\R_{>0}\subset \KK_{>0}$, the induced topology on $\R_{>0}$ is the discrete topology, so that \cref{t:RealFinite} is in fact not fully a special case of \cref{t:IntroPuiseauxFinite}. 
\begin{mainthm}\label{t:IntroPuiseauxFinite}
The map $
\Delta_{>0}:\ToeplitzU_{n+1}(\KK_{>0})\longrightarrow T_{SL_{n+1}}(\KK_{>0})$
defined by sending $u\in\ToeplitzU_{n+1}(\KK_{>0})$ to the diagonal matrix $d$ with diagonal entries
\[
d_1= \Delta_{1}(u),\   
d_2=\frac{\Delta_{2}(u)}{\Delta_{1}(u)},\ 
\dotsc,
\ d_{n+1}=\frac {1}{\Delta_{n}(u)}
\]
is a bijection. Moreover, it is a homeomorphism for the $t$-adic topology.
\end{mainthm}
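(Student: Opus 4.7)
My plan is to break the theorem into three parts: well-definedness and continuity of the forward map, bijectivity, and continuity of the inverse in the $t$-adic topology; I expect the last of these to be the main obstacle.

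For well-definedness and forward continuity, I would note that each $\Delta_i(u)$ is a subtraction-free polynomial in the entries $c_1,\dots,c_n$ of $u=u(c_1,\dots,c_n)$ (for example via the standard planar network model of a Toeplitz matrix, where Lindström--Gessel--Viennot expresses the minor as a positive sum over non-intersecting path families). Hence for $u\in\ToeplitzU_{n+1}(\KK_{>0})$ each $\Delta_i(u)$ lies in $\KK_{>0}$, so each $d_i=\Delta_i(u)/\Delta_{i-1}(u)\in\KK_{>0}$, and $\prod d_i = 1/\Delta_n(u) \cdot \Delta_n(u) = 1$. Being a subtraction-free rational function of the Puiseaux coefficients of the $c_i$'s, the map $\Delta_{>0}$ is manifestly continuous in the $t$-adic topology.

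For bijectivity I would invoke Tarski--Seidenberg transfer. The generalised Puiseaux series field $\KK$ is a real closed field (a Hahn-series field over $\R$ with divisible exponent group), and $\KK_{>0}$ is exactly its positive cone. Both the total positivity of a Toeplitz matrix and the positivity of a diagonal matrix with determinant one are cut out by finitely many polynomial (in)equalities, and $\Delta$ itself is polynomial. Thus the statement that $\Delta$ restricts to a bijection between these two semialgebraic sets is a first-order sentence in the language of ordered fields, and since \cref{t:RealFinite} establishes it over $\R$, transfer gives it over $\KK$.

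The remaining step, and the main obstacle, is $t$-adic continuity of $\Delta_{>0}^{-1}$, which does not follow from bijectivity alone. A natural approach is to use the fact (inherited by transfer from \cref{t:RealFinite}) that the differential of $\Delta$ is invertible at every totally positive point, and to combine it with a $t$-adic implicit function theorem. Such a theorem is however delicate for generalised Puiseaux series, whose supports are only required to be well-ordered rather than discrete, and making the iterative inversion converge within $\KK_{>0}$ requires care. A cleaner route, which I expect the paper to take in view of the Lusztig weight-map interpretation announced in the abstract, is to write down the inverse explicitly in terms of valuations; its piecewise-linear structure on the tropical side then makes continuity manifest, and lifts back to $t$-adic continuity on $\KK_{>0}$.
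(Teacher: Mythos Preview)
Your transfer idea is appealing but has a genuine gap: the paper's $\KK_{>0}$ is defined with \emph{complex} higher-order coefficients (only the leading coefficient is required to lie in $\R_{>0}$), so $\KK$ is algebraically closed rather than real closed, and $\KK_{>0}$ is not the positive cone of any ordered field. Tarski--Seidenberg would only yield the statement over the real-coefficient subfield $\KK_\R$, which is a proper subset of what is claimed. Your treatment of the inverse's continuity is also off: the piecewise-linear structure of the tropicalised inverse says nothing about $t$-adic continuity of the actual inverse on $\KK_{>0}$, and there is no general ``lifting'' principle of the kind you invoke. Ironically, over $\KK_\R$ the obstacle you flag dissolves for a reason you do not mention: the valuation ring is order-convex, so the $t$-adic and order topologies coincide, and continuity of $\Delta_{>0}^{-1}$ is itself a first-order statement that transfers along with bijectivity.

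The paper's argument is entirely different and works directly over the full $\KK_{>0}$. It realises $\ToeplitzU_{n+1}(\KK_{>0})$ as the union over $d\in T(\KK_{>0})$ of the positive critical loci of a family of Laurent-polynomial superpotentials $\mathcal W_d$ (the mirror Landau--Ginzburg model for the flag variety), so that inverting $\Delta_{>0}$ amounts to finding the unique positive critical point of $\mathcal W_d$. Existence and uniqueness come from the Newton-polytope criterion of \cite{JuddRietsch:24}, the polytope condition itself being read off from the known real case; $t$-adic continuity of the critical-point map $d\mapsto p_{\mathrm{crit}}(d)$ is a separate analytic result deferred to \cite{R:CritCont}. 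Thus the inverse is constructed and its continuity established without any model-theoretic transfer.
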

A precursor to this theorem is \cite[Theorem~5.1]{Judd:Flag}, which shows that the fibre of $t^\lambda\in T_{SL_{n+1}}(\KK_{>0})$ for a dominant coweight $\lambda$ is a singleton. The proof of \cref{t:IntroPuiseauxFinite} 
is inspired by the second proof of \cref{t:RealFinite}, from \cite{rietschNagoya}, which uses mirror symmetry to describe Toeplitz matrices as critical points of a `superpotential' associated to the type $A_n$ flag variety. We prove \cref{t:IntroPuiseauxFinite} via this method, making use of a positive critical point theorem over $\KK_{>0}$ given in \cite{JuddRietsch:24}.

Our next result is the description of the tropicalisation $\ToeplitzU_{n+1}(\Rmin)$ and the tropicalised version of the parametrisation theorem.  
Consider the positive charts of $U_+$  given by multiplying together simple root subgroups 
according to a reduced expression $\mathbf i$ of the longest element $w_0$ of the Weyl group $W=S_{n+1}$, see \cref{s:U+poscharts}. We identify the reduced expression $\mathbf i$ with its associated normal ordering of the positive roots $R_+=\{\alpha_{k\ell}=\ep_k-\ep_\ell\mid 1\le k<\ell\le n+1\}$ as in  \eqref{e:posrootsordering}. 
Thus for any reduced expression $\mathbf i$ we obtain a  coordinate chart with  coordinates $(M^{\mathbf i}_\alpha)_{\alpha\in R_+}\in\R^{R_+}$ on $U_+(\Rmin)$. Coordinate changes are piecewise-linear maps $\psi_{\mathbf i,\mathbf i'}:\R^{R_+}\to \R^{R_+} $. 

Let us now write $\mathfrak h_{\R,SL_{n+1}}$ for the diagonal Cartan subalgebra of $\mathfrak sl_{n+1}(\R)$ and make the identification  $\mathfrak h_{\R,SL_{n+1}}=\mathfrak h^*_{\R,PSL_{n+1}}$.
Lusztig in \cite{Lusztig94} showed that the following map 
\begin{equation}\label{e:LusztigWeight}
\begin{array}{cccc}
\mathcal L:&U_+(\Rmin) &\longrightarrow & \mathfrak h_{\R,PSL_{n+1}}^*\\
& (M^{\mathbf i}_{\alpha})_{\alpha\in R_+}&\mapsto &\sum_{\alpha\in\R_+} M^{\mathbf i}_\alpha\alpha,
\end{array}
\end{equation}
 which he used to describe the weights of canonical basis elements, is well-defined. We call it the Lusztig weight map. We show in \cref{l:dunilemma} that $\mathcal L$ is the tropicalisation of the map $\Delta_{>0}:U_+(\R_{>0})\to T_{SL_{n+1}}(\R_{>0})$ given by the formula \eqref{e:IntroDelta} in our parametrisation theorem.  The next theorem contains a combination of work from \cite{Judd:Flag} and \cite{Ludenbach} in strengthened form, making use of \cref{t:IntroPuiseauxFinite}. It is proved in   \cref{t:MinIdealIsToepl} and \cref{t:TropFinite}.
\begin{mainthm}\label{t:IntroTropFinite}
The subset $\ToeplitzU_{n+1}(\Rmin)$ of $U_+(\Rmin)$ is described as follows. For any element of $\ToeplitzU_{n+1}(\Rmin)$ its coordinates $(M_{\alpha}^{\mathbf i})_{\alpha\in R_+}$ are independent of $\mathbf i$, so that we may write it as $(M_{\alpha})_{\alpha\in R_+}$ without specifying a reduced expression, and we have
\[
\ToeplitzU_{n+1}(\Rmin)=\{(M_\alpha)_{\alpha\in R_+}\mid M_{\gamma}=\min(M_{\alpha},M_{\beta}) \text{ whenever $\gamma=\alpha+\beta$ in $R_+$}\}.
\]
Moreover, the restriction of the Lusztig weight map to $\ToeplitzU_{n+1}(\Rmin)$ 
\[
\mathcal L|_{\ToeplitzU_{n+1}(\Rmin)}: \ToeplitzU_{n+1}(\Rmin) \longrightarrow \mathfrak h^*_{\R,PSL_{n+1}}.
\]
is a bijection.
\end{mainthm}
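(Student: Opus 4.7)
The overall strategy is to deduce the description of $\ToeplitzU_{n+1}(\Rmin)$ from the Puiseaux parametrisation \cref{t:IntroPuiseauxFinite} by taking valuations. By definition $\ToeplitzU_{n+1}(\Rmin) = \ToeplitzU_{n+1}(\KK_{>0})/\sim$, where the equivalence is given by $\Val$ applied to any positive chart. For $u \in \ToeplitzU_{n+1}(\KK_{>0})$ and a reduced expression $\mathbf{i}$, the factorisation coefficients $a^{\mathbf{i}}_\alpha \in \KK_{>0}$ can be written as monomials of minors of $u$ via the standard positive chart formulas, see \cref{l:mijFormula}. The tropical coordinates $M^{\mathbf{i}}_\alpha$ are their $\Val$-images.

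First I would show that for Toeplitz $u$ the valuations $M^{\mathbf{i}}_\alpha$ do not depend on $\mathbf{i}$. This builds on Judd's and L\"udenbach's work: the Toeplitz symmetry of the minors of $u$ forces the chart-by-chart monomial expressions to have equal valuation, so one simply writes $M_\alpha$. Second, for the min-relations, whenever $\gamma=\alpha+\beta$ there is a reduced expression exhibiting an $A_2$-type subword containing all three roots; within this subword the Toeplitz condition forces a relation in $\KK_{>0}$ whose tropicalisation is $M_\gamma = \min(M_\alpha, M_\beta)$.

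Conversely, any tuple $(M_\alpha)\in\R^{R_+}$ satisfying the min-relations is determined by its simple-root values $(M_{\alpha_i})_{i=1}^n$ via the iterated formula $M_{\alpha_{k\ell}}=\min(M_{\alpha_k},\dotsc, M_{\alpha_{\ell-1}})$. Since $\Val:\KK_{>0}\twoheadrightarrow \Rmin$ is surjective, we may lift these $n$ real numbers to a diagonal matrix in $T_{SL_{n+1}}(\KK_{>0})$; applying the inverse of $\Delta_{>0}$ furnished by \cref{t:IntroPuiseauxFinite} and then tropicalising produces an element of $\ToeplitzU_{n+1}(\Rmin)$ with the prescribed coordinates. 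For the bijectivity of $\mathcal L|_{\ToeplitzU_{n+1}(\Rmin)}$, the key input is \cref{l:dunilemma}, identifying $\mathcal L$ as the tropicalisation of the map $\Delta_{>0}:U_+(\R_{>0})\to T_{SL_{n+1}}(\R_{>0})$. Surjectivity onto $\mathfrak h^*_{\R, PSL_{n+1}}$ is precisely the lifting argument just described. Injectivity follows because $\mathcal L(M)=\mathcal L(M')$ together with the identification of the simple-root values $M_{\alpha_i}$ with the $\Val$ of the diagonal entries of $\Delta_{>0}$ yields $M_{\alpha_i}=M'_{\alpha_i}$ for all $i$, and then the min-relations propagate this equality to all of $R_+$.

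The main obstacle will be the first step: establishing independence of $M^{\mathbf{i}}_\alpha$ on $\mathbf{i}$ for Toeplitz matrices. Since the coordinate changes on $U_+(\Rmin)$ are only piecewise-linear in general, this independence is a genuine Toeplitz-specific phenomenon and requires either explicit minor manipulations exploiting the $c_{i-j}$-structure, or a uniform chart-independent description of each $M_\alpha$ as the valuation of a single well-chosen minor of $u$. The min-relation of the second step then appears as a compatibility condition between these chart-independent valuations along rank-two subwords, and its verification will rely on the same Toeplitz-symmetry identities.
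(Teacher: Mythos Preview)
Your injectivity argument for $\mathcal L|_{\ToeplitzU_{n+1}(\Rmin)}$ has a genuine gap. You claim that from $\mathcal L(M)=\mathcal L(M')$ one can read off $M_{\alpha_i}=M'_{\alpha_i}$ by ``the identification of the simple-root values $M_{\alpha_i}$ with the $\Val$ of the diagonal entries of $\Delta_{>0}$''. But no such identification exists: by \cref{l:dunilemma} the $\Val$ of the diagonal entries $d_i$ are the $\varepsilon_i$-components of $\mathcal L(M)=\sum_{\alpha\in R_+}M_\alpha\alpha$, which are sums over many $M_\alpha$, not the individual $M_{\alpha_i}$. Already for $SL_3$ one has $\mathcal L(M)=(M_{\alpha_1}+M_{\alpha_1+\alpha_2})\alpha_1+(M_{\alpha_2}+M_{\alpha_1+\alpha_2})\alpha_2$, so recovering the simple-root values requires solving these linear equations \emph{together with} the min-relation, and in higher rank this is a nontrivial piecewise-linear inversion. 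The same confusion undermines your converse direction: lifting $(M_{\alpha_1},\dotsc,M_{\alpha_n})$ to a point of $T_{SL_{n+1}}(\KK_{>0})$ and applying $(\Delta_{>0})^{-1}$ produces \emph{some} tropical Toeplitz matrix, but you have not shown its simple-root coordinates are the prescribed $M_{\alpha_i}$, because the torus coordinates are not the simple-root coordinates.

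The paper handles these two points quite differently. For injectivity of $\mathcal L|_{\ToeplitzU}$ it does not argue combinatorially at all: it uses that $(\Delta_{>0}^{PGL})^{-1}$ is given by the positive critical point of the superpotential, and invokes the result of \cite{JuddRietsch:24} that the valuation of the positive critical point depends only on the valuations of the parameters $q_\bullet$. This constructs an inverse to $\mathcal L|_{\ToeplitzU}$ directly. For the equality $\ToeplitzU_{n+1}(\Rmin)=\Imin_{n+1}$, the paper first proves the inclusion $\ToeplitzU\subseteq\Imin$ via the quiver description \cref{t:IntroQfinite} (a careful induction on the arrow labels, not an $A_2$-subword argument), and then closes the loop by citing Judd's combinatorial proof that $\mathcal L|_{\Imin}$ is \emph{injective}: since $\ToeplitzU\subseteq\Imin$ with $\mathcal L$ bijective on the smaller set and injective on the larger, the inclusion must be an equality. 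Your proposal would need to supply either an independent proof of Judd's injectivity or the critical-point dependence result to fill the gap.
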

We have formulated this theorem here in a way that it can be generalised beyond type $A$. 
In type $A$ we will prefer the following explicit description of the elements of $\ToeplitzU_{n+1}(\Rmin)$. Let us arrange the coordinates $(M_{\alpha})_{\alpha\in R_+}$ of an element of $\ToeplitzU_{n+1}(\Rmin)$ into a lower-triangular array and re-index them. For example if $n=3$ then  
\begin{equation*}
\begin{tikzpicture}[scale=0.6]
\node[fill=white] at    (0.5,2.55) {$M_{\alpha_{34}}$}  ;
\node[fill=white] at    (0.5,1.55) {$M_{\alpha_{24}}$}  ;
\node[fill=white] at    (0.5,0.55) {$M_{\alpha_{14}}$}  ;
\node[fill=white] at    (1.95,1.55) {$M_{\alpha_{23}}$}  ;
\node[fill=white] at    (4.95,1.55) {$=$}  ;
\node[fill=white] at    (1.95,0.55) {$M_{\alpha_{13}}$}  ;
\node[fill=white] at    (3.4,0.55) {$M_{\alpha_{12}}$}  ;
\draw[black] (-.2,0.1) -- (-.2,3.1) -- (1.25,3.1) -- (1.25,0.1) -- (4.1,0.1) -- (4.1,1.1)--(-.2,1.1)--(-.2,0.1)--(2.7,0.1)--(2.7,2.1)--(-.2,2.1);
 \end{tikzpicture}\quad
 \begin{tikzpicture}[scale=0.6]
\node[fill=white] at    (0.5,2.55) {$M_{31}$}  ;
\node[fill=white] at    (0.5,1.55) {$M_{21}$}  ;
\node[fill=white] at    (0.5,0.55) {$M_{11}$}  ;
\node[fill=white] at    (1.95,1.55) {$M_{22}$}  ;
\node[fill=white] at    (1.95,0.55) {$M_{12}$}  ;
\node[fill=white] at    (3.4,0.55) {$M_{13}$}  ;
\draw[black] (-.2,0.1) -- (-.2,3.1) -- (1.25,3.1) -- (1.25,0.1) -- (4.1,0.1) -- (4.1,1.1)--(-.2,1.1)--(-.2,0.1)--(2.7,0.1)--(2.7,2.1)--(-.2,2.1);
 \end{tikzpicture}.
\end{equation*}
Then the condition from \cref{t:IntroTropFinite} for $(M_{\alpha})_{\alpha\in R_+}$ to lie in $\ToeplitzU_{n+1}(\Rmin)$ is equivalent to the condition  
\[
M_{ij}=\min(M_{i+1,j},M_{i,j+1})\quad \text{for all $\ i+j\le n$.}
\]
We call such an array a \textit{min-ideal filling} of rank $n$, and we use the notation $\Imin_{n+1}$ for the rank $n$ min-ideal fillings, see \cref{d:minideal}. The first part of \cref{t:IntroTropFinite} translates into the statement that $\ToeplitzU_{n+1}(\Rmin)=\Imin_{n+1}$, and that this is a well-defined subset of $U_+(\Rmin)$. A version of this is nearly proved in \cite{Ludenbach}. Modulo this first half, much of the second half can be deduced from \cite{Judd:Flag}. In completing the proof of \cref{t:IntroTropFinite}, we have made use of \cite{Judd:Flag} for injectivity, but otherwise give new direct proofs. The relationship with these works is also explained in greater detail in \cref{r:JuddLudenbach}.   

A key ingredient of our proof of the equality $\ToeplitzU(\Rmin)=\Imin_{n+1}$ is a particular preferred positive chart for $U_+$, whose coordinates $(m_{ij})_{i+j\le n+1}$ we call the \textit{standard coordinates}. We prove the following explicit description  of totally positive Toeplitz matrices in terms of these coordinates. This description uses a quiver $Q_{n+1}$ that we construct in \cref{d:Qn+1}, see  \cref{f:Qn+1} for an illustration, and it applies to $\ToeplitzU_{n+1}(\mathcal R_{>0})$ over an arbitrary `positive semifield' $\mathcal R_{>0}$ lying inside a ring $\RR$.    
\begin{mainthm}\label{t:IntroQfinite} For the quiver $Q_{n+1}$, label the vertices $v_{ij}$ by the  expressions $\sum \frac{1}{m_{i-\ell,j-\ell}}$ in standard coordinates, and the arrows by the label at the head minus the label at the tail. Then 
\[
\ToeplitzU_{n+1}(\mathcal R_{>0})=\left\{(m_{ij})_{i,j}\in \mathcal R_{>0}^{\{i+j\le n+1\}}\left |\begin{array}{l} \text{$\prod$ incoming arrow labels $=\ \prod$ outgoing arrow labels,}\\
\text{for all vertices $v_{ij}$ with $1< i+j< n$.}
\end{array}\right.\right\}
\]
Moreover, for $(m_{ij})_{i,j}\in\ToeplitzU_{n+1}(\mathcal R_{>0})$, the arrow labels are all in $\mathcal R_{>0}$. 
\end{mainthm}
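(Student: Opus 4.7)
The strategy is to start from the parametrization of $U_+(\mathcal R_{>0})$ by the standard coordinates $(m_{ij})_{i+j\le n+1}$ and translate the Toeplitz condition --- that the matrix entries of $u$ depend only on the difference between column and row indices --- into explicit polynomial relations on the $m_{ij}$. First I would recall or establish the formula expressing the matrix entries $u_{ab}$ of an element $u\in U_+(\mathcal R_{>0})$ in terms of the $m_{ij}$: since the standard chart comes from a product of simple-root one-parameter subgroups indexed by the triangle $\{i+j\le n+1\}$, these entries are nonnegative polynomial (or Laurent) expressions in the $m_{ij}$ with a transparent combinatorial meaning, typically counting weighted paths through the triangular index set. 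The diagonal sums $\sum_\ell \frac{1}{m_{i-\ell,j-\ell}}$ that label vertices of $Q_{n+1}$ should then be recognized as capturing the ratio $u_{a+1,b+1}/u_{a,b}$ along the appropriate diagonal of $u$, which is precisely why their \emph{differences} across neighbouring vertices govern local Toeplitz identities.

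Next I would convert the bare Toeplitz equalities $u_{a,b}=u_{a+1,b+1}$ into a telescoping product identity around each internal vertex $v_{ij}$. The idea is that the ratio of neighbouring entries on adjacent diagonals factors as a product over arrows incident to $v_{ij}$, each arrow contributing one elementary factor of the form `head label minus tail label'. The Toeplitz condition at the diagonal indexed by $(i,j)$ then becomes exactly $\prod(\text{incoming labels}) = \prod(\text{outgoing labels})$ at the corresponding interior vertex of $Q_{n+1}$. The restriction $1<i+j<n$ selects precisely those interior vertices that give nontrivial identities, the boundary cases being automatic from the shape of the standard chart. Running this equivalence over all interior vertices yields the claimed cut-out description of $\ToeplitzU_{n+1}(\mathcal R_{>0})$ inside $U_+(\mathcal R_{>0})$.

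For the `moreover' clause --- that each arrow label, a priori only an element of the ambient ring $\mathcal R$, in fact lies in $\mathcal R_{>0}$ --- the key observation is that every arrow of $Q_{n+1}$ is oriented in the direction of increasing vertex label. Concretely, passing from the tail diagonal to the head diagonal adjoins one (or several) additional index pairs to a sum of positive reciprocals $\frac{1}{m_{\cdot\cdot}}$, so head minus tail is itself a nonempty partial sum of such terms and hence lies in $\mathcal R_{>0}$. I expect the main obstacle to be pinning down the combinatorics of $Q_{n+1}$ precisely enough that the pattern of arrow endpoints matches the Toeplitz identities on the nose; once this combinatorial correspondence between arrows of $Q_{n+1}$ and the defining equations is nailed down, each algebraic step --- the factorisation into arrow contributions and the positivity of each difference --- reduces to a short bookkeeping argument. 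In other words, the work lies in the combinatorial matching, not in any individual estimate.
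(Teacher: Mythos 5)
Your proposal has two significant gaps, one of which is a plain factual error.

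The more serious problem is your argument for the \emph{Moreover} clause. You claim each arrow label is positive because ``passing from the tail diagonal to the head diagonal adjoins one (or several) additional index pairs to a sum of positive reciprocals,'' i.e.\ that the head label contains the tail label as a sub-sum. This is false for the arrows that matter. For an eastward arrow from $v_{i,j-1}$ to $v_{i,j}$, the head label is $v_{ij}=\sum_{\ell}\frac{1}{m_{i-\ell,j-\ell}}$ and the tail label is $v_{i,j-1}=\sum_{\ell}\frac{1}{m_{i-\ell,j-1-\ell}}$; these are sums over two \emph{disjoint} anti-diagonals through the triangular array, sharing no common terms, so the difference $v_{ij}-v_{i,j-1}$ is not a partial sub-sum of anything. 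The only containments of this kind are along the $(i,j)\mapsto(i+1,j+1)$ direction, where indeed $v_{i+1,j+1}-v_{ij}=1/m_{i+1,j+1}$; but that is not the direction of the arrows of $Q_{n+1}$. Positivity of the arrow labels is a nontrivial \emph{consequence} of the Toeplitz relations: the paper establishes it by exhibiting closed Laurent-monomial formulas (condition (2) of \cref{p:ToeplitzEquiv}, namely $e_{ij}=\frac{1}{m_{ij}}\prod_\ell\frac{m_{i+1,\ell}}{m_{i,\ell}}$ and its transpose), and proving these by induction from the quiver relations. Your argument would wrongly conclude positivity for an arbitrary element of $U_+(\mathcal R_{>0})$, not just for Toeplitz matrices.

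The second issue is that your translation from the Toeplitz equalities $u_{a,b}=u_{a+1,b+1}$ to the quiver relations is asserted rather than argued. You say ``the ratio of neighbouring entries on adjacent diagonals factors as a product over arrows incident to $v_{ij}$,'' but in a Toeplitz matrix entries on a fixed diagonal are all equal, so that ratio is $1$, and it is not clear what factorization you have in mind or how the vertex label $v_{ij}=\sum_\ell\frac{1}{m_{i-\ell,j-\ell}}$ relates to a ratio of matrix entries. The actual proof does not work with raw matrix entries at all: it first reformulates the Toeplitz condition as a family of minor identities via the Desnanot–Jacobi identity (Lemma~\ref{l:ToeplitzMinorChar}), then uses the minor formulas for $m_{ij}$ (Lemma~\ref{l:mijFormula}) to convert those identities into the explicit arrow-label formulas of condition (2), and finally shows conditions (2) and (3) are equivalent by a telescoping induction. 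Replacing this with a direct path-counting argument on matrix entries would require establishing the combinatorial factorization you gesture at, and I do not see how to do so without essentially rediscovering the Desnanot–Jacobi reduction.
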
 
This result is related to an earlier description of Toeplitz matrices from \cite{rietsch} via \cite[Theorem 4.3.1]{Ludenbach}, see also \cite[Section~2.5]{rietschICM}. We give a direct proof of \cref{t:IntroQfinite} in \cref{a:ToeplitzviaQ}. 

\subsection{}\label{s:IntroGeneralType}
\label{r:conjecture} 
Unipotent Toeplitz matrices  can be described  Lie theoretically as elements of the stabiliser in $U_+$ of the principal nilpotent $E=(\delta_{i,j-1})_{i,j}$ under conjugation. This generalises to any reductive algebraic group $G$ in a standard way, whereby $\ToeplitzU_G$ is the stabilizer inside the analogous unipotent subgroup $U^G_{+}$ of an analogous principal nilpotent $E$ in $\mathfrak g^*$, e.g.  \cite{Kostant:Toda,peterson}.  
We define
\begin{equation}\label{e:ConjGenTropToep}
\Imin_G:=\{(M_\alpha)_{\alpha\in R^G_+}\in \R^{R^G_+}\mid M_{\gamma}=\min(M_{\alpha},M_{\beta}) \text{ whenever $\gamma=\alpha+\beta$ in $R^G_+$}\},
\end{equation}
where $R^G_+$ is the set of positive roots for $G$. The elements of $\Imin_G$ are  fixed under the now general type piecewise-linear coordinate change maps $\Psi^G_{\mathbf i,\mathbf i'}:\R^{R^G_+}\to \R^{R^G_+}$. Namely, this invariance follows by tropicalisation from \cite[Lemma 5.2.(2)]{R:IMRN}. Thus $\Imin_G$ is a well-defined subset of $U^G_+(\Rmin)$. 
\begin{conj}\label{c:IntroConj}
The subset $\Imin_G$ of $U_+(\Rmin)$ is the tropicalisation of $\ToeplitzU_G$,
\[\ToeplitzU_G(\Rmin)=\Imin_G.\]
\end{conj}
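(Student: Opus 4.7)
The plan is to generalize the argument for \cref{t:IntroPuiseauxFinite} and \cref{t:IntroTropFinite} to arbitrary reductive $G$, in four stages. The first stage would establish a general-type analogue of \cref{t:IntroPuiseauxFinite}: that an appropriately defined map $\Delta^G_{>0}:\ToeplitzU_G(\KK_{>0})\to T_G(\KK_{>0})$, built from generalized minors in the sense of Fomin-Zelevinsky, is a bijection and a homeomorphism in the $t$-adic topology. The proof should follow the mirror-symmetry strategy used in type~$A$: by the Peterson presentation, $\ToeplitzU_G$ is the critical locus of the general-type Rietsch superpotential $W_G$ on an open Richardson variety inside the Langlands-dual flag variety $G^\vee/B^\vee$, and one then applies the positive critical point theorem of \cite{JuddRietsch:24}.

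The second stage would identify the tropicalisation of $\Delta^G_{>0}$ with the restriction of the Lusztig weight map $\mathcal L^G$ to $\ToeplitzU_G(\Rmin)$. This should be a direct generalisation of \cref{l:dunilemma}, using the subtraction-free formulas for generalized minors in a reduced-expression chart on $U^G_+$.

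The third and fourth stages establish the two inclusions making up \cref{c:IntroConj}. For $\ToeplitzU_G(\Rmin)\subseteq \Imin_G$, the task is to check that for any $u\in\ToeplitzU_G(\KK_{>0})$, any reduced expression $\mathbf i$, and any non-simple positive root $\gamma=\alpha+\beta$ in $R^G_+$, the positive coordinates $m^{\mathbf i}_\gamma,m^{\mathbf i}_\alpha,m^{\mathbf i}_\beta$ of $u$ satisfy a subtraction-free identity of the shape $m^{\mathbf i}_\gamma=m^{\mathbf i}_\alpha+m^{\mathbf i}_\beta+(\text{positive terms})$, whose tropicalisation is exactly $M_\gamma=\min(M_\alpha,M_\beta)$. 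For the reverse inclusion $\Imin_G\subseteq\ToeplitzU_G(\Rmin)$, one uses that on $\Imin_G$ the coordinates at non-simple roots are recursively determined by the simple ones via the defining min-plus equations, so $\mathcal L^G|_{\Imin_G}$ is a bijection onto $\mathfrak h^*_{\R,G^\vee}$; combined with the bijectivity of $\mathcal L^G|_{\ToeplitzU_G(\Rmin)}$ delivered by stages one and two, the inclusion from stage three is forced to be an equality.

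The principal obstacle is stage three. In type~$A$ it is effectively controlled by the explicit standard-coordinate quiver description of \cref{t:IntroQfinite}, which exhibits the required subtraction-free identities. No analogous quiver is known in general type, so these identities would need to be derived intrinsically: either from the Lie-theoretic defining equation $[u,E]=0$ of $\ToeplitzU_G$ expressed in Chevalley generators, or by leveraging the cluster algebra structure on positive coordinates of $U^G_+$, in which case the identities $m^{\mathbf i}_\gamma=m^{\mathbf i}_\alpha+m^{\mathbf i}_\beta+\dotsc$ might be recognisable as restrictions of cluster exchange relations to the Toeplitz locus.
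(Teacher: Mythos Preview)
The statement you are attempting to prove is presented in the paper as \emph{Conjecture~A}, not as a theorem; the paper offers no proof of it and instead remarks that, assuming the conjecture and replacing \cref{t:RealFinite} by its general-type analogue from \cite{LamRietsch}, the analogues of \cref{t:IntroPuiseauxFinite} and \cref{t:IntroTropFinite} are expected to follow. So there is no proof in the paper to compare your proposal against directly.

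Your strategy closely mirrors the paper's type~$A$ argument (\cref{t:MinIdealIsToepl} and \cref{t:TropFinite}). Stages one and two are plausible and the paper itself essentially expects them to go through: the general-type positive critical point machinery of \cite{JuddRietsch:24} combined with the real result of \cite{LamRietsch} should yield the Puiseux bijection, and the identification of its tropicalisation with the Lusztig weight map is a natural extension of \cref{l:dunilemma}. You correctly identify stage three, the inclusion $\ToeplitzU_G(\Rmin)\subseteq\Imin_G$, as the principal obstacle; in type~$A$ this was the content of \cref{p:TropToeplitzviaQ}, resting on the explicit quiver description of \cref{t:IntroQfinite}, for which no general-type replacement is known.

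There is, however, a genuine gap in your stage four. You assert that because the coordinates at non-simple roots are recursively determined by the simple-root values, the map $\mathcal L^G|_{\Imin_G}$ is a bijection onto $\mathfrak h^*_{\R,G^\vee}$. This does not follow. The recursive determination (which itself is not entirely trivial: one must verify that distinct decompositions $\gamma=\alpha+\beta$ give consistent values of $M_\gamma$) shows at most that restriction-to-simple-roots $\Imin_G\to\R^\Pi$ is injective. That is a different map from $\mathcal L^G$, which sends $(M_\alpha)_\alpha$ to $\sum_{\alpha\in R_+^G} M_\alpha\,\alpha$; there is no a~priori reason the latter, a piecewise-linear map between two copies of~$\R^n$, should be injective on~$\Imin_G$. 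In the paper's type~$A$ proof this injectivity is precisely the step borrowed from Judd's combinatorial argument \cite[Proposition~5.6]{Judd:Flag}, and you would need a general-type analogue of that argument, which is not available. So even granting stages one through three, your stage four remains incomplete.
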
 
We note that we expect that \cref{t:IntroPuiseauxFinite} and \cref{t:IntroTropFinite} can be generalised to this setting following the arguments given in \cref{s:ToeplitzFinite} and \cref{s:TropToeplitzFinite}, assuming  \cref{c:IntroConj} and with \cref{t:RealFinite}  replaced by its general type analogue  proved in  \cite{LamRietsch}.

\subsection{} We now turn to the infinite setting. Consider the following projective limit of the $\Imin_{n+1}$,
\begin{center}
 \begin{tikzpicture}[scale=0.6]
 \node[fill=white] at    (-3,1.6) {$\Imin_{\infty}(\Rmin):=$}  ;
  \draw[decoration={brace,raise=5pt},decorate]
  (-.3,-0.2) -- node[right=6pt] {} (-.3,3.6);
 \draw[decoration={brace,mirror,raise=5pt},decorate]
  (13,-0.2) -- node[right=6pt] {} (13,3.6);
\node[fill=white] at    (0.5,2.6) {$M_{31}$}  ;
\node[fill=white] at    (0.5,1.6) {$M_{21}$}  ;
\node[fill=white] at    (0.5,0.6) {$M_{11}$}  ;
\node[fill=white] at    (1.8,1.6) {$M_{22}$}  ;\node[fill=white] at    (1.8,0.6) {$M_{12}$}  ;
\node[fill=white] at    (3,0.6) {$M_{13}$}  ;
\node[fill=white] at    (4.4,0.6) {$\cdots$}  ;
\node[fill=white] at    (14,0) {$.$}  ;
\node[fill=white] at   (9,1) {$M_{ij}=\min(M_{i,j+1},M_{i+1,j})$};
\node[fill=white] at   (9,2) {$\forall  i,j\in\N, \ \  M_{ij}\in\R\ \text{ and}$};
\draw[black] (5,-0.2) -- (5,3.5);
\draw[black] (-.2,0.1) -- (-.2,3.55) --(-.2,3.1)--(1.35,3.1)-- (1.15,3.1) --(1.15,3.3) -- (1.15,0.1) -- (4,0.1)--(3.6,0.1) -- (3.6,1.3)--(3.6,1.1)--(3.8,1.1)--(3.6,1.1)--(-.2,1.1)--(-.2,0.1)--(2.4,0.1)--(2.4,2.3)--(2.4,2.1)--(2.6,2.1)--(2.4,2.1)--(-.2,2.1);
 \end{tikzpicture}
 \end{center} 
This is a basic infinite analogue of min-ideal fillings. We also introduce the following more restrictive infinite limits of the $\Imin_{n+1}$. The first consists of infinite min-ideal fillings for which each row and column has a supremum in $\R$. We call these min-ideal fillings `asymptotically real' and write
\[
\Imin^{\R}_{\infty}(\Rmin)=\{(M_{ij})\in\Imin_{\infty}(\Rmin)\mid \text{$\sup\{M_{ik}\mid k\in\N\}$ and $\sup\{M_{kj}\mid k\in\N\}<\infty$ for every $i,j\in\N$}\}.
\]
The second consists of the infinite min-ideal fillings for which every row and column has a maximum. We call these `stable' and write 
\begin{equation*}\begin{array}{cl}
\Imin^s_\infty(\Rmin)&:=\left\{(M_{ij})_{i,j\in\N}\in\Imin_\infty(\Rmin)\mid \text{$(M_{ik})_k$ and $(M_{kj})_k$ stabilise as $k\to\infty$}\right\}.
\end{array}
\end{equation*}
We then introduce tropical parameter spaces starting with the basic one,
\begin{equation}
\Omega_\star(\Rmin)=\left\{(\mathbf A,\mathbf B)\in(\R\cup\{\infty\})^\N\times(\R\cup\{\infty\})^\N\left|\begin{array} {l}
 \text{$\mathbf A=(A_i)_i, \mathbf B=(B_j)_j$, weakly increasing,} \\
\text{$\min(A_i,B_i)\in\R$ for all $i\in\N$,}\\
\sup(\{A_i\})=\sup(\{B_j\})
\end{array}\right.
\right\}.
\end{equation}
Here we refer to weakly increasing sequences $\bigA$ and $\bigB$ with $\sup(\{A_i\})=\sup(\{B_j\})$ as `weakly interlacing'. Within $\Omega_\star(\Rmin)$ we consider the subset whose sequences are real,
\begin{equation}
\Omega^{\R}_\star(\Rmin)=\left\{(\mathbf A,\mathbf B)\in \Omega_\star(\Rmin)\mid A_i,B_i\in\R\ \text{ for all $i\in\N$ }
\right\},
\end{equation}
and an even more restrictive subset satisfying a condition that we call \textit{interlacing},
\begin{equation}
\Omega^{il}_\star(\Rmin)=\left\{(\mathbf A,\mathbf B)\in \Omega_\star(\Rmin)\left|\begin{array} {l}
 \text{for any $i\in\N$ there exists a $j$ such that $B_j\ge A_i$} \\
 \text{for any $j\in\N$ there exists an $i$ such that $A_i\ge B_j$}\end{array}\right.
\right\}.
\end{equation}
The following theorem is proved in \cref{s:TropEdrei}.
\begin{mainthm}\label{t:IntroTropEdrei}
For any pair $\bigA,\bigB$ of weakly increasing sequences in $\R\cup\{\infty\}$ with $\min(A_i,B_i)\in\R$ for all~$i$, the element $(M_{ij})\in\R^{\N\times\N}$ defined by $M_{ij}=\min(A_i,B_j)$ is an infinite min-ideal filling. We have the following commutative  diagram of maps,  
\begin{equation*}
\begin{tikzcd}
	{\Omega_\star(\Rmin)} && {\Imin_{\infty}(\Rmin)} \\
	{\Omega_\star^\R(\Rmin)} && {\Imin_{\infty}^\R(\Rmin)} \\
	{\Omega^{il}_\star(\Rmin)} && {\Imin^s_\infty}(\Rmin),
	\arrow["{\mathbb E}", from=1-1, to=1-3]
	\arrow[hook,from=2-1, to=1-1]
	\arrow[hook, from=2-3, to=1-3]
	\arrow["{\mathbb E^\R}", from=2-1, to=2-3]
	\arrow[hook,from=3-1, to=2-1]
	\arrow[hook, from=3-3, to=2-3]
	\arrow["{\mathbb E^s}", from=3-1, to=3-3]
\end{tikzcd} 
\end{equation*}
where  the horizontal maps are all bijections and given by $(\bigA,\bigB)\mapsto (\min(A_i,B_j))_{i,j\in\N}$. 
\end{mainthm}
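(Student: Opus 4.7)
My plan is to construct an explicit two-sided inverse for $\mathbb{E}$ and then verify that this inverse restricts correctly along the displayed subsets.

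For the forward direction, given $(\bigA,\bigB)\in\Omega_\star(\Rmin)$, set $M_{ij}:=\min(A_i,B_j)$. A short case analysis on whether $i\le j$ or $i\ge j$, using $\min(A_k,B_k)\in\R$ together with the monotonicity of $\bigA$ and $\bigB$, shows $M_{ij}\in\R$. The min-ideal identity is then immediate:
$$\min(M_{i+1,j},M_{i,j+1})=\min(A_{i+1},A_i,B_j,B_{j+1})=\min(A_i,B_j)=M_{ij},$$
again by monotonicity. So $\mathbb{E}$ takes values in $\Imin_\infty(\Rmin)$.

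Conversely, for $(M_{ij})\in\Imin_\infty(\Rmin)$, define $A_i:=\sup_j M_{ij}$ and $B_j:=\sup_i M_{ij}$. The min-ideal identity forces $M_{ij}\le M_{i+1,j}$ and $M_{ij}\le M_{i,j+1}$, so $M$ is weakly increasing in each index and hence $\bigA,\bigB$ are weakly increasing with $\sup_i A_i=\sup_{i,j}M_{ij}=\sup_j B_j$. The crux is to show $M_{ij}=\min(A_i,B_j)$. Iterating the min-ideal identity along antidiagonals gives
$$M_{ij}=\min_{0\le k\le N}M_{i+N-k,\,j+k}\qquad\text{for every }N\ge 0.$$
The inequality $M_{ij}\le\min(A_i,B_j)$ is immediate. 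For the reverse, assume for contradiction that $\min(A_i,B_j)>M_{ij}$; then there exist $I\ge i$ and $J\ge j$ with $M_{Ij},\,M_{iJ}>M_{ij}$. Choose $N$ sufficiently large (e.g.\ $N=I+J$). For every $0\le k\le N$, either $k<J-j$, in which case $N-k\ge I-i$ and so $M_{i+N-k,\,j+k}\ge M_{I,j}>M_{ij}$ by monotonicity of $M$, or $k\ge J-j$, in which case $M_{i+N-k,\,j+k}\ge M_{i,J}>M_{ij}$. The antidiagonal minimum therefore strictly exceeds $M_{ij}$, contradicting the iterated identity. In particular $\min(A_i,B_i)=M_{ii}\in\R$, so $(\bigA,\bigB)\in\Omega_\star(\Rmin)$, and the two constructions are mutually inverse.

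Commutativity of the diagram is tautological since all horizontal maps come from the single formula $(\bigA,\bigB)\mapsto(\min(A_i,B_j))_{i,j}$; it only remains to check that $\mathbb{E}$ restricts to the indicated subsets. For the middle row, $(\bigA,\bigB)\in\Omega_\star^\R$ iff all $A_i,B_j$ are finite iff every row and column supremum of $M$ is finite iff $(M_{ij})\in\Imin_\infty^\R$. For the bottom row, the interlacing condition ``for any $i$ there is $j$ with $B_j\ge A_i$'' translates via $M_{ij}=\min(A_i,B_j)$ into the statement that row $i$ of $M$ attains $A_i$ from some index on (which in turn forces $A_i\in\R$ and the row to stabilise); the columnwise statement is dual, and together these match the stable condition precisely. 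The main obstacle is the staircase estimate in the middle paragraph: it is what upgrades the purely local one-step min-ideal identity to the global factorisation $M_{ij}=\min(A_i,B_j)$, and without it one cannot recover $(\bigA,\bigB)$ from $(M_{ij})$.
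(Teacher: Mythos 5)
Your proof is correct, and it takes a genuinely different route from the paper. The paper establishes a chain of intermediate lemmas (Lemmas~\ref{l:InfiniteProperty}, \ref{l:OneLegConstant}, \ref{l:InfinitePropertyStrong}, Corollary~\ref{c:InfinitePropertyStrongII}): it first proves $M_{ij}=\min(M_{i+k,j},M_{i,j+k})$ for each fixed $k$ by an indirect argument that locates a plateau of constant entries $a,c,\dots,c,b$ along the antidiagonal and derives a contradiction from the entries above and to the right of that plateau; it then deduces that for each position one of row or column is eventually constant, and finally assembles $M_{ij}=\min(M_{i,\infty},M_{\infty,j})$. The paper also checks well-definedness of $\bar{\mathbb E}$ via the meet-semilattice structure and injectivity via a separate equivalence-relation lemma (\ref{l:injectiveInfMin}). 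You bypass most of that scaffolding: you go directly for $M_{ij}=\min(\sup_j M_{ij},\sup_i M_{ij})$, and your two-case argument on the antidiagonal index $k$ (comparing $k$ with $J-j$) collapses the paper's several steps into a single clean estimate. Your approach is more economical for the statement at hand; what the paper's longer route buys is the intermediate Lemma~\ref{l:OneLegConstant} (eventual constancy of a row or column through each position), a structural fact it records as a separate result. One small point: the claim that the iterated antidiagonal identity would hold even for finite fillings while your contradiction does not, because the suprema $A_i,B_j$ are genuine limits over an infinite row/column, is implicit in your write-up but worth flagging explicitly --- it is exactly where the infiniteness of the array enters.
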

 Our next result relates the parameters $(\bigA,\bigB)$ for asymptotically real min-ideal fillings from  \cref{t:IntroTropEdrei} to the parameters from the parametrisation of $\ToeplitzU_{n+1}(\Rmin)$ in \cref{t:IntroTropFinite}. We also show that the asymptotically real condition cannot be removed.  This theorem is proved in  \cref{s:VK}
\begin{mainthm}\label{t:IntroVKtyp}
Let $M^{(\infty)}=(M_{ij})_{i,j\in\N}\in\Imin^{\R}_\infty(\Rmin)$ and $M^{(n+1)}=(M_{ij})_{i+j\le n+1}$ its projection to $\Imin_{n+1}$. Suppose
\[
\mathcal L^{(n+1)}(M^{(n+1)})=\begin{pmatrix}
\lambda^{(n+1)}_1 & &&&\\
&\lambda^{(n+1)}_2 & &&\\
&&\ddots &&\\
&&&\lambda^{(n+1)}_n & \\
&&&&\lambda^{(n+1)}_{n+1}
\end{pmatrix}
\]
denotes the Lusztig weight of $M^{(n+1)}$. Then the limits
\begin{equation}\label{e:Introlimits}
\lim_{n\to\infty}\frac{\lambda^{(n+1)}_i}n=A_i\qquad \text{and}\qquad \lim_{n\to\infty}\frac{-\lambda^{(n+1)}_{n+1-j}}{n}=B_j,
\end{equation}
exist in $\R$ and together determine an element $(\bigA,\bigB)=((A_i)_i,(B_j)_j)\in\Omega^{\R}_\star(\Rmin)$. Moreover, we have that $\mathbb E^\R(\bigA,\bigB)=M^{(\infty)}$.
\end{mainthm}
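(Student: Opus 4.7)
The plan is to compute the Lusztig weight $\lambda^{(n+1)}=\mathcal L^{(n+1)}(M^{(n+1)})$ explicitly in terms of the array entries $M_{ij}$, extract the asymptotic leading terms by a Cesàro-mean argument, and then invoke the bijectivity of $\mathbb E^\R$ from \cref{t:IntroTropEdrei} to identify the resulting $(\bigA,\bigB)$ as the Edrei parameters of $M^{(\infty)}$.

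First I would translate the root-labelled coordinates $(M_\alpha)_{\alpha\in R_+}$ into the triangular array coordinates via the correspondence shown below \cref{t:IntroTropFinite}, and read off the coefficient of each $\ep_p$ in $\mathcal L(M^{(n+1)})=\sum_{\alpha\in R_+} M_\alpha \alpha$. This yields a formula of the shape
\[
\lambda^{(n+1)}_p=\sum_{j=1}^{n+1-p} M_{pj}\ -\ \sum_{k=1}^{p-1} M_{k,\,n+2-p},
\]
in which the first sum is a prefix of row $p$ of the infinite filling and the second sum has a fixed number $p-1$ of entries whose column index tends to $\infty$. Fix $i$ and consider $\lambda^{(n+1)}_i/n$. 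The min-ideal condition forces rows and columns of $M^{(\infty)}$ to be weakly increasing, and the asymptotically real hypothesis then makes $(M_{ik})_k$ converge to a finite supremum $A_i\in\R$. By the Cesàro mean theorem, $\frac1{n+1-i}\sum_{j=1}^{n+1-i} M_{ij}\to A_i$, so the first sum contributes $A_i$ to the limit of $\lambda^{(n+1)}_i/n$, while the second sum has $i-1$ terms each bounded by the corresponding column supremum, is thus $O(1)$, and disappears after dividing by $n$. Hence $\lambda^{(n+1)}_i/n\to A_i$. A symmetric argument with the roles of rows and columns interchanged identifies $-\lambda^{(n+1)}_{n+1-j}/n$ with the corresponding column supremum, yielding the sequence $\bigB$.

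To verify $(\bigA,\bigB)\in \Omega^\R_\star(\Rmin)$, weak monotonicity of $\bigA$ and $\bigB$ follows from $M_{ij}\le M_{i+1,j},\,M_{i,j+1}$, and the common-supremum condition $\sup_i A_i=\sup_j B_j$ holds because both equal the total supremum $\sup_{i,j} M_{ij}$. For the final identification $\mathbb E^\R(\bigA,\bigB)=M^{(\infty)}$, I would use the bijectivity of $\mathbb E^\R$ from \cref{t:IntroTropEdrei}: since $M^{(\infty)}\in\Imin^\R_\infty$ there exists a unique $(\bigA',\bigB')\in\Omega^\R_\star$ with $M_{ij}=\min(A'_i,B'_j)$. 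Passing to $\lim_j$ and $\lim_i$ on both sides, and using $\sup A'_i=\sup B'_j$ to collapse the $\min$, forces $A'_i=\lim_jM_{ij}=A_i$ and $B'_j=\lim_iM_{ij}=B_j$. Thus $(\bigA,\bigB)$ is precisely the preimage of $M^{(\infty)}$ under $\mathbb E^\R$.

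The delicate step is the Cesàro estimate, which relies crucially on the asymptotically real hypothesis: only this guarantees both that the target limits $A_i,B_j$ are finite and that the error sum in the Lusztig weight formula is uniformly $O(1)$ so that it vanishes on dividing by $n$. One should therefore also expect that the conclusion fails outside $\Imin^\R_\infty$, so the hypothesis cannot be dropped, matching the second assertion of the theorem. Once the limits are established, the final identification closes at once via \cref{t:IntroTropEdrei}.
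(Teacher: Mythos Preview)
Your proposal is correct and follows essentially the same route as the paper. Both arguments rest on the explicit formula
\[
\lambda^{(n+1)}_i=\sum_{j=1}^{n+1-i} M_{ij}\ -\ \sum_{k=1}^{i-1} M_{k,\,n+2-i},
\]
then observe that the long first sum is a Ces\`aro average of a sequence converging to $A_i$, while the short second sum has a fixed number of terms and is $O(1)$. The paper carries out the Ces\`aro step by an explicit $\varepsilon$--$N$ estimate and organises the logic in the reverse order (first invoking \cref{t:IntroTropEdrei} to produce $(\bigA,\bigB)$, then verifying the limits), but the content is the same.

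One small wording slip: when you bound the second sum you say the terms are ``bounded by the corresponding column supremum''. The column index $n+2-i$ moves with $n$, so the relevant uniform bound comes from the \emph{row} suprema: for each fixed $k\le i-1$ the sequence $(M_{k,j})_j$ is weakly increasing with limit $A_k<\infty$, hence bounded, and so the $(i-1)$-term sum is $O(1)$. Your conclusion is right; just tighten the justification.
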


\begin{remark}\label{r:VK}In what follows we will be concerned with detropicalising the parameters $(\bigA,\bigB)\in\Omega_\star(\Rmin)$ and relating them to the Schoenberg parameters $(\boldalpha,\boldbeta)$ from \cref{t:EdreiIntro}. Therefore \cref{t:IntroVKtyp} has the flavour of an asymptotic reconstruction of Schoenberg parameters. In connection with this we mention a very different work due to Vershik and Kerov  \cite{VershikKerovAsymptotic} that has a similar flavour. Namely, in Thoma's theorem \cite{Thoma} a normalised version of $\ToeplitzU_\infty(\R_{>0})$ parametrises irreducible characters of the infinite symmetric group~$S_\infty$, and therefore another finite analogue for infinite totally positive Toeplitz matrices is given by the irreducible characters of $S_n$. In this setting, Vershik and Kerov proved an asymptotic interpretation of the classical Schoenberg parameters $(\boldalpha,\boldbeta)$ using normalised arm and leg-lengths of Young diagrams corresponding to characters of~$S_n$. This gave the   inspiration to look for an  asymptotic expression of our Schoenberg style parameters $(\bigA,\bigB)$ in terms of the finite parameters that led to \cref{t:IntroVKtyp}.
\end{remark}
 \subsection{} 
We now begin describing some subtle results concerning infinite totally positive Toeplitz matrices over different valuative semifields with the goal of `detropicalising' \cref{t:IntroTropEdrei}. 
For now let $\mathcal R$ be any ring containing a `positive' semifield $\mathcal R_{>0}$ together with a surjective semifield homomorphism $\Val:\mathcal R_{>0}\to\Rmin$ extending continuously to  $\Val:\mathcal R_{\ge 0}\to\R\cup\{\infty\}$, see \cref{d:semifieldvaluation}.
 One example is still $(\KK,\KK_{>0},\Val)$ with the usual $t$-adic topology on $\KK_{\ge 0}$. 

The standard coordinates we used in  \cref{t:IntroQfinite} were carefully constructed to extend to the projective limit, so that we again have  standard coordinates  $(m_{ij})_{i,j\in\N}$ for $U_+^{(\infty)}$. Moreover, the description of $\ToeplitzU_{n+1}(\RR_{>0})$ from  \cref{t:IntroQfinite} extends  to a description of $\ToeplitzU_{\infty}(\RR_{>0})$, now via an infinite quiver $Q_\infty$, see \cref{f:InfQuiverExample} and \cref{c:infQcoords}. From here on, we describe totally positive Toeplitz matrices in terms of these standard coordinates.

We consider a straightforward infinite analogue of \eqref{e:IntroTropToep}, $
\ToeplitzU_{\infty,\mathcal R_{>0}}(\Rmin):=\ToeplitzU_{\infty}(\mathcal R_{>0})/\sim$ to be the tropicalisation of $\ToeplitzU_{\infty}(\RR_{>0})$. Using valuations of standard coordinates and \cref{t:IntroTropFinite} we have
\[
\ToeplitzU_{\infty,\mathcal R_{>0}}(\Rmin)\subseteq \Imin_{\infty}(\Rmin),
\]
see \cref{l:infminideal}.
Our first main detropicalisation result will be about relating the tropical parameters from \cref{t:IntroTropEdrei} to actual Schoenberg-type parameters (but over $\mathcal R_{>0}$). We define the following parameter space. Here $s_{i\times j}$ denotes the Schur function for the partition $(j,\dotsc,j)$ with $i$ parts. 
 \begin{equation}\label{e:IntroOmega} \Omega(\mathcal R_{>0}):=\left\{(\boldalpha,\boldbeta)\in\mathcal R_{\ge 0}^\N\times \mathcal R_{\ge 0}^\N\left |\begin{array}{l} \boldalpha_1\ge\boldalpha_2\ge\boldalpha_3\ge\dotsc, \\\boldbeta_1\ge\boldbeta_2\ge\boldbeta_3\ge \dotsc, \\\boldbeta_i+\boldalpha_i\ne 0 \text{ for all $i\in\N,$}\\  \text{$s_{i\times j}(\boldalpha), s_{i\times j}(\boldbeta)$ are well-defined (converge in $\mathcal R_{>0}$) for all $i,j\in\N$}\end{array}\right.\right\}.
 \end{equation} 

Corresponding to $\Omega(\mathcal R_{>0})$ we also  introduce  
\[\Omega(\Rmin)=\{(\mathbf A,\mathbf B)\in (\R\cup\{\infty\})^\N\times (\R\cup\{\infty\})^\N\mid \mathbf A,\mathbf B \text{ weakly increasing, $\min(A_i,B_i)\in\R$}\},
\] 
so that coordinatewise valuation defines a map $\Val: \Omega(\mathcal R_{>0})\to\Omega(\Rmin)$. Note that $\Omega(\Rmin)$ is bigger than the tropical parameter spaces from \cref{t:IntroTropEdrei}. We now have the following theorem in this general setting, which is part of \cref{t:restrtropparam} and a first step towards a detropicalisation of \cref{t:TropEdrei}. A key ingredient of \cref{t:IntroGenR} is the theory of supersymmetric functions, in particular work of  \cite{Macdonald:variations,GouldenGreene94}.
\begin{mainthm}\label{t:IntroGenR}
There is a well-defined map $\mathcal T:{\Omega(\RR_{>0})} \to {\ToeplitzU_\infty(\RR_{>0})}$   that sends $(\boldalpha,\boldbeta)$ to the totally positive Toeplitz matrix with generating function 
\[
1+\sum_{k=1}^\infty \mathbf c_k x^k=\frac{\prod_{j=1}^{\infty}(1+\boldbeta_j x)}{\prod_{i=1}^{\infty}(1-\boldalpha_i x)}.
\]
Moreover, if $(\mathbf m_{ij}(\boldalpha,\boldbeta))$ are the standard coordinates of $\mathcal T(\boldalpha,\boldbeta)$, then $\Val(\mathbf m_{ij}(\boldalpha,\boldbeta))=\min(A_i, B_j)$, 
in terms of the valuations $A_i=\Val(\boldalpha_i)$ and $B_j=\Val(\boldbeta_j)$.
In other words, we have a commutative diagram
\begin{equation}\label{e:IntroOmVal}
\begin{tikzcd}
	{(\boldalpha,\boldbeta)} & {\Omega(\RR_{>0})} && {\ToeplitzU_\infty(\RR_{>0})} & {(m_{ij})_{i,j}} \\
	{(\bigA,\bigB)} & {\Omega(\Rmin)} && {\Imin_\infty}(\Rmin) & {(M_{ij})_{i,j}}.
	\arrow[maps to, from=1-1, to=2-1]
	\arrow["\mathcal T", from=1-2, to=1-4]
	\arrow["\Val"', from=1-2, to=2-2]
	\arrow["\Val"',from=1-4, to=2-4]
	\arrow[maps to, from=1-5, to=2-5]
	\arrow["\widetilde{\mathbb E}", from=2-2, to=2-4]
\end{tikzcd}
\end{equation}
 where the map $\widetilde{\mathbb E}$ maps $(\bigA,\bigB)$ to $(M_{ij})_{i,j}=(\min(A_i,B_j))_{i,j}$. 
\end{mainthm}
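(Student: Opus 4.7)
The plan is to establish the two main assertions---well-definedness of $\mathcal T$, and the valuation formula for the standard coordinates---using the theory of supersymmetric (hook) Schur functions, following \cite{Macdonald:variations,GouldenGreene94}.

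\emph{Step 1 (Well-definedness).} First I would expand the generating function as
\[
1+\sum_{k\ge 1}\mathbf c_k\, x^k \;=\; \Bigl(\sum_{j\ge 0}e_j(\boldbeta)\,x^j\Bigr)\Bigl(\sum_{\ell\ge 0}h_\ell(\boldalpha)\,x^\ell\Bigr),
\]
so that $\mathbf c_k=\sum_{j=0}^{k} e_j(\boldbeta)\,h_{k-j}(\boldalpha)$. Convergence of each $\mathbf c_k$ in $\RR_{\ge 0}$ then reduces to convergence of the column Schur functions $e_j(\boldbeta)=s_{j\times 1}(\boldbeta)$ and row Schur functions $h_\ell(\boldalpha)=s_{1\times \ell}(\boldalpha)$, both of which are rectangular Schur functions and hence converge by the defining conditions of $\Omega(\RR_{>0})$. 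For total positivity I would appeal to the classical identity that every minor of such a Toeplitz matrix equals a supersymmetric Schur function $s_\lambda(\boldalpha/\boldbeta)$ (a Jacobi--Trudi-style identity, cf.\ \cite{Macdonald:variations}), combined with the Berele--Regev positive expansion expressing $s_\lambda(\boldalpha/\boldbeta)$ as a nonnegative integer combination of products $s_\mu(\boldalpha)\,s_\nu(\boldbeta)$. Every minor then becomes a sum of products of Schur functions in the $\boldalpha$ and $\boldbeta$ alone, all of which lie in $\RR_{\ge 0}$.

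\emph{Step 2 (Explicit formula for standard coordinates).} Next I would extend the quiver description of \cref{t:IntroQfinite} to the infinite quiver $Q_\infty$ and use it to write the standard coordinates $\mathbf m_{ij}(\boldalpha,\boldbeta)$ of $\mathcal T(\boldalpha,\boldbeta)$ as explicit rational expressions in $(\boldalpha,\boldbeta)$. Guided by the form of the quiver relations in \cref{t:IntroQfinite} and by Desnanot--Jacobi (Pl\"ucker) identities for rectangular Schur functions, the natural candidate is a ratio of rectangular super-Schur functions,
\[
\mathbf m_{ij}(\boldalpha,\boldbeta)\;=\;\frac{s_{i\times j}(\boldalpha/\boldbeta)\,s_{(i-1)\times(j-1)}(\boldalpha/\boldbeta)}{s_{(i-1)\times j}(\boldalpha/\boldbeta)\,s_{i\times(j-1)}(\boldalpha/\boldbeta)}.
\]
Proving such a formula---by induction on $i+j$ and direct verification of the quiver relations from \cref{t:IntroQfinite}---is the main combinatorial content of this step.

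\emph{Step 3 (Valuations telescope to $\min(A_i,B_j)$).} With the formula of Step 2 in hand, the valuation assertion reduces to computing $\Val\bigl(s_{i\times j}(\boldalpha/\boldbeta)\bigr)$ in terms of the $A_k=\Val(\boldalpha_k)$ and $B_k=\Val(\boldbeta_k)$. The minimum-valuation summand in the Berele--Regev decomposition is controlled by a greedy split of the $i\times j$ rectangle into an $\boldalpha$-part and a $\boldbeta$-part, the weak monotonicity of $\bigA,\bigB$ ensuring a unique dominant term. Substituting these valuations into the ratio of Step 2 then produces a telescoping cancellation that collapses to exactly $\min(A_i,B_j)$, which is the commutativity asserted by \eqref{e:IntroOmVal}.

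\emph{Main obstacle.} The chief difficulty lies in Step 3: one must rule out cancellations between Berele--Regev summands of equal valuation in $s_{i\times j}(\boldalpha/\boldbeta)$, and verify that after forming the ratio of Step 2 the contributions from the four rectangles collapse to exactly $\min(A_i,B_j)$. The weak monotonicity of $\bigA$ and $\bigB$, together with the ``min'' structure of $\widetilde{\mathbb E}$, enter essentially at this point. Step 2 is also substantial, but its structure is dictated by the quiver $Q_\infty$ and should go through inductively once the correct super-Schur ratio is identified.
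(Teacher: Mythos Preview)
Your overall strategy---supersymmetric Schur functions, the ratio formula for $\mathbf m_{ij}$, and telescoping of valuations---matches the paper's. But two technical choices differ, and your stated ``main obstacle'' is not actually an obstacle.

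\textbf{Step 2.} You propose to verify the ratio formula
\[
\mathbf m_{ij}(\boldalpha,\boldbeta)=\frac{S_{i\times j}(\boldalpha||\boldbeta)\,S_{(i-1)\times(j-1)}(\boldalpha||\boldbeta)}{S_{(i-1)\times j}(\boldalpha||\boldbeta)\,S_{i\times(j-1)}(\boldalpha||\boldbeta)}
\]
by induction on $i+j$ using the quiver relations. The paper obtains this without any induction: the formula \eqref{e:mviaminors} in \cref{l:mijFormula} already expresses $m_{ij}$ as a ratio of the minors $\Minor^{[i]}_{[i]+j}(u)$, and for the Toeplitz matrix with entries $\mathbf c_k=H_k(\boldalpha||\boldbeta)$ these minors are precisely $S_{i\times j}(\boldalpha||\boldbeta)$ by the Jacobi--Trudi identity \eqref{e:SJTformula}. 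So the ratio formula is immediate. Total positivity then follows because the $m_{ij}$ lie in $\RR_{>0}$, rather than by checking all minors.

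\textbf{Step 3.} You plan to compute $\Val(S_{i\times j}(\boldalpha||\boldbeta))$ via the Berele--Regev decomposition and a greedy split of the rectangle. The paper instead uses the Goulden--Greene--Macdonald formula
\[
S_\lambda(\boldalpha||\boldbeta)=\sum_{T\in SSYT(\lambda)}\prod_{(r,s)\in\lambda}\bigl(\boldalpha_{T(r,s)}+\boldbeta_{T^*(r,s)}\bigr),
\]
a manifestly positive sum in which the tableau $T_0(r,s)=r$ gives the summand of minimal valuation $\sum_{(r,s)\in\lambda}\min(A_r,B_s)$ directly, with all other tableaux contributing at least as much since $T(r,s)\ge r$ and $T^*(r,s)\ge s$. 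This avoids the optimisation over subpartitions that your Berele--Regev route requires.

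\textbf{On cancellations.} Your stated main obstacle---ruling out cancellations between summands of equal valuation---is not a genuine issue. All summands lie in $\RR_{\ge 0}$ and $\Val$ is a semifield homomorphism to $\Rmin$, so $\Val(\sum a_k)=\min_k\Val(a_k)$ automatically; there is no subtraction and hence no cancellation to worry about. The telescoping in the ratio then gives $\min(A_i,B_j)$ exactly as you say.
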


Note that $\mathcal T$ will be injective but generally not surjective. On the other hand the map $\widetilde{\mathbb E}$ is surjective but not injective, compare  \cref{t:IntroTropEdrei}. We nevertheless consider $\mathcal T$ to be a `detropicalisation' of $\widetilde{\mathbb E}$ as long as both of the vertical maps are surjective. Namely in this case each identity $\widetilde{\mathbb E}(\bigA,\bigB)=(M_{ij})_{i,j}$ will have a lifting to an identity $\mathcal T((\boldalpha,\boldbeta))=(m_{ij})_{i,j}$. Note that this surjectivity may fail depending on $\mathcal R_{>0}$. For example, for $\mathcal R_{>0}=\KK_{>0}$ we show that 
$\Val(\Omega(\KK_{>0}))=\Omega^{div}_\star(\Rmin)$, consisting pairs $(\bigA,\bigB)\in\Omega_\star(\Rmin)$ where both sequences have supremum equal to $\infty$, and we even have a strict inclusion of 
$\widetilde{\mathbb E}(\Val(\Omega(\KK_{>0})))$ in $\Val(\ToeplitzU_\infty(\KK_{>0}))$. See \cref{p:OmegaDivInterp}.

We now introduce a new valued semifield $\mathcal C_{>0}$ that has better properties than $\KK_{>0}$ in the infinite setting. Namely, pick some $\delta>0$ and consider for $\mathcal R$ the ring $\mathcal C:=C^0((0,\delta])$ of continuous functions. The positive semifield $\Kdelpos$ inside $\mathcal C$ is defined as the subset of $\R_{>0}$-valued functions with the additional property that 
\[
\lim_{t\to 0}t^{-F}f(t)\in\R_{>0}\quad \text{for some $F\in\R$,}
\]
which we then take to be the valuation, $\Val(f):=F$. 
Our first result involving the semifield $\Kdelpos$ is  that all min-ideal fillings can be realised over it. 
\begin{mainthm}
We have the equality 
\[
\ToeplitzU_{\infty,\Kdelpos}(\Rmin)= \Imin_{\infty}(\Rmin).
\]
\end{mainthm}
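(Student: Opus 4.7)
The inclusion $\ToeplitzU_{\infty,\Kdelpos}(\Rmin)\subseteq \Imin_\infty(\Rmin)$ is already established, in the generality of any valued semifield, by \cref{l:infminideal}. My plan is therefore to prove the reverse inclusion $\Imin_\infty(\Rmin)\subseteq \ToeplitzU_{\infty,\Kdelpos}(\Rmin)$ by explicit construction: given any $(M_{ij})\in\Imin_\infty(\Rmin)$, I will exhibit a totally positive Toeplitz matrix in $\ToeplitzU_\infty(\Kdelpos)$ whose standard coordinates have valuations exactly $(M_{ij})$.

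The first step is to invoke \cref{t:IntroTropEdrei} to recover from $(M_{ij})$ the unique pair of weakly increasing sequences $(\bigA,\bigB)\in\Omega_\star(\Rmin)$ with $M_{ij}=\min(A_i,B_j)$. Second, after shrinking $\delta$ if necessary so that $\delta\le 1$, I lift $(\bigA,\bigB)$ to $\Kdelpos$ by setting
\[
\boldalpha_i(t):=2^{-i}\,t^{A_i},\qquad \boldbeta_j(t):=2^{-j}\,t^{B_j},
\]
with the convention $t^\infty=0$. Each such function is continuous on $(0,\delta]$ with $\Val(\boldalpha_i)=A_i$ and $\Val(\boldbeta_j)=B_j$, each sequence is weakly decreasing in its index (both factors are), and $\boldalpha_i+\boldbeta_i\ne 0$ follows from $\min(A_i,B_i)\in\R$.

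The crux of the argument is to verify that $(\boldalpha,\boldbeta)$ lies in $\Omega(\Kdelpos)$ in the sense of \eqref{e:IntroOmega}, which demands convergence of every Schur function $s_{i\times j}(\boldalpha), s_{i\times j}(\boldbeta)$ in $\Kdelpos$. At fixed $t\in(0,\delta]$, the estimate $\boldalpha_i(t)\le 2^{-i}\max(1,t^{A_1})$ forces absolute convergence of $\sum_i\boldalpha_i(t)$, so $\boldalpha_i(t)\to 0$ and the infinite product $\prod_i(1-\boldalpha_i(t)x)^{-1}$ defines an analytic function of $x$ near the origin; in particular every complete homogeneous $h_k(\boldalpha)(t)$ is finite and strictly positive, and then so is each $s_{i\times j}(\boldalpha)(t)$ via Jacobi--Trudi. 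For the asymptotic as $t\to 0$, the combinatorial expansion $s_{i\times j}(\boldalpha)=\sum_T\prod_c\boldalpha_{T(c)}$ over semistandard tableaux of shape $(j^i)$ shows that the minimum valuation $j(A_1+\cdots+A_i)$ is attained by the tableau whose $k$th row is constant equal to $k$ (strict increase in columns combined with $\bigA$ weakly increasing forces this lower bound), with a positive leading coefficient summing the contributions of all tableaux that realise the minimum. The analogous analysis applies to $\boldbeta$.

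Once $(\boldalpha,\boldbeta)\in\Omega(\Kdelpos)$ is established, \cref{t:IntroGenR} produces $\mathcal T(\boldalpha,\boldbeta)\in\ToeplitzU_\infty(\Kdelpos)$ whose standard coordinates $\mathbf m_{ij}$ satisfy $\Val(\mathbf m_{ij})=\min(A_i,B_j)=M_{ij}$, proving $M\in\ToeplitzU_{\infty,\Kdelpos}(\Rmin)$. The main technical obstacle is the Schur function convergence step: its success hinges on the flexibility of $\Kdelpos$ over $\KK_{>0}$, namely that a sum such as $\sum 2^{-i}t^{A_i}$ converges pointwise on $(0,\delta]$ as a continuous function of $t$ even when $\bigA$ is bounded, whereas the analogous generalised Puiseaux series would not converge in that regime, explaining why the equality $\ToeplitzU_{\infty,\Kdelpos}(\Rmin)=\Imin_\infty(\Rmin)$ can hold for the continuous semifield but fails (strictly) for $\KK_{>0}$.
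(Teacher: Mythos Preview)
Your approach is essentially identical to the paper's: write $M_{ij}=\min(A_i,B_j)$ via the tropical Edrei theorem, lift to $\boldalpha_i=2^{-i}t^{A_i}$, $\boldbeta_j=2^{-j}t^{B_j}$, verify Schur-function convergence in $\Kdelpos$, and apply the map $\mathcal T$ from \cref{t:IntroGenR}. The only slip is invoking Jacobi--Trudi for \emph{strict positivity} of $s_{i\times j}(\boldalpha)(t)$: that determinant has alternating signs, so positivity should instead come from the SSYT expansion you already use for the valuation; the paper also works a bit harder (via the strong topology and uniform convergence on $[0,\delta]$) to justify continuity and the leading-coefficient asymptotic, whereas your pointwise argument leaves the interchange of the $t\to 0$ limit with the infinite tableau sum implicit.
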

This is proved in \cref{t:KdelTrop}. The proof of this theorem involves defining a topology on $\Kdelnn$ and showing that the map $\Val:\Omega(\Kdelpos)\to\Omega(\Rmin)$ is surjective. This result then also implies that over $\Kdelpos$ the map $\mathcal T:\Omega(\Kdelpos)\to \ToeplitzU_{\infty}(\Kdelpos)$ from \cref{t:IntroGenR} becomes a detropicalisation of $\widetilde {\mathbb E}:\Omega(\Rmin)\to \Imin_{\infty}(\Rmin)$.

Note that \cref{t:IntroGenR} still does not explain the additional conditions on our tropical parameter spaces in the `tropical Edrei theorem', \cref{t:IntroTropEdrei}. Our final result will be a detropicalisation of the bijections $\mathbb E$ and $\mathbb E^s$  from that theorem. It will involve constructing an alternative infinite analogue of $\ToeplitzU_{n+1}(\RR_{>0})$.

\subsection{} Recall that we assume $\mathcal R_{\ge 0}=\mathcal R_{>0}\cup\{0\}$ to be a topological semiring inside a ring $\RR$. We use standard coordinates to define the following set of what we call \textit{restricted} infinite Toeplitz matrices,
\begin{equation}\label{e:IntroToepRes}
\ToeplitzU^{res}_{\infty}(\mathcal R_{>0}):=\left\{u\in\ToeplitzU_\infty(\mathcal R_{>0})\left|\begin{array}{l} \text{for the standard coordinates $(m_{ij})_{i,j\in\N}$ of $u$,}\\
\exists\ \lim_{k\to\infty}m_{ik}\in\mathcal R_{\ge 0}\ \text{and}\ \lim_{k\to\infty}m_{kj}\in\mathcal R_{\ge 0}
\end{array}\right.\right\}.
\end{equation}
We also define an associated parameter space $\Omega^{res}(\mathcal R_{>0})$, as the inverse image of $\ToeplitzU^{res}_{\infty}(\mathcal R_{>0})$ under $\mathcal T$ from \cref{t:IntroGenR}. For $\R_{\ge 0}$ it turns out that $\ToeplitzU^{res}_{\infty}(\R_{\ge 0})=\ToeplitzU_{\infty}(\R_{\ge 0})$ as follows from \cite{Edrei53}. Namely, in an alternative proof of \cref{t:EdreiIntro}, Edrei constructed the Schoenberg parameters as limits of combinations of minors that turn out to agree with the standard coordinates. Therefore \eqref{e:IntroToepRes} is an equally valid generalisation of $\ToeplitzU_{\infty}(\R_{\ge 0})$.

We introduce two topologies on $\mathcal C_{\ge 0}=\mathcal C_{\ge 0}\cup\{0\}$ that we call the weak and strong topologies, see \cref{d:Kdeltop} and \cref{d:topCnn}, and we consider the restriction of $\mathcal T$,
\[
\mathcal T^{res}:\Omega^{res}(\mathcal R_{>0})\to 
\ToeplitzU^{res}_{\infty}(\mathcal R_{>0}),
\] 
 for $\mathcal R_{>0}=\Kdelposwk$ and $\mathcal R_{>0}=\Kdelposst$. 
We then have restricted tropicalisations,
$\ToeplitzU^{res}_{\infty,\RR_{>0}}(\Rmin):=\ToeplitzU^{res}_{\infty,\RR_{>0}}(\RR_{>0})/\sim$, that depend also on the chosen topology. This leads to the final result. 
\begin{mainthm}\label{t:IntroRestrC}
For $\RR_{> 0}=\Kdelposwk$ and $\Kdelposst$ the diagram \eqref{e:IntroOmVal} from \cref{t:IntroGenR} restricts in the following ways,
\begin{equation}\label{e:IntrorestrOmVal}
\begin{tikzcd}
	{\Omega^{res}(\Kdelposwk)} && {\ToeplitzU^{res}_\infty(\Kdelposwk)} \\
	{\Omega_\star(\Rmin)} && {\Imin_\infty}(\Rmin),
	\arrow["{\mathcal T^{res}}", from=1-1, to=1-3]
	\arrow[two heads,from=1-1, to=2-1]
	\arrow[two heads, from=1-3, to=2-3]
	\arrow["{\mathbb E}", from=2-1, to=2-3]
\end{tikzcd} \qquad \qquad
\begin{tikzcd}
	{\Omega^{res}(\Kdelposst)} && {\ToeplitzU^{res}_\infty(\Kdelposst)} \\
	{\Omega_\star^{il}(\Rmin)} && {\Imin^{s}_\infty}(\Rmin),
	\arrow["{\mathcal T^{res}}", from=1-1, to=1-3]
	\arrow[two heads, from=1-1, to=2-1]
	\arrow[two heads, from=1-3, to=2-3]
	\arrow["{\mathbb E^s}", from=2-1, to=2-3]
\end{tikzcd}
\end{equation}
where the restrictions $\mathbb E$ and $\mathbb E^s$ of $\widetilde{\mathbb E}$ are bijections by  \cref{t:IntroTropEdrei} and the vertical maps are surjective.
\end{mainthm}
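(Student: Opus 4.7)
The plan is to build the two restricted diagrams out of the big commutative diagram \eqref{e:IntroOmVal} of \cref{t:IntroGenR} by imposing the restriction conditions on both sides. The bijectivity of the bottom arrows $\mathbb{E}$ and $\mathbb{E}^s$ is already provided by \cref{t:IntroTropEdrei}; commutativity of each square is inherited from \eqref{e:IntroOmVal} together with the identity $\widetilde{\mathbb{E}}(\bigA, \bigB)_{ij} = \min(A_i, B_j)$, which agrees with $\mathbb{E}$ (resp.\ $\mathbb{E}^s$) on $\Omega_\star(\Rmin)$ (resp.\ $\Omega_\star^{il}(\Rmin)$). Moreover, $\mathcal{T}^{res}$ is tautologically well-defined as a map into $\ToeplitzU^{res}_\infty(\RR_{>0})$, since $\Omega^{res}(\RR_{>0})$ was defined as the $\mathcal{T}$-preimage. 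So the real content is: (a) the left vertical $\Val$ sends $\Omega^{res}(\Kdelposwk)$ onto $\Omega_\star(\Rmin)$ and $\Omega^{res}(\Kdelposst)$ onto $\Omega^{il}_\star(\Rmin)$; (b) the right vertical $\Val$ sends $\ToeplitzU^{res}_\infty(\Kdelposwk)$ onto $\Imin_\infty(\Rmin)$ and $\ToeplitzU^{res}_\infty(\Kdelposst)$ onto $\Imin^s_\infty(\Rmin)$.

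For (a), the starting point is the identity $\Val(\mathbf{m}_{ij}(\boldalpha,\boldbeta)) = \min(A_i, B_j)$ from \cref{t:IntroGenR}. The condition defining $\Omega^{res}$ is that the standard coordinates of $\mathcal{T}(\boldalpha,\boldbeta)$ have limits in $\Kdelnnwk$ (or $\Kdelnnst$) as $i,j \to \infty$. By the construction of the weak topology on $\Kdelnn$, such limits of $m_{ik}$ as $k\to\infty$ at the level of valuations amount to the condition that $\min(A_i, B_k) \to \min(A_i, \sup_k B_k)$ together with a common supremum $\sup_i A_i = \sup_j B_j$, which is precisely the defining condition of $\Omega_\star(\Rmin)$. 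In the strong topology, convergence of the $m_{ij}$ forces eventual stabilisation of the valuations, which matches the interlacing condition defining $\Omega^{il}_\star(\Rmin)$. I would establish surjectivity by explicit lifting: given $(\bigA,\bigB)\in \Omega_\star(\Rmin)$, choose monomial representatives $\boldalpha_i(t) = t^{A_i}$ and $\boldbeta_j(t) = t^{B_j}$ (with the convention $t^\infty := 0$) and check the Schur-function convergence condition in \eqref{e:IntroOmega} and the limit-existence condition in \eqref{e:IntroToepRes} hold in $\Kdelposwk$; the analogous lift into $\Kdelposst$ works when $(\bigA,\bigB)\in \Omega^{il}_\star(\Rmin)$.

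Statement (b) will then follow by combining (a), commutativity, and the bijectivity of $\mathbb{E}$ and $\mathbb{E}^s$. For any $M \in \Imin_\infty(\Rmin)$, \cref{t:IntroTropEdrei} provides a unique $(\bigA,\bigB)\in \Omega_\star(\Rmin)$ with $\mathbb{E}(\bigA,\bigB) = M$; lifting via (a) to some $(\boldalpha,\boldbeta) \in \Omega^{res}(\Kdelposwk)$ and applying $\mathcal{T}^{res}$ yields a preimage of $M$ in $\ToeplitzU^{res}_\infty(\Kdelposwk)$. Well-definedness of the right vertical map into $\Imin_\infty(\Rmin)$ itself reduces to the general inclusion $\Val(\ToeplitzU_\infty(\RR_{>0})) \subseteq \Imin_\infty(\Rmin)$ from \cref{l:infminideal}, and the factorisation through $\Imin^s_\infty(\Rmin)$ in the strong case follows from the additional stabilisation built into $\Kdelposst$.

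The main obstacle will be the careful matching of topologies in step (a): showing that the weak and strong topologies on $\Kdelnn$ (from \cref{d:Kdeltop} and \cref{d:topCnn}) are precisely fine enough so that the $\Val$-image of $\Omega^{res}$ equals $\Omega_\star(\Rmin)$ in one case and $\Omega^{il}_\star(\Rmin)$ in the other. Everything reduces to the formula $\Val(m_{ij}) = \min(A_i, B_j)$, but one must translate the pointwise-versus-uniform-like behaviour of continuous functions on $(0,\delta]$ into combinatorial conditions on weakly increasing sequences and their suprema. I would expect to handle this by first analysing the monomial lifts to pin down the correspondences, and then using a term-by-term comparison argument to extend from monomial $(\boldalpha,\boldbeta)$ to general positive sequences, leveraging the fact that in $\Kdelnn$ the topology is controlled by leading-order behaviour as $t\to 0$.
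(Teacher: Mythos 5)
Your overall plan — reduce the theorem to surjectivity of the two vertical $\Val$ maps, deduce the right verticals from the left ones using commutativity and bijectivity of $\mathbb E$, $\mathbb E^s$, and establish left-vertical surjectivity by explicit lifting — matches the paper's structure. But two concrete gaps remain.

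First, the lift $\boldalpha_i(t)=t^{A_i}$, $\boldbeta_j(t)=t^{B_j}$ does not land in $\Omega(\Kdelpos)$: the defining condition in \eqref{e:IntroOmega} requires $S_{i\times j}(\boldalpha||\boldbeta)$ — in particular $s_{(1)}(\boldalpha)=\sum_i\boldalpha_i$ — to converge in $\Kdelpos$, and if $(\bigA,\bigB)\in\Omega_\star(\Rmin)$ has, say, $A_i\equiv 1$ bounded, then $\sum_i t^1=\infty$ pointwise and the series fails to converge even weakly. The paper inserts a decay factor, taking $\boldalpha_i=\tfrac{1}{2^i}t^{A_i}$ (see \cref{l:detropK} and \cref{c:detropOmega}); without this, the lifting step fails for every non-divergent $(\bigA,\bigB)$, which is most of $\Omega_\star(\Rmin)$.

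Second, and more seriously for the strong-topology square: you write that "the analogous lift into $\Kdelposst$ works," but establishing that the lifted parameters actually satisfy the \emph{restricted} condition in $\Kdelposst$ — i.e.\ that the standard coordinates $m_{ik}$ converge to $\boldalpha_i$ in the strong topology, not merely pointwise — is the hard part of the theorem. Strong convergence requires stabilisation of valuations (guaranteed by interlacing) \emph{and} uniform convergence of $t^{-A_i}m_{ik}$ on $[0,\delta]$. The paper proves this via a non-trivial argument: it passes to the auxiliary quantities $v_{ik}^{-1}$ from \cref{p:ToeplitzviaQ}, uses their monotonicity in $k$ together with pointwise convergence (\cref{p:Edreilimits}) to invoke Dini's theorem, and then transfers uniform convergence back to $m_{ik}$ via an inversion argument. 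Your proposal treats this as a routine "check" deferred to a vague term-by-term comparison; as written it does not give you uniform convergence, and so does not show the lift lands in $\Omega^{res}(\Kdelposst)$. Relatedly, well-definedness of the left vertical in the weak case (that $\Val$ sends $\Omega^{res}(\Kdelposwk)$ into $\Omega_\star(\Rmin)$ rather than just into $\Omega(\Rmin)$) needs Edrei's \cref{p:Edreilimits} plus continuity of $\Val$ to identify the limits of $m_{ik}$ with $\boldalpha_i$, $\boldbeta_j$; this ingredient should be made explicit.
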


Finally, we note that we have  natural inclusions of $\R_{>0}$ into $\Kdelposwk$ and $\Kdelposst$ as  constant functions, and these are topological semifield embeddings, as is shown in \cref{s:Kdel}. Our map 
$\Omega^{res}(\Kdelposwk)\to\ToeplitzU_{\infty}^{res}(\Kdelposwk)$ and its $\Kdelposst$ version naturally  `extend' the open dense parametrisation 
\[
\Omega_S^\circ(\R_{>0})=\{(0,\boldalpha,\boldbeta)\in\Omega_{S}\mid \alpha_i,\beta_i\ne 0 \}\longrightarrow\ToeplitzU_{\infty}(\R_{>0})
\]
coming from the original Edrei theorem, \cref{t:EdreiIntro}. At the same time they tropicalise to recover the parametrisation maps $\mathbb E$
and $\mathbb E^s$ from \cref{t:IntroTropEdrei}. This is summed up in the commutative diagrams, 
\begin{equation}\label{e:IntroEnding}
\begin{tikzcd}
	{\Omega_S^\circ(\R_{>0})} && {\ToeplitzU_\infty(\R_{>0})} \\
	{\Omega^{res}(\Kdelposwk)} && {\ToeplitzU^{res}_\infty(\Kdelposwk)} \\
	{\Omega_\star(\Rmin)} && {\Imin_\infty}(\Rmin),
	\arrow["{\mathcal T_{S}}", from=1-1, to=1-3]
	\arrow[hook,from=1-1, to=2-1]
	\arrow[hook, from=1-3, to=2-3]
	\arrow["{\mathcal T^{res}}", from=2-1, to=2-3]
	\arrow[two heads,from=2-1, to=3-1]
	\arrow[two heads, from=2-3, to=3-3]
	\arrow["{\mathbb E}", from=3-1, to=3-3]
\end{tikzcd} \qquad \qquad
\begin{tikzcd}
	{\Omega_S^\circ(\R_{>0})} && {\ToeplitzU_\infty(\R_{>0})} \\
	{\Omega^{res}(\Kdelposst)} && {\ToeplitzU^{res}_\infty(\Kdelposst)} \\
	{\Omega^{il}_\star(\Rmin)} && {\Imin^s_\infty}(\Rmin).
	\arrow["{\mathcal T_{S}}", from=1-1, to=1-3]
	\arrow[hook,from=1-1, to=2-1]
	\arrow[hook, from=1-3, to=2-3]
	\arrow["{\mathcal T^{res}}", from=2-1, to=2-3]
	\arrow[two heads,from=2-1, to=3-1]
	\arrow[two heads, from=2-3, to=3-3]
	\arrow["{\mathbb E^s}", from=3-1, to=3-3]
\end{tikzcd} 
 \end{equation}

\subsection*{Acknowledgements} I thank Alexei Borodin, Nicholas Shepherd-Barron and Lauren Williams for helpful conversations, as well as my former students Jamie Judd and Teresa L\"udenbach whose beautiful work this paper is building on. I also thank George Lusztig, whose work on total positivity has been very inspiring to me, and  Dale Peterson, thanks to whom I became interested in Toeplitz matrices.

\section{Notation and Setup}\label{s:Notation}
\subsection {Positive structure on a ring}\label{s:Semifields} Let $(\S,+,\cdot)$ be a set with abelian semigroup operations of `addition' and `multiplication' that together satisfy distributivity. We assume that multiplication has a unit element, denoted $1_\S$.  We call such an object $(\S,+,\cdot )$ a (commutative) \textit{semiring}. Since we always assume commutativity we will leave out this adjective.  If every element has a multiplicative inverse then we will call $\S$ a \textit{semifield}. These definitions are following \cite{BFZ}. Note that a semifield cannot include an additive identity element $0_\S$. For a semiring $\S$ with $0_\S$ element, we call $\S$ a \textit{semifield with $0$} if every element \textit{but}~$0_\S$ is invertible for multiplication. We may extend any semifield to a \textit{semifield with $0$} simply by adding an additive identity element $0_\S$. 

Let $\RR$ be a commutative ring of characteristic~$0$. We denote by $\RR^{\times}$ the group of invertible elements of $\RR$.
If $\S\subseteq \RR$ is a semiring with addition and multiplication inherited from $\RR$, then we call $\S$ a subsemiring of $\RR$. If $\S$ is a semifield, we call it a subsemifield. 

\begin{defn}
By a ring with a \textit{positive structure} we mean a pair $(\RR,\RR_{>0})$ where $\RR$ is a (commutative) ring $\RR$ of characteristic $0$ and $\RR_{>0}$ a subsemifield of $\RR$. We call call $\RR_{>0}$ the `positive' subsemifield, or simply the `positive part', of $\RR$. We then also set $\RR_{\ge 0}:=\RR_{>0}\cup\{0_\RR\}$.
\end{defn}

\begin{remark}\label{r:RcontainsQ} Note that if a ring $\mathcal R$ has a positive structure, then it must contain $\Q$. This is because $\mathcal R_{>0}$ must contain $\Z_{>0}$, but as it is a semifield it also contains $\Q_{>0}$ and thus $\mathcal R$ contains $\Q$. 
\end{remark}

\begin{example}\label{ex:C} A standard example for a ring with positive structure is $(\RR,\RR_{>0})=(\C,\R_{>0})$. 
Thus $(\R_{> 0},+,\cdot)$ is the default choice of \textit{positive subsemifield} of $\C$, with $(\R_{\ge 0},+,\cdot)$ the associated \textit{nonnegative subsemiring} of $\C$ (which is of course a subsemifield with $0$). Note that  $\Q_{>0}$ also also forms a subsemifield. The natural numbers $\N=\{1,2,\dotsc\}$ form a subsemiring, as do $\R_{\ge 1}$ and $\Q_{\ge 1}$. Moreover, $\Q_{>0}$ also forms a  subsemifield of the $p$-adic field $\Q_{p}$.   
\end{example}

Two key examples used in a Lie-theoretic context in \cite{Lusztig94} are the following. 
\begin{example}[Laurent series]\label{ex:Laurent} 
Consider the field $\mathcal L=\C((t))$ of Laurent series with its lowest-term valuation $\Val$. 
We set 
\[\mathcal L_{>0}:=\left\{\left .\sum_{k=k_0}^\infty a_k t^{k}\in \mathcal L\ \right | \  k_0\in\Z, \  a_{k_0}\in \R_{>0} \right\}
\]
and consider this the \textit{positive subsemifield} of $\mathcal L$. We also have $\mathcal L_{\ge 0}:=\mathcal L_{>0}\cup\{0\}$.
\end{example}

\begin{example}[tropical semirings]\label{e:tropicalSemifield}  We consider the semifields  $\Z_{\min{}}:=(\Z,\min,+)$ and   $\Rmin:=(\R,\min,+)$ and their associated `semifields with $0$' which are  $\Z_{\min{},\infty}:=(\Z\cup\{\infty\},\min, +)$ and $\R_{\min{},\infty}:=(\R\cup\{\infty\},\min, +)$. An example of a tropical semiring is $\N_{\min}:=(\N,\min{},+)$. These `tropical semirings' cannot arise as subsemirings of any ring $\RR$.  See \cite{Litvinov} for more related background.
\end{example}

\begin{defn}[Valuative positive structure]\label{d:semifieldvaluation} Suppose $\RR$ is a ring as above, and $\RR_{>0}$ a subsemifield. We call $\RR_{>0}$ a \textit{valued subsemifield} of $\RR$ if it is endowed with a semifield homomorphism $\Val:\RR_{>0}\to\Rmin$. Thus by  a ring with a \textit{valuative positive structure} we mean a triple $(\RR,\RR_{>0},\Val)$ where $\RR_{>0}$ is a valued  subsemifield of $\RR$ with valuation $\Val$.  We may additionally ask for $\RR_{\ge 0}$ to be a \textit{topological} semiring (for which the semiring operations are continuous). The topology may be inherited from $\RR$, or it may be defined directly. We call $(\RR,\RR_{>0},\Val)$ a ring with a \textit{topological valuative positive structure}, if $\RR_{\ge 0}$ is a topological semiring and $\Val:\RR_{\ge 0}\to\Rmininf$ is continuous. Here the topology of $\Rmininf$ is the  Euclidean topology on $\R$ with added neighbourhoods $(M,\infty]$ of the additional point, $\infty$.
\end{defn}

A positive structure $\RR_{>0}$ on a ring $\RR$ naturally gives rise to a partial ordering on $\RR$ and therefore, in particular, a partial ordering on $\RR_{>0}$. Namely we consider that $k_1< k_2$ if $k_2-k_1\in\RR_{>0}$. If $\RR_{>0}$ is a valuative positive semifield as above, then we note that $\Val:\RR_{>0}\to\Rmin$ is automatically an order-reversing map.
\begin{lemma}\label{l:Kdelposet} Suppose $k_1,k_2\in\RR_{>0}$ with $k_1\le k_2$, then $ \Val(k_2)\le \Val(k_1)$.
\end{lemma}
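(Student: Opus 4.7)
The plan is to unpack the definition of the ordering on $\RR_{>0}$ and apply the fact that $\Val$ is a semifield homomorphism to $\Rmin = (\R, \min, +)$. Recall that $k_1 \le k_2$ means either $k_1 = k_2$ or $k_2 - k_1 \in \RR_{>0}$. The first case is trivial since then $\Val(k_1) = \Val(k_2)$, so I would dispense with it in one line and concentrate on the second case.

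In the second case, I would write $k_2 = k_1 + (k_2 - k_1)$, which expresses $k_2$ as the sum in $\RR$ of two elements of $\RR_{>0}$. Since $\RR_{>0}$ is a subsemifield of $\RR$ (so addition inside $\RR_{>0}$ coincides with addition in $\RR$), this identity takes place inside $\RR_{>0}$. Now applying the semifield homomorphism $\Val : \RR_{>0} \to \Rmin$, and recalling that addition in $\Rmin$ is the $\min$ operation, I obtain
\[
\Val(k_2) \;=\; \Val\bigl(k_1 + (k_2 - k_1)\bigr) \;=\; \min\bigl(\Val(k_1),\, \Val(k_2-k_1)\bigr) \;\le\; \Val(k_1),
\]
which is the desired inequality.

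There is essentially no obstacle here: the statement is a direct consequence of the two facts that (i) $\le$ on $\RR$ is defined via membership of the difference in $\RR_{>0}$, and (ii) a semifield homomorphism to $\Rmin$ sends sums to minima. The only mild point worth flagging in the write-up is that addition in the subsemifield $\RR_{>0}$ is the restriction of addition in $\RR$, so that $k_1 + (k_2-k_1)$ inside $\RR$ agrees with the semifield sum to which $\Val$ applies.
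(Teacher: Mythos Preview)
Your proposal is correct and follows essentially the same argument as the paper: write $k_2 = k_1 + k$ with $k \in \RR_{>0}$, apply $\Val$, and use that $\Val$ sends sums to minima so that $\Val(k_2) = \min(\Val(k_1), \Val(k)) \le \Val(k_1)$. The only (harmless) difference is that you explicitly separate out the trivial case $k_1 = k_2$, whereas the paper's proof handles only the strict inequality.
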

\begin{proof}
If $k_1< k_2$, then $k_2=k_1+k$ for a $k\in\RR_{>0}$. But then $\Val(k_2)=\Val(k_1+k)=\min(\Val(k),\Val(k_1))$, which immediately implies $\Val(k_2)\le \Val(k_1)$. 
\end{proof}

\begin{example} The leading-term valuation $\Val$ on $\mathcal L$ from \cref{ex:Laurent} restricts to a semifield homomorphsim  $\mathcal L_{>0}\to\Rmin$ turning $\mathcal L_{>0}$ into a valuative subsemifield  of $\mathcal L$ in the above sense. In contrast, the restriction of the $p$-adic valuation to $\Q_{>0}$ does not give a semifield homomorphism, so that $\Q_{>0}\subset \Q_p$ is not an example. Note that in \cref{d:semifieldvaluation} we do not require $\RR$ to be a valuation ring. See \cref{ex:continuous}. 
\end{example}

We will make use of two very different examples of  rings with valuative positive structure.  
\begin{example}[Generalised Puiseux series~\cite{Markwig:genPuiseux}]\label{ex:genPuiseux}
Let $\KK$ denote the field of {\it generalised 
Puiseux series}. These are series $\sum_{a\in S}c_a t^a$ in a variable $t$ whose exponent sets are described by
\[
\{ S\subset \R \mid  \operatorname{Cardinality}(S\cap \R_{\le N})<\infty \text{ for all $N\in \N$} \}.
\]
Thus an exponent set $S$ may be 
thought of as a \textit{strictly monotone sequence} which is either finite, or countable and tending to infinity. We write $(a_k) \in \SMonSeq_{\infty}$ if $(a_k)_k=(a_0, a_1,a_2,\dotsc )$ is such a  sequence, so that 
\begin{equation}\label{ex:K}
\KK=\left\{\left.\boldalpha=\sum_{(a_k) \in  \SMonSeq_{\infty}} c_{a_k} t^{a_k}\right| c_{a_k}\in\C \right\}.
\end{equation}
The field $\KK$ has the valuation, 
$%
\Val:\KK\to \Rmininf$,
defined by $\ValK(0)=\infty$ and
$\Val\left(\boldalpha\right)=c_{a_0}$
if $\boldalpha=\sum_{(a_k) \in  \SMonSeq_{\infty}} c_{a_k} t^{a_k}$ with $c_{a_0}\ne 0$. This field is complete for the $t$-adic norm $\|\boldalpha\|_t=e^{-\Val(\boldalpha)}$, and it is algebraically closed, see \cite{Markwig:genPuiseux}. 

We then call 
\begin{equation}\label{e:Kpos}
\KK_{>0}=\left\{\left.\sum_{(a_k) \in \SMonSeq_\infty} c_{a_k} t^{a_k} \in\KK\, \right | c_{a_0}\in \R_{>0} \right\}
\end{equation}
the \textit{positive subsemifield} of $\KK$, and  $\KK_{\ge 0}=\KK_{>0}\cup\{0\}$ the \textit{nonnegative subsemiring} of $\KK$. We may consider the $t$-adic topology on $\KK$, so that $(\KK,\KK_{>0},\Val)$ is an example of a ring with topological valuative positive structure.
\end{example}

Note that we have a natural inclusion map 
$(\C,\R_{>0})\to (\KK,\KK_{>0})$, but the restriction of the $t$-adic topology of $\KK$ gives the discrete topology on $\C$. Thus continuity arguments employed in the classical theory of total positivity cannot be straightforwardly extended to $(\KK,\KK_{>0})$.
\begin{example}[Continuous functions]\label{ex:continuous}
Fix $0<\delta<1$ and let $\Kdel:=C^0((0,\delta])$ be the ring of continuous $\R$-valued functions on $(0,\delta]$. Set
\[
\begin{array}{ccl}
\Kdelpos&:=&\left\{k:(0,\delta]\to \R_{> 0}\mid \text{ $k$ is continuous and there exists $K\in\R$ with $\lim_{t\to 0} t^{-K}k(t)\in\R_{>0}$}\right\}.
\end{array}
\]
Note that $\Kdel$ is not a valuation ring, and not an integral domain. But we will show in \cref{s:Kdel}  that $(\Kdel,\Kdelpos,\Val)$ is an example of a ring with a valuative positive structure. Furthermore, we will construct two useful topologies on $\Kdelnn$ in \cref{s:KdelTropToep}, turning $(\Kdel,\Kdelpos,\Val)$ into a ring with a topological valuative positive structure. 
\end{example}

\subsection{Notations for total positivity}\label{s:AlgGroup-notations}  We introduce the notations we will need from algebraic groups~\cite{Borel:LAG, Humphreys:LAG, Springer:book} and total positivity \cite{Lusztig94}.  
We consider a split reductive group $G$ over $\Q$, and will be interested in its $\RR$-valued points for a ring with a positive structure, $(\RR,\RR_{>0})$, as in \cref{s:Semifields}. We identify $G$ and  its subgroups with their $\RR$-valued points as convenient. Primarily $G$ will be either $GL_{n+1}(\RR), PGL_{n+1}(\RR)$ or $SL_{n+1}(\RR)$. The rank of $G$ is always denoted $n$ and we set $I=[1,n]$.

We choose a `pinning' for $G$, see \cite{Lusztig94}, consisting of two opposite Borel subgroups $B_+,B_-$, their unipotent radicals $U_+$ and $U_-$ and simple root homomorphisms $x_i:\RR\to U_+$ and $y_i:\RR\to U_-$. We have maximal torus $T=B_+\cap B_-$ with character lattice $X^*(T)$ and $\mathfrak h^*_\R:=X^*(T)\otimes_\Z\R$. We may add a superscript or subscript $G$ if we need to specify the group, for example writing $\mathfrak h_{G,\R}^*$ or $B_+^G$ instead of $\mathfrak h_{\R}^*$ and $B_+$. Let $R_+$ denote the set of positive roots and write $\{\omega_i\mid i\in I\}$ for the set of fundamental weights. Let $\Pi=\{\alpha_i\mid i\in I\}$ be the set of simple roots. We denote the coroot associated to a root $\alpha$ by $\alpha^\vee$ and write $\Pi^\vee=\{\alpha_i^\vee\mid i\in I\}$  for the set of simple coroots. Let $W$ denote the Weyl group of $G$ and $\{s_i\mid i\in I\}$ be the set of simple reflection generators of $W$. For every simple reflection $s_i$ we fix a representative in $G$ by setting
\[
\bar s_i:=y_i(-1)x_i(1)y_i(-1).
\] 
We also obtain a representative $\bar w$ for a general element $w\in W$ by replacing $s_i$ by $\bar s_i$ in a/any reduced expression $w=s_{i_1}\dotsc s_{i_k}$. We recall that if $s_is_js_i=s_js_is_j$ then we have the relation
\begin{equation}\label{e:braidbirationalA2}
x_i(z_1)x_j(z_2)x_i(z_3)=x_j\left(\frac{z_2z_3}{z_1+z_3}\right)x_i(z_1+z_3)x_j\left(\frac{z_1z_2}{z_1+z_3}\right).
\end{equation}

If $G$ is of type $A_n$ then we choose the standard pinning with $T$ consisting of diagonal matrices. and the simple root homomorphisms given explicitly by $x_i(a)=I_{n+1} + a (\delta_{j,i+1})_{i,j}$ and $y_i(a)=I_{n+1} + a (\delta_{i,j+1})_{i,j}$, where $I_{n+1}$ is  the $(n+1)\times (n+1)$ identity matrix. For $i,j\in I$ with $i\le j$ we set $\alpha_{i,j+1}=\alpha_i+\dotsc + \alpha_j$ which gives all the positive roots. For $G=GL_{n+1}$ or $SL_{n+1}$ we also write $\ep_j$ for the character of $T$ corresponding to the  $j$\textsuperscript{th} diagonal entry, so that $\alpha_i=\ep_i-\ep_{i+1}$ and $\alpha_{i,j+1}=\ep_i-\ep_{j+1}$.

In \cite{Lusztig94,Lusztig:TP2}, semifield analogues of $G,B_+,U_+,T$ for reductive algebraic groups are constructed.  
We recall the relevant definitions below.

\begin{defn}\label{d:nonneg} 
We let $U_+(\RR_{\ge 0})$ be the submonoid of $U_+(\RR)$ generated by $\{x_i(a)\mid a\in\RR_{\ge 0},i\in I\}$. For any split torus $T$ over $\RR$ the positive part $T(\RR_{>0})$ consists of the points $d\in T(\RR)$ satisfying $\chi(d)\in\RR_{>0}$ for all $\chi$ in the character group $X^*(T)$. We set $T(\RR_{\ge 0})=T(\RR_{>0})$, since characters cannot take the value $0$, and we note that $T(\RR_{>0})$ is a subgroup of $T(\RR)$. 
 For our choice of reductive algebraic group $G$,
\[
\begin{array}{ccl}
B^G_+(\RR_{\ge 0})&:=&T^G(\RR_{>0})U_+(\RR_{\ge 0})=U_+(\RR_{\ge 0})T^G(\RR_{>0}),\\
G(\RR_{\ge 0})&:=&U_-(\RR_{\ge 0})T^G(\RR_{>0})U_+(\RR_{\ge 0})=U_+(\RR_{\ge 0})T^G(\RR_{>0})U_-(\RR_{\ge 0}).
\end{array}
\]
 \end{defn}  
  
These are the totally nonnegative parts for $G$ and its key subgroups. We now recall Lusztig's construction of the totally positive parts. 
 Consider the longest element $w_0$ of the Weyl group $W$.  
 Let $\mathbf i=(i_1,\dotsc, i_N)$ encode a reduced expression $s_{i_1}\dotsc s_{i_N}$ of $w_0$, and consider the map $\phi_{\mathbf i}: \RR^{N}\to U_+(\RR)$ defined by
\begin{equation}\label{e:phii} 
\phi_{\mathbf i}((z_i)_{i=1}^N)=x_{i_1}(z_1)x_{i_2}(z_2)\dotsc x_{i_N}(z_N).
\end{equation}
The set $\phi_{\mathbf i}\left(\RR_{\ge 0}^{N}\right)$ 
is independent of the reduced expression $\mathbf i$ and equals to $U_+(\RR_{\ge 0})$, see  \cite[Lemma 2.10]{Lusztig94}. 
The same is true of the further restriction of $\phi_{\mathbf i}$ to $\RR_{>0}^{N}$, which additionally is injective, see \cite[Proposition~2.7]{Lusztig94}.

\begin{defn}\label{d:pos}
Let $U_+(\RR_{>0})$ be the image under $\phi_{\mathbf i}$ of $\RR_{>0}^{N}$ for a/any reduced expression $\mathbf i$ of $w_0$. That is, $U_+(\RR_{>0})=\phi_{\mathbf i}(\RR_{>0})$. We also define $U_-(\RR_{>0})$  completely analogously, with  $x_i$ factorisations replaced by $y_i$ factorisations.
Then we set  
\[
\begin{array}{ccl}
B^G_+(\RR_{>0})&:=&T^G(\RR_{>0})U_+(\RR_{>0})
=U_+(\RR_{>0})T^G(\RR_{>0}),\\
G(\RR_{> 0})&:=&U_-(\RR_{> 0})T^G(\RR_{>0})U_+(\RR_{> 0})=U_+(\RR_{> 0})T^G(\RR_{>0})U_-(\RR_{> 0}).
\end{array}
\]
We may also consider the  images  of $(\RR^\times)^{N}$ under the $\phi_{\mathbf i}$, which do depend on $\mathbf i$ and will be denoted $U^{\mathbf i}_+(\RR^\times)$. Note that $U^{\mathbf i}_+(\RR^\times)$, and therefore $U_+(\RR_{>0})$, lies in the intersection $U_+\cap B_-\dot w_0 B_-$ by an application of the Bruhat lemma. 
\end{defn}

We recall that these definitions are compatible with the classical theory of total positivity (for matrices over $\R$) in terms of positivity of minors that goes  back to Fekete, Polya, Schoenberg, Gantmacher and Krein. Namely, $G=GL_{n+1}$ and let $J,K$ of $[1,n+1]$ be subsets with equal cardinality $k$. Then for any $g\in GL_{n+1}(\RR)$ we let $g^J_K$ denote the submatrix of $g$ with rows $J$ and columns $K$, and set
\[
\Minor^{J}_{K}(g)=\det(g^{J}_K),
\]
with the convention that $\Minor^{\emptyset}_{\emptyset}=1$. Then $G(\RR_{>0})$ consists of the $g\in G(\RR)$ for which all $\Minor^{J}_{K}(g)$ lie in $\RR_{>0}$, and similarly for $U_+$, see \cite{Whitney,Lusztig94,BFZ}.

\subsection{Twist map} We now recall a subtle bijection naturally relating $U_+(\RR_{>0})$ and $U_-(\RR_{>0})$ introduced in \cite[Theorem~8.7]{Lusztig94} over $\R$, and shown to be given by subtraction-free rational functions over $\Q$ in \cite[3.4]{Lusztig97} and \cite[Proposition~5.1]{BZ}. Passing from $\Q$ to $\RR$ we may rewrite this results as follows in our setting.  
\begin{theorem}\label{t:involution}
Let $(\RR,\RR_{>0})$ be a ring with positive structure. Any $u\in U_+(\RR_{>0})$ has a unique factorisation $u=b_-\dot w_0 u'$
where $b_-\in B_-$ and $u'\in U_-(\RR_{>0})$, and this defines a bijection
\[
\begin{array}{lcr}
U_+(\RR_{>0})\overset{\Psi}\longrightarrow U_-(\RR_{>0}),&  &u\mapsto u'.
\end{array}
\]
Moreover, for any reduced expression  $\mathbf i$ of $w_0$ there is a  subtraction-free birational involution with integer coefficients,
$\Psi_{\mathbf i}:\RR^N\longrightarrow \RR^N$,
such that if $z=(z_i)_{i=1}^N\in \RR_{>0}^N$ and $\Psi_{\mathbf i}(z)=(z'_i)_{i=1}^N$, we have
\[
\begin{array}{lcr}
\Psi: u= x_{i_1}(z_1)x_{i_2}(z_2)\dotsc x_{i_N}(z_N)
&\mapsto&
 u'=y_{i_1}(z'_1)y_{i_2}(z'_2)\dotsc y_{i_N}(z'_N).
 \end{array}
\]
\end{theorem}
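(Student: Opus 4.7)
The plan is to reduce the assertion over a general ring with positive structure $(\RR,\RR_{>0})$ to the already-established results over $\R$ and $\Q$, exploiting the subtraction-free nature of the coordinate formulas. The uniqueness of the factorisation $u=b_-\dot w_0 u'$ (even in the larger set $U_-$) is purely formal and holds in $G(\RR)$ for any ring: if $b_-\dot w_0 u' = \tilde b_-\dot w_0 \tilde u'$ with $b_-,\tilde b_-\in B_-$ and $u',\tilde u'\in U_-$, then $\tilde b_-^{-1}b_- = \dot w_0(\tilde u' u'{}^{-1})\dot w_0^{-1}$; the left side lies in $B_-$, the right side in $\dot w_0 U_-\dot w_0^{-1}=U_+$, so both sides lie in $B_-\cap U_+=\{e\}$.

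For existence, I fix a reduced expression $\mathbf i=(i_1,\dotsc,i_N)$ of $w_0$. By \cref{d:pos}, every $u\in U_+(\RR_{>0})$ has a unique factorisation $u=x_{i_1}(z_1)\dotsc x_{i_N}(z_N)$ with $z_j\in \RR_{>0}$. The cited results \cite[3.4]{Lusztig97} and \cite[Proposition 5.1]{BZ} provide subtraction-free rational functions $\Psi^j_{\mathbf i}$ with integer coefficients, together with subtraction-free rational formulas for the entries of an accompanying $b_-(z)\in B_-$, satisfying
\[
x_{i_1}(z_1)\dotsc x_{i_N}(z_N) = b_-(z)\,\dot w_0\, y_{i_1}\!\bigl(\Psi^1_{\mathbf i}(z)\bigr)\dotsc y_{i_N}\!\bigl(\Psi^N_{\mathbf i}(z)\bigr)
\]
as an identity of rational functions in free variables $z_1,\dotsc,z_N$ over $\Q$. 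Since $\RR\supset\Q$ by \cref{r:RcontainsQ}, and $\RR_{>0}$ is closed under addition, multiplication and inversion, the evaluation of any subtraction-free rational function with $\N$-coefficients at a point of $\RR_{>0}^N$ produces an element of $\RR_{>0}$: numerators and denominators are polynomials with non-negative integer coefficients, and, being a non-empty sum of products of positive elements, they lie in $\RR_{>0}$, hence are invertible there. The displayed $\Q$-identity therefore specialises to an identity in $G(\RR)$ at our point $z$, which exhibits $u=b_-(z)\dot w_0 u'$ with $u':=y_{i_1}(z'_1)\dotsc y_{i_N}(z'_N)\in U_-(\RR_{>0})$ and $z'_j=\Psi^j_{\mathbf i}(z)$.

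By the uniqueness step, the element $u'$ produced does not depend on the choice of $\mathbf i$, so $\Psi\colon u\mapsto u'$ is a well-defined map $U_+(\RR_{>0})\to U_-(\RR_{>0})$. An entirely symmetric construction (swapping the roles of $U_+$ and $U_-$, or equivalently applying the Chevalley involution to the preceding argument) yields a map in the opposite direction, and the involution identity $\Psi_{\mathbf i}\circ\Psi_{\mathbf i}=\mathrm{id}$, known over $\Q$ from the same references, again extends to $\RR_{>0}^N$ by specialising a subtraction-free rational identity. This proves $\Psi$ is a bijection with the stated coordinate description.

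The only genuine technical point is the specialisation principle: that identities of subtraction-free rational functions with non-negative integer coefficients, valid as identities of rational functions over $\Q$, remain valid after substitution of $\RR_{>0}$-valued arguments. This is where I expect the main care to be needed, although it reduces to the essentially tautological observation that a subtraction-free polynomial with coefficients in $\N$, evaluated at positive elements of a subsemifield $\RR_{>0}\subset\RR$, lies in $\RR_{>0}$ and is in particular nonzero in $\RR$ so that division by it is permitted. All of the nontrivial algebraic content has been absorbed into the existence of the subtraction-free formulas of Lusztig and Berenstein--Zelevinsky.
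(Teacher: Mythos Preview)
Your proposal is correct and follows exactly the approach the paper takes: the paper does not give an independent proof but cites \cite[Theorem~8.7]{Lusztig94}, \cite[3.4]{Lusztig97}, and \cite[Proposition~5.1]{BZ} for the result over $\R$ and $\Q$, and then simply remarks ``Passing from $\Q$ to $\RR$ we may rewrite this result as follows in our setting.'' You have spelled out that passage in detail---the uniqueness via $B_-\cap U_+=\{e\}$, the specialisation of subtraction-free rational identities from $\Q$ to any semifield $\RR_{>0}$ using \cref{r:RcontainsQ}, and the involution property---which is precisely what the paper leaves implicit.
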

The map $\Psi$ (composed with transposition) is studied and generalised in \cite{BZ,BZ:Tensor} where it is called the \textit{twist map}, and an explicit `chamber ansatz' formula expressing the coordinates $z_i$ of $u=\phi_{\mathbf i}((z_i)_i)$ in terms of minors of $\Psi(u)$ is given. See also  \cite[Section 3.1]{BFZ} for the type $A$ case. . 

In our type $A$ setting we will be making use of the following lemma which inverts $\phi_{\mathbf i}$ directly for a particular reduced expression~$\mathbf i_0$. It was written down in slightly different conventions in \cite[Appendix~A]{Ludenbach:thesis}.

\begin{lemma}\label{l:mijFormula}
Suppose $u\in U^{\mathbf i_0}_+(\RR^\times)$ for the reduced expression  $\mathbf i_0=(n,n-1,\dotsc, 1;n,\dotsc, 2;  \dotsc; n,n-1;n)$ of  $w_0$, 
\begin{eqnarray}\label{e:idealfact}
u&=&x_{n}\left({m_{n,1}}\right)\cdots\cdots\cdots x_2\left({m_{2,1}}\right) x_1\left({m_{1,1}}\right)\\
&& x_{n}\left({m_{n-1,2}}\right)\cdots\cdots\cdots x_2\left({m_{1,2}}\right)\nonumber\\
&&\nonumber\\
&&\dots\nonumber\\
&&\nonumber\\
&& x_{n}\left({m_{2,n-1}}\right)x_{n-1}\left({m_{1,n-1}}\right)\nonumber\\
&& x_n\left({m_{1,n}}\right).\nonumber
\end{eqnarray} 
Then the $m_{ij}$ are given by the formula
\begin{equation}\label{e:mviaminors}
m_{ij}=\frac{\Minor_{[i]+j}^{[i]}(u)\Minor_{[i-1]+(j-1)}^{[i-1]}(u)}{\Minor^{[i]}_{[i]+(j-1)}(u)\Minor_{[i-1]+j}^{[i-1]}(u)},
\end{equation}
where  we use the notation $[k]+r=\{1+r,2+r,\dotsc, k+r\}$ for a shifted index set.
\end{lemma}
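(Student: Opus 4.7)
The plan is to compute each minor $\Minor^{[i]}_{[i]+j}(u)$ as an explicit monomial in the coordinates $m_{k,\ell}$ and then verify \eqref{e:mviaminors} by a direct telescoping cancellation. As a first step, I would group the factors in \eqref{e:idealfact} into blocks $U_1,\dotsc,U_n$, where
\[
U_j := x_n(m_{n-j+1,j})\,x_{n-1}(m_{n-j,j})\cdots x_j(m_{1,j}).
\]
Since $E_{a,a+1}E_{b,b+1}=\delta_{a+1,b}\,E_{a,b+1}=0$ whenever $a>b$, all higher-order terms in the expansion of $U_j$ vanish, and
\[
U_j \;=\; I+\sum_{k=j}^{n} m_{k-j+1,j}\,E_{k,k+1}.
\]
Thus $u=U_1U_2\cdots U_n$ is the transfer matrix of a planar network whose $j$-th column has, at every vertex, a ``stay'' edge of weight $1$, together with ``up'' edges of weight $m_{k-j+1,j}$ from vertex $k$ to vertex $k+1$ for $k=j,\dotsc,n$ (and no up edges for $k<j$).

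The main step is to apply the Lindstr\"om--Gessel--Viennot lemma to this network, expressing $\Minor^{[i]}_{[i]+j}(u)$ as a signed sum over path families from the sources $\{1,\dotsc,i\}$ to the sinks $\{j+1,\dotsc,j+i\}$. The lemma is valid over any commutative ring, and because all non-trivial edges are monotone-up, only the identity permutation contributes, so no signs arise and the minor equals the sum over \emph{non-intersecting} such path families. The principal (and only) obstacle is to show that there is a \emph{unique} non-intersecting configuration, namely the one in which the path from source $k$ takes its $\ell$-th up step in column $\ell$ itself. For path $1$ this is forced: writing $c_\ell^{(1)}$ for the column where path $1$ takes its $\ell$-th up step, the path is at vertex $\ell$ just before that step, so non-vanishing of the edge weight forces $c_\ell^{(1)}\le\ell$; strict monotonicity of the $c_\ell^{(1)}$ then forces $c_\ell^{(1)}=\ell$. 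An induction on $k$, combining the analogous column-index constraint for path $k$ with the non-intersection requirement that path $k$ lie strictly above path $k-1$ at the exit of every column, extends the conclusion to all paths. The $\ell$-th up step of path $k$ then takes place from vertex $k+\ell-1$ in column $\ell$, contributing weight $m_{(k+\ell-1)-\ell+1,\,\ell}=m_{k,\ell}$, whence
\[
\Minor^{[i]}_{[i]+j}(u) \;=\; \prod_{k=1}^{i}\prod_{\ell=1}^{j} m_{k,\ell}.
\]

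Finally, substituting this product formula into the right-hand side of \eqref{e:mviaminors}, with the convention $\Minor^{\emptyset}_{\emptyset}(u)=1$, one finds that every index pair $(k,\ell)$ with $k\le i-1$, $\ell\le j-1$ cancels, as do those with $k=i$, $\ell\le j-1$ (from the first numerator and first denominator factor) and those with $k\le i-1$, $\ell=j$ (from the second numerator and second denominator factor). The only surviving factor is $m_{i,j}$, which is exactly the formula to be proved. The heart of the argument is the uniqueness of the non-intersecting configuration, which the inductive sketch above settles cleanly.
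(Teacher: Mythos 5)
Your proof is correct and follows essentially the same strategy as the paper: reduce to the product formula $\Minor^{[i]}_{[i]+j}(u)=\prod_{k=1}^{i}\prod_{\ell=1}^{j}m_{k,\ell}$, then observe that the right-hand side of \eqref{e:mviaminors} telescopes to $m_{ij}$. The paper simply asserts the product formula is ``straightforward to check,'' whereas you justify it cleanly via the block decomposition $u=U_1\cdots U_n$ and the Lindstr\"om--Gessel--Viennot lemma, including the uniqueness argument for the non-intersecting path family; this fills in precisely the computation the paper leaves to the reader.
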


\begin{proof}
It is straightforward to check that $\Minor^{[i]}_{[i]+j}(u)=\prod_{k=1}^i \prod_{r=1}^j m_{k,r}$. The formula follows immediately.
\end{proof}

\subsection{}\label{s:posstructure}
In this section we explain the notion of positive structure on a variety that we will use, 
that can be applied to all of the examples considered so far and relates closely to \cite{BK:GeometricCrystalsII,FG,Lusztig:TP2}.

\begin{defn}
\label{d:admissible} Suppose $\S$ is any semifield. We call a map $\psi:\S^m_{>0}\to \S^{m'}_{>0}$  \textit{admissible} 
if it is a subtraction-free rational map with integer coefficients. Suppose $m=m'$ and $\psi$ has an admissible inverse. Then we call $\psi$ a  \textit{bi-admissible} transformation.
\end{defn}

\begin{defn}\label{d:posstructure} 
Consider a variety $X$ over $\RR$, where $\RR$ has a positive subsemifield $\RR_{>0}$.  
We let a \textit{positive structure} on  $X$ be a subset $X(\RR_{>0})$ of $X(\RR)$ together with a collection of  bijections $\RR_{>0}^N\to X(\RR_{>0})$ called `positive charts' that are related to one another by bi-admissible transformations of $\RR_{>0}^N$. We may call it regular if the charts arise as restrictions of algebraic maps from a torus $\mathbb G_m^N$.

By a \textit{compatible} map we mean a morphism of $\RR$-varieties such that the associated map on $\RR$-valued points restricts to a map $X(\RR_{>0})\to X'(\RR_{>0})$. We call a compatible map \textit{admissible} if in terms of any choice of positive charts the corresponding map $\RR_{>0}^m\to \RR_{>0}^{m'}$ is admissible.   

Two varieties $X$ and $X'$ with positive structures are called isomorphic if there is an admissible map $X\to X'$ that has an admissible inverse.
\end{defn}

This notion of positive structure 
arises naturally from a `regular positive atlas' in the sense of ~\cite{FG} or a positive variety in the sense of \cite{BK:GeometricCrystalsII} by considering the $\RR$-valued points. It is very close to the notion of positive structure from \cite[1.3]{Lusztig:TP2}, a  difference being that we replace the ambient field $K$ by an ambient ring~$\RR$. However, since $\RR$ contains $\Q$, see \cref{r:RcontainsQ}, this generalisation is a straightforward one to make.

\begin{example} For a torus $T$ we consider a positive chart  to be any bijection $\chi_{>0}:\RR_{>0}^{\dim(T)}\to T(\RR_{>0})$ given by a basis of the co-character lattice of~$T$. We consider $T(\RR_{>0})$ to define a positive structure for $T$ with this collection of positive charts. 
\end{example}
\begin{example}
The totally positive part $U_+(\RR_{>0})$ of $U_+(\RR)$ by its definition comes with the restrictions $\phi_{\mathbf i,>0}: \RR_{>0}^{R_+}\to U_+(\RR_{>0})$ of the maps $\phi_{\mathbf i}$ from \eqref{e:phii}. 
We define a \textit{positive chart for $U_+$} to be any bijection from $\RR_{>0}^N$ to $U_+(\RR_{>0})$ that is related to one of the $\phi_{\mathbf i,>0}$ by a bi-admissible transformation. Similarly for $U_-$ and its defining charts. This determines positive structures on $U_+$ and $U_-$.
\end{example}

\begin{remark}\label{r:posstructureThm}
\cref{t:involution} can be interpreted as saying that $\Psi\circ\phi_{\mathbf i,>0}$ is one of the positive charts for the positive structure of $U_-$. 
\end{remark}

\begin{example}
For $B_+$ and $B_-$ we have the products of the positive charts for $T$ and $U_+$, respectively $T$ and $U_-$ (using $B_{\pm}\cong T\times U_{\pm}$). For example, we have $\chi_{>0}\times\phi_{\mathbf i,>0}:(l,z)\mapsto \chi(l)\phi_{\mathbf i}(z)$ for $B_+$. We consider any chart $\phi_{>0}:\RR_{>0}^{\dim T+N}\to B_{\pm}(\RR_{>0})$ related to one of these product charts by an admissible transformation to be a  \textit{positive chart} for $B_{\pm}$, provided it obeys the condition that the composition,
\[
\RR_{>0}^{N+\dim T}\overset\sim\longrightarrow B_{\pm}(\RR_{>0}) \overset \ep\longrightarrow \RR_{>0},
\]
is still a Laurent monomial map for any character $\ep$ of $T$ (extended trivially along $U_{\pm}$ to $B_{\pm}$). This constraint is equivalent to imposing that the projection onto the diagonal $B_+\to T$ be admissible, compare also \cite[Section~3.4]{BK:GeometricCrystals}.
\end{example}
\subsection{Upper-triangular Toeplitz matrices}
We introduce the following notations concerning upper-triangular Toeplitz matrices. We will initially be mostly be concerned with unipotent upper-triangular Toeplitz matrices. 
\begin{defn}
Let $(\RR,\RR_{>0})$ be a ring with positive structure. We write $\ToeplitzU_{n+1}(\RR)$ for the  unipotent upper-triangular Toeplitz matrices in $GL_{n+1}$. Then the `totally nonnegative' part is defined as 
\[
\ToeplitzU_{n+1}(\RR_{\ge 0}):=\ToeplitzU_{n+1}(\RR)\cap U_+(\RR_{\ge 0}).
\]
Inside $\ToeplitzU_{n+1}(\RR_{\ge 0})$ we have the `totally positive' part of $\ToeplitzU_{n+1}(\RR)$ which is defined to be 
\[
\ToeplitzU_{n+1}(\RR_{>0}):=\ToeplitzU_{n+1}(\RR)\cap U_+(\RR_{>0}).
\]  
We may also consider all the upper-triangular Toeplitz matrices in $GL_{n+1}$ (dropping the unipotent assumption), which we denote by $\Toeplitz_{n+1}$.  We set
\[
\begin{array}{lcc}
\Toeplitz_{n+1}(\RR_{\ge 0})&:=&\Toeplitz_{n+1}(\RR)\cap B^{GL_{n+1}}_+(\RR_{\ge 0}),\\
\Toeplitz_{n+1}(\RR_{>0})&:=&\Toeplitz_{n+1}(\RR)\cap B^{GL_{n+1}}_+(\RR_{>0}).
\end{array}
\] 
$\ToeplitzU_{n+1}(\RR_{\ge 0})$ and $\Toeplitz_{n+1}(\RR_{\ge 0})$ are monoids and $\ToeplitzU_{n+1}(\RR_{>0})$ and $\Toeplitz_{n+1}(\RR_{>0})$ are semigroups (without unit).   
\end{defn}
The analogue $\Toeplitz_G(\RR)$ of the upper-triangular Toeplitz matrices for a general reductive algebraic group $G$ can be defined as the stabiliser of a standard principal nilpotent, and has important relations to the Toda lattice and quantum cohomology of (Langlands dual) flag variety, \cite{Kostant:Toda, peterson, Kos:QCoh}.  

\section{Parametrization theorem for $\Toeplitz_{n+1}(\KK_{>0})$}\label{s:ToeplitzFinite}
We will now work over the field $\KK$ of generalised Puiseaux series together with its $t$-adic topology and its positive subsemifield $\KK_{>0}$, see \cref{ex:genPuiseux}. Suppose $G$ is the group $GL_{n+1}(\KK)$ with its maximal torus $T=T^{GL}$, and the `anti-principal' minors $\Delta_i:=\Minor^{[i]}_{[n-i+2,n+1]}$ where $i=1,\dotsc, n+1$. The main goal of this section is to prove the following theorem. 

\begin{theorem} \label{t:PuiseauxFinite}
The map $
\Delta_{>0}^{GL}:\Toeplitz_{n+1}(\KK_{>0})\longrightarrow T^{GL}(\KK_{>0})$
defined by sending $b\in\Toeplitz_{n+1}(\KK_{>0})$ to the diagonal matrix $d$ with diagonal entries
\[
d_1= \Delta_{1}(b),\   
d_2=\frac{\Delta_{2}(b)}{\Delta_{1}(b)},\ 
\dotsc,
\ d_{n+1}=\frac {\Delta_{n+1}(b)}{\Delta_{n}(b)}
\]
is a homeomorphism.
\end{theorem}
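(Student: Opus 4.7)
The plan is to reduce Theorem~\ref{t:PuiseauxFinite} to Theorem~\ref{t:IntroPuiseauxFinite} by decomposing both source and target into a scalar factor times the $SL_{n+1}$-version.

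First, I would establish a canonical decomposition of the source. Since $\Toeplitz_{n+1}(\KK_{>0}) \subseteq B_+^{GL_{n+1}}(\KK_{>0}) = T^{GL}(\KK_{>0})\, U_+(\KK_{>0})$, and since the diagonal of a Toeplitz matrix must be a scalar multiple of the identity, any $b \in \Toeplitz_{n+1}(\KK_{>0})$ factors uniquely as $b = \lambda u$ with $\lambda = b_{11} \in \KK_{>0}$ and $u = \lambda^{-1} b \in \ToeplitzU_{n+1}(\KK_{>0})$. Both $b \mapsto (\lambda, u)$ and its inverse $(\lambda, u) \mapsto \lambda u$ are given by elementary operations in $\KK$, so this is a homeomorphism $\Toeplitz_{n+1}(\KK_{>0}) \cong \KK_{>0} \times \ToeplitzU_{n+1}(\KK_{>0})$ for the $t$-adic topology.

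Next I would do the analogous decomposition of the target. Given $\mathrm{diag}(d_1,\dots,d_{n+1}) \in T^{GL}(\KK_{>0})$, one sets $\lambda := (d_1 \cdots d_{n+1})^{1/(n+1)}$ and $e_i := d_i/\lambda$, so that $\mathrm{diag}(e_1,\dots,e_{n+1}) \in T^{SL_{n+1}}(\KK_{>0})$. The only non-formal point here is that $\KK_{>0}$ is closed under $(n+1)$-th roots: a series $c\, t^a(1 + \cdots) \in \KK_{>0}$ with $c \in \R_{>0}$ has the unique positive root $c^{1/(n+1)} t^{a/(n+1)} (1 + \cdots)^{1/(n+1)}$, whose exponent set is still in $\mathrm{SMonSeq}_\infty$ because rescaling by $1/(n+1)$ preserves the strictly increasing tending-to-infinity property, and the binomial expansion of $(1+\cdots)^{1/(n+1)}$ converges $t$-adically. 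This gives a homeomorphism $T^{GL}(\KK_{>0}) \cong \KK_{>0} \times T^{SL_{n+1}}(\KK_{>0})$.

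Then I would match up the two decompositions. Using multiplicativity of the anti-principal minors, $\Delta_i(\lambda u) = \lambda^i \Delta_i(u)$, and recalling $\Delta_{n+1}(u) = 1$, one reads off
\[
\frac{\Delta_i(b)}{\Delta_{i-1}(b)} = \lambda \cdot \frac{\Delta_i(u)}{\Delta_{i-1}(u)} \quad (i \le n), \qquad \frac{\Delta_{n+1}(b)}{\Delta_n(b)} = \frac{\lambda}{\Delta_n(u)}.
\]
In particular $d_1 \cdots d_{n+1} = \lambda^{n+1}$ telescopes, so the scalar factor of the target decomposition coincides with $\lambda = b_{11}$ on the source side. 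Under the identifications established above, $\Delta_{>0}^{GL}$ therefore becomes the map $(\lambda, u) \mapsto (\lambda, \Delta_{>0}^{SL}(u))$, where $\Delta_{>0}^{SL}$ is precisely the map of Theorem~\ref{t:IntroPuiseauxFinite}.

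Finally, I would invoke Theorem~\ref{t:IntroPuiseauxFinite} to conclude that $\Delta_{>0}^{SL}$ is a homeomorphism, whence so is $\mathrm{id}_{\KK_{>0}} \times \Delta_{>0}^{SL}$, and so is $\Delta_{>0}^{GL}$. The genuine content is entirely absorbed into Theorem~\ref{t:IntroPuiseauxFinite}; the only obstacle in the present reduction is the arithmetic check that $(n+1)$-th roots exist in $\KK_{>0}$ and depend $t$-adically continuously on their argument, which is routine once the structure of generalised Puiseux series is spelled out.
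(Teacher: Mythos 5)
Your reduction is algebraically sound: the factorisation $b=\lambda u$ with $\lambda=b_{11}$ and $u\in\ToeplitzU_{n+1}(\KK_{>0})$, the $(n+1)$-th root splitting $T^{GL}(\KK_{>0})\cong\KK_{>0}\times T^{SL_{n+1}}(\KK_{>0})$, and the telescoping identity $d_1\cdots d_{n+1}=\Delta_{n+1}(b)=\lambda^{n+1}$ are all correct, and under these identifications $\Delta^{GL}_{>0}$ does become $\mathrm{id}\times\Delta^{SL}_{>0}$. The problem is circularity. In this paper \cref{t:IntroPuiseauxFinite} is not an independently established result that you may cite; it is the introduction's statement of precisely this theorem, and it is derived \emph{from} \cref{t:PuiseauxFinite} in \cref{r:SL-PGL-FiniteParam} by restriction together with the very same $(n+1)$-th root device you use. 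Your argument therefore shows that the $GL$ and the unipotent/$SL$ formulations are equivalent, but it does not establish either of them; the sentence ``the genuine content is entirely absorbed into Theorem~\ref{t:IntroPuiseauxFinite}'' concedes exactly the step that is missing.

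What the paper actually proves, directly and without assuming the unipotent version, is \cref{t:PuiseauxFinite} by a mirror-symmetry argument. One identifies $\Toeplitz_{n+1}\cap Z$ with the critical locus of the superpotential $W$ on $Z=B_+\cap B_-\bar w_0 B_-$, fibered over $T$ by the canonical weight map $\mathbf d$ (\cref{d:GLmirror}, \cref{t:Zcrit}); in any fibered positive chart $W_d$ is a positive Laurent polynomial (\cref{l:Winposchart}) whose Newton polytope is shown to be full-dimensional with $0$ in its interior by comparison with the known $\R_{>0}$ case (\cref{p:WdNP}, \cref{c:unique}); the positive critical point theorem over $\KK_{>0}$ from \cite{JuddRietsch:24} (\cref{t:JuddR}) then yields a unique positive critical point over each fiber, which provides the inverse to $\mathbf d|_{\Toeplitz_{n+1}(\KK_{>0})}$; and continuity of this inverse for the $t$-adic topology comes from a continuity-of-critical-points statement (\cref{p:continuity}, \cref{c:continuity}). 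None of this machinery appears in your proposal, so the substantive part of the argument is missing.
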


This theorem can be viewed as a strengthening of \cite[Theorem~5.1]{Judd:Flag}. It is an analogue over $\KK$ of \cref{t:RealFinite}.

\begin{remark}\label{r:SL-PGL-FiniteParam} As a corollary of \cref{t:PuiseauxFinite} we obtain a homeomorphism, 
\begin{equation}\label{e:SLFiniteParam}
\Delta_{>0}^{SL}:\ToeplitzU_{n+1}(\KK_{>0})\overset\sim\longrightarrow T^{SL}(\KK_{>0}),
\end{equation}
by restriction, and another homeomorphism, 
\begin{equation}\label{e:PGLFiniteParam}
\Delta_{>0}^{PGL}:\ToeplitzU_{n+1}(\KK_{>0})\overset\sim\longrightarrow T^{PGL}(\KK_{>0}),
\end{equation}
which arises from taking the quotient by $\KK_{>0}$ on both sides of $\Delta_{>0}^{GL}$.  Note that $\Delta_i$ is homogeneous of degree $n+1-i$ for the rescaling action by $\KK_{>0}$, so that $\Delta^{GL}_{>0}$ overall has degree~$1$. Note also that we can uniquely and continuously normalise the determinants of elements of $T^{GL}(\KK_{>0})$ to $1$ using the well-defined $(n+1)^{st}$-root map on $\KK_{>0}$ to obtain a surjective homomorphism $T^{GL}(\KK_{>0})\to T^{SL}(\KK_{>0})$ that identifies $T^{SL}(\KK_{>0})$ with $T^{PGL}(\KK_{>0})$ as sets. 

Finally, we may compose $\Delta^{PGL}$ further with the natural isomorphism $
T^{PGL}(\KK_{>0})\to (\KK_{>0})^n$ given by the  negative simple roots. The composition gives a homeomorphism 
\begin{equation}\label{e:q-paramToeplitz}
(\mathbf q_1,\dotsc,\mathbf q_n):\ToeplitzU_{n+1}(\KK_{>0})\overset\sim\longrightarrow \KK_{>0}^{n}
\end{equation}
whose components $\mathbf q_i$ have an interpretation in terms of the `quantum parameters' for the quantum cohomology of the flag variety $SL_{n+1}/B_+$. 
Namely, this homeomorphism \eqref{e:q-paramToeplitz} represents the direct generalisation of the original parametrisation of $\ToeplitzU_{n+1}(\R_{>0})$ constructed in \cite{rietschJAMS} using properties of the quantum cohomology ring $qH^*(SL_{n+1}/B_+)$. The formula for the quantum parameters in terms of Toeplitz minors, $\mathbf q_i=\frac{\Delta_{i-1}\Delta_{i+1}}{\Delta_i^2}$, is due to Kostant, see~\cite{Kos:QCoh}.
\end{remark}

\begin{remark}\label{r:posstructureToeplitz}
We can in principle use this theorem to endow $\Toeplitz_{n+1}$  with a \textit{positive structure} in the sense of \cref{d:posstructure} (albeit not a regular one). Namely, we identify $T^{GL}(\KK_{>0})$ with $\KK_{>0}^{n+1}$ so that the inverse of $\Delta^{GL}_{>0}$ gives a bijection $\KK_{>0}^{n+1}\to\Toeplitz(\KK_{>0})$. We would then define a  positive chart for $\Toeplitz_{n+1}$ to be any  bijection $\KK_{>0}^{n+1}\to\Toeplitz(\KK_{>0})$ obtained by Laurent monomial transformations from this one. Applying the analogous procedure to the bijections~\eqref{e:SLFiniteParam} and \eqref{e:PGLFiniteParam} leads to two distinct positive structures on $\ToeplitzU_{n+1}$ reflecting the fact that the construction of the homeomorphism from $T^{PGL}(\KK_{>0})$ to $T^{SL}(\KK_{>0})$ used $(n+1)^{st}$ roots.   
\end{remark}

\subsection{}\label{s:LieMirror}
The proof of \cref{t:PuiseauxFinite} involves constructing $b=(\Delta_{>0}^{GL})\inv(d)$ as critical point of a function $W_d$. This is along the lines of the proof in~\cite{rietschNagoya} in the setting where $(\RR,\RR_{>0})=(\C,\R_{>0})$, and is also the approach taken in \cite{Judd:Flag} for particular parameter values.

\begin{defn}\label{d:GLmirror} Let $G=GL_{n+1}(\RR)$. Define $Z:=B_+\cap B_-\bar w_0 B_-$. For any element $b\in Z$ there is a unique factorisation $b=u_Ld\bar w_0  u_R$, where $u_L,u_R\in U_-$ and $d\in T$. Let $f_k^*(u):=\Minor^{[k-1]\cup \{k+1\}}_{[k]}(u)=u_{k+1,k}$. We define
\begin{equation*}
\begin{array}{cccl}
W: & Z(\RR)&\longrightarrow & \RR\\
& b
&\mapsto& \sum_{k\in I} f_k^*(u_L)+ \sum_{k\in I} f_k^*(u_R),
\end{array}
\end{equation*}
where $u_L$ and $u_R$ are given by the factorisation $b=u_Ld\bar w_0 u_R$. 

Let us also define $\mathbf d:  Z\to  T$ to be the map sending $b$ to the diagonal factor $d$
  in $b=u_Ld\bar w_0  u_R$. Explicitly, we have that $\mathbf d(b)$ is the diagonal matrix  with entries
\begin{equation}\label{e:di}
d_1=\Delta_{1}(b),\   
d_2=\frac{\Delta_{2}(b)}{\Delta_{1}(b)},\ 
\dotsc,\ d_n=\frac{\Delta_{n}(b)}{\Delta_{{n-1}}(b)},\ d_{n+1}= \frac{{\Delta_{n+1}(b)}}{\Delta_{n}(b)}.
\end{equation}
 We denote the fiber $\mathbf d\inv(d)$ by $Z_d$, and the restriction of $W$ to the fiber $Z_d$ will be denoted $W_d$. We will call $W$ or $W_d$ the \textit{superpotential} and $\mathbf d$ the \textit{canonical weight} map. 
\end{defn}
\begin{remark}
These maps first appeared in the theory of geometric crystals by Berenstein and Kazhdan \cite{BeKa,BK:GeometricCrystalsII}, where $\mathbf d$ is called the highest weight map, and $W$ is called the `decoration' of the geometric crystal. They were constructed again independently in \cite{rietsch}, where $W$ was proposed to be the superpotential for the flag  variety, and shown to recover in a particular chart the Laurent polynomial superpotential proposed earlier by Givental~\cite{Givental:QToda}.  
\end{remark}

\begin{defn}\label{d:duni}
We note that the fiber $Z_d$ of $\mathbf d$ lies in $SL_{n+1}$ whenever $d\in T^{SL_{n+1}}$, in fact, $Z^{SL_{n+1}}=\mathbf d\inv(T^{SL_{n+1}})$. We restrict $\mathbf d$ to the unipotent part of $Z$, defining  
\[
\mathbf d_{uni}:U_+\cap B_-\bar w_0 B_-\to T^{SL_{n+1}}.
\]
This map is again given by \eqref{e:di}, now with $\Delta_{n+1}(b)=1$.
\end{defn}
We have the following relationship between the superpotential $W$ and $\Toeplitz_{n+1}$, which follows from {\cite[Theorem~4.1]{rietsch}}. 
\begin{theorem}[\cite{rietsch}]   
\label{t:Zcrit}
 Consider the union of critical loci of the $W_d$, 
\[Z^{crit}(\C):=\{b\in Z(\C)\mid \mbox{$b$ is a critical point for $W_d$ where $d=\mathbf d(b)$}\}.\]
Then $Z^{crit}(\C)=\Toeplitz_{n+1}(\C)\cap Z(\C)$.
\end{theorem}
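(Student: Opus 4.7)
The plan is to reformulate the critical-point condition as a commutator equation and then recognise that equation as the Lie-theoretic characterisation of a Toeplitz matrix. The key fact is that a matrix $b \in B_+$ lies in $\Toeplitz_{n+1}$ if and only if it centralises the principal nilpotent $E = \sum_{i=1}^n E_{i,i+1}$: since $E$ is regular nilpotent, its centraliser in $\mathfrak{gl}_{n+1}$ is the $(n+1)$-dimensional subalgebra $\C\langle I, E, \dotsc, E^n\rangle$, whose intersection with $B_+$ is exactly $\Toeplitz_{n+1}$. Thus the theorem would follow once I show, for $b \in Z$ with $d = \mathbf{d}(b)$, that $b$ is a critical point of $W_d$ if and only if $[b, E] = 0$.

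\textbf{Setting up the differential.} I would first record the identity $\sum_k f_k^*(u) = \operatorname{tr}(E u)$ for $u \in U_-$, which is a direct calculation from $f_k^*(u) = u_{k+1,k}$. This converts the decoration into the intrinsic form $W(b) = \operatorname{tr}(E u_L) + \operatorname{tr}(E u_R)$, where $b = u_L d \bar w_0 u_R$. Next I would differentiate the factorisation: for a left-invariant tangent direction $b \cdot \xi$ at $b$ with $\xi \in \mathfrak{b}_+$, the Gauss-type factorisation yields linear maps $\xi \mapsto \delta u_L \in u_L \mathfrak{u}_-$, $\xi \mapsto \delta d \in d \mathfrak{t}$ and $\xi \mapsto \delta u_R \in u_R \mathfrak{u}_-$. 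Feeding these into the trace pairing with $E$ then produces $dW|_b(b\xi)$, and the tangent space $T_b Z_d$ is cut out by the single constraint $\delta d = 0$.

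\textbf{From vanishing differential to $[b,E] = 0$.} The critical equation $dW_d|_b = 0$ then reads $\operatorname{tr}(E \cdot \delta u_L) + \operatorname{tr}(E \cdot \delta u_R) = 0$ for all $\xi \in \mathfrak{b}_+$ with $\delta d = 0$. After eliminating the coupling between $\delta u_L$ and $\delta u_R$ imposed by the fibre constraint, I would aim to rewrite this as an intrinsic equation of the form $\operatorname{tr}\bigl((E - \operatorname{Ad}(b^{-1}) E) \cdot \eta\bigr) = 0$ for $\eta$ ranging over a subspace of $\mathfrak{gl}_{n+1}$ on which the trace pairing is non-degenerate against the space containing $E - \operatorname{Ad}(b^{-1}) E$. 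Non-degeneracy then forces $\operatorname{Ad}(b) E = E$, i.e.\ $[b, E] = 0$, and combined with Step~1 this gives both inclusions.

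\textbf{Main obstacle and an alternative route.} The technical heart is precisely the disentangling step: since $u_L$ and $u_R$ are linked through $d$, a tangent vector to $Z_d$ does not split into independent deformations of the two unipotent factors, so carrying out the elimination and identifying the correct subspace for the trace pairing requires care. An alternative, more computational approach would be to parametrise $Z_d$ by Lusztig-type coordinates $(z_1,\dotsc,z_N)$ associated to a reduced word $\mathbf{i}$ of $w_0$ (say, using $\phi_{\mathbf{i}}$ combined with the map $\mathbf{d}$), write $W_d$ as an explicit Laurent polynomial in these $z_i$ and the entries of $d$, and then verify by direct differentiation that the system $\{\partial W_d/\partial z_i = 0\}$ reproduces the Kostant--Peterson description of $\Toeplitz_{n+1} \cap Z_d$ via quantum-cohomology-type relations for the flag variety $SL_{n+1}/B_+$, yielding both inclusions simultaneously.
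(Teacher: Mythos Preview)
The paper does not prove this theorem; it is quoted as \cite[Theorem~4.1]{rietsch} and used as a black box. So there is no in-paper proof to compare against, only the strategy of the cited reference.

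Your overall plan is the right one and matches the Lie-theoretic argument in the cited source: the identification of $\Toeplitz_{n+1}$ with the centraliser of the principal nilpotent $E$, the rewriting $\sum_k f_k^*(u)=\operatorname{tr}(Eu)$, and the goal of showing that $dW_d|_b=0$ is equivalent to $\operatorname{Ad}(b)E=E$ are all correct and are exactly how this result is established. Your ``alternative route'' via a reduced-word chart is also genuine: that is precisely the computation carried out in \cite[Section~9]{rietsch} and \cite{rietschNagoya}, which the present paper invokes later in the proof of Corollary~\ref{c:WdNP}.

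The one place where your outline is genuinely incomplete is the step you flag yourself. You write that after imposing $\delta d=0$ you ``would aim to rewrite'' the vanishing of $dW_d$ as $\operatorname{tr}\bigl((E-\operatorname{Ad}(b^{-1})E)\,\eta\bigr)=0$ over a subspace on which the trace pairing is nondegenerate, but you do not say which subspace or how the two unipotent variations combine into a single $\eta$. In practice this is done by parametrising $Z_d$ by one of the unipotent factors (say $u_L\in U_-$, since $b\mapsto u_L$ is an isomorphism onto an open cell of $U_-$ once $d$ is fixed) and expressing $u_R$ as a function of $u_L$; differentiating that dependence and using $\operatorname{Ad}(\bar w_0)$ to transport between the two copies of $\mathfrak u_-$ is what produces the single pairing against $E-\operatorname{Ad}(b^{-1})E$, with $\eta$ ranging over $\mathfrak u_-$ and the target lying in $\mathfrak b_+$, so that nondegeneracy of the trace pairing $\mathfrak u_-\times\mathfrak b_+\to\C$ finishes the argument. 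Filling in that transport is the only missing ingredient; once it is in place your proof is complete and coincides with the one in \cite{rietsch}.
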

Let $Z^{crit}(\R_{>0}):= Z^{crit}(\C)\cap B_+(\R_{>0})$. Then the above theorem implies that $Z^{crit}(\R_{>0})=\Toeplitz_{n+1}(\R_{>0})$.
 We now have the following parametrization theorem over $\R_{>0}$. 
\begin{theorem}[\cite{rietschJAMS,rietschNagoya}]   
\label{t:ZcritposR}
The map $\mathbf d$ restricts to a homeomorphism
\[
\Toeplitz_{n+1}(\R_{>0})\longrightarrow T(\R_{>0}).
\]
\end{theorem}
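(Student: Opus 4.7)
The plan is to combine \cref{t:Zcrit} with a convex-analytic study of the superpotential, following the mirror-symmetric strategy used in \cite{rietschNagoya}. By \cref{t:Zcrit} we have $Z^{\mathrm{crit}}(\R_{>0}) = \Toeplitz_{n+1}(\R_{>0})$, so $\mathbf d:\Toeplitz_{n+1}(\R_{>0})\to T(\R_{>0})$ is a bijection iff for each $d\in T(\R_{>0})$ the function $W_d$ has a unique critical point inside the positive part $Z_d(\R_{>0}):=Z_d\cap B_+(\R_{>0})$.

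First I would build an explicit positive chart for $Z_d(\R_{>0})$. Any $b\in B_+(\R_{>0})$ factors uniquely as $b=d'u$ with $d'\in T(\R_{>0})$ and $u\in U_+(\R_{>0})$, and by \eqref{e:di} the constraint $\mathbf d(b)=d$ pins down $d'$ in terms of $u$ and $d$. Choosing a reduced expression $\mathbf i$ of $w_0$ and the associated chart $\phi_{\mathbf i}:\R_{>0}^N\overset{\sim}{\to} U_+(\R_{>0})$ therefore yields a parametrisation $\R_{>0}^N\overset{\sim}{\to} Z_d(\R_{>0})$. Pulling $W_d$ back to $\R_{>0}^N$, the summand $\sum_k f_k^*(u_R)$ is polynomial in the chart coordinates after applying the twist map $\Psi$ from \cref{t:involution}, and the summand $\sum_k f_k^*(u_L)$ becomes subtraction-free via a chamber-ansatz computation. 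Thus the pull-back $\widetilde W_d$ is a Laurent polynomial on $\R_{>0}^N$ with strictly positive coefficients.

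Next I would establish existence and uniqueness of a critical point of $\widetilde W_d$ on $\R_{>0}^N$. Passing to logarithmic coordinates $x_i = e^{y_i}$, one obtains a positive linear combination of exponentials of linear functionals, which is automatically convex on $\R^N$. Strict convexity then reduces to showing that the exponents appearing in $\widetilde W_d$ affinely span $\R^N$, and properness reduces to showing that the Newton polytope of $\widetilde W_d$ contains $0$ in its interior (after absorbing the $d$-dependence into constant multipliers). Granted both, there is a unique global minimum, hence a unique critical point, which by \cref{t:Zcrit} is forced to be totally positive Toeplitz. Continuity of $\mathbf d$ is immediate from \eqref{e:di}; continuity of the inverse follows from continuous dependence of the unique minimum of a smooth strictly-convex proper family on the parameter $d$, or alternatively from invariance of domain applied to the resulting continuous bijection between smooth manifolds of equal dimension $n+1$.

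The main obstacle is verifying strict convexity and properness: both require making $\widetilde W_d$ sufficiently explicit in a well-chosen reduced expression and carrying out a Newton-polytope calculation. A much quicker alternative, once \cref{t:RealFinite} is available, is to note that every $\Delta_i$ is homogeneous of degree $n+1-i$ for the diagonal $\R_{>0}$-rescaling action, so that scaling identifies $\Toeplitz_{n+1}(\R_{>0})$ with $\R_{>0}\times\ToeplitzU_{n+1}(\R_{>0})$ and $T(\R_{>0})$ with $\R_{>0}\times T^{SL_{n+1}}(\R_{>0})$ compatibly with $\mathbf d$; the $GL$ statement then reduces immediately to the $SL$ statement already recorded in \cref{t:RealFinite}.
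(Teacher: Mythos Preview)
The paper does not actually prove this theorem: it is quoted from \cite{rietschJAMS,rietschNagoya} and used as input for the Puiseux-series analogue. So there is no ``paper's own proof'' to compare against here; the relevant comparison is with the original argument in \cite{rietschNagoya}, and your outline does follow that mirror-symmetric strategy (critical-point interpretation plus log-convexity of a positive Laurent polynomial).

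Your sketch is sound, and you have correctly isolated the one genuine obligation: checking that the Newton polytope of $\widetilde W_d$ is full-dimensional with $0$ in its interior. Note that you cannot borrow the paper's \cref{p:WdNP} for this, since that proposition deduces the Newton-polytope property \emph{from} \cref{t:ZcritposR}. In the references the gap is closed by writing down Givental's explicit Laurent-polynomial superpotential in a specific fibered chart (see the Remark after \cref{p:WdNP} and \cite[Section~9]{rietsch}); from that formula the polytope condition is a direct check. One small correction: after applying the twist $\Psi$, the summand $\sum_k f_k^*(u_R)$ is a subtraction-free \emph{rational} function in the $\phi_{\mathbf i}$-coordinates, not in general a polynomial; \cref{l:Winposchart} is the clean statement you want and it already gives positivity of all coefficients once you clear denominators.

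Your alternative route is entirely valid: writing $b=c\cdot u$ with $c\in\R_{>0}$ and $u\in\ToeplitzU_{n+1}(\R_{>0})$ one has $\Delta_i(cu)=c^i\Delta_i(u)$, hence $\mathbf d(cu)=c\cdot\mathbf d(u)$ with $\mathbf d(u)\in T^{SL_{n+1}}(\R_{>0})$, so the $GL$ statement reduces to \cref{t:RealFinite}. Since \cref{t:RealFinite} and \cref{t:ZcritposR} are cited from the same sources, this is a reduction between equivalent formulations rather than an independent proof, but it is the quickest way to see that the two statements stand or fall together.
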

This theorem says in particular that if $d\in T(\R_{>0})$ then $W_d$ has a unique critical point in $B_+(\R_{>0})$. Namely, $Z_{d}(\C)\cap\Toeplitz(\R_{>0})$ contains a single point and this is the totally positive critical point of $W_d$. 
\subsection{}\label{s:Wpositive} We return to our ring with positive structure $(\RR,\RR_{>0})$. Since $Z=B_+\cap B_-w_0 B_-$, we let $Z(\RR_{>0}):=B_+(\RR_{>0})$ as a set. However, the difference between $Z(\RR_{>0})$ and $B_+(\RR_{>0})$ is encapsulated in the existence of the canonical weight map $\mathbf d$. We consider its restriction $\mathbf d_{>0}: Z(\RR_{>0}) \to  T(\RR_{>0})$  to be a part of the structure of $Z(\RR_{>0})$, and therefore give a more restrictive definition of `positive chart' for $Z$. (Note that $\mathbf d_{>0}$ is well-defined by the explicit formula in \eqref{e:di}).

\begin{defn}\label{d:poschartZ}
A \textit{positive chart} for $Z$ is a positive chart $\phi_{>0}$ for $B_+$ for which additionally the composition $\mathbf d\circ\phi_{>0}:\RR_{>0}^{\dim Z}\to T(\RR_{>0})$ is given by a Laurent monomial formula (in terms of a basis of characters of~$T$). All the positive charts for $B_+$ of the form $\chi\times \phi_{\mathbf i,>0}$ have this property -- where $\chi$ is any positive chart for~$T$. This follows from the fact that $TU_+^{\mathbf i}$ is contained in $Z$, see  \cref{d:pos}.

By a \textit{fibered positive chart} for $Z$ we mean a positive chart for $Z$ of the form $\phi_{>0}:\RR_{>0}^{\dim T}\times\RR_{>0}^{N}\to Z(\RR_{>0})$, together with a positive chart for $T$ identifying $\RR_{>0}^{\dim T}$ with $T(\RR_{>0})$, such that for every  $d\in T(\RR_{>0})$ the restriction gives a well-defined bijection with the associated fiber,
 \[
 \phi_{d,>0}: \{d\}\times \RR_{>0}^{N}\longrightarrow Z_d(\RR_{>0}).
 \]
We may also write $\phi_{d,>0}$  for the map $\RR_{>0}^{N}\longrightarrow Z_d(\RR_{>0})$ (ignoring the factor $\{d\}$), and call this the positive chart for $Z_d$ associated to $\phi_{>0}$. 
\end{defn}

 We consider $Z$ to have a positive structure defined by the above notion of positive chart. The fibered positive charts will be a useful subset of positive charts.

\begin{lemma}\label{l:Winposchart}
The superpotential $W$ restricts to a map
\[
W_{>0}:Z(\RR_{>0})\to \RR_{>0}
\]
which in any positive chart of $Z(\RR_{>0})$ is given by a Laurent polynomial with positive integer coefficients. 
\end{lemma}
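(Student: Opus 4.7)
The plan is to fix the specific Lusztig chart $\phi_{\mathbf i_0,>0}$ of \cref{l:mijFormula} together with a positive torus chart $\chi$; by \cref{d:poschartZ} this produces a fibered positive chart $(l,m)\mapsto \chi(l)\phi_{\mathbf i_0}(m)$ for $Z$, because \eqref{e:di} combined with the identity $\Delta_i(\phi_{\mathbf i_0}(m))=\prod_{k\le i,\ r\le n-i+1}m_{k,r}$ (implicit in the proof of \cref{l:mijFormula}) makes $\mathbf d\circ\phi_{>0}$ Laurent monomial in $(l,m)$. I would then pull back $W$ through this chart and verify both parts of the statement there, before appealing to bi-admissibility of the transitions to deduce the result in an arbitrary positive chart.

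To compute $W$ in the chart, apply the twist map of \cref{t:involution}: for $u=\phi_{\mathbf i_0}(m)$ we have $u=b_-\bar w_0 u_R$ with $u_R=\Psi(u)=y_{i_1}(m_1')\cdots y_{i_N}(m_N')$, and each $m_k'$ a subtraction-free rational function of $m$ with integer coefficients. Since $y_i(m')$ contributes $m'$ to the $(i+1,i)$-entry and any product $y_{i_{k_1}}\cdots y_{i_{k_r}}$ with $r\ge 2$ lies strictly below the subdiagonal, one reads off
\[
\sum_k f_k^*(u_R)=\sum_{k=1}^{N}m_k'.
\]
Writing further $b_-=u_L^0 d_0$ with $u_L^0\in U_-$ and $d_0\in T$, the factorisation $b=\chi(l)u$ takes the form $b=u_L d\bar w_0 u_R$ with $u_L=\chi(l)u_L^0\chi(l)^{-1}$ and $d=\chi(l)d_0$. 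Conjugation by $\chi(l)$ multiplies the $(k+1,k)$-entry by $l_{k+1}/l_k$, while $(u_L^0)_{k+1,k}$ is itself subtraction-free rational in $m$ with integer coefficients by the chamber ansatz of \cite{BZ,BFZ}. This shows $W_{>0}$ takes values in $\RR_{>0}$; since transitions between positive charts are bi-admissible, this positivity persists in any positive chart.

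To upgrade to a Laurent polynomial with positive integer coefficients, I would note that in the $\mathbf i_0$-chart the coordinates $(l;m)$ coincide up to a torus rescaling with Givental's mirror coordinates, and there the superpotential reduces to Givental's explicit Laurent polynomial, a sum of $n(n+1)$ monomials each of coefficient one, going back to \cite{Givental:QToda} and identified with $W$ in the present form in \cite{rietsch} (see \cref{t:Zcrit}). The main obstacle is the explicit verification that each $m_k'$ and each $(u_L^0)_{k+1,k}$ is in fact a single Laurent monomial in $(l,m)$---the apparently subtraction-free rational formulas should collapse to monomials thanks to the special combinatorics of $\mathbf i_0$---and this ultimately reduces to the minor identities underlying \cref{l:mijFormula} together with the chamber ansatz applied to $\mathbf i_0$.
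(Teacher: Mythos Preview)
Your approach diverges from the paper's in a key respect and contains a concrete error. The paper does not attempt to compute the $u_L$ factor via chamber-ansatz formulas at all. Instead it writes down a direct minor identity expressing $d_1\cdots d_k\,f_k^*(u_L)$ as a single minor of $b$. Since every minor of $b=\chi(l)\phi_{\mathbf i}(m)$ is a polynomial with nonnegative integer coefficients in the Lusztig chart variables, and the $d_i$ are Laurent monomials by the very definition of a positive chart for $Z$ (\cref{d:poschartZ}), one reads off immediately that $f_k^*(u_L)$ is a positive integer Laurent polynomial---no intermediate monomiality needed. For $f_k^*(u_R)$ the paper invokes $u_R=\Psi(u)$ together with \cref{t:involution}; the same mechanism (a parallel minor identity, coming from the $LDU$ factorisation of $b\bar w_0^{-1}$) underlies this step.

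The direct verification you propose instead---that each twist coordinate $m_k'$ and each $(u_L^0)_{k+1,k}$ collapses to a single Laurent monomial in the $\mathbf i_0$-chart---is false. Already for $GL_3$ with $\mathbf i_0=(2,1,2)$: writing $u=x_2(m_1)x_1(m_2)x_2(m_3)$ and $\Psi(u)=y_2(m_1')y_1(m_2')y_2(m_3')$ one finds $m_3'=\frac{1}{m_1+m_3}$, and on the other side $(u_L)_{21}=\frac{m_1+m_3}{m_2m_3}$; neither is a monomial. The \emph{sums} $\sum_k m_k'$ and $\sum_k f_k^*(u_L)$ do become positive integer Laurent polynomials, but only through a genuine cancellation that a monomial-by-monomial argument cannot see. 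Two related issues: the $\mathbf i_0$-chart is \emph{not} Givental's chart up to a torus rescaling (in this $GL_3$ example $W$ contains the term $\tfrac{2}{m_2}$, whereas Givental's Laurent polynomial has all coefficients~$1$; the fibered chart recovering Givental's formula is a different one, cf.\ the remark after \cref{p:WdNP}); and bi-admissibility of chart transitions transports only subtraction-free rationality, not Laurent polynomiality, so your final step does not extend the result to other charts.
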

\begin{proof} Fix a positive chart for $Z$. For $b=u_L d\bar w_0  u_R$ we have the identity 
\begin{equation}\label{e:minorofb}\Minor^{[n-k]\cup\{k+1\}}_{[n-k+2,n+1]}(b)=d_{1}\dotsc d_{k}
f_k^*(u_L).
\end{equation}
 We also have that the $d_i$ are Laurent monomials and the nontrivial minors of $b$ are Laurent polynomials with positive integer coefficients in terms of the positive chart. Therefore \eqref{e:minorofb} implies that $f_k^*(u_L)$ is a Laurent polynomial with positive integer coefficients. Let us write $b=\ell u$ for $\ell\in T(\RR_{>0})$ and $u\in U_+(\RR_{>0})$, so that $u=\ell\inv u_L\bar w_0 d u_R$. Then $u_R=\Psi(u)$ for the map $\Psi$ from \cref{t:involution}. This implies that $u_R\in U_-(\RR_{>0})$ and that $f_k^*(u_R)$ is a Laurent polynomial with positive integer coefficients in terms of the original positive chart.
\end{proof}

\subsection{}\label{s:JR} We now explain some results about positive Laurent polynomials and their critical points 
following \cite{JuddRietsch:24}. Let us write $(\K,\K_{>0})$ for a field with positive structure, as we will be applying these results in the finite-dimensional case, where we work over $\R,\C$ or the field of  generalised Puiseaux series $\KK$.  

\begin{defn}
Let $\L=\sum\gamma_i x^{\v_i}$ be a Laurent polynomial in $r$ variables and with coefficients in $\K$. Here each $\v_i$ is a lattice point $\v_i=(\v_{i,1},\dotsc,\v_{i,r})\in\Z^r$ in $\R^r$, and $x^{\v_i}$ is the corresponding Laurent monomial $x^{\v_i}=x_1^{\v_{i,1}}\dotsc x_r^{\v_{i,r}}$.  The Newton polytope of $\L$ is defined to be the convex hull in $\R^r$ of the set of lattice points $\{\v_1,\dots, \v_m\}$.  Supposing $\K$ has a positive subsemifield $\K_{>0}$, we call $\L$ a \textit{positive Laurent polynomial} over $\K$ if the coefficients $\gamma_i$ all lie in $\K_{>0}$. 
\end{defn}

We call $p\in(\K\setminus\{0\})^r$ a critical point of $\L$ if the vector-valued function $G(x)=\sum\gamma_i x^{\v_i}\v_i$ vanishes at~$p$. This definition is equivalent to the vanishing of all the partial derivatives of the Laurent polynomial $L$, as the components of $G(x)$ are just $x_j\frac{\partial}{\partial x_j}f$. 
 We now have the following existence and uniqueness theorem in the $\K=\KK$ setting. A related version of the existence part of this result (requiring some further assumptions) was proved earlier in \cite[Proposition 4.7]{FOOO:I}. 

\begin{theorem}[{\cite[Theorem 1.1, Corollary 8.5]{JuddRietsch:24}}]\label{t:JuddR} Let $\K=\KK$, the field of generalised Puiseaux series. Let $\L=\sum \gamma_i x^{\v_i}$ be a positive Laurent polynomial. Then $\L$ has a unique critical point in $(\KK_{>0})^r$ if and only if the Newton polytope of $\L$  is full-dimensional with zero in the interior. We call this point \emph {the positive critical point} of $\L$. 
\end{theorem}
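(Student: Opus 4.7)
The plan is to reduce the $\KK$-valued statement to the classical real case by exploiting the Puiseaux structure, handling the ``only if'' direction by elementary positivity and the ``if'' direction by tropicalisation followed by Hensel-style lifting.

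\textbf{``Only if'' direction.} If $0 \notin \operatorname{interior}(\Newton(\L))$, a hyperplane separation yields a nonzero linear functional $\phi \colon \R^r \to \R$ with $\phi(\v_i) \ge 0$ for every $i$ and $\phi(\v_{i_0}) > 0$ for some $i_0$. Then
\[
\phi \cdot G(x) \;=\; \sum_i \gamma_i\, \phi(\v_i)\, x^{\v_i}
\]
is a $\KK_{\ge 0}$-combination of positive monomials with at least one strictly positive term, hence cannot vanish on $(\KK_{>0})^r$. If instead $\Newton(\L)$ is contained in a proper linear subspace $H \subsetneq \R^r$ through the origin, then $G(x)$ takes values in $H$, yielding fewer than $r$ independent equations and a positive-dimensional positive critical locus.

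\textbf{``If'' direction.} Assume $0 \in \operatorname{interior}(\Newton(\L))$. The argument proceeds in three steps.

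\emph{Step 1 (classical real case).} When $\gamma_i \in \R_{>0}$, the substitution $x_j = e^{y_j}$ converts $\L$ into $\widetilde{\L}(y) = \sum_i \gamma_i e^{\v_i \cdot y}$, which is strictly convex, and is made coercive in every direction by the interior-$0$ hypothesis. Hence $\widetilde{\L}$ has a unique critical point, with positive definite Hessian, giving a unique positive real critical point of $\L$.

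\emph{Step 2 (tropical critical equations).} Write $\gamma_i = c_i t^{a_i} + (\text{higher terms})$ with $c_i \in \R_{>0}$ and $a_i = \Val(\gamma_i)$, and seek a critical point of the form $p_j = q_j t^{\xi_j} + (\text{higher terms})$ with $q_j \in \R_{>0}$, $\xi_j \in \R$. Taking leading terms of $G(p) = 0$ forces the tropical critical condition: the minimising set
\[
S(\xi) \;=\; \bigl\{\, i \,:\, a_i + \v_i \cdot \xi = \min_j(a_j + \v_j \cdot \xi)\,\bigr\}
\]
must satisfy $0 \in \operatorname{interior}(\operatorname{conv}\{\v_i : i \in S(\xi)\})$. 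Through the Legendre/tropical duality between $\Newton(\L)$ and the regular subdivision induced by the heights $(a_i)$, the hypothesis $0 \in \operatorname{interior}(\Newton(\L))$ produces exactly one $\xi^* \in \R^r$ with this property, and at $\xi^*$ the convex hull of $\{\v_i : i \in S(\xi^*)\}$ is full-dimensional.

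\emph{Step 3 (lifting).} Given $\xi^*$, the leading-coefficient equation $\sum_{i \in S(\xi^*)} c_i\, q^{\v_i}\, \v_i = 0$ is the critical equation of a real positive Laurent polynomial whose Newton polytope contains $0$ in its interior; Step~1 provides a unique $q \in (\R_{>0})^r$ with positive definite Hessian. Successive higher-order coefficients of $p$ are then constructed by formal Newton iteration: at each stage the linear system is governed by the (invertible) Hessian at $q$, so one solves for the next coefficient in $\R$ and gains positive $t$-adic precision. Completeness of $\KK$ in the $t$-adic norm yields convergence to a genuine critical point $p \in (\KK_{>0})^r$, and uniqueness is inherited stage by stage.

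\textbf{Main obstacle.} The technical heart of the proof is Step~2: proving existence and uniqueness of $\xi^* \in \R^r$ satisfying the tropical critical condition, and simultaneously verifying that at $\xi^*$ the set $S(\xi^*)$ picks out a configuration whose convex hull is full-dimensional with $0$ in its interior. This requires a careful dual analysis of the piecewise linear concave function $\min_i(a_i + \v_i \cdot \xi)$ and is precisely where the interior-$0$ hypothesis must be exploited twice---once to locate $\xi^*$ globally, and once to guarantee that Step~3 reduces to a non-degenerate strictly convex real problem. A secondary difficulty is controlling the valuations of the successive corrections in the Newton iteration to ensure $t$-adic convergence within the semifield $\KK_{>0}$ rather than merely in $\KK$.
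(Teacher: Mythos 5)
First, a point of reference: this paper does not prove Theorem~\ref{t:JuddR} at all --- it is imported verbatim from \cite{JuddRietsch:24} --- so your proposal can only be judged against that cited proof. Your ``only if'' direction is essentially fine, up to a small repair: when all $\v_i$ lie in a proper linear subspace, the positive critical locus could also be empty, so rather than asserting it is positive-dimensional you should argue that invariance of $\L$ under the positive one-parameter subgroup attached to an integer vector orthogonal to all the $\v_i$ makes the critical set either empty or infinite; either way uniqueness-plus-existence fails.

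The genuine gap is in Steps 2--3. The correct leading-order solvability condition at a candidate valuation vector $\xi$ is $0\in\operatorname{relint}\operatorname{conv}\{\v_i : i\in S(\xi)\}$, and there is no reason for this hull to be full-dimensional at the true critical point. Concretely, take $r=2$ and $\L=\gamma_1x_1+\gamma_2x_1^{-1}+\gamma_3x_2+\gamma_4x_2^{-1}$: the Newton polytope is the full-dimensional diamond with $0$ in its interior, but at the unique positive critical point one has $\xi_1^*=\tfrac12(\Val(\gamma_2)-\Val(\gamma_1))$, $\xi_2^*=\tfrac12(\Val(\gamma_4)-\Val(\gamma_3))$, and unless $\Val(\gamma_1)+\Val(\gamma_2)=\Val(\gamma_3)+\Val(\gamma_4)$ the minimising set $S(\xi^*)$ consists of only the two $x_1$-monomials (or only the two $x_2$-monomials), whose hull is a segment. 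So your Step~2 claim that there is exactly one $\xi^*$ with $0$ in the \emph{interior} of $\operatorname{conv}\{\v_i : i\in S(\xi^*)\}$, with that hull full-dimensional, is false for generic valuations --- in this example no $\xi$ satisfies it. Step~3 collapses with it: the leading-coefficient equation is then the critical equation of a degenerate real Laurent polynomial (in the example $q_2$ is completely undetermined at leading order), its Hessian is only positive semidefinite, and the ``formal Newton iteration governed by the invertible Hessian'' has no invertible Hessian to use. The directions transverse to the span of $\{\v_i : i\in S(\xi^*)\}$ are only pinned down at strictly higher valuations, when monomials outside $S(\xi^*)$ enter, so both existence and the claimed stage-by-stage uniqueness require a multi-level, recursive analysis of the regular subdivision induced by the heights $\Val(\gamma_i)$ rather than a single Hensel-style lift; this recursive structure is precisely what makes the proof in \cite{JuddRietsch:24} nontrivial, and it is missing from (indeed contradicted by) your outline.
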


This theorem was inspired by the analogous result over $\K=\R$ from \cite{Galkin}. We observe that the above Theorem has the following corollary. 

\begin{cor}\label{c:unique} Fix $m$ exponent vectors $\mathrm v_i$, where $i=1,\dotsc m$. Let $\L_{\gamma}=\sum \gamma_i x^{\v_i}$ be a positive Laurent polynomial in $r$ variables, where $\gamma$ denotes the  vector $\gamma=(\gamma_i)_i$ of  coefficients, $\gamma_i\in\KK_{>0}$. The following statements are equivalent. 
\begin{enumerate}
\item The Laurent polynomial $\L_{\gamma}$ has a unique critical point in $(\KK_{>0})^r$ for some choice of $\gamma\in\KK_{>0}^m$.
\item The Laurent polynomial $\L_\gamma$ has a unique critical point in $(\KK_{>0})^r$ for any choice of $\gamma\in\KK_{>0}^m$.
\item The Laurent polynomial $\L_{\gamma}$ has a unique critical point in $(\R_{>0})^r$ for some choice of $\gamma\in\R^m_{>0}$.
\end{enumerate}
Namely, these statements are all equivalent to $L_\gamma$ having a Newton polytope that is full-dimensional with $0$ in the interior.
\end{cor}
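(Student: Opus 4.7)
The plan is to observe that the Newton polytope of $\L_\gamma$ depends only on the fixed exponent vectors $\v_1,\dotsc,\v_m$ and not on the coefficients $\gamma=(\gamma_i)_i$ (since all $\gamma_i$ are nonzero, being in $\K_{>0}$). Thus the Newton polytope condition in \cref{t:JuddR} is an intrinsic property of the exponent set, and we can apply \cref{t:JuddR} uniformly across different choices of $\gamma$.

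First I would handle the equivalence of (1) and (2) directly via \cref{t:JuddR}. Assuming (1), there is some $\gamma\in\KK_{>0}^m$ for which $\L_\gamma$ has a unique critical point in $(\KK_{>0})^r$; by \cref{t:JuddR}, the Newton polytope $\ConvexHull(\v_1,\dotsc,\v_m)$ is full-dimensional with $0$ in its interior. Since this condition does not depend on $\gamma$, applying the other direction of \cref{t:JuddR} to any other $\gamma'\in\KK_{>0}^m$ gives (2). The implication (2)$\Rightarrow$(1) is trivial.

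Next I would connect to (3) by using the natural inclusion $\R_{>0}\hookrightarrow \KK_{>0}$ (constants as generalised Puiseaux series). The analogue of \cref{t:JuddR} over $\R$ is the theorem of Galkin cited in the paragraph preceding the corollary, giving existence and uniqueness of a positive critical point for a real positive Laurent polynomial if and only if the Newton polytope is full-dimensional with $0$ in the interior. So (3) is also equivalent to the same Newton polytope condition, and therefore equivalent to (1) and (2).

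There is no real obstacle here; the point of the corollary is precisely that the Newton polytope characterisation from \cref{t:JuddR} is coefficient-independent, so the statement follows by chaining the $\KK$-version and the $\R$-version of the same criterion. The only thing to note carefully is that in (3) we are given some $\gamma\in\R_{>0}^m\subset \KK_{>0}^m$, and that a critical point in $(\R_{>0})^r$ is in particular a critical point in $(\KK_{>0})^r$; combined with \cref{t:JuddR}'s uniqueness, this ensures that the real positive critical point is the unique positive critical point over $\KK$ as well, which is consistent with the above chain of implications.
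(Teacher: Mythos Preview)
Your overall strategy---reduce everything to the coefficient-independent Newton polytope criterion---is exactly the paper's approach, and your handling of (1)$\Leftrightarrow$(2) via \cref{t:JuddR} matches the paper verbatim.

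There is one soft spot in your treatment of (3). You assert that Galkin's result gives the full ``if and only if'' over~$\R$, i.e.\ that a unique positive real critical point forces the Newton polytope to be full-dimensional with $0$ in its interior. The paper does \emph{not} take this for granted: it uses Galkin only for the direction (Newton polytope condition)$\Rightarrow$(3), and for the converse (3)$\Rightarrow$(Newton polytope condition) it explicitly invokes that the proofs of Lemmas~8.3 and~8.4 in \cite{JuddRietsch:24} carry over verbatim to~$\R$. Your alternative route in the last paragraph---pushing the real critical point into $(\KK_{>0})^r$ via the inclusion $\R_{>0}\hookrightarrow\KK_{>0}$ and then appealing to \cref{t:JuddR}---does not close this gap: the inclusion gives you \emph{existence} of a critical point in $(\KK_{>0})^r$, but \cref{t:JuddR} does not let you conclude \emph{uniqueness} over~$\KK$ from existence alone; uniqueness over~$\KK$ is equivalent to the Newton polytope condition, which is precisely what you are trying to establish. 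So that argument is circular as written. Either cite the converse direction over~$\R$ properly (as the paper does), or give the easy direct argument that absence of the Newton polytope condition rules out a unique positive critical point.
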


\begin{proof}
Assume (1) holds. If $\L_\gamma$ has a unique positive critical point then by \cref{t:JuddR} its Newton polytope is full-dimensional with $0$ in the interior. On the other hand, this Newton polytope does not depend on the $\gamma_i$. Therefore $\L_\gamma$  has full-dimensional Newton polytope with zero in the interior for any choice of $\gamma$. Now Theorem~\ref{t:JuddR} implies (2) and since (2) clearly implies (1) these two statements are equivalent. (1) and (2) imply that the Newton polytope of $\L_{\gamma}$ is full-dimensional with $0$ in its interior for any $\gamma$, including $\gamma\in(\R_{>0})^m$. With the result of \cite{Galkin} this implies (3). Finally, the proofs of Lemmas~8.3 and 8.4 in \cite{JuddRietsch:24} work verbatim over $\R$ and show that if a Laurent polynomial with coefficients in $\R_{>0}$ has a unique positive critical point, then its Newton polytope must be full-dimensional with $0$ in the interior. This shows that (3) implies (1) and (2). \end{proof}

We can now ask about the dependence of the positive critical point of $\L_{\gamma}$ on the choice of $\gamma$.

\begin{prop} \label{p:continuity}
Consider the field $\KK$ of generalised Puiseaux series with its $t$-adic topology. Let $\L_{\gamma}=\sum_{i=0}^m \gamma_i x^{\v_i}$ be a positive Laurent polynomial in $r$ variables depending on $\gamma=(\gamma_i)_i\in(\KK_{>0})^m$ such that $\L_\gamma$ has a unique positive critical point for a/any $\gamma$. Then the map 
\[ 
p_{crit}:\KK_{>0}^m\to\KK_{>0}^r
\]
sending $\gamma=(\gamma_i)_i$ to the unique critical point of $\L_\gamma$ is continuous for the $t$-adic topology. 
\end{prop}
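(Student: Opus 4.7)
The plan is to apply a non-archimedean implicit function theorem to the critical-point equations $F(\gamma,x)=0$, where $F_j(\gamma,x) := \sum_i \gamma_i \v_{i,j}\,x^{\v_i}$ for $j=1,\dots,r$. Since $\KK$ is complete for the $t$-adic norm (\cref{ex:genPuiseux}), once invertibility of the $x$-Jacobian of $F$ at the positive critical point is established, a standard Newton-iteration argument will produce a $t$-adically continuous local solution branch which must agree with $p_{crit}$ by \cref{t:JuddR}.

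Fix $\gamma_0 \in \KK_{>0}^m$ and let $p_0 := p_{crit}(\gamma_0)$. A direct calculation gives $x_k\,\partial F_j/\partial x_k = \sum_i \gamma_i \v_{i,j}\v_{i,k}\,x^{\v_i}$, so after rescaling the $k$-th column of the $x$-Jacobian by the nonzero scalar $p_{0,k}$, the resulting matrix at $(\gamma_0,p_0)$ equals $V^{T} D V$, where $V$ is the $m\times r$ matrix with rows $\v_i$ and $D = \operatorname{diag}(c_i)$ with $c_i := \gamma_{0,i}\,p_0^{\v_i} \in \KK_{>0}$. By the Cauchy--Binet formula,
\[
\det(V^{T} D V) \;=\; \sum_{|S|=r}\det(V_S)^2 \prod_{i\in S} c_i,
\]
where the sum ranges over $r$-subsets $S\subseteq\{1,\dots,m\}$ and $V_S$ is the corresponding $r\times r$ submatrix of $V$. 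By \cref{t:JuddR} the Newton polytope of $\L_{\gamma_0}$ is full-dimensional with $0$ in its interior, so the $\v_i$ linearly span $\R^r$ and some $\det(V_S)\neq 0$. The corresponding summand lies in $\KK_{>0}$ while every other summand lies in $\KK_{\ge 0}$, so the total sum lies in $\KK_{>0}$; in particular, the $x$-Jacobian of $F$ at $(\gamma_0,p_0)$ is invertible over $\KK$.

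A non-archimedean implicit function theorem then yields a $t$-adic neighborhood $U$ of $\gamma_0$ in $\KK^m$ and a continuous map $\tilde p : U \to \KK^r$ with $\tilde p(\gamma_0)=p_0$ and $F(\gamma,\tilde p(\gamma))=0$ for all $\gamma\in U$. Shrinking $U$ so that $\Val\bigl(\tilde p(\gamma)_k - p_{0,k}\bigr) > \Val(p_{0,k})$ for every $k$, each component $\tilde p(\gamma)_k$ inherits the (positive) leading coefficient of $p_{0,k}$, so $\tilde p(\gamma)\in\KK_{>0}^r$. Uniqueness of the positive critical point (\cref{t:JuddR}) then forces $\tilde p(\gamma)=p_{crit}(\gamma)$ on $U\cap\KK_{>0}^m$, and continuity of $p_{crit}$ at $\gamma_0$ follows. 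The main obstacle will be invoking the non-archimedean implicit function theorem in the precise setting of generalised Puiseaux series; once the Cauchy--Binet computation above supplies invertibility of the Jacobian, however, this reduces to standard Newton iteration in a complete valued field, and the positivity of the coefficients $c_i$ is exactly what enters at this step.
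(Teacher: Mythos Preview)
Your argument is sound, and it is genuinely different from what the paper does. The paper does not actually prove \cref{p:continuity}: immediately after stating it, the author writes that the proof ``is highly dependent on the notations and constructions from \cite{JuddRietsch:24} and will be proved in a separate note, \cite{R:CritCont}.'' So the paper's intended argument evidently tracks through the explicit recursive construction of the positive critical point from \cite{JuddRietsch:24} and verifies continuity step by step; this is why it requires that paper's internal machinery and is deferred.

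Your route bypasses all of that. The Cauchy--Binet computation showing $\det(V^T D V)\in\KK_{>0}$ is clean and correct: full-dimensionality of the Newton polytope with $0$ in the interior forces the $\v_i$ to span $\R^r$ linearly, so some $\det(V_S)$ is nonzero, and positivity of the $c_i$ makes the whole sum lie in $\KK_{>0}$. With the $x$-Jacobian invertible, the non-archimedean implicit function theorem over the complete valued field $\KK$ applies directly (the critical-point map $F$ is Laurent-polynomial, hence analytic near $p_0\in(\KK^\times)^r$), and your observation that a small enough $t$-adic ball around $p_0$ stays inside $\KK_{>0}^r$ lets you invoke the uniqueness half of \cref{t:JuddR} to identify $\tilde p$ with $p_{crit}$. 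Note also that $\KK_{>0}$ is $t$-adically open in $\KK$, so there is no issue restricting to $U\cap\KK_{>0}^m$.

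What each approach buys: the paper's deferred proof presumably yields more, since the construction in \cite{JuddRietsch:24} controls the valuation of the critical point explicitly (this is exactly what is used in the proof of \cref{t:TropFinite}). Your argument is shorter, self-contained, and does not require opening the black box of that construction, but it gives only local continuity with no direct information about $\Val(p_{crit}(\gamma))$. For the purposes of this proposition alone, your proof is complete; you should simply cite a precise source for the non-archimedean implicit function theorem (e.g.\ Serre, \emph{Lie Algebras and Lie Groups}, Part~II, or Igusa's book on local zeta functions), since that is the one external input.
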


The proof of this proposition is highly dependent on the notations and constructions from \cite{JuddRietsch:24} and will be proved in a separate note, \cite{R:CritCont}. We record the following corollary.

\begin{cor}\label{c:continuity}
Let $\L_d=\sum_{i=0}^m \gamma_i(d) x^{\v_i}$ be a family of Laurent polynomials over $\KK$ in $r$ variables, where each $\gamma_i$ is continuous map from $\KK_{>0}^s\to\KK_{>0}$. If $\L_d$ has a unique positive critical point for a/any $d\in\KK_{>0}^s$, then the map
\[ 
p_{crit}:\KK_{>0}^s\to\KK_{>0}^r
\]
sending $d=(d_i)_i$ to the unique critical point of $\L_d$ is continuous for the $t$-adic topology. 
\end{cor}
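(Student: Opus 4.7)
The plan is to deduce this corollary directly from Proposition \ref{p:continuity} by viewing the parameters $d$ as being fed in through a continuous change of coefficients.

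First, I would note that the assignment $d \mapsto (\gamma_0(d),\dotsc,\gamma_m(d))$ defines a map $\Gamma : \KK_{>0}^s \to \KK_{>0}^m$ which is continuous for the product $t$-adic topology, since each component $\gamma_i : \KK_{>0}^s \to \KK_{>0}$ is continuous by hypothesis. Second, for each fixed $\gamma \in \KK_{>0}^m$ in the image of $\Gamma$, the Laurent polynomial $\sum \gamma_i x^{\v_i}$ has a unique positive critical point (since $\L_d$ does for every $d$), and therefore by \cref{c:unique} the Newton polytope of $\sum \gamma_i x^{\v_i}$ is full-dimensional with $0$ in its interior. In particular this condition depends only on the exponent vectors $\v_i$ and not on $\gamma$, so it holds for \emph{all} $\gamma \in \KK_{>0}^m$. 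Hence \cref{p:continuity} applies to the whole family parametrised by $\KK_{>0}^m$, giving a continuous positive critical point map $p_{crit}^{\gamma} : \KK_{>0}^m \to \KK_{>0}^r$.

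The map in question is then simply the composition
\[
p_{crit} = p_{crit}^{\gamma} \circ \Gamma : \KK_{>0}^s \longrightarrow \KK_{>0}^m \longrightarrow \KK_{>0}^r,
\]
which is continuous as a composition of continuous maps. I do not anticipate any real obstacle here, as the statement is essentially a reparametrisation of \cref{p:continuity}; the only subtlety worth spelling out is that the hypothesis on uniqueness of positive critical points transfers from the $d$-parametrisation to arbitrary coefficient vectors $\gamma \in \KK_{>0}^m$ via \cref{c:unique}, ensuring that \cref{p:continuity} can be invoked unconditionally on the intermediate space $\KK_{>0}^m$.
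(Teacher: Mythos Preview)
Your proposal is correct and follows essentially the same approach as the paper: both define the continuous coefficient map $\Gamma:\KK_{>0}^s\to\KK_{>0}^m$ and obtain $p_{crit}$ as its composition with the continuous map from \cref{p:continuity}. Your extra paragraph invoking \cref{c:unique} to justify that \cref{p:continuity} applies on all of $\KK_{>0}^m$ makes explicit what the paper leaves implicit in the phrase ``a/any''; this is a helpful clarification, not a different argument.
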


\begin{proof}
The coefficients $\gamma_i$ taken together define a continuous map $(\gamma_i)_{i=1}^{m}:\KK_{>0}^{s}\to\KK_{>0}^{m}$. The map $p_{crit}:\KK_{>0}^s\to\KK_{>0}^r
$ is the composition of this continuous map $\gamma$ with the map shown to be continuous in  \cref{p:continuity}.
\end{proof}
\subsection{}
We now apply the results from \cref{s:JR} in our setting and finish the proof of \cref{t:PuiseauxFinite}. Recall the notations from 
\cref{s:LieMirror}. 
\begin{prop}\label{p:WdNP}
Consider a fibered positive chart $\phi_{>0}$ sending $(d,z)\in T(\K_{>0})\times \K_{>0}^N$ to $\phi_{>0}(d,z)\in Z_{d}(\K_{>0})$. Let $d_1,\dotsc, d_{n+1}$ denote the coordinates of $d$. We obtain a family of Laurent polynomials $\mathcal W_{d}(z)=(W\circ\phi_{>0})(d,z)$ 
whose coefficients are positive Laurent polynomials in  $q_i=\frac{d_{i+1}}{d_i}$, and we write 
\[
\mathcal W_{q_\bullet}(z):=\mathcal W_{d}(z)=W\circ\phi_{>0}(d,z),
\] 
where $q_\bullet=(q_1,\dotsc, q_n)\in\K_{>0}^n$. The Newton polytope of the Laurent polynomial $\mathcal W_{q_\bullet}$ is full-dimensional with $0$ in the interior and independent of the choice of $q_\bullet$.  
\end{prop}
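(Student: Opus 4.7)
My plan is to prove the three claims of the proposition---that the coefficients $c_\nu(d)$ in the expansion $\mathcal{W}_d(z) = \sum_\nu c_\nu(d) z^\nu$ are positive Laurent polynomials in the $q_i$, that the Newton polytope of $\mathcal{W}_{q_\bullet}$ is independent of $q_\bullet$, and that it is full-dimensional with $0$ in its interior---by combining \cref{l:Winposchart}, the invariance of $W$ under central scalar multiplication, and \cref{c:unique} together with the real parametrisation \cref{t:ZcritposR}.

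First, by \cref{l:Winposchart}, $\mathcal{W}_d(z) = W\circ\phi_{>0}(d,z)$ is a Laurent polynomial in the combined chart variables with positive integer coefficients, so each $c_\nu(d)$ is a priori a positive Laurent polynomial in $d$. To upgrade this to dependence only on $q_i = d_{i+1}/d_i$, I would verify that $W$ is invariant under the scaling action of the central subgroup $\{\lambda I\}\subset GL_{n+1}$: from the factorisation $b = u_L d \bar w_0 u_R$ one reads off $\lambda b = u_L(\lambda d)\bar w_0 u_R$ since $\lambda I$ is central, so $u_L$ and $u_R$ are unchanged and $W(\lambda b)=W(b)$. Because the fibered positive chart satisfies $\phi_{>0}(\lambda d,z) = \lambda\phi_{>0}(d,z)$---immediate for the standard product chart $\chi_{>0}\times\phi_{\mathbf{i},>0}$ and preserved under the bi-admissible transformations relating fibered positive charts---this translates into $\mathcal{W}_{\lambda d}(z)=\mathcal{W}_d(z)$. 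Hence each $c_\nu(d)$ is invariant under simultaneous rescaling of all $d_i$ and so depends only on the ratios $q_i$, and is in particular a positive Laurent polynomial in the $q_i$.

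The $q_\bullet$-independence of the Newton polytope is then immediate: a nonzero positive Laurent polynomial in the $q_i$ with $\Z_{>0}$ coefficients evaluates to a nonzero element of $\K_{>0}$ at every point of $\K_{>0}^n$, so the support $\{\nu : c_\nu(q_\bullet)\neq 0\}$ is the same for every choice of $q_\bullet$.

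Finally, for the full-dimensional-with-$0$-in-the-interior claim, I would invoke \cref{c:unique}. By \cref{t:ZcritposR}, for any $d\in T(\R_{>0})$ the superpotential $W_d = W|_{Z_d}$ has a unique positive critical point in $Z_d(\R_{>0})$, namely the unique totally positive Toeplitz matrix in that fibre. Transporting along the bijection $\phi_{d,>0}\colon \R_{>0}^N\overset{\sim}{\longrightarrow} Z_d(\R_{>0})$, the Laurent polynomial $\mathcal{W}_d$ acquires a unique positive critical point in $(\R_{>0})^N$; the equivalence of conditions in \cref{c:unique} then forces the Newton polytope of $\mathcal{W}_{q_\bullet}$ to be full-dimensional with $0$ in the interior. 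The most delicate step is the central-scaling compatibility of arbitrary fibered positive charts needed in the first paragraph: this is clear for concrete product charts but has to be tracked through the bi-admissible transformations relating equivalent fibered positive charts, i.e.\ one needs such transformations to preserve the identity $\phi_{>0}(\lambda d,z)=\lambda\phi_{>0}(d,z)$.
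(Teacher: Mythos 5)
Your proof follows essentially the same route as the paper's: \cref{l:Winposchart} supplies the positive-integer Laurent polynomiality, the central-scaling invariance $W(\lambda b)=W(b)$ (which you derive explicitly from the factorisation; the paper cites \cref{d:GLmirror}) reduces the $d$-dependence to the ratios $q_i$, and then \cref{t:ZcritposR} together with \cref{c:unique} delivers the Newton polytope conclusion. One small omission: the paper also invokes \cref{t:Zcrit}, which you should cite too, since it is what identifies positive critical points of $W_d$ with totally positive Toeplitz matrices in the fibre.

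You are right to flag the chart compatibility as the delicate step, and in fact it is a genuine gap in your argument as written: the identity $\phi_{>0}(\lambda d,z)=\lambda\phi_{>0}(d,z)$ is \emph{not} preserved by arbitrary bi-admissible transformations between fibered positive charts. For example, starting from the product chart $\chi_{>0}\times\phi_{\mathbf i,>0}$ and substituting $z_1\mapsto d_1 z_1$ produces another fibered positive chart in the sense of \cref{d:poschartZ} (bi-admissibly related, $\mathbf d\circ\phi_{>0}$ is still just projection to $d$, and the fibre maps are bijections), yet $\phi_{>0}(\lambda d,z)\neq\lambda\phi_{>0}(d,z)$, and the resulting $\mathcal W_d$ acquires a $d_1^{-\nu_1}$ factor in its coefficient of $z^\nu$ that does not descend to a Laurent polynomial in the $q_i$. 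The paper's own proof silently passes over this, asserting $\mathcal W_d=\mathcal W_{zd}$ directly from $W(b)=W(zb)$; the argument really needs to be run in a specific central-scaling-compatible chart, such as the one from \cite{rietsch}. That said, the portion of the conclusion actually used in \cref{c:WdNP} --- that the Newton polytope is full-dimensional with $0$ in the interior and independent of $d$ --- is insensitive to this issue, since multiplying coefficients by $d_1$-monomials cannot alter the support of $\mathcal W_d$, so the downstream results are unaffected.
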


\begin{remark}
An example of a fibered positive chart is given  in \cite[Section~9]{rietsch}. The associated Laurent polynomial $\mathcal W_d$ is also computed in  \cite{rietsch}. Namely, it is shown to recover the Laurent polynomial superpotential for $SL_{n+1}/B_+$ proposed by Givental~\cite{Givental:QToda}.     
\end{remark}
Without reference to this specific chart \cref{p:WdNP} can be proved as follows. 
\begin{proof}[Proof of \cref{p:WdNP}]
By \cref{l:Winposchart} we have that the composition $W\circ\phi_{>0}(d,z)$ is a Laurent polynomial with positive integer coefficients. Therefore $\mathcal W_d$ is a Laurent polynomial whose coefficients are Laurent polynomials with positive integer coefficients in the coordinates of $d$. We may change coordinates unimodularly from $(d_1,\dotsc, d_{n+1})$ to $(d_1,q_1,\dotsc, q_n)$. It follows from \cref{d:GLmirror} that we have $W(b)=W(zb)$ for any central element $z$, so that $\mathcal W_d=\mathcal W_{zd}$ is independent of rescaling of $d$. Therefore the coefficients of $\mathcal W_d$ must actually be positive Laurent polynomials in the parameters $q_i(d)$. This implies the first claim. From the positivity property it also follows that the Newton polytope of $\mathcal W_d$ does not depend on the choice of $d\in T(\K_{>0})$. Now suppose $\K_{>0}=\R_{>0}$ and choose $d$ to be the identity matrix. Then $\mathcal W_d$ has a unique \textit{real} positive critical point by the combination of Theorems~\ref{t:Zcrit} and \ref{t:ZcritposR}. (Namely this critical point represents the unique $b\in\Toeplitz_{n+1}(\R_{>0})$ with $\mathbf d(b)$ equal to the identity matrix). \cref{c:unique} implies that the Newton polytope of $\mathcal W_d$ is full-dimensional with $0$ in the interior. 
\end{proof}

The proof of the following corollary will complete the proof of \cref{t:PuiseauxFinite}. Note that the restriction  $\mathbf d|_{\Toeplitz_{n+1}(\KK_{>0})}$ is precisely the map $\Delta^{GL}_{>0}$ from \cref{t:PuiseauxFinite}. 
\begin{cor}\label{c:WdNP}
Let $\K=\KK$ and $\phi_{>0}$ a fibered positive chart for $Z$. For any $d\in T(\KK_{>0})$ the Laurent polynomial $\mathcal W_{d}(z)=W\circ\phi_{>0}(d,z)$ has a unique critical point $p_{crit}(d)$ in $\KK_{>0}^N$. This critical point represents a Toepliz matrix, namely 
\[
\phi_{>0}(d,p_{crit}(d))\in \Toeplitz_{n+1}(\KK_{>0}).
\]
Moreover, we have a homeomorphism
\[
\begin{array}{cccc}
b_{crit}:& T(\KK_{>0})&\to& \Toeplitz_{n+1}(\KK_{>0})\\
& d &\mapsto & \phi_{>0}(d,p_{crit}(d)),
\end{array}
\]
and the map $b_{crit}$ is the inverse of $\mathbf d|_{\Toeplitz_{n+1}(\KK_{>0})}$. 
\end{cor}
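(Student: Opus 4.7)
The plan is to string together several results from earlier in the section and finish with a continuity argument. First, combining Proposition~\ref{p:WdNP} with Theorem~\ref{t:JuddR}: for every $d\in T(\KK_{>0})$, the positive Laurent polynomial $\mathcal W_d$ has full-dimensional Newton polytope containing $0$ in its interior, hence admits a unique critical point $p_{crit}(d)\in\KK_{>0}^N$. This defines the candidate map $b_{crit}(d):=\phi_{>0}(d,p_{crit}(d))$.

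Second, I would check that $b_{crit}(d)\in\Toeplitz_{n+1}(\KK_{>0})$. This follows from Theorem~\ref{t:Zcrit}: although stated over $\C$, the equality $Z^{crit}=\Toeplitz_{n+1}\cap Z$ is an equality of algebraic subvarieties of $Z$ (the left side is cut out by polynomial vanishing of the partial derivatives of $W$, the right side by polynomial Toeplitz-type equations), so it passes to $\KK$-valued points. Since $\phi_{>0}(d,\cdot)$ is a positive chart for $Z_d$, a critical point of $\mathcal W_d$ in $\KK_{>0}^N$ corresponds precisely to a critical point of $W_d$ in $Z_d(\KK_{>0})$, hence to an element of $\Toeplitz_{n+1}(\KK_{>0})\cap Z_d(\KK_{>0})$.

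Third, I would verify that $b_{crit}$ is the two-sided inverse of $\mathbf d|_{\Toeplitz_{n+1}(\KK_{>0})}$. The identity $\mathbf d(b_{crit}(d))=d$ is immediate from the fibered chart property of $\phi_{>0}$. Conversely, given $b\in\Toeplitz_{n+1}(\KK_{>0})$, set $d:=\mathbf d(b)\in T(\KK_{>0})$ (which is positive by the explicit minor formula \eqref{e:di}); then $b\in Z_d(\KK_{>0})$ is a critical point of $W_d$ by Theorem~\ref{t:Zcrit}, so in the chart $\phi_{d,>0}$ it corresponds to a positive critical point of $\mathcal W_d$. By the uniqueness established in step one, this point is $p_{crit}(d)$, whence $b=\phi_{>0}(d,p_{crit}(d))=b_{crit}(d)$. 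In particular, $\mathbf d|_{\Toeplitz_{n+1}(\KK_{>0})}$ is a bijection onto $T(\KK_{>0})$.

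Finally, for the homeomorphism statement: the map $\mathbf d|_{\Toeplitz_{n+1}(\KK_{>0})}$ is continuous by the explicit rational expression \eqref{e:di} in minors. For the reverse direction, the coefficients of $\mathcal W_d$ are Laurent polynomials in the parameters $q_i(d)$ by Proposition~\ref{p:WdNP}, hence depend continuously on $d$, so Corollary~\ref{c:continuity} gives continuity of $d\mapsto p_{crit}(d)$; composing with the continuous positive chart $\phi_{>0}$ yields continuity of $b_{crit}$. Together these make $b_{crit}$ a homeomorphism, and this simultaneously completes the proof of Theorem~\ref{t:PuiseauxFinite}. The main subtlety to watch for is the passage of Theorem~\ref{t:Zcrit} from $\C$ to $\KK$, which I would handle by emphasising that the critical-locus equality is an algebraic identity valid at the level of schemes over $\Q$; beyond that, everything reduces to citing the existence/uniqueness result of \cite{JuddRietsch:24} and the continuity statement of \cite{R:CritCont}.
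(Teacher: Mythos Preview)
Your proposal is correct and follows essentially the same approach as the paper's proof: existence/uniqueness via Proposition~\ref{p:WdNP} and Theorem~\ref{t:JuddR}, the Toeplitz identification via the algebraic (over~$\Q$) equivalence of the critical-point and Toeplitz conditions, and continuity in both directions via the explicit minor formula for $\mathbf d$ and Corollary~\ref{c:continuity}. The paper's version is terser---it cites \cite{rietschNagoya} and \cite{rietsch} directly for the algebraic equivalence rather than phrasing it as a base-change of Theorem~\ref{t:Zcrit}, and it omits your explicit two-sided inverse verification---but the logical content is the same.
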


\begin{proof}
The existence and uniqueness of the positive critical point $p_{crit}(d)$ follows from \cref{p:WdNP} and \cref{t:JuddR}. A proof that the critical point conditions for the superpotential are equivalent to the Toeplitz conditions for the matrix $\phi_{>0}(d,z)$ is given in \cite{rietschNagoya} together with \cite[Section~9]{rietsch}, algebraically over $\Q$. 
By \cref{p:WdNP} and \cref{c:continuity} we have that the map $b_{crit}$ is a well-defined continuous map. Its inverse is by construction precisely given by $\mathbf d$, which is also a continuous map. Therefore $b_{crit}$ is a homeomorphism. 
\end{proof}

Theorem~\ref{t:PuiseauxFinite} is now proved. We finish this section by explicitly connecting up the \cref{c:WdNP} with the unipotent case from \cref{r:SL-PGL-FiniteParam}. Consider the compositions
\[\begin{tikzcd}
	& {T^{SL}(\KK_{>0})} \\
	{\mathbf q:\ToeplitzU_{n+1}(\KK_{>0})} && {T^{PGL}(\KK_{>0})} & {\KK_{>0}^n.}
	\arrow["\mathrm{pr}",from=1-2, to=2-3]
	\arrow["{\Delta^{SL}_{>0}}", from=2-1, to=1-2]
	\arrow["\sim", from=2-3, to=2-4]
	\arrow["\Delta^{PGL}_{>0}", from=2-1, to=2-3]
\end{tikzcd}\]
The far right-hand isomorphism is given by choosing coordinates $q_i=\frac{d_{i+1}}{d_i}$ for $T^{PGL}(\KK_{>0})$. 
All of the maps in the above commutative diagram are homeomorphisms. The inverse of the map `$\mathrm{pr}$' is a Puiseaux monomial map in the coordinates of $T^{PGL}(\KK_{>0})$. Moreover, the inverses of $\Delta_{>0}^{SL}$ and $\Delta_{>0}^{PGL}$ can be described as `critical point maps' 
in the same way as $\Delta_{>0}^{GL}$ is in \cref{c:WdNP}. Namely, the inverse of $\Delta_{>0}^{SL}$  is given by the restriction of the map $b_{crit}$ from \cref{c:WdNP}, and for $\Delta_{>0}^{PGL}$ we have the following interpretation, noting that  the variety of totally positive upper-triangular Toeplitz matrices in $PGL_{n+1}$ (the stabiliser in $PGL_{n+1}$ of the principal nilpotent $E=\sum e_i$) is naturally identified with $\ToeplitzU_{n+1}$. 
\begin{cor}\label{c:PGLcritptmap}
Let $\mathcal W_{q_\bullet}$ be the Laurent polynomial defined in \cref{p:WdNP}, and $\bar p_{crit}:\KK_{>0}^{n}\to \KK_{>0}^N$ the map sending $q_\bullet$ to the unique positive critical point of $\mathcal W_{q_\bullet}$. 
The fibered positive chart $\phi_{>0}$ for $Z$ gives rise to a fibered positive chart for $Z^{PGL}$ via $\phi^{PGL}_{>0}(q_\bullet,z):=[\phi_{>0}(d,z)]$,
where $[g]$ denotes the element in $PGL_{n+1}(\KK)$ represented by the matrix $g$, and  $d\in T(\KK_{>0})$ is chosen such that $q_i=\frac{d_{i+1}}{d_i}$ for all $i$. Then
\begin{equation}
b_{crit}^{PGL}(q_\bullet):=\phi^{PGL}_{>0}(q_\bullet,\bar p_{\crit}(q_\bullet))
\end{equation} 
defines the inverse of $\Delta^{PGL}_{>0}$.
\end{cor}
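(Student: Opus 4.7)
The plan is to deduce this $PGL$-version from the $GL$-version \cref{c:WdNP} by descending along the projection $\pi: GL_{n+1}\to PGL_{n+1}$. The essential input is that the superpotential is invariant under central rescaling, $W(\zeta b)=W(b)$ for scalar $\zeta$, which is already behind the observation in \cref{p:WdNP} that $\mathcal W_d$ depends on $d$ only through the ratios $q_i=d_{i+1}/d_i$. So $\mathcal W_{q_\bullet}$ is a well-defined Laurent polynomial, and by \cref{t:JuddR} combined with \cref{p:WdNP} it has a unique positive critical point $\bar p_{crit}(q_\bullet)\in\KK_{>0}^N$, which coincides with $p_{crit}(d)$ from \cref{c:WdNP} for any lift $d$ of $q_\bullet$.

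I would then check that $\phi^{PGL}_{>0}$ is a well-defined fibered positive chart for $Z^{PGL}$. Two lifts $d,d'\in T^{GL}(\KK_{>0})$ of $q_\bullet$ differ by a central scalar $\zeta\in\KK_{>0}$, i.e.\ $d'=\zeta d$. For a standard product-type fibered chart of the form $\chi\times\phi_{\mathbf i,>0}$ one has the central equivariance $\phi_{>0}(\zeta d,z)=\zeta\cdot\phi_{>0}(d,z)$, so that $[\phi_{>0}(d,z)]=[\phi_{>0}(\zeta d,z)]$ in $PGL_{n+1}(\KK)$, making $\phi^{PGL}_{>0}$ independent of the choice of lift. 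Bijectivity of $\phi^{PGL}_{q_\bullet,>0}$ onto $Z^{PGL}_{q_\bullet}(\KK_{>0})$ follows because $\pi$ restricts to a bijection from $Z_d(\KK_{>0})$ onto $Z^{PGL}_{q_\bullet}(\KK_{>0})$: the central $\KK_{>0}$-action on $Z$ is free, and each central orbit meets $Z_d$ in a single point. Positivity of $\phi^{PGL}_{>0}$ against the other positive charts of $Z^{PGL}$ is inherited from that of $\phi_{>0}$ via $\pi$.

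Finally, to verify that $b^{PGL}_{crit}$ inverts $\Delta^{PGL}_{>0}$: choose any lift $d\in T^{GL}(\KK_{>0})$ of $q_\bullet$. By \cref{c:WdNP} we have $b_{crit}(d)=\phi_{>0}(d,p_{crit}(d))\in\Toeplitz_{n+1}(\KK_{>0})$ with $\mathbf d(b_{crit}(d))=d$. Projecting to $PGL_{n+1}$ and using $p_{crit}(d)=\bar p_{crit}(q_\bullet)$, we get $b^{PGL}_{crit}(q_\bullet)=[b_{crit}(d)]\in\ToeplitzU_{n+1}(\KK_{>0})$ under the identification of Toeplitz matrices in $PGL_{n+1}$ with $\ToeplitzU_{n+1}$, and $\Delta^{PGL}_{>0}(b^{PGL}_{crit}(q_\bullet))=[d]=q_\bullet$. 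Since $\Delta^{PGL}_{>0}$ is a homeomorphism by \cref{r:SL-PGL-FiniteParam}, this right inverse is the two-sided inverse.

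The main anticipated subtlety is the central equivariance $\phi_{>0}(\zeta d,z)=\zeta\cdot\phi_{>0}(d,z)$ used in Step~2: it holds transparently for product-type charts $\chi\times\phi_{\mathbf i,>0}$, and is the natural compatibility condition to impose on a fibered positive chart for it to descend along $\pi$; once this equivariance is in hand, the rest of the argument is a direct transfer of \cref{c:WdNP} along the quotient $GL_{n+1}\to PGL_{n+1}$.
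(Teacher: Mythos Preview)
Your proposal is correct and takes essentially the same approach as the paper: the paper's own proof is the one-line statement that the corollary is a straightforward consequence of \cref{c:WdNP} together with \cref{p:WdNP}, and you have simply spelled out those straightforward details (central invariance of $W$ so that $p_{crit}(d)=\bar p_{crit}(q_\bullet)$, well-definedness of $\phi^{PGL}_{>0}$ via central equivariance, and descent of the inverse along $GL_{n+1}\to PGL_{n+1}$).
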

\begin{proof}
This corollary is a straightforward consequence of \cref{c:WdNP} together with \cref{p:WdNP}. 
\end{proof}

\section{Parameterization theorem for $\ToeplitzU_{n+1}(\R_{\min{}})$}\label{s:TropToeplitzFinite}

In Section~\ref{s:ToeplitzFinite} we considered totally positive Toeplitz matrices over the field of generalised Puiseaux series. In this next section we tropicalise the results from \cref{s:ToeplitzFinite}. Namely, we wish to  replace $\KK_{>0}$ by the semifield $\R_{\min}$. We begin by recalling the definition of $U_+(\R_{\min{}})$ following Lusztig \cite{Lusztig94,Lusztig97,Lusztig:QuantumGroupsBook}, who first  introduced this set in a positive integral version, $U_+(\Z^{\ge 0}_{\min{}})$, in order to parametrise the canonical basis of the (Langlands dual) quantized universal enveloping algebra $\mathcal U_q^-$. 

\subsection{}\label{s:tropdefs}
Recall the definition of a positive structure from \cref{s:posstructure}. We describe the tropicalisation construction in the form that we will apply it.

\begin{defn}[Tropicalisation]\label{d:tropgen} Let $(\RR,\RR_{>0},\Val)$ be a ring with valuative positive structure. Let us assume additionally that $\Val:\RR_{>0}\to\Ktrop$  is surjective for simplicity (otherwise $\Rmin$ should be replaced with the image of $\Val$ later on). Let $X$ be a variety with a positive structure as in \cref{d:posstructure}.  Suppose $\phi_{>0}: \RR_{>0}^N\to X(\RR_{>0})$ is one of the positive charts. Let $u=\phi_{>0}(z_1,\dotsc, z_N)$ and $u'=\phi_{>0}(z'_1,\dotsc, z'_N)$. We define an equivalence relation on $X(\RR_{>0})$ by
\[
u\sim u'\quad  \iff\quad \Val(z_i)=\Val(z'_i)\quad \text{ for $i=1\dotsc, N$}.
\]  
It follows from the fact that different positive charts are related by admissible transformations that the equivalence relation $\sim$ is well-defined and independent of the choice of chart $\phi_{>0}$. We define $X(\Ktrop)$ as the quotient space $X(\RR_{>0})/\sim$. We use the word `tropicalising' to mean `quotienting by $\sim$'. Note that with this definition comes a quotient map $X(\RR_{>0})\to X(\Rmin)$ that we may call $\pitrop$. 

For every positive chart $\phi_{>0}$ of $X$ there is a \textit{tropical chart}, which is given by the bijection 
\[
\begin{array}{cccc}
\phi_{trop}:&X(\Ktrop)&\longrightarrow &\Ktrop^{N}\\
&{[\phi_{>0}(z_1,\dotsc,z_N)]}&\mapsto&(\Val(z_1),\dotsc, \Val(z_N)).
\end{array}
\]
The bi-admissible transformations between different positive charts of $X(\RR_{>0})$ tropicalise to bi-admissible transformations of $\Ktrop^N$. Note that an admissible map over $\Ktrop$ is a piecewise-linear map constructed out of $+,-$ and $\min$.

More generally we have that any admissible map  $X\to X'$ between spaces with positive structures tropicalises to a map $X(\Ktrop)\to X'(\Ktrop)$ which, in terms of any choice of tropical charts, is given by an admissible map over $\Rmin$.
 \end{defn}

\begin{defn}\label{d:subspacetrop} For any subset $Y$ of $X$ we may set $Y(\RR_{>0}):=Y\cap X(\RR_{>0})$ and define  $Y(\Ktrop)$ to be the image of the composition
\[
Y(\RR_{>0})\hookrightarrow X(\RR_{>0}) \rightarrow X(\Ktrop).
\]  
Note that $Y$ is not assumed to have a positive structure of its own, and in this case $Y(\Ktrop)$ does not have tropical charts. It is just a subset of $X(\Ktrop)$ (which can be described in the different tropical charts of $X(\Ktrop)$). 
\end{defn}

Recall now the notations from Section~\ref{s:Semifields}. In particular, we have the definition of $U_+(\RR_{>0})$ and its positive charts, see \cref{s:posstructure}. We obtain the tropical version $U_+(\Ktrop)$ of $U_+$ by applying \cref{d:tropgen}. We now tropicalise $\ToeplitzU_{n+1}$ by viewing it as embedded in $U_+$.

\begin{defn}[Tropical Toeplitz matrices]\label{d:tropToeplitz}
Let $(\RR,\RR_{>0},\Val)$ be a ring with a valuative positive structure.  We define $\ToeplitzU_{n+1}(\R_{\min})$ to be  the image of the composition 
\[
\ToeplitzU_{n+1}(\RR_{>0})\hookrightarrow U_+(\RR_{>0})\to  U_+(\R_{\min}),
\]
following \cref{d:tropgen} and \cref{d:subspacetrop}. 
\end{defn}

The remainder of this section is devoted to describing concretely the set $\ToeplitzU_{n+1}(\R_{\min{}})$ and giving the tropical version of Theorem~\ref{t:PuiseauxFinite}.

\subsection{}\label{s:U+poscharts}
Recall  the notations from \cref{s:AlgGroup-notations}, as well as the basic positive charts of $U_+$ associated to reduced expressions of $w_0$, see \cref{s:Semifields}. For every reduced expression there is an associated normal  ordering of the set of positive roots $R^+$,  see \cite{Bourbaki:L4}. Namely, encode the reduced expression $w_0=s_{i_1}\dotsc s_{i_N}$ via the sequence $\mathbf i=(i_1,\dotsc, i_N)$ of indices.  Then 
\begin{equation}\label{e:posrootsordering}
\alpha^{\mathbf i}_1:=\alpha_{i_1},\ \ \alpha^{\mathbf i}_2:=s_{i_1}\alpha_{i_2},\ \ \dotsc, \ \ \alpha^{\mathbf i}_n=s_{i_1}\dotsc s_{i_{n-1}}\alpha_{i_n}.
\end{equation}
A special role will be played by the reduced expression from  Lemma~\ref{l:mijFormula},
\[
\mathbf i_0=(n,n-1,\dotsc, 1; n,\dotsc, 2;\dotsc; n,n-2;n).
\] 
One can naturally arrange the positive roots into an upper-triangular array according to the weight spaces that they correspond to in $\mathfrak u_+$, and label the diagonal boundary by the $\ep_i$ so that the positive roots can be read off using $\alpha_{ij}=\ep_i-\ep_j$. Let us rotate this picture, as this will be convenient later on. For example, the positive roots of $SL_4$ ordered according to the reduced expression $\mathbf i_0=(3,2,1;3,2;3)$ can be read off column by column downwards from the top in following table
\begin{equation*}
\begin{tikzpicture}[scale=0.6]
\node[fill=white] at    (0.5,3.55) {$\ep_4$};
\node[fill=white] at    (0.5,2.55) {$\alpha_{34}$}  ;
\node[fill=white] at    (0.5,1.55) {$\alpha_{24}$}  ;
\node[fill=white] at    (0.5,0.55) {$\alpha_{14}$}  ;
\node[fill=white] at    (1.8,2.55) {$\ep_3$};
c\node[fill=white] at    (1.8,1.55) {$\alpha_{23}$}  ;\node[fill=white] at    (1.8,0.55) {$\alpha_{13}$}  ;
\node[fill=white] at    (3.0,1.55) {$\ep_2$};
\node[fill=white] at    (3,0.55) {$\alpha_{12}$}  ;
\node[fill=white] at    (4.2,0.55) {$\ep_1$};
\node[fill=white] at (5.5,1.5) {} ;
\node[fill=white] at (15,1.5) { $\alpha^{\mathbf i_0}_1=\alpha_{34},\ \alpha^{\mathbf i_0}_2=\alpha_{24},\ \alpha^{\mathbf i_0}_3=\alpha_{14},\ \alpha^{\mathbf i_0}_4=\alpha_{23},\ \alpha^{\mathbf i_0}_5=\alpha_{13},\ \alpha^{\mathbf i_0}_6=\alpha_{12}$.} ;
\draw[black] (-.2,0.1) -- (-.2,3.1) -- (1.15,3.1) -- (1.15,0.1) -- (3.6,0.1) -- (3.6,1.1)--(-.2,1.1)--(-.2,0.1)--(2.4,0.1)--(2.4,2.1)--(-.2,2.1);
 \end{tikzpicture}
\end{equation*}

We may graphically depict an element of $U_+(\RR_{>0})$ in terms of the positive chart  $\phi_{>0}^{\mathbf i_0}$ as a lower-triangular array with entries in $\RR_{>0}$. For example, 
\begin{equation}\label{e:coordinates}
\begin{tikzpicture}[scale=0.6]
\node[fill=white] at    (0.5,2.55) {$z_1$}  ;
\node[fill=white] at    (0.5,1.55) {$z_2$}  ;
\node[fill=white] at    (0.5,0.55) {$z_3$}  ;
\node[fill=white] at    (1.8,1.55) {$z_4$}  ;
\node[fill=white] at    (1.8,0.55) {$z_5$}  ;
\node[fill=white] at    (3,0.55) {$z_6$}  ;
\node[fill=white] at (5.5,1.5) { $\longleftrightarrow$} ;
\node[fill=white] at (15,1.5) { $\phi_{>0}^{\mathbf i_0}((z_i)_i)=x_{3}(z_1)x_{2}(z_2)x_{1}(z_3)x_{3}(z_4)x_{2}(z_5)x_{3}(z_6)$.} ;
\draw[black] (-.2,0.1) -- (-.2,3.1) -- (1.15,3.1) -- (1.15,0.1) -- (3.6,0.1) -- (3.6,1.1)--(-.2,1.1)--(-.2,0.1)--(2.4,0.1)--(2.4,2.1)--(-.2,2.1);
 \end{tikzpicture}
\end{equation}
\begin{defn}[The map $\mathcal L$]{\label{Lweightmap}} We have the following map $\mathcal L:U_{+}(\R_{\min})\to \mathfrak h_{PSL_{n+1}}^*(\R)$ that we call the Lusztig weight map,
\[
\begin{array}{rccc}
\mathcal L:&U_{+}(\R_{\min})&\to &\mathfrak h_{PSL_{n+1}}^*(\R),\\
&[\phi_{>0}^{\mathbf i}((z_i)_i)],&\mapsto &\sum_{i=1}^N Z_i\alpha^{\mathbf i}_i,
\end{array}
\]
where $Z_i=\Val(z_i)$. This definition is the extension from $\Z_{>0}$ to $\R$ of the weight map from \cite[2.8]{Lusztig:cbJAMS}.  If $[\phi_{>0}^{\mathbf i}((z_i)_i)]\in U_{+}(\Z^{> 0}_{\min})$ then its image under $\mathcal L$ lies in the root lattice and can be interpreted as the weight of the canonical basis element of $\mathcal U_-$ parameterised by $[\phi_{>0}^{\mathbf i}((z_i)_i)]$ as in \cite{Lusztig94}. 
\end{defn}

\begin{remark}\label{r:tropicaltransformation} We recall from  \cite{Lusztig:cbJAMS} that the map $\mathcal L$  is well-defined independently of the choice of reduced expression $\mathbf i$, as can be checked using the piecewise-linear transformation which is the tropicalisation of~\eqref{e:braidbirationalA2}. Namely, corresponding to the braid relation $\mathbf i=(\dotsc, i_{k-1}, i_{k}, i_{k+1},\dotsc)=(\dotsc, i,j, i,\dotsc)\mapsto \mathbf i'=(\dotsc, j,i, j,\dotsc)$,  we record the tropicalisation of  \eqref{e:braidbirationalA2} to be the transformation given by
\begin{equation}\label{e:tropicalbraidA2}
\begin{array}{lcl}
Z'_{k-1}&=&Z_k+Z_{k+1}-\min(Z_{k-1},Z_{k+1}),\\
Z'_k= &=& \min(Z_{k-1},Z_{k+1}),\\
Z'_{k+1}&=&Z_{k-1}+Z_k-\min(Z_{k-1},Z_{k+1}),
\end{array}
\end{equation}
and $Z'_\ell=Z_\ell$ otherwise. This transforms the tropical chart for $\mathbf i$ into the one for $\mathbf i'$. 
 \end{remark}
The  map $\mathbf d_{uni}$ from \cref{d:duni} is a geometrisation of the Lusztig weight map by the following 
lemma. 
\begin{lemma}\label{l:dunilemma} Let $G=SL_{n+1}$. Recall the map  $\mathbf d_{uni}:U_+\cap B_-\dot w_0 B_-\to T^{SL_{n+1}}$ sending $u$ to the diagonal matrix with entries
\begin{equation}\label{e:di-again}
d_1=\Delta_{1}(u),\   
d_2=\frac{\Delta_{2}(u)}{\Delta_{1}(u)},\ 
\dotsc,
\ d_{n+1}= \frac{1}{\Delta_{n}(u)}.
\end{equation}
The tropicalisation $\Trop(\mathbf d_{uni}):U_+(\R_{\min})\to\mathfrak h_{SL_{n+1}}(\R)$ is precisely Lusztig's weight map $\mathcal L$ (after canonically identifying $\mathfrak h_{SL_{n+1}}$ with $\mathfrak h_{PSL_{n+1}}^*$). 
\end{lemma}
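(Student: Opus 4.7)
My plan is the following. Both $\mathcal{L}$ and $\Trop(\mathbf{d}_{uni})$ are well-defined maps $U_+(\Rmin) \to \mathfrak{h}^*_{\R, PSL_{n+1}}$ that are independent of any choice of reduced expression (for $\mathcal{L}$ this is \cref{r:tropicaltransformation}; for $\Trop(\mathbf{d}_{uni})$ this follows because $\mathbf{d}_{uni}$ is an intrinsic morphism and tropicalisation in \cref{d:tropgen} is chart-independent). It therefore suffices to check the two maps agree when evaluated in the single chart $\phi^{\mathbf i_0}_{>0}$ associated with the reduced expression $\mathbf i_0$ of \cref{l:mijFormula}.

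The key input is the product formula from the proof of \cref{l:mijFormula}: $\Minor^{[i]}_{[i]+j}(u) = \prod_{k=1}^{i}\prod_{r=1}^{j} m_{k,r}$. Applied with $j = n+1-i$ this factorises the anti-principal minors as $\Delta_i(u) = \prod_{k=1}^{i} \prod_{r=1}^{n+1-i} m_{k,r}$. Setting $M_{k,r} := \Val(m_{k,r})$ and $D_i := \Val(\Delta_i(u))$, and noting that $D_0 = D_{n+1} = 0$ (since $\Delta_0 = \Delta_{n+1} = 1$ for $u \in U_+$), tropicalisation gives
\[
D_i = \sum_{k=1}^{i}\sum_{r=1}^{n+1-i} M_{k,r}.
\]

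Next I compute $\Trop(\mathbf{d}_{uni})(u)$. Under the canonical identification $\mathfrak{h}_{\R, SL_{n+1}} = \mathfrak{h}^*_{\R, PSL_{n+1}}$ (sending $\ep^\vee_\ell \mapsto \ep_\ell$ and compatible with the respective constraints $\sum c_\ell = 0$ and $\mathrm{mod}\,\R(1,\dotsc,1)$), the tropicalisation of the diagonal matrix $\mathbf{d}_{uni}(u)$ is $\sum_\ell \Val(d_\ell)\,\ep_\ell$. From \eqref{e:di-again} we have $\Val(d_\ell) = D_\ell - D_{\ell-1}$, and a telescoping calculation using $D_0 = D_{n+1} = 0$ gives
\[
\Trop(\mathbf{d}_{uni})(u) \;=\; \sum_{\ell=1}^{n+1}(D_\ell - D_{\ell-1})\,\ep_\ell \;=\; \sum_{\ell=1}^{n} D_\ell\,\alpha_\ell \;=\; \sum_{\substack{k,r\ge 1\\ k+r\le n+1}} M_{k,r}\,\alpha_{k,\,n+2-r},
\]
where the last equality comes from substituting the expression for $D_\ell$, swapping the order of summation, and using $\sum_{\ell=k}^{n+1-r}\alpha_\ell = \alpha_{k,\,n+2-r}$.

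Finally I must match this with $\mathcal{L}$ evaluated in the chart $\mathbf i_0$. Reading off the normal ordering determined by $\mathbf i_0 = (n,n-1,\dotsc,1;\,n,\dotsc,2;\,\dotsc;\,n)$ from the factorisation \eqref{e:idealfact} and computing $\alpha^{\mathbf i_0}_p = s_{i_1}\cdots s_{i_{p-1}}\alpha_{i_p}$ at each position $p$, one checks that the coordinate $m_{k,r}$ in \eqref{e:idealfact} is paired with the positive root $\alpha_{k,\,n+2-r}$; this is the same correspondence displayed in the lower-triangular arrays following \cref{t:IntroTropFinite} (e.g.\ $m_{1,1}\leftrightarrow\alpha_{1,n+1}$ and $m_{1,n}\leftrightarrow\alpha_{1,2}$). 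Once this combinatorial bookkeeping is in place, the two expressions for $\mathcal{L}(u)$ and $\Trop(\mathbf{d}_{uni})(u)$ coincide term by term. The only real obstacle is this identification of the $\mathbf i_0$-ordering with the array indexing; the rest of the proof is direct manipulation.
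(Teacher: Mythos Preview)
Your proposal is correct and follows essentially the same approach as the paper: both work in the standard chart $\phi^{\mathbf i_0}_{>0}$, use the monomial formula $\Minor^{[i]}_{[i]+j}(u)=\prod_{k=1}^i\prod_{r=1}^j m_{k,r}$ from \cref{l:mijFormula} to tropicalise the $\Delta_i$, and then compare with $\mathcal L$. The only cosmetic difference is that the paper compares the $\ep_i$-coefficients directly (computing $\Val(\Delta_i/\Delta_{i-1})$ and the $\ep_i$-coefficient of $\sum M_{ij}(\ep_i-\ep_{n-j+2})$), whereas you pass through the Abel summation $\sum_\ell(D_\ell-D_{\ell-1})\ep_\ell=\sum_\ell D_\ell\alpha_\ell$ and collect the $M_{k,r}$-coefficients; this is the same computation organised in the root basis rather than the $\ep_i$ basis.
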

\begin{remark}\label{r:JuddvsLWeight} This lemma is related to the result going back to \cite{Judd:Flag} about $\Trop(\mathbf d)$ at critical points in terms of 	`ideal fillings'. However, in \cite{Judd:Flag} there was no connection with $U_+(\Rmin)$. See also  \cref{r:JuddLudenbach} for a more detailed comparison. The map $\mathbf d$ was also tropicalised in different coordinate  charts and with an interpretation of `highest weight map' in a crystal basis context in  \cite{BK:GeometricCrystalsII}.
\end{remark}
Before proving the lemma we make an adjustment to our notation. 
\begin{defn}\label{d:standardcoordFin} From now on, instead of labelling the $\phi^{\mathbf i_0}_{>0}$ coordinates of $U_+(\RR_{>0})$ by $z_1,z_2,\dotsc, z_N$, we will instead label them $m_{ij}$ indexed by the set 
 \begin{equation}\label{e:Sn+1}
  \mathcal S_{\le n+1}:=\{(i,j)\mid i+j\le n+1\},
  \end{equation}  
 with $i$ indicating the row (counting from the bottom) and $j$ the column (left to right) of  the coordinate in the triangular tableau \eqref{e:coordinates}.
 We will take the chart $\phi^{\mathbf i_0}$ to be our standard chart, and call the coordinates $(m_{ij})_{i,j}$ the {\it standard coordinates} for $U_+$. 

We represent an element of $U_+(\R_{\min})$ analogously  using the associated tropical chart $\phi^{\mathbf i_0}_{trop}$. Namely, we denote the tropical coordinate associated to $m_{ij}$ by $M_{ij}$, and we also arrange the $M_{ij}$  into a lower-triangular tableau, in the same way as the $m_{ij}$, to represent a tropical point $[u]\in U_+(\Rmin)$ in terms of its (tropical) standard coordinates. 
\end{defn}
\begin{example} \label{ex:UtropFin}
For $n=3$ the coordinate labellings translate as follows  
\begin{center}\label{e:indexationchange}
\begin{tikzpicture}[scale=0.6]
\node[fill=white] at    (12.5,2.55) {$\tiny{m_{31}}$}  ;
\node[fill=white] at    (12.5,1.55) {$m_{21}$}  ;
\node[fill=white] at    (12.5,0.55) {$m_{11}$}  ;
\node[fill=white] at    (13.8,1.55) {$m_{22}$}  ;\node[fill=white] at    (13.8,0.55) {$m_{12}$}  ;
\node[fill=white] at    (15,0.55) {$m_{13}$}  ;
\draw[black] (11.8,0.1) -- (11.8,3.1) -- (13.15,3.1) -- (13.15,0.1) -- (15.6,0.1) -- (15.6,1.1)--(11.8,1.1)--(11.8,0.1)--(14.4,0.1)--(14.4,2.1)--(11.8,2.1);
\node[fill=white] at    (0.5,2.55) {$\tiny{z_1}$}  ;
\node[fill=white] at    (0.5,1.55) {$z_2$}  ;
\node[fill=white] at    (0.5,0.55) {$z_3$}  ;
\node[fill=white] at    (1.8,1.55) {$z_4$}  ;\node[fill=white] at    (1.8,0.55) {$z_5$}  ;
\node[fill=white] at    (3,0.55) {$z_6$}  ;
\node[fill=white] at (7.5,1.5) { $\longleftrightarrow$} ;
\draw[black] (-.2,0.1) -- (-.2,3.1) -- (1.15,3.1) -- (1.15,0.1) -- (3.6,0.1) -- (3.6,1.1)--(-.2,1.1)--(-.2,0.1)--(2.4,0.1)--(2.4,2.1)--(-.2,2.1);
 \end{tikzpicture},
\end{center}
giving  $\phi_{>0}^{\mathbf i_0}((m_{ij})_{i,j})=x_{3}(m_{31})x_{2}(m_{21})x_{1}(m_{11})x_{3}(m_{22})x_{2}(m_{12})x_{3}(m_{13})$.
The tropical point $[u]$ for $u=\phi_{>0}^{\mathbf i_0}((m_{ij})_{i,j})$ has tropical coordinates $(M_{ij})_{i,j}$ given by $M_{ij}=\Val(m_{ij})$. To describe the Lusztig weight map in these coordinates, we line up our graphical representation of $R_+$, shown below on the left, with our tableau of tropical standard coordinates for $[u]\in U_+(\R_{\min{}})$, shown on the right,
\begin{center}
\begin{tikzpicture}[scale=0.6]
\node[fill=white] at    (0.5,3.55) {$\ep_4$};
\node[fill=white] at    (0.5,2.55) {$\alpha_{34}$}  ;
\node[fill=white] at    (0.5,1.55) {$\alpha_{24}$}  ;
\node[fill=white] at    (0.5,0.55) {$\alpha_{14}$}  ;
\node[fill=white] at    (1.8,2.55) {$\ep_3$};
c\node[fill=white] at    (1.8,1.55) {$\alpha_{23}$}  ;\node[fill=white] at    (1.8,0.55) {$\alpha_{13}$}  ;
\node[fill=white] at    (3.0,1.55) {$\ep_2$};
\node[fill=white] at    (3,0.55) {$\alpha_{12}$}  ;
\node[fill=white] at    (4.2,0.55) {$\ep_1$,};
\draw[black] (-.2,0.1) -- (-.2,3.1) -- (1.15,3.1) -- (1.15,0.1) -- (3.6,0.1) -- (3.6,1.1)--(-.2,1.1)--(-.2,0.1)--(2.4,0.1)--(2.4,2.1)--(-.2,2.1);
 \end{tikzpicture}\qquad\qquad\qquad\quad \qquad\qquad
 \begin{tikzpicture}[scale=0.6]
\node[fill=white] at    (0.5,2.54) {$M_{31}$}  ;
\node[fill=white] at    (0.5,1.54) {$M_{21}$}  ;
\node[fill=white] at    (0.5,0.55) {$M_{11}$}  ;
\node[fill=white] at    (1.8,1.54) {$M_{22}$}  ;\node[fill=white] at    (1.8,0.55) {$M_{12}$}  ;
\node[fill=white] at    (3,0.55) {$M_{13}$}  ;
\draw[black] (-.2,0.1) -- (-.2,3.1) -- (1.15,3.1) -- (1.15,0.1) -- (3.6,0.1) -- (3.6,1.1)--(-.2,1.1)--(-.2,0.1)--(2.4,0.1)--(2.4,2.1)--(-.2,2.1);
 \end{tikzpicture}.
\end{center}
We then obtain 
\[
\mathcal L((M_{ij})_{i,j})=M_{31}\alpha_{34}+M_{21}\alpha_{24}+M_{11}\alpha_{14}+M_{22}\alpha_{23}+M_{12}\alpha_{13}+M_{13}\alpha_{12}.
\]
\end{example}

\begin{proof}[{Proof of \cref{l:dunilemma}}]
Recall that $\Delta_i(u)=\Minor^{[i]}_{[i]+n-i+1}(u)$. It suffices to consider $u=\phi^{\mathbf i_0}((m_{ij})_{ij})$, so that we have by \cref{l:mijFormula} that
\[
\frac{\Delta_{i}(u)}{\Delta_{i-1}(u)}=\frac{\prod_{k=1}^{i}\prod_{r=1}^{n-i+1}\ m_{kr}}{\prod_{k=1}^{i-1}\prod_{r=1}^{n-i+2} m_{kr}}=(\prod_{r=1}^{n-i+1}\ m_{{i},r})(\prod_{k=1}^{i-1}m_{k,n-i+2})\inv.
\]
Therefore
\[
\Val\left(\frac{\Delta_{i}(u)}{\Delta_{i-1}(u)}\right)=\sum_{r=1}^{n-i+1}\ M_{{i},r}-\sum_{k=1}^{i-1}M_{k,n-i+2}
\]
with $M_{ij}=\Val(m_{ij})$. On the other hand, if we express the positive roots as $\alpha_{i,k+1}=\ep_i-\ep_{k+1}$, then the Lusztig weight map in terms of the $M_{ij}$ coordinates is given by 
\[
\mathcal L((M_{ij})_{i,j})=\sum_{(i,j)\in \mathcal S_{\le n+1}} M_{ij}(\ep_{i}-\ep_{n-j+2})=\sum_{i=1}^{n+1}( \sum_{r=1}^{n-i+1}\ M_{{i},r}-\sum_{k=1}^{i-1}M_{k,n-i+2})\,\ep_i.
\]
This implies the lemma.
\end{proof}

\subsection{Two theorems about finite tropical Toeplitz matrices}
The remainder of Section~\ref{s:TropToeplitzFinite} will be concerned with the following two theorems that we can now state about finite tropical Toeplitz matrices. These results  are building on \cite{Ludenbach} and \cite{Judd:Flag}. We start with the following definition. Note that this is a negated version of what is called an `ideal filling' in \cite{Judd:Flag}, see~\cref{r:JuddLudenbach}.

\begin{defn}[Min-ideal fillings] \label{d:minideal}
Let a \textit{min-ideal filling} be a lower-triangular tableau of real numbers with the property that  any entry $C\in \R$ not along the top diagonal is the minimum of its upper and its right-hand neighbour. Pictorially, we have the following rule,
\begin{center}
\begin{tikzpicture}[scale=0.6]
\node[fill=white] at    (0.5,2.6) {$A$}  ;
\node[fill=white] at    (0.5,1.55) {$B$}  ;
\node[fill=white] at    (1.7,1.6) {$C$}  ;
\draw[black] (-.1,1.05)--(-.1,3.1) -- (1.15,3.1) -- (1.15,1.05)-- (2.3,1.05)--(-.1,1.05)--(-.1,2.1)--(2.3,2.1)--(2.3,1.05); 
\node[fill=white] at (5.5,2) {$\implies$};
\node[fill=white] at (10,2) {$\min(A,C)=B$,};
 \end{tikzpicture}
\end{center}
for all minimal triangular sub-arrays. 
 We write $\Imin_{n+1}$ for the set of min-ideal fillings with $n$ rows and columns. We interpret $\Imin_{n+1}$ as a subset $U_+(\R_{\min})$ for $PSL_{n+1}$ via  \cref{d:standardcoordFin}, compare \cref{ex:UtropFin}.
\end{defn}

\begin{remark}\label{r:meetsemilattice} The set of min-ideal fillings $\Imin_{n+1}$ forms a meet semilattice, with poset structure given by: $(M_{ij})_{i,j}\le (M'_{ij})_{i,j}$ if an only if $M_{ij}\le M'_{i,j}$ in $(\R,\le)$ for all $i,j\in \mathcal S_{\le n+1}$. The `meet' of two min-ideal fillings is the coordinate-wise minimum,
\[(M_{ij})_{i,j}\wedge (M'_{ij})_{i,j} =(\min(M_{ij},M'_{ij}))_{i,j},\]
which is straightforwardly again a min-ideal filling. 
\end{remark}

\begin{remark}\label{r:JuddLudenbach} In the work of Judd \textit{ideal fillings} (which are upper-triangular arrays obeying a $\max$-condition, instead of a $\min$-condition) arise in connection with tropical critical points of the superpotential $W$ written as the Laurent polynomial  first proposed by Givental~\cite{Givental:QToda}, see~\cite{rietsch}, and with specific values $q_i=t^{\lambda_{i}-\lambda_{i+1}}$ where $\lambda$ is a dominant weight. The interpretation is that for any dominant $\lambda$ there is a polytope whose lattice points parametrise a basis of the representation of highest weight $\lambda$, and there is a unique point in the interior of this polytope which is the `tropical critical point' of a  Laurent polynomial $W_{t^\lambda}$ which bijectively corresponds to an ideal filling. The highest weight $\lambda=\lambda(\mathrm n)$ can be read off the ideal filling $\mathrm n$, 
and  is a dominant weight interpreted as the highest weight of a representation, and the components $n_{ij}$ lie in $\mathbb Q_{\ge 0}$. Accordingly, the tropical critical points considered in \cite{Judd:Flag} correspond to elements of $U_+(\mathbb Q^{\le 0}_{\min})$, noting that ideal-fillings are related to min-ideal fillings by a change of sign. The canonical basis, on the other hand, is indexed by  $U_+(\Z^{\ge 0}_{\min})$. The extension of the map $\lambda$ to $U_+(\R_{\min})$ as Lusztig's weight map as in \cref{l:dunilemma}
is not visible in the original construction from \cite{Judd:Flag}. In  \cite{Ludenbach}, reciprocal inverses of the $\phi_{>0}^{\mathbf i_0}$ coordinates are introduced and called `ideal coordinates', because, as proved in \cite{Ludenbach}, at critical points they de-tropicalise the ideal fillings from \cite{Judd:Flag}. Here we extend these previous works, reversing the signs, and making the connection to the canonical basis.
\end{remark}

The theorem below describes the set $\ToeplitzU_{n+1}(\R_{\min})$. It is essentially a reformulation of a result stated in the introduction of \cite{Ludenbach}, namely Theorem~1.0.1. The proof outlined there covers the tropicalisation of Toeplitz matrices $u=\phi^{\mathbf i_0}((z_i)_i)$ for which the entries of $\mathbf d_{uni}(u)$ are of the form $t^{\lambda_i}$ as in \cref{r:JuddLudenbach}. We will prove this now using \cref{t:PuiseauxFinite}. 

\begin{theorem} \label{t:MinIdealIsToepl} 
The subset $\ToeplitzU_{n+1}(\R_{\min})$ 
 of  $U_+(\R_{\min})$ is precisely the set $\Imin_{n+1}$ of min-ideal fillings.  
\end{theorem}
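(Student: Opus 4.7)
My plan is to establish both inclusions $\ToeplitzU_{n+1}(\Rmin) \subseteq \Imin_{n+1}$ and $\Imin_{n+1} \subseteq \ToeplitzU_{n+1}(\Rmin)$ using different ingredients. The forward inclusion relies on a tropical analysis of the critical point equations of the superpotential in standard coordinates, while the reverse inclusion uses the Lusztig weight map $\mathcal{L}$ together with \cref{t:PuiseauxFinite}.

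For the forward inclusion, given $u \in \ToeplitzU_{n+1}(\KK_{>0})$ with standard coordinates $(m_{ij})$, the goal is to show the valuations $M_{ij} = \Val(m_{ij})$ satisfy the min-identity $M_{ij} = \min(M_{i+1,j}, M_{i,j+1})$ for every $i+j \le n$. By \cref{c:PGLcritptmap}, the element $u$ arises as the positive critical point of the Laurent polynomial $\mathcal{W}_{q_\bullet}$ in standard coordinates, for a unique $q_\bullet \in \KK_{>0}^n$. Since $\mathcal{W}_{q_\bullet}$ has positive integer coefficients in standard coordinates by \cref{l:Winposchart}, each critical point equation $m_{ij}\,\partial_{m_{ij}}\mathcal{W}_{q_\bullet} = 0$ rearranges into an equality between two sums of positive Laurent monomials. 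Tropicalising each such equality produces a piecewise-linear min-equality on the $M_{ij}$. Using the explicit Givental-type form of $\mathcal{W}_{q_\bullet}$ in standard coordinates (going back to \cite{rietsch}), one then identifies the resulting tropical equations precisely with the min-ideal filling conditions; this is essentially the content of the combined results of Judd \cite{Judd:Flag} and L\"udenbach \cite{Ludenbach}, now applied to all $q_\bullet \in \KK_{>0}^n$ rather than just $q_i = t^{\lambda_i - \lambda_{i+1}}$ as in loc.\ cit.

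For the reverse inclusion, I would use the Lusztig weight map as follows. A min-ideal filling is uniquely determined by its top diagonal $(M_{n,1}, \dotsc, M_{1,n}) \in \R^n$, so $\Imin_{n+1}$ is parametrised by $\R^n$, and $\mathcal{L}|_{\Imin_{n+1}}$ is a continuous piecewise-linear map to $\mathfrak{h}^*_{\R, PSL_{n+1}} \cong \R^n$. I claim this restriction is injective: on each maximal cell of the piecewise-linear structure, every interior entry equals one of the top-diagonal entries (via a path of ``min choices''), so $\mathcal{L}$ becomes a linear map of the top diagonal, which can be verified to be invertible on each cell. On the other hand, tropicalising \cref{t:PuiseauxFinite} and applying \cref{l:dunilemma} shows that $\mathcal{L}|_{\ToeplitzU_{n+1}(\Rmin)}$ is a bijection onto $\mathfrak{h}^*_{\R, PSL_{n+1}}$. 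Combining: given any $M \in \Imin_{n+1}$, there exists $M' \in \ToeplitzU_{n+1}(\Rmin) \subseteq \Imin_{n+1}$ with $\mathcal{L}(M') = \mathcal{L}(M)$, and injectivity on $\Imin_{n+1}$ forces $M = M'$, so $M \in \ToeplitzU_{n+1}(\Rmin)$.

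The main obstacle is the forward inclusion: matching the tropicalised critical point equations to the combinatorial min-ideal filling conditions. Although positivity of $\mathcal{W}_{q_\bullet}$ ensures clean tropicalisation of each partial derivative equation, identifying the resulting $\binom{n}{2}$ min-equalities with the $\binom{n}{2}$ conditions $M_{ij} = \min(M_{i+1,j}, M_{i,j+1})$ is combinatorially delicate. An alternative route that avoids the superpotential would be to expand the product-of-incoming-equals-product-of-outgoing relations at interior vertices of the quiver $Q_{n+1}$ from \cref{t:IntroQfinite} into subtraction-free identities among the $m_{ij}$ and then tropicalise directly; this should yield the same conclusion.
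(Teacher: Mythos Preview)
Your overall strategy matches the paper's: establish $\ToeplitzU_{n+1}(\Rmin)\subseteq\Imin_{n+1}$ directly, then deduce the reverse inclusion from the bijectivity of $\mathcal L|_{\ToeplitzU_{n+1}(\Rmin)}$ (which is \cref{t:TropFinite}, itself obtained from \cref{t:PuiseauxFinite}) together with injectivity of $\mathcal L|_{\Imin_{n+1}}$.

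For the forward inclusion, the paper in fact takes your ``alternative route'': it uses the quiver description (\cref{p:ToeplitzviaQ}) and then tropicalises the vertex relations in \cref{p:TropToeplitzviaQ}. But note that this tropicalisation is not as immediate as either of your sketches suggests. The relation at a vertex $b$ does not by itself tropicalise to $M_b=\min(M_c,M_d)$; one first rewrites it as $(m_c+m_d)(1+\text{corrections})=m_b(1+\text{corrections})$ and must show the correction terms have nonnegative valuation. That requires an inductive ``Claim'' that $M_{i-1,j-1}\le M_{ij}$ along diagonals, whose proof uses the positivity of the arrow labels $\kappa_a$ in a nontrivial way. Your superpotential route would face the same obstacle: the tropicalised critical equations are not literally the min-ideal conditions, and extending the Judd--L\"udenbach arguments from $q_i=t^{\lambda_i-\lambda_{i+1}}$ to arbitrary $q_\bullet\in\KK_{>0}^n$ still needs this kind of inductive control.

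Your reverse-inclusion argument has a genuine gap. You correctly observe that $\Imin_{n+1}$ is parametrised by its top diagonal, so is homeomorphic to $\R^n$, and that $\mathcal L$ is linear on each cell of the resulting fan. But invertibility on each maximal cell does \emph{not} imply global injectivity of a piecewise-linear map $\R^n\to\R^n$; already $x\mapsto|x|$ on $\R$ is a counterexample. Nothing in your sketch rules out two cells mapping onto overlapping regions. The paper closes exactly this gap by invoking the combinatorial proof of \cite[Proposition~5.6]{Judd:Flag}, which shows directly that a min-ideal filling is determined by its Lusztig weight. You would need either that argument, or a properness/degree argument promoting cell-wise invertibility to a global homeomorphism.
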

\begin{remark}
This theorem says a priori that the description of $\ToeplitzU_{n+1}(\R_{\min})$ in terms of the special tropical chart associated to $\mathbf i_0$ is given by the min-ideal condition. But in fact, an ideal filling $M$ of $\Imin_{n+1}$ is naturally an element of $\R^{R_+}$, and it turns out that its image $\phi^{\mathbf i}_{trop}(M)$ from this perspective is independent of $\mathbf i$, see \cite[Proposition~5.5.6]{Ludenbach}. Therefore a min-ideal filling is in a sense a canonical description of an element of $\ToeplitzU_{n+1}(\Rmin)$. This is another feature that makes the subset  $\ToeplitzU_{n+1}(\R_{\min})$ of $U_+(\R_{\min})$ very special.
\end{remark}

The second theorem of this section is the tropical parametrisation theorem for tropical Toeplitz matrices. 
\begin{theorem}\label{t:TropFinite}
The restriction 
\[\mathcal L|_{\ToeplitzU_{n+1}}:\ToeplitzU_{n+1}(\R_{\min})\to \mathfrak h^{*}_{PSL_{n+1}}(\R)\]
of Lusztig's weight map $\mathcal L:U_+(\R_{\min})\to \mathfrak h^{*}_{PSL_{n+1}}(\R)$ is a bijection. 
\end{theorem}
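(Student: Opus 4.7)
The plan is to prove the theorem in two parts, surjectivity and injectivity, using the identifications already available. By \cref{l:dunilemma}, the restricted weight map $\mathcal L|_{\ToeplitzU_{n+1}(\Rmin)}$ equals the tropicalisation of $\mathbf d_{uni}:\ToeplitzU_{n+1}(\KK_{>0}) \to T^{SL_{n+1}}(\KK_{>0})$, which in turn is a homeomorphism by the $SL_{n+1}$-version of \cref{t:PuiseauxFinite} (see \cref{r:SL-PGL-FiniteParam}). Surjectivity is then immediate: for $\lambda \in \mathfrak h^*_{\R,PSL_{n+1}}$, lift to $d = t^\lambda \in T^{SL_{n+1}}(\KK_{>0})$ (using surjectivity of $\Val:\KK_{>0}\twoheadrightarrow \Rmin$) and set $u = (\mathbf d_{uni})^{-1}(d)$; then $\mathcal L([u]) = \Val(d) = \lambda$.

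For injectivity I would use the already-established first half of the theorem, namely \cref{t:MinIdealIsToepl}, to identify $\ToeplitzU_{n+1}(\Rmin) = \Imin_{n+1}$. Iterating the min-ideal relation $M_{ij}=\min(M_{i+1,j},M_{i,j+1})$ shows that a min-ideal filling is determined by its antidiagonal values $N_k := M_{n+1-k,k}$ through $M_{ij} = \min\{N_\ell \mid j\le\ell\le n+1-i\}$, giving a bijection $\Imin_{n+1}\cong \R^n$. In these coordinates $\mathcal L$ becomes a continuous piecewise-linear map $\R^n\to \mathfrak h^*_{\R,PSL_{n+1}}\cong \R^n$ given by $\lambda_a=\sum_{r=1}^{n+1-a}M_{a,r}-\sum_{k=1}^{a-1}M_{k,n+2-a}$, as computed in \cref{l:dunilemma}.

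To prove that this piecewise-linear surjection is injective, I would invoke the work of \cite{Judd:Flag}: using the critical-point description of $(\mathbf d_{uni})^{-1}$ from \cref{c:PGLcritptmap}, the element $u_\lambda=(\mathbf d_{uni})^{-1}(t^\lambda)$ is the positive critical point of the superpotential $\mathcal W_{q_\bullet}$ with $q_i=t^{\lambda_i-\lambda_{i+1}}$, and Judd shows that the valuations of its standard coordinates form the unique `ideal filling' of weight $\lambda$ (which, after the sign-change between ideal and min-ideal fillings, yields an explicit piecewise-linear formula for the $N_k$ in terms of $\lambda$). This defines a section $\sigma:\mathfrak h^*_{\R,PSL_{n+1}}\to \Imin_{n+1}$ of $\mathcal L$, and the explicitness of Judd's formula forces $\sigma\circ\mathcal L=\id$, establishing injectivity.

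The main obstacle is the technical connection between Judd's original setting (where he works in the Givental chart and with dominant integral or rational $\lambda$) and our setting of arbitrary $\lambda\in \mathfrak h^*_{\R,PSL_{n+1}}$ in the standard $\phi^{\mathbf i_0}$-coordinates. To bridge this one can use the existence and uniqueness of positive critical points over $\KK_{>0}$ from \cref{t:JuddR} (which requires no positivity or integrality of $\lambda$) together with the change-of-coordinates between the Givental chart and $\phi^{\mathbf i_0}_{>0}$. A more elementary alternative is to verify by induction on $n$ that the $N_k$ are read off unambiguously from the cumulative sums $\Val(\Delta_i(u))=\lambda_1+\dotsb+\lambda_i$ via \cref{l:mijFormula}: the extreme values $\lambda_{n+1}=-\sum_k M_{k,1}$ and $\lambda_1=\sum_r M_{1,r}$ allow one to peel off the outermost row and column of the tableau, reducing to a min-ideal filling of rank $n-1$, with a short chamber analysis (indexed by the relative ordering of the $N_\ell$) handling the minima in each step.
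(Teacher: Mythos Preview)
Your surjectivity argument is essentially the paper's: lift $\lambda$ to $t^\lambda$, use that $\Delta^{PGL}_{>0}$ (equivalently $\mathbf d_{uni}$) is a bijection over $\KK_{>0}$, and push down via $\Val$.

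For injectivity, however, there is a logical circularity. You invoke \cref{t:MinIdealIsToepl} to identify $\ToeplitzU_{n+1}(\Rmin)=\Imin_{n+1}$, but in this paper \cref{t:MinIdealIsToepl} is proved \emph{after} and \emph{using} \cref{t:TropFinite}; see the paragraph just below the two theorem statements and the proof of \cref{t:MinIdealIsToepl}. So you cannot appeal to the equality here. The fix is easy: you only need the containment $\ToeplitzU_{n+1}(\Rmin)\subseteq\Imin_{n+1}$, which is \cref{p:TropToeplitzviaQ} and is proved independently. Given that containment, Judd's combinatorial result (\cite[Proposition~5.6]{Judd:Flag}) that $\mathcal L|_{\Imin_{n+1}}$ is injective immediately gives injectivity of $\mathcal L|_{\ToeplitzU_{n+1}(\Rmin)}$ by restriction. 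Your detour through a section $\sigma$ and the claim ``$\sigma\circ\mathcal L=\id$'' is not the right formulation: a right inverse of a surjection does not force injectivity; what you actually need, and what Judd supplies, is injectivity of $\mathcal L$ on $\Imin_{n+1}$ directly.

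By contrast, the paper's proof of injectivity avoids min-ideal fillings entirely. It observes that the inverse of $\Delta^{PGL}_{>0}$ is the positive-critical-point map $q_\bullet\mapsto \phi^{PGL}_{>0}(q_\bullet,\bar p_{crit}(q_\bullet))$ from \cref{c:PGLcritptmap}, and then uses the fact from \cite{JuddRietsch:24} that $\Val(\bar p_{crit}(q_\bullet))$ depends only on $\Val(q_\bullet)$. This produces a well-defined map $\mathfrak h^*_{PSL_{n+1}}(\R)\to\ToeplitzU_{n+1}(\Rmin)$ that is a two-sided inverse of $\mathcal L|_{\ToeplitzU}$. This is cleaner: it needs neither the min-ideal description nor Judd's combinatorial injectivity argument, only the general property of tropical critical points. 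Your inductive ``peeling'' alternative in the last paragraph is a third route, but as written it is only a sketch and the chamber analysis would need to be carried out carefully.
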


We will prove  \cref{t:MinIdealIsToepl} and \cref{t:TropFinite}  in reverse order, making use of \cref{t:TropFinite} in the proof of \cref{t:MinIdealIsToepl}.
The proof of \cref{t:TropFinite} arises for us as a corollary of Theorem~\ref{t:PuiseauxFinite}, the parameterisation theorem for $\Toeplitz_{n+1}(\KK_{>0})$.
 The proof of  \cref{t:MinIdealIsToepl} relies also on a proof found in \cite{Judd:Flag} that shows that there is a \textit{unique} ideal filling for a given $\lambda\in\mathfrak h^*_{PSL}(\R)$. While Judd in fact assumes in his work that $\lambda$ is a dominant weight, this proof works in this greater generality.  Indeed, though neither \cref{t:MinIdealIsToepl} nor \cref{t:TropFinite} are proved in  \cite{Judd:Flag}, the proofs from \cite{Judd:Flag} appear to suffice to show the remarkable combinatorial corollary that $\Imin_{n+1}\to \mathfrak h^*_{PSL_{n+1}}(\R)$ is a bijection. 

\begin{proof}[Proof of Theorem~\ref{t:TropFinite}]

We have the following commutative diagram,
\[\begin{tikzcd}
	{U_+(\KK_{>0})} & {\ToeplitzU_{n+1}(\KK_{>0})} & {T^{PGL}(\KK_{>0})} \\
	{U_+(\R_{\min})} & {\ToeplitzU_{n+1}(\R_{\min})} & {\mathfrak h^*_{PSL_{n+1}}(\R) .}
	\arrow[from=1-1, to=2-1]
	\arrow[hook', from=1-2, to=1-1]
	\arrow["\Delta^{PGL}_{>0}", from=1-2, to=1-3]
	\arrow[from=1-2, to=2-2]
	\arrow[from=1-3, to=2-3]
	\arrow[hook', from=2-2, to=2-1]
	\arrow["\mathcal L|_{\ToeplitzU}",from=2-2, to=2-3]
\end{tikzcd}\]
The top right-hand map $\Delta^{PGL}_{>0}$ is the map from \eqref{e:PGLFiniteParam}, which is a homeomorphism by \cref{t:PuiseauxFinite} and \cref{r:SL-PGL-FiniteParam}, with inverse given in \cref{c:PGLcritptmap}. The bottom right-hand map is, by \cref{l:dunilemma}, the restriction of the Lusztig weight map. The two outer vertical maps are taking the quotient using the equivalence relation from \cref{d:tropgen} and the middle one is a restriction of the left-hand map. All of the vertical maps are surjective. It follows that $\mathcal L|_{\ToeplitzU}$ is surjective. Now we consider the inverse of the map $\Delta_{>0}^{PGL}$. By \cref{c:PGLcritptmap} this inverse is given by \[
q_\bullet\mapsto\phi_{>0}^{PGL}(q_\bullet,\bar p_{crit}(q_\bullet))
\]
where $\bar p_{crit}(q_\bullet)\in \KK_{>0}^N$ is the positive critical point of $\mathcal W_{q_\bullet}$. By the construction of the positive critical point, the valuation of $\bar p_{crit}(q_\bullet)$, namely the associated tropical point in $\R^N$, depends only on  $Q_\bullet:=(\Val(q_i))\in\R^n$, by \cite{JuddRietsch:24}. It follows that the associated tropical Toeplitz matrix, the equivalence class of $\phi_{>0}^{PGL}(q_\bullet,\bar p_{crit}(q_\bullet))$, also depends  only on $Q_\bullet$. This therefore constructs an inverse to $\mathcal L|_{\ToeplitzU}$, completing the proof.      
\end{proof}

\subsection{Tropical Toeplitz matrices}
In this section we prove Theorem~\ref{t:MinIdealIsToepl}.
We begin by giving an explicit description of $\Toeplitz_{n+1}(\RR_{>0})$ in terms of the standard coordinates $m_{ij}$ using a quiver that we now introduce. 

\begin{defn}[The quiver $Q_{n+1}$] \label{d:Qn+1} Consider a set of vertices $\mathcal V_{n+1}=\mathcal V_\bullet\cup\mathcal V_\circ$ and a set of arrows $\mathcal A$ defined as follows. 
\[
\begin{array}{ccl}
 \mathcal V_{\bullet}&=&\{v_{ij}\mid i,j\in \Z_{>0},  i+j\le n\},\\
 \mathcal V_{\circ}&=&\{v_{ij}\mid i,j\in \Z_{>0}, i+j= n+1\text{ or $i=j=0$}\}. 
 \end{array}
 \] 
We may also write $v_{ij}=\bullet_{(i,j)}$ if $v_{ij}\in\mathcal V_\bullet$, and $v_{ij}=\circ_{(i,j)}$ if $v_{ij}\in\mathcal V_\circ$. We picture the vertex $v_{ij}$ as located at the point with coordinates $(j,i)$ in the plane $\R^2$, and place an arrow from any $\bullet_{(i,j)}$ to its immediate neighbors above and to the right. Additionally, we let two arrows go from $\circ_{(0,0)}$ to $\bullet_{(1,1)}$ and one arrow from $\circ_{(0,0)}$ to each of the vertices $v_{1,j}$ and $v_{i,1}$, where $i,j>1$.  See \cref{f:QuiverExample} for an example.   
  We have a bijection between  $\mathcal S_{\le n+1}\cup\{(0,0)\}$, and $\mathcal V_{n+1}$ sending $v_{ij}$ to $(i,j)$, and $\mathcal S_{\le n+1}$ will be used as a convenient indexing set. 
\end{defn}

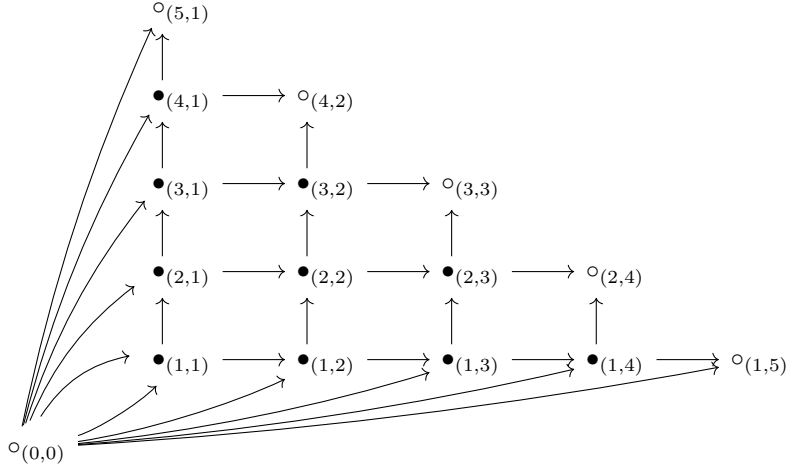
\begin{figure}[h!]
\[\begin{tikzcd}
	& { \circ_{(5,1)}} \\
	& { \bullet_{(4,1)}} & { \circ_{(4,2)}} \\
	& { \bullet_{(3,1)}} & { \bullet_{(3,2)}} & { \circ_{(3,3)}} \\
	& { \bullet_{(2,1)}} & { \bullet_{(2,2)}} & { \bullet_{(2,3)}} & { \circ_{(2,4)}} \\
	& { \bullet_{(1,1)}} & { \bullet_{(1,2)}} & { \bullet_{(1,3)}} & { \bullet_{(1,4)}} & { \circ_{(1,5)}} \\
	\circ_{(0,0)}
	\arrow[shift left=3,from=2-2, to=1-2]
	\arrow[from=2-2, to=2-3]
	\arrow[shift left=3,from=3-2, to=2-2]
	\arrow[from=3-2, to=3-3]
	\arrow[shift left=3,from=3-3, to=2-3]
	\arrow[from=3-3, to=3-4]
	\arrow[shift left=3,from=4-2, to=3-2]
	\arrow[from=4-2, to=4-3]
	\arrow[shift left=3,from=4-3, to=3-3]
	\arrow[from=4-3, to=4-4]
	\arrow[shift left=3,from=4-4, to=3-4]
	\arrow[from=4-4, to=4-5]
	\arrow[shift left=3,from=5-2, to=4-2]
	\arrow[from=5-2, to=5-3]
	\arrow[shift left=3,from=5-3, to=4-3]
	\arrow[from=5-3, to=5-4]
	\arrow[shift left=3,from=5-4, to=4-4]
	\arrow[from=5-4, to=5-5]
	\arrow[shift left=3,from=5-5, to=4-5]
	\arrow[from=5-5, to=5-6]
	\arrow[shift left=3,curve={height=-6pt}, from=6-1, to=1-2]
	\arrow[shift left=3,curve={height=-6pt}, from=6-1, to=2-2]
	\arrow[shift left=3,curve={height=-6pt}, from=6-1, to=3-2]
	\arrow[shift left=3,curve={height=-6pt}, from=6-1, to=4-2]
	\arrow[shift left=3,curve={height=-6pt}, from=6-1, to=5-2]
	\arrow[curve={height=6pt}, from=6-1, to=5-2]
	\arrow[curve={height=6pt}, from=6-1, to=5-3]
	\arrow[curve={height=6pt}, from=6-1, to=5-4]
	\arrow[curve={height=6pt}, from=6-1, to=5-5]
	\arrow[curve={height=6pt}, from=6-1, to=5-6]
\end{tikzcd}\]
\caption{The quiver $Q_6=(\mathcal V_{\bullet}\cup\mathcal V_{\circ},\mathcal A)$\label{f:QuiverExample}}
\end{figure}

We now introduce labels for the vertices and edges of the quiver $Q_{n+1}$. These labels will be Laurent polynomials in the standard coordinates $(m_{ij})_{(i,j)\in\mathcal S_{\le n+1}}$ of $U_+$.
For any vertex $v_{ij}$ let 
\[SW(v_{ij})=\{(i-r,j-r)\mid 1\le r< \min(i,j)\}\]
be the indexing set for the $\bullet$-vertices directly southwest of $b$ 
(where this set may be empty). We associate to the vertex $v_{ij}$ a label $\ell_{ij}$ given by
\[
\begin{array}{lc}
\ell_{ij}=\begin{cases}m_{ij}\inv+\sum_{b\in SW(v_{ij})}m\inv_{b} & (i,j)\ne (0,0),\\
0 & (i,j)=(0,0).
\end{cases}
& \qquad \text{(vertex label)} \end{array}
\]
For an arrow $a$ with head $h(a)=v_{ij}$ we write $\ell_{h(a)}$ for $\ell_{ij}$ and similarly with head $h(a)$ replaced by the tail $t(a)$ of the arrow $a$. We then label the arrow $a$ itself by 
\[
\begin{array}{lr}
\ 
\kappa_a=\ell_{h(a)}-\ell_{t(a)} &\qquad \qquad \qquad \qquad \qquad\qquad\qquad\quad  \text{(arrow label).}
\end{array}
\]
Finally, for every $\bullet$-vertex $v_{ij}=\bullet_{(i,j)}$ we consider the sets of arrows
\[
\begin{array}{ccc}
\operatorname{in}(v_{ij})= \{a\in\mathcal A\mid h(a)=v_{ij}\}&\text{and}&
\operatorname{out}(v_{ij})=\{a\in\mathcal A\mid t(a)=v_{ij}\}.
\end{array}
\]

\begin{prop} \label{p:ToeplitzviaQ}
$\ToeplitzU_{n+1}(\RR_{>0})$ has the following description,
\begin{equation}\label{e:ToeplitzviaQfin}
\ToeplitzU_{n+1}(\RR_{>0})=\bigg{\{}\left.\phi^{\mathbf i_0}_{>0}((m_{ij})_{i,j})\,\right| \, m_{ij}\in \RR_{>0}, \prod_{a\in\operatorname{in}(v)}\kappa_a=
\prod_{a\in\operatorname{out}(v)}\kappa_a\,\text{ for all $v\in\mathcal V_\bullet$}\bigg{\}},
\end{equation}
in terms of the quiver $Q_{n+1}=(\mathcal V,\mathcal A)$ from \cref{d:Qn+1}, with the arrow labels $\kappa_a$ defined as  above.
Moreover, if $\phi^{\mathbf i_0}_{>0}((m_{ij})_{i,j})$ lies in $\ToeplitzU_{n+1}(\RR_{>0})$ then $\kappa_a\in\RR_{>0}$ for all arrows $a$.
\end{prop}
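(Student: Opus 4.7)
The plan is to verify both inclusions in \eqref{e:ToeplitzviaQfin}, together with the positivity of the arrow labels $\kappa_a$, by a direct computation. The two main ingredients are: the identity $\Minor^{[i]}_{[i]+j}(u)=\prod_{k=1}^{i}\prod_{r=1}^{j}m_{kr}$ established in the proof of \cref{l:mijFormula}, and the fact that for a Toeplitz matrix $u$ with first superdiagonal entries $c_1,\dots,c_n$ the same minor is a Toeplitz determinant $H^i_j:=\det(c_{q-p+j})_{1\le p,q\le i}$. Combining these two, the standard coordinates of $u\in\ToeplitzU_{n+1}(\RR_{>0})$ admit the closed form $m_{ij}=H^i_j H^{i-1}_{j-1}/(H^i_{j-1} H^{i-1}_j)$, so that each vertex label $\ell_{ij}$ of $Q_{n+1}$ is a sum of reciprocal ratios of such Toeplitz minors indexed by the south-west diagonal through $(i,j)$.

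For the inclusion $\ToeplitzU_{n+1}(\RR_{>0})\subseteq\{\text{vertex equations}\}$, the crux is a telescoping identity coming from Desnanot-Jacobi (equivalently, a three-term Pl\"ucker relation) applied to the Toeplitz minors $H^i_j$. I expect this to collapse $\ell_{ij}$ to a closed form involving only two Toeplitz minors, which then expresses each arrow label $\kappa_a=\ell_{h(a)}-\ell_{t(a)}$ as a single ratio of such minors of $u$. This simultaneously makes $\kappa_a$ a subtraction-free rational expression in the $m_{ij}$'s (giving the positivity assertion over an arbitrary positive semifield $\RR_{>0}$) and reduces each vertex equation $\prod_{\operatorname{in}(v_{ij})}\kappa_a=\prod_{\operatorname{out}(v_{ij})}\kappa_a$ to an identity between products of Toeplitz minors of $u$ that holds tautologically.

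For the reverse inclusion $\supseteq$, I would argue by induction on the vertices of $Q_{n+1}$, ordering $\mathcal V_\bullet$ so that each successive vertex equation determines a single new $m_{ij}$ as a subtraction-free rational function of the previously fixed coordinates. This realises the solution set of the vertex equations over $\RR_{>0}$ as an $n$-parameter family, parametrised for instance by the coordinates lying on the top anti-diagonal of $\mathcal V_\circ$. By the forward direction this family contains all of $\ToeplitzU_{n+1}(\RR_{>0})$, and matching the two $n$-parameter families under the same parametrising data forces the identification of the entire solution set with $\ToeplitzU_{n+1}(\RR_{>0})$.

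The main obstacle is the combinatorial identity in the forward direction, namely the telescoping of $\ell_{ij}$ and the resulting expression of $\kappa_a$ as a single positive ratio of Toeplitz minors. Once this is in hand, both the vertex equations and the positivity $\kappa_a\in\RR_{>0}$ are automatic consequences of determinantal identities of Desnanot-Jacobi type, which hold over any ring containing $\Q$ and hence over any ring with positive structure as in \cref{r:RcontainsQ}.
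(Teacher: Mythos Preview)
Your forward direction is essentially the paper's approach: the paper also expresses the arrow labels as explicit positive monomials $e_{ij}=\frac{1}{m_{ij}}\prod_\ell \frac{m_{i+1,\ell}}{m_{i,\ell}}$ and $n_{ij}=\frac{1}{m_{ij}}\prod_\ell \frac{m_{\ell,j+1}}{m_{\ell,j}}$ (equivalently, ratios of the Toeplitz minors $\Minor^{[i]}_{[i]+j}(u)$), after which both the vertex equations and the positivity $\kappa_a\in\RR_{>0}$ are immediate. One technical comment: the paper does not actually collapse the vertex label $\ell_{ij}$ itself to a closed form, but rather proves the arrow-label formulas inductively using the Desnanot--Jacobi characterisation of Toeplitz matrices (its Lemma~A.2). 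Your telescoping idea may well give the same output, but you should check that it really closes up.

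The reverse inclusion is where your proposal has a genuine gap. You want to order $\mathcal V_\bullet$ so that each vertex equation determines one new $m_{ij}$ subtraction-free from the anti-diagonal data, giving an $n$-parameter family that you then match with $\ToeplitzU_{n+1}(\RR_{>0})$. This does not work as stated: already for $n=3$ the vertex equation at $(2,1)$ involves $\ell_{22}=\frac{1}{m_{22}}+\frac{1}{m_{11}}$ and $\ell_{11}=\frac{1}{m_{11}}$, so it couples the unknowns $m_{21}$ and $m_{11}$ in a way that no ordering decouples. More fundamentally, over a general semifield $\RR_{>0}$ a dimension-count or ``same $n$-parameter family'' argument is not available; you must produce an algebraic identity. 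The paper handles this by inserting an intermediate condition: it proves that the vertex equations \emph{force} the arrow labels to equal the explicit monomials $e_{ij},n_{ij}$ (induction on the lexicographic order of $(i+j,j)$, where the key step solves a single quadratic relation in the labels at each $\bullet$-vertex), and then shows these monomial formulas are equivalent to the minor identities $\bigl(\Minor^{[i]}_{[i]+j}\bigr)^2=\Minor^{[i]}_{[i]+j-1}\Minor^{[i]}_{[i]+j+1}+\Minor^{[i+1]}_{[i+1]+j}\Minor^{[i-1]}_{[i-1]+j}$, which it independently proves characterise Toeplitz matrices in $U_+^{\mathbf i_0}$. That second step is where the nontrivial algebra sits, and your outline does not supply a substitute for it.
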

The proof of the above proposition will be given in \cref{a:ToeplitzviaQ}.
We can now prove the following result. 
\begin{prop}\label{p:TropToeplitzviaQ} Suppose $(\RR,\RR_{>0},\Val)$ is a ring with a valuative positive structure. Consider an element $u=\phi_{>0}^{\mathbf {i_0}}((m_{ij})_{i,j})$ of $U_+$ and let $M_{ij}=\Val(m_{ij})$.
If $u\in \ToeplitzU_{n+1}(\RR_{>0})$ then $(M_{ij})_{i,j}$ is a min-ideal filling. 
\end{prop}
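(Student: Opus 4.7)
The plan is to tropicalise the quiver description of $\ToeplitzU_{n+1}(\RR_{>0})$ from \cref{p:ToeplitzviaQ}. The starting observation is that the vertex labels satisfy $\ell_{ij}=m_{ij}^{-1}+\ell_{i-1,j-1}$ for $i,j\geq 2$, with boundary cases $\ell_{i,1}=m_{i,1}^{-1}$ and $\ell_{1,j}=m_{1,j}^{-1}$. Hence $\ell_{ij}-\ell_{i-1,j-1}=m_{ij}^{-1}$ for $i,j\geq 2$, and writing this difference as a sum of arrow labels along either of the two monotone paths from $v_{i-1,j-1}$ to $v_{ij}$ in $Q_{n+1}$ gives
\[
m_{ij}^{-1} \;=\; \kappa^{\uparrow}_{i-1,j}+\kappa^{\rightarrow}_{i-1,j-1}\;=\;\kappa^{\rightarrow}_{i,j-1}+\kappa^{\uparrow}_{i-1,j-1},
\]
where $\kappa^{\uparrow}_{k\ell}$ and $\kappa^{\rightarrow}_{k\ell}$ denote the labels of the up and right arrows out of $v_{k\ell}$, all of which lie in $\RR_{>0}$ by \cref{p:ToeplitzviaQ}. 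Applying $\Val$ and setting $K_a:=\Val(\kappa_a)$, I then obtain
\[
-M_{ij}\;=\;\min(K^{\uparrow}_{i-1,j},K^{\rightarrow}_{i-1,j-1})\;=\;\min(K^{\rightarrow}_{i,j-1},K^{\uparrow}_{i-1,j-1}).
\]
The boundary cases $i=1$ or $j=1$ are handled analogously by the arrows out of $\circ_{(0,0)}$, whose labels are precisely the $m^{-1}_{k\ell}$ of the vertex they point into.

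Second, the same formulas applied at $v_{i+1,j}$ and $v_{i,j+1}$ express $-M_{i+1,j}$ and $-M_{i,j+1}$ as minima of the $K_a$-values of arrows surrounding $v_{ij}$. Combining these expressions with the tropicalised balance relation $\sum_{a\in\operatorname{in}(v_{ij})}K_a=\sum_{a\in\operatorname{out}(v_{ij})}K_a$ at $v_{ij}$ should then pin down the min-ideal equality $M_{ij}=\min(M_{i+1,j},M_{i,j+1})$.

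The main obstacle is this last combinatorial step. Tropical balance at a single vertex, taken alone, is insufficient: one can prescribe the four adjacent $K_a$ so that they satisfy the balance at $v_{ij}$ while violating the desired min-ideal identity. The further constraints that rule such configurations out come from the fact that the decompositions $m^{-1}_{ij}=\kappa+\kappa'$ at \emph{neighbouring} vertices share arrow labels, so the balance equations at nearby $\bullet$-vertices jointly restrict the $K_a$. My expected approach is an induction on $n+1-(i+j)$, propagating inward from the top rim of $\circ$-vertices: on the rim, $\ell_{ij}=m_{ij}^{-1}$, which directly controls the valuation of any arrow whose head is on the rim and lets the min-ideal identity spread into the interior via the balance relations.
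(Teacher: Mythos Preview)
Your setup is sound: the decomposition $m_{ij}^{-1}=\kappa+\kappa'$ along the two length-$2$ paths from $v_{i-1,j-1}$ to $v_{ij}$ is correct, and combining these with the tropicalised balance relations $\sum_{\mathrm{in}}K_a=\sum_{\mathrm{out}}K_a$ is indeed the route. But there is a concrete error, and the gap you flag is not closed.

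The error: it is false that $\ell_{ij}=m_{ij}^{-1}$ on the rim $i+j=n+1$. By definition $\ell_{ij}=\sum_{r=0}^{\min(i,j)-1}m_{i-r,j-r}^{-1}$, which has $\min(i,j)$ summands; it collapses to a single term only when $i=1$ or $j=1$. So the rim gives no special leverage, and your proposed inward induction has no workable base case.

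The paper runs the induction the other way, on $k=i+j$ from $k=2$ outward. After cancellation the balance relation at $b=(i,j)$, with $c=(i{+}1,j)$ and $d=(i,j{+}1)$, becomes
\[
(m_c+m_d)\Bigl(1+\sum_{b'\in SW(b)}\tfrac{m_b}{m_{b'}}\Bigr)
\;=\; m_b\Bigl(1+\sum_{c'\in SW(c)}\tfrac{m_c}{m_{c'}}+\sum_{d'\in SW(d)}\tfrac{m_d}{m_{d'}}\Bigr).
\]
At $k=2$ both brackets equal $1$, giving $m_{12}+m_{21}=m_{11}$ and hence $M_{11}=\min(M_{21},M_{12})$. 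For general $k$, applying $\Val$ gives $\min(M_c,M_d)=M_b$ \emph{provided} both brackets have valuation $0$, i.e.\ provided $M_v\ge M_{\tilde v}$ for each $v\in\{b,c,d\}$ and its southwest neighbour $\tilde v$, iterated down the SW diagonal. For $v$ strictly below the $k$-diagonal this follows from the inductive min-ideal hypothesis; the delicate case is $v\in\{c,d\}$ on the new diagonal $r+s=k+1$. This southwest-monotonicity claim is the missing lemma. It is proved by contradiction: if $M_v<M_{\tilde v}$, your identity $-M_v=\min(K_{a_1},K_{a_2})=\min(K_{a'_1},K_{a'_2})$ for the four arrows at $v$ forces each of those $K$-values to exceed $-M_{\tilde v}$; feeding this and the analogous identities at the two $k$-diagonal neighbours of $\tilde v$ into the tropicalised balance $K_{a_2}+K_{a'_2}=K_{a_3}+K_{a'_3}$ at $\tilde v$ yields a strict inequality, a contradiction. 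With the claim in hand both brackets have valuation $0$ and the induction closes.
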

\begin{remark} This proposition proves one containment direction of \cref{t:MinIdealIsToepl}.
\end{remark}
\begin{proof}  Consider a vertex $b\in \mathcal V_\bullet$ with its surrounding arrows. The quiver $Q_{n+1}$ locally around $b$ looks like one of the examples shown below. 
\[\begin{tikzcd}
	& c &&& c &&&& c &&& c \\
	\tilde c & b & d && b & d && \tilde c & b & d && b & d \\
	& \tilde d &&& \tilde d && 0 &&&& 0 \\
	&&& 0
	\arrow[from=2-1, to=2-2]
	\arrow[from=2-2, to=1-2]
	\arrow[from=2-2, to=2-3]
	\arrow[from=2-5, to=1-5]
	\arrow[from=2-5, to=2-6]
	\arrow[from=2-8, to=2-9]
	\arrow[from=2-9, to=1-9]
	\arrow[from=2-9, to=2-10]
	\arrow[from=2-12, to=1-12]
	\arrow[from=2-12, to=2-13]
	\arrow[from=3-2, to=2-2]
	\arrow[from=3-5, to=2-5]
	\arrow[curve={height=12pt}, from=3-7, to=2-9]
	\arrow[curve={height=12pt}, from=3-11, to=2-12]
	\arrow[curve={height=-12pt}, from=3-11, to=2-12]
	\arrow[curve={height=-6pt}, from=4-4, to=2-5]
\end{tikzcd}\] 
Consider $(m_{ij})_{i+j\le n+1}$ with $\phi^{\mathbf i_0}_{>0}((m_{ij}))\in \Toeplitz_{n+1}(\RR_{>0})$. Write $f_b((m_{ij})_{i,j}):= \prod_{a\in\operatorname{in}(b)}\kappa_a-
\prod_{a\in\operatorname{out}(b)}\kappa_a$, so that the Toeplitz condition implies the identity $f_{b}((m_{ij}))=0$, by \cref{p:ToeplitzviaQ}. If $b$ is as in the first configuration shown above, this identity takes the form 
\begin{multline*}
\left(m_c\inv-m_b\inv +\sum_{c'\in SW(c)} m_{c'}\inv  - \sum_{b'\in SW(b)} m_{b'}\inv\right) 
\left(m_d\inv - m_b\inv +\sum_{d'\in SW(d)} m_{d'}\inv - \sum_{b'\in SW(b)} m_{b'}\inv\right)\\
=\left( m_b\inv -m_{\tilde c}\inv + \sum_{b'\in SW(b)} m_{b'}\inv-\sum_{c'\in SW(\tilde c)} m_{c'}\inv\right) 
\left( m_b\inv -  m_{\tilde d}\inv +\sum_{b'\in SW(b)}  m_{b'}\inv -\sum_{d'\in SW(\tilde d)} m_{d'}\inv \right).
\end{multline*}
The identity for the other configurations is the same but with some sums absent. Namely, in the other configurations we have $SW(b)=\emptyset$, as well as either $SW(c)$ or $SW(d)=\emptyset$, or both. After cancellations and multiplying both sides of the equation by $m_b m_c m_d$ we get the simpler looking identity,
\begin{equation}\label{e:triangleatb}
\left(m_c +m_d\right)\left(1 + \sum_{b'\in SW(b)} \frac {m_b}{m_{b'}}\right) =
m_b \left( 1+\sum_{c'\in SW(c)}\frac {m_{c}}{m_{c'}}+\sum_{d'\in SW(d)} \frac{m_d}{m_{d'}}\right), 
\end{equation}
which is equivalent to the one above. 

We now prove the proposition using induction. The starting case is the first min-ideal condition, namely the identity $M_{1,1}=\min(M_{1,2},M_{2,1})$, which holds since \eqref{e:triangleatb} gives $m_{12}+m_{21}=m_{11}$. 
Now we fix some $k\ge 3$ and assume that  $M_{i,j}=\min(M_{i+1,j},M_{i,j+1})$ whenever $i+j< k$, so that the truncation $(M_{i,j})_{i+j\le k}$ is a min-ideal filling, and then proceed by induction on $k$.  We start by  showing the following claim. 
\vskip .2cm
\noindent{{\it Claim:}}
Consider any vertex $v=(r,s)$ with $r+s\le k+1$ and let $\tilde v=(r-1,s-1)$. Then $ M_{\tilde v}\le M_v$.   
\vskip .2cm
\noindent{{\it Proof of the Claim:}} If $r+s<k+1$ then the claim holds true by the min-ideal condition, which we can use thanks to the induction hypothesis. We therefore consider the case $r+s=k+1$. We may assume we have a subquiver of $Q_{n+1}$ as shown below, as any other case is similar and simpler. The dotted line indicates where the induction hypothesis applies. Namely the $M_c$ for vertices $c$ below the dotted line are already part of a min-ideal filling. 
\[\begin{tikzcd}
	{} & {} \\
	& u & v \\
	{\tilde u} & {\tilde v} & w & {} \\
	& {\tilde w} && {}
	\arrow[""{name=0, anchor=center, inner sep=0}, draw=none, from=1-1, to=1-2]
	\arrow["{a_1}", from=2-2, to=2-3]
	\arrow["{a_3}", from=3-1, to=3-2]
	\arrow["{a_2}"', from=3-2, to=2-2]
	\arrow["{a_2'}", from=3-2, to=3-3]
	\arrow["{a_1'}"', from=3-3, to=2-3]
	\arrow["{a_3'}"', from=4-2, to=3-2]
	\arrow[""{name=1, anchor=center, inner sep=0}, draw=none, from=4-4, to=3-4]
	\arrow[shorten <=9pt, shorten >=9pt, dashed, no head, from=0, to=1]
\end{tikzcd}\]
From the construction of the vertex and arrow labellings 
we have that
\begin{equation}\label{e:kappasum}
\kappa_{a_1}+\kappa_{a_2}=\kappa_{a'_1}+\kappa_{a'_2}=m_v\inv.
\end{equation}
Let us set $K_a:=\Val(\kappa_a)$ and $\overline M_c:=\Val(m_c\inv)=-M_c$. Recall that by \cref{p:ToeplitzviaQ} we have $\kappa_a\in \RR_{>0}$. It  now follows from  \eqref{e:kappasum} that 
\begin{equation}\label{e:Kasatv}
 \min(K_{a_1},K_{a_2})=\min(K_{a'_1},K_{a'_2})=\overline M_v.
\end{equation}
Let us assume indirectly that $M_v<M_{\tilde v}$. Then  $\overline M_v>\overline M_{\tilde v}$ and therefore each of the $K_a$ involved in \eqref{e:Kasatv} is bigger than $\overline M_{\tilde v}$,
\begin{equation}\label{e:Ksvstildev}
 K_{a_1},K_{a_2},K_{a'_1},K_{a'_2}> \overline M_{\tilde v}.
\end{equation}
On the other hand, by the induction hypothesis, $M_{\tilde v}=\min(M_u,M_w)$, wherefore $\overline M_{\tilde v}=\max(\overline M_u, \overline M_w)$. So we can extend \eqref{e:Ksvstildev} to 
\begin{equation}\label{e:Ksvstildevetc}
 K_{a_1},K_{a_2},K_{a'_1},K_{a'_2}> \overline M_{\tilde v}=\max(\overline M_u,\overline M_w)\ge \overline M_u,\overline M_w.
\end{equation}
Analogously to \eqref{e:kappasum}, but at $u$ and $w$ we have $\kappa_{a_2}+\kappa_{a_3}=m_u\inv$ and $\kappa_{a_2'}+\kappa_{a_3'}=m_{w}\inv$, and therefore
\begin{equation}
\min(K_{a_2},K_{a_3})=\overline M_u\qquad \text{and}\qquad \min(K_{a'_2},K_{a'_3})=\overline M_{w}.
\end{equation}
Now since $K_{a_2}>\overline M_u$ and $K_{a'_2}>\overline M_w$ by \eqref{e:Ksvstildevetc}, we must have $K_{a_3}=\overline M_u$ and $K_{a'_3}=\overline M_w$. Now we can compare the sum of the valuations of the outgoing arrows of $\tilde v$ with those of the incoming  arrows, and we find that 
\begin{equation}
K_{a_2}+K_{a'_2}>2\overline M_{\tilde v}\ge \overline M_u+\overline M_w =K_{a_3}+K_{a'_3}
\end{equation}
where the two inequalities follow from \eqref{e:Ksvstildevetc}. This contradicts the  relation at the vertex $\tilde v$ from \eqref{e:ToeplitzviaQfin}, which reads $\kappa_{a_2}\kappa_{a'_2}=\kappa_{a_3}\kappa_{a'_3}$ and implies $K_{a_2}+K_{a'_2}=K_{a_3}+K_{a'_3}$. Thus the claim is proved. 

\vskip .2cm
We can now finish the inductive proof of the proposition. We pick a vertex $b=(i,j)$ with $i+j=k$ and we want to show that $M_b=\min(M_c,M_d)$ for $c=(i+1,j)$ and $d=(i,j+1)$. 

By repeated application of the claim just proved we see that 
\[
 M_b\ge M_{b'},\quad M_c\ge  M_{c'},\quad M_d\ge M_{d'}\quad \text{ whenever\quad $b'\in SW(b),\ c'\in SW(c)$ and $d'\in SW(d)$.}
\]
It follows that the factors found in  \eqref{e:triangleatb}, namely 
\[
\left(1 + \sum_{b'\in SW(b)} \frac {m_b}{m_{b'}}\right)\qquad\text{ and }\qquad \left( 1+\sum_{c'\in SW(c)}\frac {m_{c}}{m_{c'}}+\sum_{d'\in SW(d)} \frac{m_d}{m_{d'}}\right),\]
both have valuation $0$. 
Now applying $\Val$ to the two sides of \eqref{e:triangleatb} we are left with the identity $
\Val(m_c+m_d)=\Val(m_b)$, which is precisely the min-ideal condition $\min(M_c,M_d)=M_b$. 
\end{proof}

We now complete the theorem describing $\ToeplitzU_{n+1}(\R_{\min})$ by making use of \cref{t:TropFinite}, that we have proved already, together with an argument from \cite{Judd:Flag}. 

\begin{proof}[Proof of \cref{t:MinIdealIsToepl}]  We have the following commutative diagram, 
\[\begin{tikzcd}
	{\ToeplitzU_{n+1}({\Rmin})} && {\mathfrak h^*_{PSL}(\R)} \\
	\Imin_{n+1}
	\arrow["\mathcal L|_{\Toeplitz}",from=1-1, to=1-3]
	\arrow[hook, from=1-1, to=2-1]
	\arrow["{\mathcal L|_{\Imin}}"', from=2-1, to=1-3]
\end{tikzcd}\]
where the horizontal map $\mathcal L|_{\Toeplitz}$ is a bijection by \cref{t:TropFinite}, and the vertical map is the  inclusion that we have by \cref{p:ToeplitzviaQ}. It now suffices to prove that the piecewise linear map $\mathcal L|_{\Imin}:\Imin_{n+1}\to\mathfrak h^*_{PSL}(\R)$ is injective. This follows by the combinatorial proof given in \cite[Proof of Proposition~5.6]{Judd:Flag}, which applies without change to our  setting. 
\end{proof}

\section{Infinite min-ideal fillings and a tropical Edrei theorem}\label{s:TropEdrei}
By an infinite min-ideal filling we will simply mean an element $(M_{ij})_{i,j}\in\R^{\N\times\N}$ satisfying the condition
\[
M_{ij}=\min(M_{i+1,j},M_{i,j+1})
\]
for all $i,j\in\N$. Graphically, we may represent it as an infinite pattern of numbers in the positive quadrant $\R_{>0}^2$ with $M_{ij}$ placed at the lattice point $(j,i)\in \N^2$, obeying the condition shown in \cref{d:minideal}.
\begin{center}
 \begin{tikzpicture}[scale=0.6]
\node[fill=white] at    (0.5,2.6) {$\tiny{M_{31}}$}  ;
\node[fill=white] at    (0.5,1.6) {$M_{21}$}  ;
\node[fill=white] at    (0.5,0.6) {$M_{11}$}  ;
\node[fill=white] at    (1.8,1.6) {$M_{22}$}  ;\node[fill=white] at    (1.8,0.6) {$M_{12}$}  ;
\node[fill=white] at    (3,0.6) {$M_{13}$}  ;
\draw[black] (-.2,0.1) -- (-.2,3.55) --(-.2,3.1)--(1.35,3.1)-- (1.15,3.1) --(1.15,3.3) -- (1.15,0.1) -- (4,0.1)--(3.6,0.1) -- (3.6,1.3)--(3.6,1.1)--(3.8,1.1)--(3.6,1.1)--(-.2,1.1)--(-.2,0.1)--(2.4,0.1)--(2.4,2.3)--(2.4,2.1)--(2.6,2.1)--(2.4,2.1)--(-.2,2.1);
 \end{tikzpicture}
 \end{center} 
For the infinite case we may also consider various alternative versions in terms of the limiting behaviour.  We now give some more formal definitions, after which we will state and prove directly a parametrisation theorem for the infinite min-deal fillings. 

\begin{defn}[Infinite min-ideal fillings]
Let 
\[
\Imin_{\infty}(\Rmininf):=\{(M_{ij})_{i,j\in\Z_{>0}}\mid M_{ij}\in \Rmininf,\, M_{ij}=\min (M_{i+1,j},M_{i,j+1})\ \text{ for all $i,j\in\Z_{>0}$}\}
\]
and let $\Imin_{\infty}(\Rmin)$ be the subset of  
$\Imin_{\infty}(\Rmininf)$ where all coordinates $M_{ij}$ lie in $\Rmin$. 
We refer to the elements of $\Imin_{\infty}(\Rmin)$ as \textit{infinite min-ideal fillings}. The elements of $\Imin_{\infty}(\Rmininf)$ will be called \textit{generalised infinite min-ideal fillings}. The monoid structure given by the fomula
\[
(M_{ij})_{i,j=1}^\infty\wedge(M'_{ij})_{i,j=1}^\infty=(\min(M_{ij},M'_{ij}))_{i=1}^\infty.
\]
turns both $\Imin_{\infty}(\Rmininf)$ and  $\Imin_{\infty}(\Rmin)$ into meet semi-lattices.
\end{defn}

\begin{remark}[Projective limit description] We may describe the set $\Imin_{\infty}(\Rmin)$ of infinite min-ideal fillings as a projective limit. Consider the sequence of maps 
\[\begin{tikzcd}
	{\Imin_2} & {\Imin_3} & {} & {} & {\Imin_n} & {\Imin_{n+1}} & {}
	\arrow["\pi^{3}_2"',from=1-2, to=1-1]
	\arrow["\pi^{4}_3"',from=1-3, to=1-2]
	\arrow[dotted, no head, from=1-4, to=1-3]
	\arrow["{\pi^{\, n}_{n-1}}"',from=1-5, to=1-4]
	\arrow["{\pi^{n+1}_n}"', from=1-6, to=1-5]
	\arrow[from=1-7, to=1-6]
\end{tikzcd}\]
where $\pi^{k+1}_k$ sends $(M_{ij})_{(i,j)\in \mathcal S_{k+1}}$ to its truncation $(M_{ij})_{(i,j)\in \mathcal S_{k}}$. 
Then we have 
\begin{equation}\label{e:IMaslimit}\Imin_{\infty}(\Rmin)=\underset{\longleftarrow}{\lim}\ \Imin_{n+1}.
\end{equation}
This description endows $\Imin_{\infty}(\Rmin)$ with a natural map to $\Imin_{n+1}$ for every $n$, 
\begin{equation}
\begin{array}{rccc}
\pi_{n+1}:&\Imin_{\infty}(\Rmin)&\to&\Imin_{n+1}\\
&(M_{ij})_{i,j=1}^\infty&\mapsto &(M_{ij})_{i+j\le n+1}.
\end{array}
\end{equation}
Observe also that all of the maps above are compatible with the meet semilattice structures. 
\end{remark}
\begin{remark}\label{r:NonSurj}
It is important to note that the maps $\pi^{n+1}_n:\Imin_{n+1}\to\Imin_{n}$ are not generally surjective. For example, the min-ideal filling in $\Imin_4$ given by
\begin{center} 
 \begin{tikzpicture}[scale=0.6]
\node[fill=white] at    (0.6,2.6) {$2$}  ;
\node[fill=white] at    (0.6,1.6) {$1$}  ;
\node[fill=white] at    (0.6,0.6) {$1$}  ;
\node[fill=white] at    (1.7,1.6) {$1$}  ;\node[fill=white] at    (1.7,0.6) {$1$}  ;
\node[fill=white] at    (2.7,0.6) {$2$}  ;
\draw[black] (0,0.1) -- (0,3.1) -- (1.15,3.1) -- (1.15,0.1) -- (3.2,0.1) -- (3.2,1.1)--(0,1.1)--(0,0.1)--(2.2,0.1)--(2.2,2.1)--(0,2.1);
 \end{tikzpicture}\quad
 \end{center}
does not lie in the image of $\pi^5_4$. Otherwise the extended ideal filling $(M_{ij})_{i+j\le 5}$ would have to have $\min(M_{23},M_{32})=M_{2,2}=1$, which would imply that $M_{31}, M_{13}\le 1$. But we have $M_{31}=M_{13}=2$.  Similarly, the maps $\pi_{n+1}:\Imin_{\infty}(\Rmin)\to\Imin_{n+1}$ are not surjective once $n\ge 3$. 
\end{remark}

\begin{defn}[Variations on infinite min-ideal fillings] \label{d:MinIdealVar}

We call an infinite min-ideal  filling  \textit{divergent} if the supremum of its entries is $\infty$.  We write \[\Imin^{div}_{\infty}(\Rmin)=\{(M_{ij})\in\Imin_{\infty}(\Rmin)\mid  \sup(\{M_{ij}\mid i,j\in\N\})=\infty\}.\]
We call a min-ideal filling \textit{asymptotically real} if rows and columns individually have suprema in $\R$. Set
\[
\Imin^{\R}_{\infty}(\Rmin)=\{(M_{ij})\in\Imin_{\infty}(\Rmin)\mid \text{$\sup\{M_{ik}\mid k\in\N\},\sup\{M_{kj}\mid k\in\N\}\in\R$ for every $i,j\in\N$}\}.
\]
We call an infinite ideal filling \textit{stable} if the rows and columns stabilise, that is, if 
\begin{equation}\label{e:stability}
\begin{array}{ccl}
 \lim_{i\to\infty} M_{ij}&=&\max(\{M_{ij}\mid i\in\Z_{>0}\}),\text{ for all $j\in\Z_{>0}$,}\\   \lim_{j\to\infty}M_{ij}&=&\max(\{M_{ij}\mid j\in\Z_{>0}\}),\text{ for all $i\in\Z_{>0}$.}
\end{array}
\end{equation}
We denote the set of stable min-ideal fillings by $\Imin^s_\infty(\Rmin)$.

The  divergent, the asymptotically real, and the stable infinite min-ideal fillings are all sub-meet-semilattices of $\Imin_\infty(\Rmin)$. 
\end{defn}
We now state a parametrisation theorem for the  infinite ideal fillings. 
We require the following definitions.

\begin{defn}[Interlacing and weakly interlacing sequences]\label{d:interlacing}
Let
\[
\overline{\operatorname{MonSeq}}:=\{\bigA=(A_i)_{i=1}^{\infty}\mid A_i\in\R_{\min,\infty},\ A_1\le A_2\le\dotsc \},
\]
denote the set of weakly increasing sequences in $\Rmininf$.  
Two elements $\bigA=(A_i)_{i\in \Z_{\ge 0}}$ and $\bigB=(B_j)_{j\in \Z_{\ge 0}}$ of $\overline{\operatorname{MonSeq}}$ are called \textit{weakly interlacing} if 
\[
\sup\{A_i\mid i\in\mathbb Z_{>0}\}=
\sup\{B_i\mid i\in\mathbb Z_{>0}\},
\]
where the supremum is allowed to be $\infty$. The two sequences  $\bigA,\bigB$ will be called \textit{interlacing} if for every $A_i$ from $\bigA$ there is a $B_j$ from $\bigB$ such that $B_j\ge A_i$, and vice versa, for every $B_i$ there is an $A_j$ such that $A_j\ge B_i$. Note that interlacing implies weakly interlacing. 
The weakly interlacing sequences that are not interlacing are precisely those for which the joint supremum is a maximum for one of the sequences but not for the other.  
\end{defn}

\begin{defn}[Tropical parameter spaces]\label{d:Omegatrop} 
We make the following key definitions. 
\begin{eqnarray*}
\Omega(\Rmin)\ &:=&\{(\bigA,\bigB)\mid \bigA,\bigB \in\overline{\operatorname{MonSeq}},\, \min(A_i,B_i)<\infty \text{ for all $i\in\Z_{>0}$}\},\\
\Omega_\star(\Rmin)&:=&\{(\bigA,\bigB)\in \Omega(\Rmin)\mid \text{with $\bigA,\bigB$ weakly interlacing}\}.
\end{eqnarray*}
We also have the following alternative versions (subsets) of $\Omega_\star(\Rmin)$,
\begin{eqnarray*}
\Omega_{div}(\Rmin)&:=&\{(\bigA,\bigB)\in \Omega_\star(\Rmin)\mid \text{with $\sup(\{A_i\})=\sup(\{B_i\})=\infty$}\},\\
\Omega_\star^{il}(\Rmin)&:=&\{(\bigA,\bigB)\in \Omega_\star(\Rmin)\mid \text{with $\bigA,\bigB$ interlacing}\}.
\end{eqnarray*}
We may replace $\Rmin$ by $\Rmininf$ above, which will refer to the analogous set with the condition that $\min(A_i,B_i)<\infty$ removed.  So, for example, we write $\Omega(\Rmininf)$ for $\overline{\operatorname{MonSeq}}\times \overline{\operatorname{MonSeq}}$, and we set $\Omega_\star(\Rmininf)$ to be the set of weakly interlacing sequences in $\Omega(\Rmininf)$, 
\[
\Omega_\star(\Rmininf):=\{(\bigA,\bigB)\mid  \text{ $\bigA,\bigB \in\overline{\operatorname{MonSeq}}$, weakly interlacing}\}.
\]
We may think of $\Omega_\star(\Rmin)$ as the subset of $\Omega_\star(\Rmininf)$ with the additional property that at least one of the sequences lies entirely in $\Rmin$. 

We finally consider one further subset of $\Omega_\star(\Rmin)$, 
\[
\Omega^\R_\star(\Rmin):=\{(\bigA,\bigB)\in \Omega_\star(\Rmin)\mid \text{with $A_i,B_j\in\R$ for all $i,j\in\N$}\}.
\]
which is made up  of real weakly interlacing sequences $\bigA,\bigB$. 
\end{defn}
\begin{remark}\label{r:interlacingvsR} Note that
\[\Omega^{il}_\star(\Rmin) \subset \Omega^\R_\star(\Rmin)\subset \Omega_\star(\Rmin)\quad \text{ and }\quad  \Omega_\star(\Rmin)=\Omega^{\R}_\star(\Rmin)\cup\Omega^{div}_{\star}(\Rmin),\]
with a nontrivial intersection $\Omega^{\R}_\star(\Rmin)\cap\Omega^{div}_{\star}(\Rmin)=\Omega^{il}_\star(\Rmin)\cap\Omega^{div}_{\star}(\Rmin)$ made up of real, divergent parameter sequence pairs.  
\end{remark}
We now state the following parametrization theorem covering infinite min-ideal fillings of various types. We will go on to show how this result relates to the Edrei theorem in \cref{s:KdelTropToep} and \cref{s:restrictedT}.  
\begin{theorem}\label{t:TropEdrei}
If $(\bigA,\bigB)\in\Omega(\Rmininf)$ then $M_{ij}:=\min(A_i,B_j)$ determines a  generalised infinite min-ideal filling $(M_{ij})_{i,j\in\mathbb Z_{>0}}$. Moreover the map
\[
\begin{array}{rccl}
\bar{\mathbb E}:&\Omega_\star(\Rmininf)&\longrightarrow&\Imin_{\infty}(\Rmininf)
\\
&{(\bigA,\bigB)}&\mapsto& (\min(A_i,B_j)_{ij})_{i,j\in\mathbb Z_{>0}}
\end{array}
\]
is a  bijection, as is the restriction $\bar{\mathbb E}^s:\Omega^{il}_\star(\Rmininf)\overset\sim\longrightarrow \Imin^s_{\infty}(\Rmininf)$ to interlacing sequences. 
Furthermore we obtain by restriction
\begin{equation*}
\begin{array}{llccl}
\mathbb E:&\Omega_\star(\Rmin)&\longrightarrow &\Imin_{\infty}(\Rmin),\\
\mathbb E^s:&\Omega^{il}_\star(\Rmin)&\longrightarrow &\Imin^s_{\infty}(\R_{\min}),\\
\mathbb E^\R:&\Omega^{\R}_\star(\Rmin) &\longrightarrow &\Imin^{\R}_{\infty}(\R_{\min}),\\
\mathbb E^{div}:&
\Omega^{div}_\star(\Rmin)&\longrightarrow &\Imin^{div}_{\infty}(\R_{\min}),
\end{array}
\end{equation*} 
and these maps are again well-defined bijections. 
\end{theorem}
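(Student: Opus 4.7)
The plan is to construct an explicit inverse to $\bar{\mathbb E}$ and then verify that it restricts correctly on each subset. The forward direction is almost immediate: for $(\bigA, \bigB) \in \Omega(\Rmininf)$, weak monotonicity of the sequences gives
\[
\min(M_{i+1,j}, M_{i,j+1}) = \min(A_{i+1}, B_j, A_i, B_{j+1}) = \min(A_i, B_j) = M_{ij},
\]
since $A_i \le A_{i+1}$ and $B_j \le B_{j+1}$. For the restriction of $\bar{\mathbb E}$ to $\Omega(\Rmin)$, a short case analysis using $\min(A_i, B_i) < \infty$ shows that $\min(A_i, B_j) < \infty$ for all $i, j$: were both $A_i$ and $B_j$ infinite, then setting $k := \max(i,j)$ monotonicity would force $\min(A_k, B_k) = \infty$, contradicting the hypothesis. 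The candidate inverse map sends $M$ to $(\bigA, \bigB)$ with $A_i := \sup_j M_{ij}$ and $B_j := \sup_i M_{ij}$, well-defined in $\Rmininf$ because the min-ideal identity $M_{ij} = \min(M_{i+1,j}, M_{i,j+1})$ forces rows and columns to be weakly increasing. The sequences $\bigA, \bigB$ are themselves weakly increasing, and weak interlacing $\sup_i A_i = \sup_{i,j} M_{ij} = \sup_j B_j$ follows by exchanging suprema.

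The heart of the argument is the identity $M_{ij} = \min(A_i, B_j)$. One direction is monotonicity: $M_{ij} \le A_i, B_j$. For the converse I would argue by contradiction: assume $M_{ij} < r < \min(A_i, B_j)$ for some $r \in \R$ and consider the strict level set $L_r := \{(p,q) \mid M_{pq} < r\}$. The min-ideal condition $M_{pq} = \min(M_{p+1,q}, M_{p,q+1})$ forces at least one of the two neighbours $(p+1,q), (p,q+1)$ to lie in $L_r$, namely the one realising the minimum. Starting from $(i,j) \in L_r$ this produces an infinite up-right path inside $L_r$, and since the path takes infinitely many steps, at least one of its coordinates must diverge. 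If the $i$-coordinate diverges, then for every $i' \ge i$ the path passes through some $(i', q)$ with $q \ge j$, whence $M_{i',j} \le M_{i',q} < r$ by row monotonicity, so $B_j \le r$, contradicting $r < B_j$. The symmetric case yields $A_i \le r$. Making this level-set/path argument fully precise is the main obstacle I expect.

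With the key identity in hand, bijectivity of $\bar{\mathbb E}$ is straightforward: the inverse recovers $\sup_j M_{ij} = \sup_j \min(A_i, B_j) = \min(A_i, \sup_j B_j) = A_i$, the last equality using weak interlacing $\sup_j B_j = \sup_i A_i \ge A_i$, so the map $M \mapsto (\bigA, \bigB)$ is indeed a two-sided inverse to $\bar{\mathbb E}$. Each restriction is then verified by direct inspection using $M_{ij} = \min(A_i, B_j)$: interlacing of $(\bigA, \bigB)$ is equivalent to every row and column of $M$ attaining its supremum, since stability of column $j$ means $B_j = \sup_i M_{ij}$ is attained, which happens iff some $A_{i_0} \ge B_j$; the asymptotically real and divergent subsets correspond literally to $A_i, B_j \in \R$ (via $\sup_j M_{ij} = A_i$) and to $\sup A_i = \sup B_j = \infty$ (via $\sup_{i,j} M_{ij} = \sup_i A_i$) respectively. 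Together with the finiteness analysis above for $\Omega(\Rmin)$, this produces all four bijections claimed in the theorem.
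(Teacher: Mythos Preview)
Your proof is correct and essentially complete; in particular the level-set/path argument for the key identity $M_{ij}=\min(A_i,B_j)$ is sound, and the remaining verifications for the four restrictions are straightforward as you describe.

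The paper takes a different route to the same identity. Rather than your direct contradiction via an infinite up-right path in a sublevel set, it first establishes a structural lemma (\cref{l:InfiniteProperty}) showing that $M_{ij}=\min(M_{i+k,j},M_{i,j+k})$ for \emph{every} $k\ge 0$, by analysing the pattern of entries along and above a diagonal and ruling out a ``dip'' $a,c,\dotsc,c,b$ with $a,b>c$. From this it deduces a \emph{one-leg-constant} property (\cref{l:OneLegConstant}): from any box, at least one of the row or column to the northeast is constant. These then yield $M_{i_0,j_0}=\min(M_{i_0,\infty},M_{\infty,j_0})$ (\cref{c:InfinitePropertyStrongII}), which is the inverse formula. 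For injectivity the paper separately proves (\cref{l:injectiveInfMin}) that every $\sim$-class in $\Omega(\Rmininf)$ has a unique weakly interlacing representative, whereas you verify directly that $\sup_j\min(A_i,B_j)=A_i$ under weak interlacing. For well-definedness the paper uses the meet-semilattice structure, writing $\bar{\mathbb E}(\bigA,\bigB)$ as the meet of a row-constant and a column-constant filling; your one-line identity $\min(A_{i+1},B_j,A_i,B_{j+1})=\min(A_i,B_j)$ is of course equivalent. Your approach is more economical and gets to the bijection faster; the paper's approach yields intermediate structural facts (notably \cref{l:InfiniteProperty} and the one-leg-constant lemma) that are of independent interest and are reused, for instance, to highlight the contrast with finite min-ideal fillings in \cref{r:NonSurj}.
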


We begin by proving some key properties of (generalised) infinite min-ideal fillings. 

\begin{lemma}\label{l:InfiniteProperty} Suppose $(M_{ij})_{i,j}\in \Imin_{\infty}(\Rmininf)$ is a generalised infinite min-ideal filling. Then we have 
\begin{equation}\label{e:InfiniteProperty}
M_{ij}=\min(M_{i+k,j},M_{i,j+k})
\end{equation}
for any $k\in\Z_{\ge 0}$ .
\end{lemma}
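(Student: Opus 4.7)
The plan is to prove the identity by induction on $k$. The cases $k=0$ and $k=1$ are immediate: the first is trivial since $M_{i+0,j}=M_{i,j+0}=M_{ij}$, and the second is the defining min-ideal relation. The easy half of the induction will exploit monotonicity, while the main work will go into the reverse inequality via a careful case analysis that propagates the min-ideal relation along a row or column.

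Before the induction I would note that the defining condition gives monotonicity in both indices: iterating $M_{ab}=\min(M_{a+1,b},M_{a,b+1})$ shows that $M_{ab}\le M_{a',b'}$ whenever $a\le a'$ and $b\le b'$. This immediately yields the easy inequality $M_{ij}\le \min(M_{i+k+1,j},M_{i,j+k+1})$ in the inductive step. For the reverse inequality, apply the inductive hypothesis at $(i,j)$ with step $k$ to obtain $M_{ij}=\min(M_{i+k,j},M_{i,j+k})$, and assume without loss of generality that the minimum is attained at $M_{i+k,j}$ (the other case is symmetric). Then apply the defining min-ideal relation at $(i+k,j)$ to split into two subcases.

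In the subcase $M_{i+k,j}=M_{i+k+1,j}$ we are done immediately: $M_{ij}=M_{i+k+1,j}\ge \min(M_{i+k+1,j},M_{i,j+k+1})$, which combined with the easy direction gives equality. In the subcase $M_{i+k,j}=M_{i+k,j+1}$, monotonicity forces $M_{ij}\le M_{i,j+1}\le M_{i+k,j+1}=M_{ij}$, so $M_{ij}=M_{i,j+1}$. Applying the inductive hypothesis at $(i,j+1)$ with step $k$ yields $M_{ij}=M_{i,j+1}=\min(M_{i+k,j+1},M_{i,j+k+1})$, which either collapses to $M_{i,j+k+1}$ (giving the result) or recovers $M_{i+k,j+1}=M_{ij}$, in which case one repeats the procedure with the min-ideal relation at $(i+k,j+1)$, sliding along the row.

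The main obstacle is closing off this "stubborn" subcase, where the sliding along a row (or column) never terminates at $M_{i+k+1,j}$ or $M_{i,j+k+1}$. Here the argument is to observe that the iteration produces $M_{ij}=M_{i+k,j+\ell}$ for all $\ell\ge 0$, and monotonicity again sandwiches $M_{i,j+\ell}$ between $M_{ij}$ and $M_{i+k,j+\ell}=M_{ij}$, forcing $M_{i,j+\ell}=M_{ij}$ for all $\ell\ge 0$. In particular $M_{i,j+k+1}=M_{ij}$, so $\min(M_{i+k+1,j},M_{i,j+k+1})\le M_{ij}$, which combined with the easy direction closes the induction. The whole argument works uniformly for $M_{ij}\in\R_{\min,\infty}$ since $\min$ respects the point $\infty$.
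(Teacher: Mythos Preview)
Your proof is correct and takes a genuinely different route from the paper. The paper argues by contradiction: assuming $M_{ij}<\min(M_{i+k,j},M_{i,j+k})$, it uses the easy direction to locate along the diagonal $M_{i+k,j},M_{i+k-1,j+1},\dotsc,M_{i,j+k}$ a plateau of the form $a,c,\dotsc,c,b$ with $a,b>c$, and then propagates the value $c$ northeast along two rays to reach a contradiction with the min-ideal relation at the point where the rays meet. Your argument instead runs an induction on $k$ and slides the min-ideal relation along row $i{+}k$; the infiniteness of the filling enters for you as the ability to iterate the min-ideal relation at $(i+k,j+\ell)$ for all $\ell$, whereas in the paper it enters as the indefinite northeast extension of the plateau. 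One small expository point: when Case~A occurs at some $\ell\ge 1$ you obtain $M_{ij}=M_{i+k+1,\,j+\ell}$, and it is worth making explicit the extra sandwich $M_{ij}\le M_{i+k+1,j}\le M_{i+k+1,\,j+\ell}=M_{ij}$ to land on $M_{i+k+1,j}$ (you state this explicitly only for $\ell=0$). Your approach has the virtue of being a clean induction that never needs the full diagonal-minimum formula; the paper's approach is more pictorial and yields as a by-product the convexity observation of Remark~\ref{r:convexity}.
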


\begin{remark}
It is necessary in this lemma that the min-ideal filling be  infinite. For example, Remark~\ref{r:NonSurj} shows a finite min-ideal filling for which $M_{1,1}=1$ but $M_{3,1}=M_{1,3}=2$. So for this finite min-ideal filling, the above formula with $i=j=1$ and $k=2$ fails. 
\end{remark}

\begin{proof}[Proof of Lemma~\ref{l:InfiniteProperty}]
The three entries of the min-ideal filling $(M_{ij})$ that are of interest are configured
\begin{equation}
\begin{matrix}
M_{i+k,j} &&\\
\vdots &\ddots &\\
M_{i,j} & \dots        &M_{i,j+k}.
\end{matrix}
\end{equation}
If $k=1$ then the equality \eqref{e:InfiniteProperty} becomes $M_{ij}= \min(M_{i+1,j},M_{i,j+1})$, which is automatically true as it is the defining property of a min-ideal filling. Next, it follows recursively that $M_{ij}$ is the minimum of the entries along the diagonal of the above diagram,
\begin{equation}\label{e:DiagonalMax}
M_{ij}=\min(M_{i+k,j},M_{i+k-1,j+1},\dotsc, M_{i+1,j+k-1}, M_{i,j+k}).
\end{equation}
In particular, $M_{ij}\le \min(M_{i+k,j},M_{i,j+k})$. This is true both in the finite and the infinite cases.

Let us now explicitly consider an infinite generalised min-ideal filling, and suppose indirectly that \eqref{e:InfiniteProperty} is false. Then we must have $M_{ij}< \min(M_{i,j+k},M_{i+k,j})$. It follows from \eqref{e:DiagonalMax} that $M_{ij}=M_{i+k-\ell,j+\ell}$ for some $0<\ell<k$.  Moreover we can choose $\ell$ minimal. Then there exists an index $\ell'$ with $\ell\le \ell'<k$ such that 
\[
M_{i+k-\ell+1,j+\ell-1}>M_{i+k-\ell,j+\ell}=\dotsc=M_{i+k-\ell',j+\ell'}<M_{i+k-\ell'-1,j+\ell'+1}.
\]
Thus a portion of the diagonal of the min-ideal filling takes the form $a,c,c,\dotsc,c,b$ with $a,b>c$. 

We now consider the entries of the infinite min-ideal filling that are above  and to the right of this portion of the diagonal. We denote the entries to the right of $a$ by $a_1,a_2,\dotsc $, and note that by the min-ideal condition the sequence $(a_i)_i$ is weakly increasing. Therefore $a_i>c$ for all $i$. It follows recursively using the min-ideal property that the entry below $a_i$ in the filling is equal to $c$ whenever $i\ge 1$.

 Similarly going up the column from $b$ we have a weakly increasing sequence of entries that we denote by $b_1\le b_2\le b_3\dotsc $, and the entry to the left of $b_i$ is equal to $c$ for all $i\ge 1$. Thus a part of the min-ideal filling looks as follows. 
\[
\begin{matrix}
 a &a_1&a_2&\dotsc& a_r&&\\
 *   &c  &c   &\dotsc& c  &c&&\\
  *  &*&c&&&c&b_r\\
 \vdots&&\ddots&\ddots&&\vdots &\vdots\\
 \vdots&  &&\ddots&c &c&b_2\\
 * &\ddots&&&*&c&b_1\\
 * &*&\dots&\dots&*&*&b\\
  \end{matrix}
\]
But then the entries $a_{r+1}, b_{r+1}$ just to the right of $a_r$ and above $b_r$, respectively, are both strictly greater than $c$.  Therefore $\min(a_{r+1},b_{r+1})>c$, in contradiction to the min-ideal condition. 
\end{proof}

\begin{remark}\label{r:convexity} In the proof of Lemma~\ref{l:InfiniteProperty} we have shown in particular that any diagonal segment in a min-ideal filling, that is a segment of the form  $M_{i_0+k,j_0}, M_{i_0+k-1,j_0+1},\dotsc, M_{i_0,j_0+k}$, satisfies 
\[
M_{i_0+\ell,j_0+k-\ell}\ge\min (M_{i_0+k,j_0}, M_{i_0,j_0+k})
\] 
for all $\ell\in[0,k]$.
\end{remark}
\begin{lemma}\label{l:OneLegConstant}
Suppose $M=(M_{ij})_{i,j\in\Z_{>0}}$ is an element of $ \Imin_{\infty}(\Rmininf)$. For any $i_0,j_0\in\Z_{> 0}$ we have either
$M_{i_0,j_0+\ell}=M_{i_0,j_0}$ for all $\ell\ge 0$ or $M_{i_0+\ell,j_0}=M_{i_0,j_0}$ for all $\ell\ge 0$. 
\end{lemma}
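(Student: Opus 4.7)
The plan is to combine the long-range min-identity of \cref{l:InfiniteProperty} with the monotonicity of rows and columns that is built into the min-ideal condition. First I would note that the defining relation $M_{ij}=\min(M_{i+1,j},M_{i,j+1})$ implies $M_{ij}\le M_{i+1,j}$ and $M_{ij}\le M_{i,j+1}$, so each row and each column of $M$ is weakly increasing. In particular, both sequences $(M_{i_0+k,j_0})_{k\ge 0}$ and $(M_{i_0,j_0+k})_{k\ge 0}$ are weakly increasing with initial term $M_{i_0,j_0}$.

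Next I would invoke \cref{l:InfiniteProperty}, which gives the key identity
\[
M_{i_0,j_0}=\min(M_{i_0+k,j_0},\, M_{i_0,j_0+k})\qquad\text{for all }k\ge 0.
\]
Now assume for contradiction that neither of the two conclusions of the lemma holds. Then there exist $k_1,k_2\ge 1$ with $M_{i_0+k_1,j_0}>M_{i_0,j_0}$ and $M_{i_0,j_0+k_2}>M_{i_0,j_0}$. Setting $k=\max(k_1,k_2)$ and using the monotonicity established in the first step, both $M_{i_0+k,j_0}$ and $M_{i_0,j_0+k}$ are strictly greater than $M_{i_0,j_0}$, contradicting the displayed identity.

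Hence one of the two sequences must equal $M_{i_0,j_0}$ at every index $k\ge 0$, which is exactly the stated dichotomy. I do not anticipate any obstacle here: the entire argument is a short deduction, and the only non-trivial input is \cref{l:InfiniteProperty}, which has already been established.
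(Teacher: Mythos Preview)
Your proof is correct and follows essentially the same approach as the paper's own proof: both use the monotonicity of rows and columns together with \cref{l:InfiniteProperty}, then argue by contradiction by choosing a single large index at which both the row and the column have strictly increased.
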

\begin{proof}
Recall that $M_{ij}$ must be weakly increasing in both $i$ and $j$. Suppose indirectly that the statement of the lemma is false. Then we can choose $\ell$ large enough so that both $M_{i_0,j_0+\ell}>M_{i_0,j_0}$  and $M_{i_0+\ell,j_0}>M_{i_0,j_0}$. On the other hand by Lemma~\ref{l:InfiniteProperty} 
\[
M_{i_0,j_0}=\min(M_{i_0,j_0+\ell},M_{i_0+\ell,j_0})
\]
and we have arrived at a contradiction.
\end{proof}

Thanks to Lemma~\ref{l:OneLegConstant} we can strengthen the previous Lemma~\ref{l:InfiniteProperty} as follows.

\begin{lemma}\label{l:InfinitePropertyStrong} If $(M_{ij})_{i,j}\in\Imin_{\infty}(\Rmininf)$ is a generalised infinite min-ideal filling, then   we have 
\begin{equation}\label{e:genZmin}
M_{i_0,j_0}=\min(M_{i_0,j_0+\ell},M_{i_0+k,j_0})\quad \text{for all $\ell,k\in\Z_{\ge 0}$,}
\end{equation} 
for any $i_0,j_0\in\Z_{>0}$.
\end{lemma}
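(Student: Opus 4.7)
The plan is to deduce the lemma directly from Lemma~\ref{l:OneLegConstant} together with the monotonicity of the entries of a generalised infinite min-ideal filling, so the work amounts to a short case analysis.

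First I would record the monotonicity observation: the defining identity $M_{ij}=\min(M_{i+1,j},M_{i,j+1})$ forces $M_{i+1,j}\ge M_{ij}$ and $M_{i,j+1}\ge M_{ij}$, hence by iteration $M_{i_0+k,j_0}\ge M_{i_0,j_0}$ and $M_{i_0,j_0+\ell}\ge M_{i_0,j_0}$ for all $k,\ell\in\Z_{\ge 0}$. This already gives the easy inequality $\min(M_{i_0,j_0+\ell},M_{i_0+k,j_0})\ge M_{i_0,j_0}$.

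Next I would invoke Lemma~\ref{l:OneLegConstant} at the vertex $(i_0,j_0)$: either (a) $M_{i_0,j_0+\ell}=M_{i_0,j_0}$ for every $\ell\ge 0$, or (b) $M_{i_0+k,j_0}=M_{i_0,j_0}$ for every $k\ge 0$. In case (a), for any $k,\ell\in\Z_{\ge 0}$ the right-hand side of \eqref{e:genZmin} becomes $\min(M_{i_0,j_0},M_{i_0+k,j_0})=M_{i_0,j_0}$ by the monotonicity observation above; in case (b), the right-hand side equals $\min(M_{i_0,j_0+\ell},M_{i_0,j_0})=M_{i_0,j_0}$ by the same token. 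In both cases the desired equality holds.

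There is no genuine obstacle here; all the content sits in Lemma~\ref{l:OneLegConstant}, which in turn rested on Lemma~\ref{l:InfiniteProperty}. The only care needed is to observe that Lemma~\ref{l:OneLegConstant} applies verbatim in $\Imin_\infty(\Rmininf)$ (not just $\Imin_\infty(\Rmin)$), since its proof used only weak monotonicity and the identity $M_{i_0,j_0}=\min(M_{i_0+\ell,j_0},M_{i_0,j_0+\ell})$ of Lemma~\ref{l:InfiniteProperty}, both of which hold in the generalised setting with values in $\Rmininf$.
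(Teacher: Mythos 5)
Your proof is correct and follows essentially the same route as the paper: invoke Lemma~\ref{l:OneLegConstant} at the vertex $(i_0,j_0)$ to get one of the two constancy cases, then close using weak monotonicity of the entries. The only minor redundancy is your final remark — Lemma~\ref{l:OneLegConstant} is already stated for $\Imin_\infty(\Rmininf)$, so there is nothing extra to check about the generalised setting.
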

\begin{defn}\label{d:suprema}
Let $(M_{ij})_{i,j}\in\Imin_{\infty}(\Rmininf)$ be a generalised infinite min-ideal filling. We define $M_{i,\infty},M_{\infty,j}\in\R\cup\{\infty\}$ by \[
M_{i,\infty}=\sup\{M_{i,k}\mid k\in\Z_{>0}\} \text{ and } M_{\infty, j}=\sup\{M_{k,j}\mid k\in\Z_{>0}\},
\] respectively. 
\end{defn}
We prove Lemma~\ref{l:InfinitePropertyStrong} together with the following corollary.

\begin{cor}\label{c:InfinitePropertyStrongII} If $M=(M_{ij})_{i,j}\in\Imin_{\infty}(\Rmininf)$ is a generalised infinite min-ideal filling, then for any $i_0,j_0\in\Z_{>0}$ we have 
\begin{equation}\label{e:genZmininf}
M_{i_0,j_0}=\min(M_{i_0,\infty},M_{\infty,j_0}).
\end{equation}
Additionally, if  $M\in\Imin_\infty(\Rmin)$ then $\min(M_{i_0,\infty},M_{\infty,j_0})\in \R$. 
\end{cor}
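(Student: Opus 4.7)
The plan is to prove Lemma~\ref{l:InfinitePropertyStrong} and Corollary~\ref{c:InfinitePropertyStrongII} together in one short argument built on Lemma~\ref{l:OneLegConstant}. The key observation is that the defining condition $M_{ij}=\min(M_{i+1,j},M_{i,j+1})$ forces $M_{ij}$ to be weakly increasing in each index separately, so once Lemma~\ref{l:OneLegConstant} is invoked at $(i_0,j_0)$ to produce a row or column that is constantly equal to $M_{i_0,j_0}$, every entry weakly to the upper right of $(i_0,j_0)$ is pinned to lie in $[M_{i_0,j_0},\infty]$, with equality along that row or column.

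First I would apply Lemma~\ref{l:OneLegConstant} at the point $(i_0,j_0)$. By the symmetric dichotomy it provides, I may assume that $M_{i_0,j_0+\ell}=M_{i_0,j_0}$ for all $\ell\ge 0$, the column case being identical after swapping the roles of $i$ and $j$. Under this assumption, for every $k,\ell\ge 0$ weak monotonicity in the first index gives $M_{i_0+k,j_0}\ge M_{i_0,j_0}$, so
\[
\min(M_{i_0,j_0+\ell},M_{i_0+k,j_0})=\min(M_{i_0,j_0},M_{i_0+k,j_0})=M_{i_0,j_0},
\]
which proves Lemma~\ref{l:InfinitePropertyStrong}.

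For Corollary~\ref{c:InfinitePropertyStrongII}, still in the same `constant row' case, I would observe that weak monotonicity in $j$ makes $M_{i_0,\infty}=\sup_{k\in\Z_{>0}}M_{i_0,k}$ coincide with $\sup_{\ell\ge 0}M_{i_0,j_0+\ell}=M_{i_0,j_0}$, while $M_{\infty,j_0}\ge M_{i_0,j_0}$ follows directly from the definition of the supremum. Hence $\min(M_{i_0,\infty},M_{\infty,j_0})=M_{i_0,j_0}$, which is exactly \eqref{e:genZmininf}. The final assertion is then immediate: if $M\in\Imin_\infty(\Rmin)$ then $M_{i_0,j_0}\in\R$ by hypothesis, and the identity just established forces $\min(M_{i_0,\infty},M_{\infty,j_0})\in\R$ as well.

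I do not anticipate any real obstacle, since Lemma~\ref{l:OneLegConstant} has already done the main work of identifying a constant row or column. The only point needing a second's care is that $M_{i_0,\infty}$ is defined as a supremum over \emph{all} column indices rather than just those $\ge j_0$; but weak monotonicity in $j$ shows the two suprema agree, so the argument goes through unchanged.
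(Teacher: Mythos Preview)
Your proposal is correct and follows essentially the same approach as the paper: invoke Lemma~\ref{l:OneLegConstant} at $(i_0,j_0)$, treat the constant-row case (the other being symmetric), and use weak monotonicity to conclude both identities, with the final assertion about $\R$ following immediately from $M_{i_0,j_0}\in\R$. Your extra remark about the supremum over all column indices versus those $\ge j_0$ is a nice explicit note that the paper leaves implicit.
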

\begin{proof} [Proof of Lemma~\ref{l:InfinitePropertyStrong} and Corollary~\ref{c:InfinitePropertyStrongII}] Suppose $M$ is a generalised infinite min-ideal filling. Since the  $M_{ij}$ are weakly increasing in $i$ and in $j$ we have $M_{i_0,\infty}\ge M_{i_0,j_0+\ell}\ge M_{i_0,j_0}$ and $M_{\infty,j_0}\ge M_{i_0+k,j_0}\ge M_{i_0,j_0}$.
Now by Lemma~\ref{l:OneLegConstant} there are two cases. In the first case, $M_{i_0,j_0+\ell}=M_{i_0,j_0}$ for all $\ell\ge 0$, and so we have
$M_{i_0,\infty}=M_{i_0,j_0+\ell}=M_{i_0,j_0}$. Therefore we have the following two identities,
\[
\begin{array}{l}
\min(M_{i_0,\infty},M_{\infty,j_0})=M_{i_0,\infty}=M_{i_0,j_0},\\
\min(M_{i_0,j_0+\ell},M_{i_0+k,j_0})=M_{i_0,j_0+\ell}=M_{i_0,j_0}.
\end{array}
\]
The second case is the same but with the role of $i$ and $j$ swapped. In this case we have  $
M_{\infty,j_0}=M_{i_0+k,j_0}=M_{i_0,j_0}$
and
\[
\begin{array}{l}
\min(M_{i_0,\infty},M_{\infty,j_0})=M_{\infty,j_0}=M_{i_0,j_0}, \\
\min(M_{i_0,j_0+\ell},M_{i_0+k,j_0})=M_{i_0+\ell,j_0}=M_{i_0,j_0}.
\end{array}
\]
Either way, \eqref{e:genZmin} and \eqref{e:genZmininf} hold. 
\end{proof}

The next lemma explains the relevance of the `weakly interlacing' property used in \cref{d:Omegatrop}. 

\begin{lemma} \label{l:injectiveInfMin} Consider the equivalence relation $\sim$ on $\Omega(\Rmininf)=\overline{\MonSeq}\times\overline{\MonSeq}$  defined by 
\[
(\bigA,\bigB)\sim (\bigA',\bigB') \iff \min(A_i,B_j)=\min(A'_i,B'_j)\text{ for all $i,j\in\mathbb Z_{>0}$.}
\]
We have a bijective map $
\Omega_\star(\Rmininf)\to\Omega(\Rmininf)/\sim$ defined by sending $(\bigA,\bigB)$ to its equivalence class $[\bigA,\bigB]$. 
This map restricts to a bijection $\Omega_\star(\Rmin)\to \Omega(\Rmin)/\sim$.
\end{lemma}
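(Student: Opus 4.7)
The plan is to handle surjectivity and injectivity separately, with a single explicit ``capping'' construction doing the work for surjectivity and a limiting argument handling injectivity.

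\textbf{Surjectivity.} Given $(\bigA,\bigB)\in\Omega(\Rmininf)$, set $S:=\min(\sup_i A_i,\,\sup_j B_j)\in\Rmininf$ and define the capped sequences
\[
A'_i:=\min(A_i,S),\qquad B'_j:=\min(B_j,S).
\]
I would first check that $\bigA',\bigB'\in\overline{\MonSeq}$ (monotonicity is preserved by capping). Next, I would verify $(\bigA',\bigB')\sim(\bigA,\bigB)$: for any $i,j$, since $A_i\le \sup A_k$ and similarly for $B_j$, one has $\min(A_i,B_j)\le S$, so $\min(A_i,B_j)=\min(A_i,B_j,S)=\min(A'_i,B'_j)$. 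Then $\sup_i A'_i=\sup_j B'_j=S$ by definition of $S$, so $(\bigA',\bigB')\in\Omega_\star(\Rmininf)$.

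\textbf{Injectivity.} Suppose $(\bigA,\bigB),(\bigA',\bigB')\in\Omega_\star(\Rmininf)$ satisfy $\min(A_i,B_j)=\min(A'_i,B'_j)$ for all $i,j$. Let $S$ (respectively $S'$) denote the common value of $\sup_i A_i=\sup_j B_j$ (resp.\ for the primed pair). Taking $\sup_{i,j}$ of both sides of the equality $\min(A_i,B_j)=\min(A'_i,B'_j)$ and using weak interlacing gives $S=S'$. Now for fixed $i$, since $\bigB$ is weakly increasing with supremum $S$ and $A_i\le S$,
\[
\lim_{j\to\infty}\min(A_i,B_j)=\min(A_i,S)=A_i,
\]
and similarly $\lim_{j\to\infty}\min(A'_i,B'_j)=A'_i$. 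The equality of the matrices therefore forces $A_i=A'_i$ for every $i$, and symmetrically $B_j=B'_j$.

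\textbf{Restriction to $\Omega(\Rmin)$.} For the second assertion, I would just observe that the capping construction preserves the condition $\min(A_i,B_i)\in\R$: indeed, $\min(A_i,B_i)\le A_i\le S$ (and similarly $\le B_i\le S$), so $\min(A'_i,B'_i)=\min(\min(A_i,S),\min(B_i,S))=\min(A_i,B_i)\in\R$. Hence the bijection of the first part restricts to the claimed bijection $\Omega_\star(\Rmin)\to\Omega(\Rmin)/\sim$. The only subtle point in the whole argument is verifying that $(\bigA',\bigB')\sim(\bigA,\bigB)$ after capping, which boils down to the observation that if either $A_i$ or $B_j$ exceeds $S$ then it is automatically $\ge$ the other of the two, so truncating it at $S$ does not change $\min(A_i,B_j)$; this step is the crux, but it is elementary.
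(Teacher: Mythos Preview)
Your proof is correct and essentially matches the paper's approach. The paper constructs the weakly interlacing representative by the formulas $A_i=\sup_k\min(A'_i,B'_k)$ and $B_j=\sup_k\min(A'_k,B'_j)$, which one checks coincides with your capping construction $A'_i=\min(A_i,S)$, $B'_j=\min(B_j,S)$; the paper then phrases injectivity as verifying this construction is a two-sided inverse, which amounts to your limiting argument $\lim_{j}\min(A_i,B_j)=A_i$.
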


\begin{proof}
We first show that any element $[\bigA',\bigB']$ of $\Omega(\Rmininf)$ has a representative $(\bigA,\bigB)$ that is weakly interlacing. Namely, we claim that we can construct $(\bigA,\bigB)$ out of the values $\min(A'_i,B'_j)$ by setting
\[
\begin{array}{ccl}
A_i&=&\sup(\{\min(A'_i,B'_k)\mid {k\in\mathbb Z_{>0}}\}),\\
B_j&=&\sup(\{\min(A'_k,B'_j)\mid {k\in\mathbb Z_{>0}}\}).
\end{array}
\]
It follows from the fact that $\bigA'$ and $\bigB'$ are weakly increasing, that $\bigA=(A_i)_{i\in\mathbb Z_{>0}}$ and $\bigB=(B_j)_{j\in\mathbb Z_{>0}}$ defined in this way are in $\overline{\MonSeq}$.

Next, since $\min(A'_i,B'_k)\le A'_i$ we have that $A_i\le A'_i$. Similarly $B_j\le B'_j$. Therefore $\min(A_i,B_j)\le\min(A'_i,B'_j)$.  On the other hand, $A_{i}\ge \min(A'_i,B'_j)$ by choosing $k=j$ inside the supremum, and similarly $B_{j}\ge \min(A'_i,B'_j)$ by choosing $k=i$ in the supremum. Therefore we must have that $\min(A_i,B_j)=\min(A'_i,B'_j)$. It follows that $(\bigA,\bigB)\sim(\bigA',\bigB')$ and $(\bigA,\bigB)$ represents the same element in $\Omega(\Rmininf)$. 

Next notice that from the definition of $A_i$ and $B_j$ we have
\[
\sup(\{A_i\mid i\in\Z_{>0}\})=\sup(\{\min(A'_i,B'_j)\mid {i,j\in\mathbb Z_{>0}}\})=\sup(\{B_j\mid j\in\Z_{>0}\}).
\]
Therefore $\bigA$ and $\bigB$ are weakly interlacing. 

This way we have constructed a right inverse to the map $\Omega_\star(\Rmininf)\to \Omega(\Rmininf)/\sim$ and shown that the map is surjective. To see that this right-inverse is also a left inverse we just need to check that 
\[
\begin{array}{ccl}
A_i&=&\sup(\{\min(A_i,B_k)\mid {k\in\mathbb Z_{>0}}\})\\
B_j&=&\sup(\{\min(A_k,B_j)\mid {k\in\mathbb Z_{>0}}\})
\end{array}
\]
whenever $(\bigA,\bigB)\in\Omega_\star(\Rmininf)$. But this is clear. If $\bigA$ and $\bigB$ have the same supremum, then $\sup(\{B_k\mid k\in\mathbb Z_{>0}\})\ge A_i$. Therefore 
$\sup(\{\min(A_i,B_k)\mid {k\in\mathbb Z_{>0}}\})=A_i$. Similarly $\sup(\{\min(A_k,B_j)\mid {k\in\mathbb Z_{>0}}\})=B_j$. 

It is straightforward that the restriction $\Omega_\star(\Rmin)\to \Omega(\Rmin)/\sim$ is well-defined and also a bijection.
\end{proof}

We can now prove the `tropical Edrei theorem'. 
\begin{proof}[Proof of Theorem~\ref{t:TropEdrei}]
 First let us check that $\bar{\mathbb E}(\bigA,\bigB)$ is an infinite min-ideal filling, so that the map $\mathbb E$ is well-defined. 
Let $\bigA_{\sup}$ be the constant sequence given by $\sup\{A_i\mid i\in\Z_{>0}\}$, and let ${\bigB}_{\sup}$ be the constant sequence given by $\sup\{ B_i\mid i\in\Z_{>0}\}$. Then $({\bigA},{\bigA}_{\sup})$ and $({\bigB}_{\sup},{\bigB})$ lie in $\Omega_\star(\Rmininf)$. 

Now
 $\bar{\mathbb E}({\bigA},{\bigA}_{\sup})=( A_{i})_{i,j}$, and this is clearly a (constant along rows) infinite min-ideal filling. Similarly,  $\bar{\mathbb E}({\bigB}_{\sup},{\bigB})=( B_j)_{i,j}$ is a (constant along columns) infinite min-ideal filling. But now in the meet-semilattice structure of $\Imin_{\infty}(\Rmininf)$ we have
\[
\bar{\mathbb E}({\bigA},{\bigA}_{\sup})\ \wedge\  \bar{\mathbb E}({\bigB}_{\sup},{\bigB})=(\min( A_i, B_j))_{i,j},
\]
which is precisely $\bar{\mathbb E}({\bigA},{\bigB})$. Therefore it follows that $\bar{\mathbb E}({\bigA},{\bigB})\in\Imin_{\infty}(\Rmininf)$. 

We have thus seen that  $\bar{\mathbb E}$ is a well-defined map $\Omega_\star(\Rmininf)\to \Imin_{\infty}(\Rmininf)$. The image $\bar{\mathbb E}({\bigA},{\bigB})$ lies in $\Imin_{\infty}(\Rmin)$  if and only if each entry $\min( A_i, B_j)$ lies in $\R$, that is, if and only if $({\bigA},{\bigB})\in \Omega_\star(\Rmin)$. Therefore the restriction $\mathbb E$ of $\bar{\mathbb E}$ to $\Omega_\star(\Rmin)$ gives a well-defined map 
$\Omega_\star(\Rmin)\to \Imin_{\infty}(\Rmin)$.

We now show bijectivity. It follows immediately from Lemma~\ref{l:injectiveInfMin} that  $\bar{\mathbb E}$ (and hence  also $\mathbb E$) is injective. For surjectivity, suppose $(M_{ij})_{i,j}\in \Imin_{\infty}(\Rmininf)$  is given. Consider the sequences $(M_{\infty,j})_{j\in\Z_{>0}}$ and $(M_{i,\infty})_{i\in\Z_{>0}}$ in $\R_{\min,\infty}$ constructed as in Definition~\ref{d:suprema}. These are both weakly increasing sequences. Their suprema equal to the overall supremum of the set $\{M_{ij}\mid {i,j}\in\Z_{>0}\}$, and thus agree (we may denote this common supremum by $M_{\infty,\infty}$). We now set 
\[
\bigA=(M_{i,\infty})_{i\in\Z_{>0}},\qquad \bigB=(M_{\infty,j})_{j\in\Z_{>0}},
\]
and observe that $\bigA$ and $\bigB$ lie in $\overline{\MonSeq}$ and have the same supremum. Therefore $(\bigA,\bigB)\in\Omega_{\star}(\Rmin)$. Now by Corollary~\ref{c:InfinitePropertyStrongII} we have that  $\bar{\mathbb E}(\bigA,\bigB)=(M_{ij})_{i,j}$. If our original  $(M_{ij})_{i,j}$ is an infinite ideal filling, $ (M_{ij})_{i,j}\in \Imin_{\infty}(\Rmin)$ then the the pair $(\bigA,\bigB)$ constructed above lies in $\Omega_{\star}(\Rmin)$ and $\mathbb E(\bigA,\bigB)=(M_{ij})_{i,j}$.

We now consider the restriction of $\bar{\mathbb E}$ to  $\Omega^{il}_\star(\Rmininf)$. If $(\bigA,\bigB)$ is interlacing then $\min(A_i,B_j)$ will equal to $A_i$ for $j$ large enough, since eventually $B_j\ge A_i$, and similarly $\min(A_i,B_j)=B_j$ for $i$ large enough. Thus the generalised min-ideal filling with $M_{ij}=\min(A_i,B_j)$ is stable and $\bar{\mathbb E}^s:\Omega^{il}_\star(\Rmininf)\longrightarrow \Imin^s_{\infty}(\Rmininf)$ is well-defined. If 
$(\bigA,\bigB)$ are \textit{not} interlacing, then some row or column of $(M_{ij})_{i,j}$ will not stabilise. Thus the stable generalised min-ideal fillings must all come from interlacing sequences. It follows that $\bar{\mathbb E}^s$ is surjective. Since  $\Omega^{il}_\star(\Rmin)=\Omega^{il}_\star(\Rmininf)\cap \Omega_\star(\Rmin)$  and $\Imin^s_{\infty}(\Rmin)=\Imin^s_{\infty}(\Rmininf)\cap \Imin_\infty(\Rmin)$, it follows that ${\mathbb E}^s$ is a bijection  as well.

Next we consider the restriction of $\mathbb E$ to $\Omega_\star^\R(\Rmin)$. For $(\bigA,\bigB)\in \Omega_\star^\R(\Rmin)$ the associated min-ideal filling satisfies 
\begin{eqnarray*}
\lim_{i\to\infty}M_{ij} &=&\lim_{i\to\infty}\min(A_i,B_j)\ \le\ B_j\ <\ \infty\\
\lim_{j\to\infty}M_{ij}& =&\lim_{j\to\infty}\min(A_i,B_j)\ \le\ A_i\ <\ \infty,
\end{eqnarray*}
which implies that $(M_{ij})_{i,j}$ is asymptotically real. Therefore $\mathbb E^\R:\Omega_\star^\R(\Rmin)\to\Imin^\R_{\infty}(\Rmin)$ is well-defined as restriction of $\mathbb E$. The construction of the inverse of $\mathbb E$ applied to an asymptotically real min-ideal filling straightforwardly produces real parameters, so that $\mathbb E^\R$ is seen to be a bijection. 

Finally, for the `divergent' case note that the condition $\lim_{i\to\infty} A_i=\lim_{j\to\infty} B_j=\infty$ is equivalent to  $\lim_{i\to\infty}\min(A_i,B_i)=\infty$. Therefore we clearly have that  $(\bigA,\bigB)\in \Omega_\star^{div}(\Rmin)$ if and only if $\mathbb E(\bigA,\bigB)\in\Imin_\infty^{div}(\Rmin)$.
\end{proof}

\section{Lusztig weight asymptotics}\label{s:VK}

In this section we show that the parameters $(\bigA, \bigB)$ of asymptotically real infinite min-ideal fillings from \cref{t:TropEdrei} may be obtained as limits from the parameters of the finite min-ideal fillings given by Lusztig's weight map. Let us denote by $\mathcal L^{(n+1)}$ the restriction of Lusztig's type $A_n$ weight map to tropical Toeplitz matrices,  
\begin{equation}\label{e:ParamByLn}
\begin{array}{cccc}
\mathcal L^{(n+1)}:&\ToeplitzU_{n+1}(\Rmin)&\longrightarrow &\mathfrak{h}^*_{PSL_{n+1}}(\R).\end{array}
\end{equation}
This map is a bijection by \cref{t:TropFinite}. Moreover, $\ToeplitzU_{n+1}(\Rmin)$ is just $\Imin_{n+1}$ in terms of standard coordinates by  \cref{t:MinIdealIsToepl}.

\begin{theorem}\label{t:VKtypethm}
Let $M^{(\infty)}=(M_{ij})_{i,j\in\N}\in\Imin^\R_\infty(\Rmin)$ be an asymptotically real min-ideal filling, and $M^{(n+1)}=\pi_{n+1}(M)$ be its projection to $\Imin_{n+1}$. Suppose
\begin{equation}\label{e:Ln+1}
\mathcal L^{(n+1)}(M^{(n+1)})=\begin{pmatrix}
\lambda^{(n+1)}_1 & &&&\\
&\lambda^{(n+1)}_2 & &&\\
&&\ddots &&\\
&&&\lambda^{(n+1)}_n & \\
&&&&\lambda^{(n+1)}_{n+1}
\end{pmatrix}.
\end{equation}
Then the limits
\begin{equation}\label{e:limits}
\lim_{n\to\infty}\frac{\lambda^{(n+1)}_i}n=A_i\qquad \text{and}\qquad \lim_{n\to\infty}\frac{-\lambda^{(n+1)}_{n+2-j}}{n}=B_j,
\end{equation}
exist in $\R$ and the sequences $\bigA=(A_i)_i$ and $\bigB=(B_j)_j$  define an element $(\bigA,\bigB)\in\Omega^{\R}_\star(\Rmin)$ which precisely recovers  $M^{(\infty)}=\mathbb E^{\R}(\bigA,\bigB)$.
\end{theorem}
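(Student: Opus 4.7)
The plan is to use the explicit formula for Lusztig's weight map from \cref{l:dunilemma} together with the structural description $M_{ij}=\min(A_i,B_j)$ provided by \cref{t:TropEdrei}, and then extract $(\bigA,\bigB)$ from the weights $\lambda_i^{(n+1)}$ via Ces\`aro-type averaging of the rows and columns of $M^{(\infty)}$.

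First I would invoke \cref{t:TropEdrei} to produce a pair $(\bigA,\bigB)\in\Omega^\R_\star(\Rmin)$ with $\mathbb E^\R(\bigA,\bigB)=M^{(\infty)}$; explicitly, $A_i=M_{i,\infty}$ and $B_j=M_{\infty,j}$ are finite, $\sup\bigA=\sup\bigB$, and $M_{ij}=\min(A_i,B_j)$. In particular $(M_{i,r})_r$ is weakly increasing (as the minimum of $A_i$ with the increasing sequence $B_r$) and bounded above by $A_i$, while by weak interlacing $B_\infty\geq A_i$, so $M_{i,r}\nearrow A_i$ as $r\to\infty$; symmetrically $M_{k,j}\nearrow B_j$ as $k\to\infty$.

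Next, the computation in the proof of \cref{l:dunilemma} yields
\[
\lambda_i^{(n+1)}=\sum_{r=1}^{n+1-i}M_{i,r}\;-\;\sum_{k=1}^{i-1}M_{k,\,n+2-i}.
\]
Fix $i$ and divide by $n$. The first sum equals $\tfrac{n+1-i}{n}$ times the Ces\`aro mean of a convergent sequence with limit $A_i$, and hence tends to $A_i$. The second sum is a fixed collection of $i-1$ terms each bounded by $A_k<\infty$, so after dividing by $n$ it vanishes. This gives $\lambda_i^{(n+1)}/n\to A_i$. Substituting $\ell=n+2-j$ in the same formula produces
\[
-\lambda_{n+2-j}^{(n+1)}=\sum_{k=1}^{n+1-j}M_{k,j}\;-\;\sum_{r=1}^{j-1}M_{n+2-j,\,r},
\]
and the same argument applied to the sequence $k\mapsto M_{k,j}$ (converging to $B_j$) shows that the first piece, divided by $n$, tends to $B_j$, while the second piece involves only $j-1$ terms each bounded by $B_r<\infty$ and therefore vanishes in the $1/n$ scaling.

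The $A_i,B_j$ recovered from \eqref{e:limits} thus coincide with $M_{i,\infty}$ and $M_{\infty,j}$, which are precisely the parameters used in the constructive inverse of $\mathbb E^\R$ built in the proof of \cref{t:TropEdrei}; hence $(\bigA,\bigB)\in\Omega^\R_\star(\Rmin)$ and $\mathbb E^\R(\bigA,\bigB)=M^{(\infty)}$. There is no serious obstacle in the argument: the asymptotically real hypothesis is deployed precisely to ensure that all $A_i,B_j$ are finite (so the Ces\`aro limits exist in $\R$) and that the $O(1)$ boundary correction sums are uniformly bounded, so that they vanish after dividing by $n$.
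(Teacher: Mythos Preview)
Your proof is correct and follows essentially the same approach as the paper: both start from \cref{t:TropEdrei} to write $M_{ij}=\min(A_i,B_j)$, insert the explicit formula $\lambda_i^{(n+1)}=\sum_{r=1}^{n+1-i}M_{i,r}-\sum_{k=1}^{i-1}M_{k,n+2-i}$ from the proof of \cref{l:dunilemma}, and then observe that after dividing by $n$ the long row-sum tends to $A_i$ while the fixed-length correction vanishes. The paper writes out the $\varepsilon$--$N$ estimate by hand whereas you package it as Ces\`aro convergence of $(M_{i,r})_r\to A_i$; one small wording fix is that ``bounded by $A_k$'' should be ``bounded in absolute value'' (the $M_{k,r}$ lie between $M_{k,1}$ and $A_k$, both finite), but the argument is sound.
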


\begin{remark}
Here the relation between the $A_i$ and the $B_i$ gains an interpretation as the limit of the involution $-w_0$ on $\mathfrak{h}_{SL_{n+1}}^*$, which applied to $\mathcal L^{(n+1)}(M^{(n+1)})$ from \eqref{e:Ln+1} gives the diagonal matrix with $j$-th diagonal entry $-\lambda^{(n+1)}_{n+2-j}$.
\end{remark}
\begin{proof}
By \cref{t:TropEdrei} there is a unique element $(\bigA,\bigB)\in\Omega^{\R}_\star(\Rmin)$ such that $M^{(\infty)}=(M_{ij})=\mathbb E^{\R}(\bigA,\bigB)$. We just  need to prove the identities \eqref{e:limits} for this $(\bigA,\bigB)=((A_i)_i,(B_j)_j)$.
By the definition of $\mathcal L^{(n+1)}$, see  \cref{Lweightmap} and also \cref{ex:UtropFin}, we have that 
\[
\lambda^{(n+1)}_i=\left(M_{i1}+\dotsc+M_{i,n-i+1}\right)-\left(M_{1,n-i+2}+\dotsc + M_{i-1,n-i+2}\right),
\]
with $(n-i+1)$ positive sign summands, and $(i-1)$ negative sign summands. 
Now recall that for each index $h$ we have $A_h=\lim_{k\to\infty} M_{h,k}$, by the proof of \cref{t:TropEdrei}. Let $\ep>0$ and pick $N_\ep$ such that $|M_{hk}-A_h|<\ep$ for all $h\in [1,i]$ whenever $k> N_\ep$. Next suppose $n$ is large enough so that $n-i+1>N_\ep$. Then we have
\begin{equation*}
\begin{array}{ccl}
|\lambda^{(n+1)}_i-nA_i|&\le &\left(|M_{i1}-A_i|+\dotsc+|M_{i,n-i+1}-A_i|\right)+\left(|M_{1,n-i+2}|+\dotsc + |M_{i-1,n-i+2}|\right)+(i-1)|A_i|\\
&\le& \sum_{k=1}^{N_\ep}|M_{ik}-A_i|+ \ep (n-i+1-N_\ep) +\left(\sum _{h=1}^{i-1}(|M_{h,n-i+2}-A_{h}|+|A_h|)\right) + (i-1)|A_i|\\
&\le& \sum_{k=1}^{N_\ep}|M_{ik}-A_i|+ \ep (n-i+1-N_\ep) +\left((i-1)\ep +\sum _{h=1}^{i-1}|A_h|\right) + (i-1)|A_i|.
\end{array}
\end{equation*}
All of the summands on the right-hand side apart from $\ep(n-i+1-N_\ep)$ are independent of $n$. Thus dividing by $n$ and then taking the limit $n\to\infty$ we find that
\[
\lim_{n\to\infty}\left|\frac{\lambda^{(n+1)}_i}n-A_i\right|\le \ep.
\]
This holds for any $\ep>0$ and so the first part of \eqref{e:limits} is proved in this case. The formula for $B_j$ follows by symmetry.  
\end{proof}
\begin{remark}
The `asymptotically real' condition in \cref{t:VKtypethm} cannot be dropped. Let us construct an example of an $(M_{ij})\in\Imin_{\infty}(\Rmin)$ that is not asymptotically real, where  $A_i:=\lim_{k\to\infty}M_{ik}=\infty$ but
\[
\lim_{n\to\infty}\frac{\lambda_i^{(n+1)}}{n}\ne \infty,
\]
for some $i$. Suppose $(\bigA,\bigB)\in \Omega_\star(\Rmin)$ with  $A_i=\infty$ and for all $i$. Then $\bigB=(B_j)_{j\in\N}$ is a divergent sequence that is weakly monotonely increasing. We let $(M_{ij})=\mathbb E(\bigA,\bigB)$, thus $M_{ij}=\min(A_i,B_j)=B_j$ for all $i,j$. In this case we have
\begin{equation*}
\lambda^{(n+1)}_i=B_1+B_2+\dotsc +B_{n-i+1}-(i-1)B_{n-i+2}.
\end{equation*}
Now fix $i\ge 2$, for example $i=2$. This gives  
\[
\lambda^{(n+1)}_2= B_1+B_2+\dotsc +B_{n-1}-B_{n}.
\]
Then setting $B_1=1$ and $B_n=\sum_{i=1}^{n-1} B_i$ recursively defines a divergent sequence, namely $\bigB=(B_n)_n$ is given by $B_1=1$ and $B_n=2^{n-2}$ for $n\ge 2$, such that we have 
\[
\lambda^{(n+1)}_2=0\qquad {\text {for all $n\in\N$.}}
\]
Thus $\lim_{n\to\infty}\frac{\lambda^{(n+1)}_2}{n}=0$, while $A_2=\infty$. We see that the conclusion of \cref{t:VKtypethm} does not hold in this  example. 
\end{remark}
\begin{remark}\label{r:rhoDominant} Note that the tropical parameter sequences $\bigA=(A_i)_i$ and $\bigB=(B_j)_j$ are always weakly increasing, which is not the case for the finite parameter sequences $(\lambda^{(n+1)}_k)_k$, and is a phenomenon that only emerges in the (normalised) limit as $n\to\infty$. For example, suppose we consider the constant ideal filling equal to $1$ in  $\Imin^{(n+1)}$. Its Lusztig weight is the sum of the positive roots for $PSL_{n+1}$, let us denote it $2\rho^{(n+1)}$, and is regular dominant. So we have $\lambda^{(n+1)}_1> \lambda^{(n+1)}_2> \dots>\lambda^{(n+1)}_{n+1} $ --  decreasing instead of increasing. But looking in more detail we find that $\frac{\lambda_i^{(n+1)}}{n}=1-\frac{2i-2}{n}$, so that as $n\to\infty$ the limit becomes constant equal to $1$ for all $i$, which is indeed the associated  sequence $\bigA=(A_i)_i$. In fact, $-w_0(2\rho^{(n+1)})=2\rho^{(n+1)}$ in this case and $\bigB=\bigA$ is also the constant sequence $(1)$. 
\end{remark}
\section{General interpretation of the tropical Schoenberg parameters}\label{s:PuiseauxEdrei}

\subsection{Infinite totally positive Toeplitz matrices} We now turn our attention to infinite upper-triangular totally positive Toeplitz matrices over $(\RR,\RR_{>0},\Val)$, compare \cref{s:Semifields}. The goal is to relate our results about infinite min-ideal fillings to the classical Edrei theorem. 


\begin{defn}\label{d:infToeplitz}
Let $\pr^{n+1}_n:U_+^{(n+1)}(\RR_{>0})\to U_+^{(n)}(\RR_{>0})$ be the map defined by deletion of the last row and column, and denote also by $\pr^{n+1}_n$ its restriction to $\ToeplitzU_{n+1}(\RR_{>0})$. 
We set
\begin{equation*}
\begin{array} {lcc}
U_+^{(\infty)}(\RR_{>0})&:=&\underset{\longleftarrow}\lim  \ U_+^{(n+1)}(\RR_{>0}),\\ 
\ToeplitzU_{\infty}(\RR_{>0})&:=&\underset{\longleftarrow}\lim\ToeplitzU_{n+1}(\RR_{>0}).
\end{array}
\end{equation*}
Let $\pr_{n+1}$ denote the projection map $U_+^{(\infty)}(\RR_{>0})\to U_+^{(n+1)}(\RR_{>0})$ as well as its restriction to $\ToeplitzU_{\infty}(\RR_{>0})$.
\end{defn}

\begin{remark} The above definition is  analogous to \eqref{e:IMaslimit}. Note that, as in the tropical setting, the maps $\pr^{n+1}_n$ and $\pr_{n+1}$ are not surjective. This is because of the additional minors occurring in the extended matrix that make the total positivity condition increasingly restrictive as $n$ grows. 
\end{remark}
We have the following compatibility lemma. Recall the indexing set $\mathcal S_{\le n+1}$ from \eqref{e:Sn+1}. We focus on the standard positive coordinate chart $\phi^{\mathbf i_0}_{>0}=\phi^{\mathbf i_0,(n+1)}_{>0}$ for $U_+^{(n+1)}$ with coordinates $m_{ij}$ indexed by $\mathcal S_{\le n+1}$, see  \cref{d:standardcoordFin}. The image of the inclusion \begin{equation*}\label{e:inclusion-ideal-coordinates}
\begin{array}{cccc}
\iota_{n+1}:&\ToeplitzU_{n+1}(\RR_{>0})&\hookrightarrow &U^{(n+1)}_+(\RR_{>0}) 
\end{array}
\end{equation*}
in terms of  $(m_{ij})_{(i,j)\in\mathcal S_{\le n+1}}$ was described in \cref{p:ToeplitzviaQ}.  It turns out that the maps $\pr^{n+1}_n$, the coordinate charts $\phi^{\mathbf i_0,(n+1)}_{>0}$ and the above inclusions are all straightforwardly compatible.

\begin{lemma} \label{l:idealrestriction}
We have the following commutative diagram  
\[\begin{tikzcd}
	{\ToeplitzU_{n+1}(\RR_{>0})} && {U_{+}^{(n+1)}(\RR_{>0})} && {\ (\RR_{>0})^{\mathcal S_{\le n+1}}} \\
	{\ToeplitzU_{n}(\RR_{>0})} && {U_{+}^{(n)}(\RR_{>0})} && {\ (\RR_{>0})^{\mathcal S_{\le n}}}
	\arrow["{\iota_{n+1}}", hook, from=1-1, to=1-3]
	\arrow["{\pr^{n+1}_n}", from=1-1, to=2-1]
	\arrow["{\sim}"', from=1-5, to=1-3]
	\arrow["{\pr^{n+1}_n}", from=1-3, to=2-3]
	\arrow["{\qquad}", from=1-5, to=2-5]
	\arrow["{\iota_n}", hook, from=2-1, to=2-3]
	\arrow["{\sim}"', from=2-5, to=2-3]
\end{tikzcd}\]
where horizontal maps on the right are the appropriate standard positive charts $\phi^{\mathbf i_0}_{>0}$, and the vertical map on the right-hand side is the coordinate projection.  
\end{lemma}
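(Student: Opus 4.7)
The plan is to verify the two squares of the diagram separately. The left square commutes for tautological reasons, since the vertical map $\pr^{n+1}_n$ on $\ToeplitzU_{n+1}(\RR_{>0})$ is by definition the restriction of $\pr^{n+1}_n$ on $U^{(n+1)}_+(\RR_{>0})$ (see \cref{d:infToeplitz}), while $\iota_{n+1}$ and $\iota_n$ are merely inclusions. So there is nothing to prove on the left side.

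For the right square, the content is to show that in standard coordinates the matrix-theoretic projection $\pr^{n+1}_n$ becomes the coordinate projection $(m_{ij})_{i+j\le n+1} \mapsto (m_{ij})_{i+j\le n}$ that forgets the $n$ coordinates lying on the outer anti-diagonal $\{(i,j) : i+j = n+1\}$. I would first establish two elementary facts. First, $\pr^{n+1}_n : U^{(n+1)}_+ \to U^{(n)}_+$ is a group homomorphism: for any $u,v \in U^{(n+1)}_+$ the last row of $v$ is $(0,\ldots,0,1)$, so the top-left $n\times n$ block of $uv$ equals the product of the top-left $n\times n$ blocks of $u$ and $v$. Second, since $x_n(a)$ differs from the identity only at position $(n,n+1)$, we have $\pr^{n+1}_n(x_n(a)) = I_n$, whereas $\pr^{n+1}_n(x_i(a)) = x_i(a) \in U^{(n)}_+$ for each $i < n$.

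Now apply these observations to the explicit factorisation \eqref{e:idealfact} of $u = \phi^{\mathbf i_0}_{>0}((m_{ij})_{i+j \le n+1})$, which is naturally grouped into $n$ consecutive blocks where the $k$-th block (for $k=1,\ldots,n$) reads
\[
x_n(m_{n-k+1,k}) \cdot x_{n-1}(m_{n-k,k}) \cdots x_k(m_{1,k}).
\]
The initial factor of each block is an $x_n$, and its coordinate index $(n-k+1, k)$ satisfies $i+j = n+1$; conversely these $n$ factors exhaust all $x_n$ appearances in the factorisation, and their indices exhaust the outer anti-diagonal. Applying $\pr^{n+1}_n$ as a homomorphism kills exactly these $n$ factors, and the remaining product consists of factors $x_i(m_{ij})$ with $i < n$ and $(i,j) \in \mathcal S_{\le n}$, assembled in precisely the order prescribed by the reduced expression $\mathbf i_0$ for $w_0 \in S_n$. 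That is, the result equals $\phi^{\mathbf i_0}_{>0}((m_{ij})_{i+j\le n}) \in U^{(n)}_+$, which is exactly what the commutativity of the right square asserts.

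The only thing to be careful about when writing this up is the block structure of $\mathbf i_0$ and the matching between the outer anti-diagonal of indices and the $x_n$ factors; both are already transparent from \eqref{e:idealfact}, so no genuine obstacle arises beyond bookkeeping.
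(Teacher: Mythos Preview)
Your proof is correct. The paper takes a slightly different route for the right square: rather than pushing the factorisation \eqref{e:idealfact} forward through the homomorphism $\pr^{n+1}_n$ as you do, it appeals to the minor formula \eqref{e:mviaminors} from \cref{l:mijFormula}, observing that the expression for $m_{ij}$ in terms of minors of $u$ does not depend on $n$ (the minors involved use only rows and columns with index at most $i+j\le n$, hence are unchanged by deleting the last row and column). Your forward argument via the block structure of $\mathbf i_0$ is equally valid and arguably makes it more transparent \emph{why} this particular reduced expression is adapted to the projective system; the paper's argument trades that insight for brevity, since it reduces the entire verification to a one-line citation of an already-established formula.
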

\begin{proof}
The commutativity of the left-hand side square is obvious. Suppose now $u=\phi_{>0}^{\mathbf i_0,(n+1)}(m_{ij})$. \cref{l:mijFormula} gives a formula expressing the $m_{ij}$ in terms of minors of $u$ that doesn't depend on $n$. This implies that the horizontal maps on the right-hand side are compatible and that square also commutes. 
\end{proof}

\begin{defn}  Suppose $\pr^{n+1}_{n}:X^{(n+1)}\to X^{(n)}$ is a  projective system of varieties over $\RR$ with (individual) positive structures. By a positive structure on the projective limit, we mean of a  set of positive charts $\phi^{(n)}:\RR_{>0}^{I^{(n)}}\to X^{(n)}(\RR_{>0})$ and inclusions $I^{(n)}\hookrightarrow I^{(n+1)}$, that are \textit{compatible} meaning we have commutative diagrams 
\[\begin{tikzcd}
	{\RR_{>0}^{I^{(n+1)}}} && {X^{(n+1)}(\RR_{>0})} \\
	{\RR_{>0}^{I^{(n)}}} && {X^{(n)}(\RR_{>0}).}
	\arrow["{\phi^{(n+1)}}", from=1-1, to=1-3]
	\arrow["{\qquad} "', from=1-1, to=2-1]
	\arrow["{\pr^{n+1}_n}", from=1-3, to=2-3]
	\arrow["{\phi^{(n)}}", from=2-1, to=2-3]
\end{tikzcd}\]
Here the left-hand vertical map is the coordinate projection  associated to the inclusions $I^{(n)}\hookrightarrow I^{(n+1)}$. The positive part of $ X^{(\infty)
}=\underset{\longleftarrow}\lim\, X^{(n)}$   is then defined as 
\[ 
X^{(\infty)}(\RR_{>0}):=\underset{\longleftarrow}\lim\,
 X^{(n)}(\RR_{>0}),
\]
and setting $I^{(\infty)}:=\underset{\longrightarrow}\lim \, I^{(n)}$, we have the projective limit $\phi^{(\infty)}:=\underset{\longleftarrow}{\lim}\ \phi^{(n)}$ giving us a positive chart
\[
\phi^{(\infty)}:\RR_{>0}^{I^{(\infty)}}\longrightarrow X^{(\infty)}(\RR_{>0}).
\]
\end{defn}

By \cref{l:idealrestriction}, the positive charts $\phi^{\mathbf {i_0},(n+1)}_{>0}$  for $U_+^{(n+1)}$ with positive coordinates $(m_{ij})_{(i,j)\in\mathcal S_{n+1}}$ give a positive structure on $U^{(\infty)}_+(\RR)$, and determine a positive chart 
\[
\phi^{(\infty)}_{>0}:\RR_{>0}^{\N\times \N}\longrightarrow U_+^{(\infty)}(\RR_{>0}).
\]  
Let us call the associated coordinates $(m_{ij})_{i,j\in\N}$ the \textit{standard coordinates} of $U_+^{(\infty)}$.

We can now obtain a description of $\ToeplitzU_\infty(\RR_{>0})$ inside $U_+^{(\infty)}(\RR_{>0})$ in terms of the standard  coordinates 
using an infinite analogue of the quiver $Q_{n+1}$ from \cref{f:QuiverExample}.  
\begin{defn}\label{d:Qinf}
Consider the (infinite) quiver $Q_\infty=(\mathcal V,\mathcal A)$ with vertex set $\mathcal V=\N^2\cup\{(0,0)\}$ and arrows as follows, 
\begin{itemize}
\item $(0,0)\overset{c_k}\To (1,k)$ for $k\in\N$,
\item $(i,j)\overset{c_{ij}}\To(i+1,j)$ (vertical arrows in column $j$),
\item $(0,0)\overset{d_k}\To(k,1)$ for $k\in\N$,
\item $(i,j)\overset{d_{ij}}\To (i,j+1)$ (horizontal arrows in row $i$).
\end{itemize}
Note that there are two distinct arrows, $c_1$ and $d_1$, pointing from $(0,0)$ to $(1,1)$.
\end{defn}
\begin{figure}
\[\begin{tikzcd}
	{\quad}& {\quad} 	& {\quad}	& {\quad}	& {\quad}& {\quad}	\\
	& { \bullet_{(4,1)}} & {\bullet_{(4,2)}} &\bullet_{(4,3)} &\bullet_{(4,4)} & {\quad} \\
	& { \bullet_{(3,1)}} & { \bullet_{(3,2)}} & {\bullet_{(3,3)}} &{\bullet_{(3,4)}} &{\quad}\\
	& { \bullet_{(2,1)}} & { \bullet_{(2,2)}} & { \bullet_{(2,3)}} & {\bullet_{(2,4)} }&{\quad} \\
	& { \bullet_{(1,1)}} & { \bullet_{(1,2)}} & { \bullet_{(1,3)}} & { \bullet_{(1,4)}} &{\quad} \\
	\circ_{(0,0)}
	\arrow[shift left=3,from=2-2, to=1-2]
	\arrow[from=2-2, to=2-3]
	\arrow[from=2-3, to=2-4]
	\arrow[from=2-4, to=2-5]
	\arrow[from=2-5, to=2-6]
	\arrow[shift left=3,from=3-2, to=2-2]
	\arrow[from=3-2, to=3-3]
	\arrow[shift left=3,from=3-3, to=2-3]
	\arrow[from=3-3, to=3-4]
	\arrow[from=3-4, to=3-5]
	\arrow[from=3-5, to=3-6]
	\arrow[shift left=3,from=4-2, to=3-2]
	\arrow[from=4-2, to=4-3]
	\arrow[shift left=3,from=4-3, to=3-3]
	\arrow[from=4-3, to=4-4]
	\arrow[shift left=3,from=4-4, to=3-4]
	\arrow[from=4-4, to=4-5]
	\arrow[shift left=3,from=5-2, to=4-2]
	\arrow[from=4-5, to=4-6]
	\arrow[from=5-2, to=5-3]
	\arrow[shift left=3,from=5-3, to=4-3]
	\arrow[from=5-3, to=5-4]
	\arrow[shift left=3,from=5-4, to=4-4]
	\arrow[from=5-4, to=5-5]
	\arrow[shift left=3,from=5-5, to=4-5]
	\arrow[from=5-5, to=5-6]
	\arrow[shift left=3,curve={height=-6pt}, from=6-1, to=1-2]
	\arrow[shift left=3,curve={height=-6pt}, from=6-1, to=2-2]
	\arrow[shift left=3,curve={height=-6pt}, from=6-1, to=3-2]
	\arrow[shift left=3,curve={height=-6pt}, from=6-1, to=4-2]
	\arrow[shift left=3,curve={height=-6pt}, from=6-1, to=5-2]
	\arrow[shift left=3,from=2-3, to=1-3]
	\arrow[shift left=3,from=2-4, to=1-4]
	\arrow[shift left=3,from=2-5, to=1-5]
	\arrow[shift left=3, from=3-4, to=2-4]
	\arrow[shift left=3, from=3-5, to=2-5]	
	\arrow[shift left=3, from=4-5, to=3-5]	
	\arrow[curve={height=6pt}, from=6-1, to=5-2]
	\arrow[curve={height=6pt}, from=6-1, to=5-3]
	\arrow[curve={height=6pt}, from=6-1, to=5-4]
	\arrow[curve={height=6pt}, from=6-1, to=5-5]
	\arrow[curve={height=6pt}, from=6-1, to=5-6]
\end{tikzcd}\]
\caption{The infinite quiver $Q_\infty$. \label{f:InfQuiverExample}}
\end{figure}
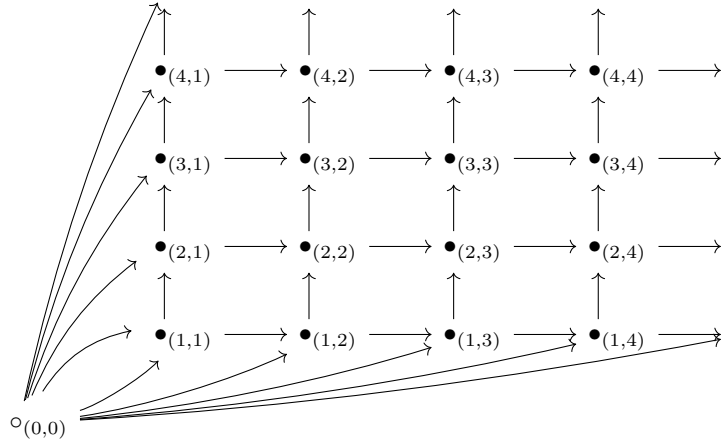
Recall the notations from \cref{d:Qn+1}.
\begin{cor} \label{c:infQcoords}
Associate to $u\in U^{(\infty)}_{+}(\RR_{>0})$ with standard coordinates $(m_{ij})_{i,j\in\N}$ the vertex labeling $(v_{(i,j)})_{(i,j)\in\mathcal V}$ of $Q_{\infty}$ given by
\[
v_{(i,j)}=\sum_{k=1}^{\min(i,j)-1} \frac 1{m_{i-k+1,j-k+1}},
\]   
where $v_{(0,0)}=0$, for the case where the sum is empty. Consider the arrow labelling $(\kappa_a)_{a\in\mathcal A}$ given by
\[\kappa_a=v_{h(a)}-v_{t(a)}.
\] 
Then $u\in \ToeplitzU_{\infty}(\RR_{>0})$ if and only if at every vertex $v\ne(0,0)$ we have
\[
\prod_{a\in\operatorname{in}(v)}\kappa_a=\prod_{a\in\operatorname{out}(v)}\kappa_a.
\]
In this case also $\kappa_a\in\RR_{>0}$ for all arrows $a\in\mathcal A$. 
\end{cor}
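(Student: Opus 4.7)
The plan is to deduce this corollary from its finite counterpart, Proposition~\ref{p:ToeplitzviaQ}, using the projective limit description of $\ToeplitzU_{\infty}(\RR_{>0})$ and of $U_+^{(\infty)}(\RR_{>0})$. By \cref{l:idealrestriction} the standard coordinates behave well under the projections $\pr_{n+1}$: if $u \in U_+^{(\infty)}(\RR_{>0})$ has standard coordinates $(m_{ij})_{i,j\in\N}$, then $\pr_{n+1}(u) \in U_+^{(n+1)}(\RR_{>0})$ has standard coordinates $(m_{ij})_{(i,j)\in\mathcal S_{\le n+1}}$. Moreover, by definition
\[
u \in \ToeplitzU_{\infty}(\RR_{>0}) \iff \pr_{n+1}(u) \in \ToeplitzU_{n+1}(\RR_{>0}) \text{ for every } n \ge 1.
\]

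The next step is to compare the quivers $Q_\infty$ and $Q_{n+1}$. First I would show that, for every vertex $(i,j)$ of $Q_\infty$ with $i+j \le n+1$, the vertex label $v_{(i,j)}$ is a function only of those $m_{i'j'}$ with $i'+j' \le n+1$, and agrees with the vertex label $\ell_{ij}$ of the corresponding vertex of $Q_{n+1}$ (after matching the two conventions in \cref{d:Qn+1} and the statement of the corollary). Consequently, for every arrow $a$ of $Q_\infty$ whose head and tail both satisfy $i+j \le n+1$, the label $\kappa_a$ coincides with the label of the corresponding arrow of $Q_{n+1}$. In particular, for every vertex $v = (i,j)$ of $Q_\infty$ with $1 \le i+j \le n$, the incoming and outgoing arrows at $v$ in $Q_\infty$ are exactly the incoming and outgoing arrows at the $\bullet$-vertex $v \in \mathcal V_\bullet$ of $Q_{n+1}$, with matching labels.

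Given this match, I would then invoke Proposition~\ref{p:ToeplitzviaQ}: $\pr_{n+1}(u) \in \ToeplitzU_{n+1}(\RR_{>0})$ if and only if $\prod_{a \in \operatorname{in}(v)} \kappa_a = \prod_{a \in \operatorname{out}(v)} \kappa_a$ at every $\bullet$-vertex of $Q_{n+1}$, equivalently at every vertex $v$ of $Q_\infty$ with $1 \le i+j \le n$. Running this for all $n \ge 1$ exhausts every vertex of $Q_\infty$ other than $(0,0)$, yielding the stated characterisation of $\ToeplitzU_{\infty}(\RR_{>0})$. The final claim about positivity of the arrow labels follows immediately, since Proposition~\ref{p:ToeplitzviaQ} provides $\kappa_a \in \RR_{>0}$ for every arrow of each $Q_{n+1}$, and every arrow of $Q_\infty$ occurs in $Q_{n+1}$ for all sufficiently large $n$.

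The main obstacle is purely bookkeeping: verifying that the vertex-label and arrow-label conventions used to define $Q_\infty$ in the corollary agree, up to the obvious reindexing, with those used for $Q_{n+1}$ in \cref{d:Qn+1}, especially at the vertices $(i,j)$ with $i+j = n+1$ which sit on the $\circ$-boundary of $Q_{n+1}$ but are interior vertices of $Q_\infty$. Once this translation is set up cleanly, the argument is a direct limit over $n$ and no further analytic input is needed; in particular no convergence or topological argument on $\RR_{>0}$ intervenes, as the relations at each vertex are purely algebraic and involve only finitely many of the $m_{ij}$.
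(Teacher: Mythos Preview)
Your proposal is correct and follows precisely the approach of the paper, which proves the corollary in one line as ``a direct consequence of \cref{l:idealrestriction} and \cref{p:ToeplitzviaQ}.'' You have simply spelled out the bookkeeping that this sentence compresses: matching vertex and arrow labels of $Q_\infty$ with those of $Q_{n+1}$ on the range $i+j\le n+1$, and then letting $n$ grow.
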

\begin{proof} This is a direct consequence of \cref{l:idealrestriction} and \cref{p:ToeplitzviaQ}.
\end{proof}

The standard coordinates $m_{ij}$ for a given element $u\in U_+^{(\infty)}$ can also be computed directly in terms of minors of the infinite matrix $u$ via \begin{equation}\label{e:mviaminorsInf}
m_{ij}=\frac{\Minor_{[i]+j}^{[i]}(u)\Minor_{[i-1]+(j-1)}^{[i-1]}(u)}{\Minor^{[i]}_{[i]+(j-1)}(u)\Minor_{[i-1]+j}^{[i-1]}(u)},
\end{equation}
as in \eqref{e:mviaminors}. We recall the following result proved by Edrei about these precise minor expressions when $u$ is a totally positive Toeplitz matrix over $\R$ (where all $m_{ij}$ are defined and  strictly positive). Consider the factorisation of the generating function associated to $u=(c_{i-j})$ provided by the classical Edrei theorem (\cref{t:EdreiIntro}),  
\[
1+c_1x+c_2x^2+c_3x^3+\dotsc =e^{\gamma x}\,\frac{\prod_{j=1}^\infty (1+\beta_jx)}{\prod_{i=1}^\infty(1-\alpha_ix)}.
\]
We call the $\alpha_i,\beta_j\in \R_{>0}$ the Schoenberg parameters of $u$. 
\begin{prop}[{\cite[Assertion iii]{Edrei53}}]\label{p:Edreilimits} For $u\in\ToeplitzU_\infty(\R_{>0})$, the minor expressions $m_{ij}$ from \eqref{e:mviaminorsInf} converge separately in~$i$ and in~$j$. Moreover,
$\lim_{k\to\infty}(m_{ik})=\alpha_i$ and $\lim_{k\to\infty}(m_{kj})=\beta_j$ for the Schoenberg parameters $\alpha_i$ and $\beta_j$ of $u$. 
\end{prop}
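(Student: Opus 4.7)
The plan is to pass the minor expressions through the Jacobi--Trudi identity and then extract asymptotics from the Schoenberg factorisation of the generating function. Setting $P^{(i)}_j:=\Minor^{[i]}_{[i]+j}(u)=\det(c_{j+l-k})_{1\le k,l\le i}$, Jacobi--Trudi gives $P^{(i)}_j=s_{(j^i)}(c)$, the rectangular Schur polynomial evaluated at the sequence of $c_k$'s viewed as complete homogeneous symmetric functions. Via the factorisation from \cref{t:EdreiIntro} these become rectangular supersymmetric Schur functions in $(\boldalpha|\boldbeta)$, modified by a Plancherel contribution from $\gamma$, in the sense of \cite{Macdonald:variations,GouldenGreene94}. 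The formula \eqref{e:mviaminorsInf} is then a $2\times 2$ ``determinantal ratio'' of such Schur functions, and the aim becomes an asymptotic analysis of these.

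The main step is to show that $P^{(i)}_j\sim C_i\,(\alpha_1\alpha_2\cdots\alpha_i)^{j}$ as $j\to\infty$ for some nonzero constant $C_i$ independent of $j$. To do this I would expand $c_k$ by partial fractions,
\[
c_k=\sum_{p\ge 1}A_p\,\alpha_p^{k},\qquad A_p=\frac{e^{\gamma/\alpha_p}\prod_{q}(1+\beta_q/\alpha_p)}{\prod_{q\ne p}(1-\alpha_q/\alpha_p)},
\]
substitute into the determinantal formula for $P^{(i)}_j$, and factor $(\alpha_1\cdots\alpha_i)^{j}$ out of the contributions of the $i$ leading terms. Up to an $O((\alpha_{i+1}/\alpha_i)^{j})$ error, the residual matrix is $X\cdot D\cdot Y^{T}$ with $X_{kp}=\alpha_{p}^{-k}$, $Y_{lp}=\alpha_{p}^{l}$, $D=\mathrm{diag}(A_1,\ldots,A_i)$, and its determinant reduces to a multiple of $V(\alpha_1,\ldots,\alpha_i)^{2}\prod_{p=1}^{i}A_p$. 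The signs of the $A_p$'s alternate as $(-1)^{p-1}$, so $\prod_{p=1}^iA_p$ has sign $(-1)^{\binom{i}{2}}$, which cancels the sign introduced by inverting columns in $X$; the net constant $C_i$ is strictly positive, consistent with total positivity. Plugging into \eqref{e:mviaminorsInf} then gives
\[
\lim_{j\to\infty}m_{ij}=\frac{\alpha_1\cdots\alpha_i}{\alpha_1\cdots\alpha_{i-1}}=\alpha_i.
\]

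For $\lim_{k\to\infty}m_{kj}=\beta_j$ I would invoke the dual Jacobi--Trudi identity $s_{(j^i)}=\det(e_{i+l-k})_{1\le k,l\le j}$, expressing $P^{(i)}_j$ as a $j\times j$ determinant in elementary supersymmetric functions $e_m$. From the product relation $\bigl(\sum h_kx^k\bigr)\bigl(\sum(-1)^ke_kx^k\bigr)=1$, the generating function $\sum_{m}e_{m}x^{m}=e^{\gamma x}\prod(1+\alpha_i x)/\prod(1-\beta_j x)$ is the $\boldalpha\leftrightarrow\boldbeta$-swapped version of $f(x)$, so $e_m\sim E_1\beta_1^m$ as $m\to\infty$. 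Running the same Vandermonde argument as $i\to\infty$ (the determinant has fixed size $j$, but its entries shift) yields $P^{(i)}_j\sim D_j\,(\beta_1\cdots\beta_j)^{i}$, whence $\lim_{k\to\infty}m_{kj}=\beta_j$ from \eqref{e:mviaminorsInf}.

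The principal technical obstacle is justifying the partial-fraction expansion in the infinite setting: the residues $|A_p|$ may grow as $p\to\infty$, so the series $\sum_{p}A_p\alpha_p^{k}$ need not be absolutely convergent, and one must control the tail and its interaction with the determinant by careful Hankel-type estimates. This is precisely the delicate core of Edrei's original argument in \cite{Edrei53}. A cleaner route in the totally positive case would be to approximate by truncations of $\boldalpha,\boldbeta$, where the formulas are literal and the asymptotic expansion above holds as stated, and then pass to the limit using the continuity guaranteed by convergence of the Schoenberg product.
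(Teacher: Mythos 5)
The paper does not prove this proposition; it is imported verbatim as \cite[Assertion iii]{Edrei53}, so there is no in-paper proof to compare your sketch against. That said, your proposal is a plausible reconstruction of the shape of Edrei's argument, and it correctly identifies the Jacobi--Trudi / rectangular Schur function reformulation of the minor $P^{(i)}_j = \Minor^{[i]}_{[i]+j}(u)$ and the need for a Vandermonde-factorisation of the residual determinant. The cancellation structure in $m_{ij} = P^{(i)}_j P^{(i-1)}_{j-1} / (P^{(i)}_{j-1} P^{(i-1)}_j)$ is also handled correctly: once one has $P^{(i)}_j \sim C_i(\alpha_1\cdots\alpha_i)^j$ with $j$-independent $C_i$, the ratio collapses to $\alpha_i$.

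However, the gap you flag at the end is not merely ``a delicate technicality'' --- it is where the argument as written actually fails. When $\gamma>0$, your formula for the residue $A_p$ contains the factor $e^{\gamma/\alpha_p}$, which grows super-geometrically as $\alpha_p\to 0$ (and $\alpha_p\to 0$ is forced by summability). Then $\sum_p A_p\alpha_p^k$ is not merely non-absolutely convergent: it is genuinely divergent, because the $e^{\gamma/\alpha_p}$ growth overwhelms the $\alpha_p^k$ decay for any fixed $k$. So the partial-fraction expansion of $c_k$ does not exist in the naive sense and cannot be substituted into the determinant. The entire Plancherel factor $e^{\gamma x}$ has to be peeled off before the pole analysis, or replaced by a genuinely different estimate; you cannot treat it as a harmless constant in the residues. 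A secondary gap: your error term $O((\alpha_{i+1}/\alpha_i)^j)$ requires strict inequality $\alpha_{i+1}<\alpha_i$; the Schoenberg parameters are only weakly decreasing, so ties (higher-order poles) and the boundary case of finitely many nonzero $\alpha_p$ each need separate handling --- the latter being precisely the regime where $\alpha_i=0$ and the claimed asymptotic $P^{(i)}_j \sim C_i(\alpha_1\cdots\alpha_i)^j$ degenerates to $0$ without giving you the limit by simple ratio-taking. Your proposed fallback (truncate $\boldalpha,\boldbeta$, appeal to continuity in the Schoenberg product) is the right instinct, but it needs a uniform-in-$j$ estimate on the approximation error in the minors, which is not automatic from mere convergence of the parameter sequences; this uniform control is exactly what Edrei's Hankel-determinant bounds provide and is the real content of \cite[Assertion iii]{Edrei53}.
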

This proposition is part of a complete proof that Edrei provided of \cref{t:EdreiIntro} as an alternative to  \cite{ASW,Edrei52}.

\subsection{Tropicalisations for infinite Toeplitz matrices}
Let  $(\RR,\RR_{>0},\Val)$ be a ring with valuative positive structure as in \cref{d:semifieldvaluation}. We first construct an  associated tropicalisation $U_{+,\RR_{>0}}^{(\infty)}(\Rmin)$ of $U_+^{(\infty)}$. 
\begin{defn}\label{d:Uinftytrop}
Let us define an equivalence relation $\sim$ on $U_+^{(\infty)}(\RR_{>0})$ by declaring $\phi^{(\infty)}_{>0}((m_{ij}))$ and $\phi^{(\infty)}_{>0}((m'_{ij}))$ equivalent if and only if $\Val(m_{ij})=\Val(m'_{ij})$ for all $i,j\in\N$. Then we consider as our tropical analogue of $U_+^{(\infty)}$ the set $ 
U_+^{(\infty)}(\RR_{>0})/\sim$,
where for each standard coordinate $m_{ij}$ we have a well-defined coordinate function on  that sends $[\phi^{(\infty)}_{>0}(m_{ij})]$ to $M_{ij}=\Val(m_{ij})$. This tropicalisation depends on $\RR_{>0}$ in that the standard coordinates run through the image of the valuation map. We assume this image to be all of $\R_{\min}$, unless stated otherwise, and then we denote this tropicalisation 
\[
U_+^{(\infty)}(\Rmin):=U_+^{(\infty)}(\RR_{>0})/\sim.
\]
We have that the coordinates $(M_{ij})_{i,j\in\N}$ all together give us a bijection
\[
U_+^{(\infty)}(\Rmin)\overset\sim\longrightarrow\R^{\N\times\N},
\]
that we call the \textit{standard tropical chart} for $U_+^{(\infty)}(\Rmin)$. We also refer to the  $M_{ij}$ as the standard (tropical) coordinates on $U_+^{(\infty)}(\Rmin)$.
\end{defn} 

We now define the (standard) tropicalisation  
for $\ToeplitzU_{\infty}$ in an analogous way to \cref{d:tropToeplitz}.  
\begin{defn}[Tropical infinite Toeplitz matrices]\label{d:TropInf}
For $(\RR,\RR_{>0},\Val)$ a ring with valuative positive structure, let $\ToeplitzU_{\infty,{\RR_{>0}}}(\Rmin)$ be the image of the composition 
\[
\ToeplitzU_{\infty}(\RR_{>0})\hookrightarrow
U_+^{(\infty)}(\RR_{>0})\longrightarrow U_+^{(\infty)}(\Rmin).
\]
This subset of $U_+^{(\infty)}(\Rmin)$ depends more strongly on $\RR_{>0}$, hence the $\RR_{>0}$ forms part of our notation. We think of its elements as described in terms of the standard coordinates $(M_{ij})_{i,j\in\N}$ of $U_+^{(\infty)}(\Rmin)$.
\end{defn}
The following lemma is a straightforward consequence of \cref{p:TropToeplitzviaQ}.
\begin{lemma}\label{l:infminideal}
If $(M_{ij})_{i,j\in\N}\in\ToeplitzU_{\infty,\RR_{>0}}(\Rmin)$, then   $(M_{ij})_{i,j\in\N}$ is an infinite min-ideal filling. \qed    
\end{lemma}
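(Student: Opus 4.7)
The plan is to reduce the infinite min-ideal condition, which is a pointwise statement at each lattice position $(i,j)\in\N\times\N$, to the already established finite version of the result in \cref{p:TropToeplitzviaQ}. The key point is that verifying $M_{ij}=\min(M_{i+1,j},M_{i,j+1})$ at a single position only involves three coordinates, so the infinite case at that position is accessible via a sufficiently large truncation $\pr_{n+1}$.

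More concretely, let $[u]\in\ToeplitzU_{\infty,\RR_{>0}}(\Rmin)$ be represented by $u\in\ToeplitzU_\infty(\RR_{>0})$ with standard coordinates $(m_{ij})_{i,j\in\N}$, so that $M_{ij}=\Val(m_{ij})$. I would fix an arbitrary $(i,j)\in\N\times\N$ and choose any integer $n\ge i+j$. Then the three positions $(i,j),(i+1,j),(i,j+1)$ all lie in $\mathcal S_{\le n+1}$, and $(i,j)$ is not on the top (anti-)diagonal of that triangle, so the finite min-ideal condition is stated at $(i,j)$ in the truncated tableau.

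Next I would invoke \cref{l:idealrestriction}, which says that the projection $\pr_{n+1}(u)$ lies in $\ToeplitzU_{n+1}(\RR_{>0})$ and has standard coordinates equal to the truncation $(m_{kl})_{(k,l)\in\mathcal S_{\le n+1}}$. Applying \cref{p:TropToeplitzviaQ} to $\pr_{n+1}(u)$ then yields that the truncated array $(M_{kl})_{(k,l)\in\mathcal S_{\le n+1}}$ is a finite min-ideal filling; in particular $M_{ij}=\min(M_{i+1,j},M_{i,j+1})$. Since $(i,j)$ was arbitrary, $(M_{ij})_{i,j\in\N}$ satisfies the infinite min-ideal condition everywhere, i.e.\ it lies in $\Imin_\infty(\Rmin)$.

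There is no real obstacle: the argument is essentially a cofinality statement, with \cref{l:idealrestriction} providing the compatibility between standard coordinates and the projective limit, and \cref{p:TropToeplitzviaQ} doing all the nontrivial tropical work at each finite stage.
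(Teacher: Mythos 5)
Your proof is correct and follows exactly the route the paper intends: the paper gives no explicit argument, merely noting that the lemma is ``a straightforward consequence of'' \cref{p:TropToeplitzviaQ}, and your write-up fills in precisely the missing steps --- reduction of the pointwise min-ideal condition at $(i,j)$ to a sufficiently large truncation $\pr_{n+1}(u)$, compatibility of standard coordinates under truncation via \cref{l:idealrestriction}, and the finite case from \cref{p:TropToeplitzviaQ}. Nothing more is needed.
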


We also introduce the following more subtle generalisation of the real infinite totally positive Toeplitz matrices and its tropicalisation. For this we require a topology on the semiring $\RR_{\ge 0}=\RR_{>0}\cup\{0\}$.
\begin{defn}[Restricted totally positive Toeplitz matrices]\label{d:ResToepl} Suppose $(\mathcal R,\mathcal R_{>0})$ is a ring with a positive structure and along with a topology on the semiring $\mathcal R_{\ge 0}=\mathcal R_{>0}\cup\{0\}$. We set 
\[
\ToeplitzU^{res}_{\infty}(\mathcal R_{>0}):=\{(m_{ij})_{i,j}\in\ToeplitzU_{\infty}(\mathcal R_{>0})\mid \exists\ \lim_{k\to\infty}m_{ik} \text{ and } \lim_{k\to\infty}m_{kj} \text{ in $\mathcal R_{\ge 0}$}\}.
\]
\end{defn}
\begin{remark}
For $\mathcal R_{>0}=\R_{>0}$ with the usual Euclidean topology on $\R_{\ge 0}$, the limit conditions in the definition above are automatically satisfied, by \cref{p:Edreilimits}. Therefore this definition does not give anything new over $\R$. As a result, $\ToeplitzU_{\infty}(\mathcal R_{>0})$ and $\ToeplitzU^{res}_{\infty}(\mathcal R_{>0})$ are equally valid generalisations of the set of infinite real totally positive Toeplitz matrices. 
\end{remark}

We now require $\RR$ to be a ring with a topological valuative positive structure as in \cref{d:semifieldvaluation}. 

\begin{defn}[Restricted tropical infinite Toeplitz matrices]\label{d:resTropToepl} For $(\RR,\RR_{>0},\Val)$ a ring  with a topological valuative positive structure, let $\ToeplitzU^{res}_{\infty,{\RR_{>0}}}(\Rmin)$ be the image of the composition 
\[
\ToeplitzU^{res}_{\infty}(\RR_{>0})\hookrightarrow
U_+^{(\infty)}(\RR_{>0})\longrightarrow U_+^{(\infty)}(\Rmin).
\]
\end{defn}
Given these definitions and \cref{l:infminideal} we now have inclusions
\[
\ToeplitzU^{res}_{\infty,\RR_{>0}}(\Rmin)\subseteq\ToeplitzU_{\infty,\RR_{>0}}(\Rmin)\subseteq \Imin_{\infty}(\Rmin).\]
A key question in the infinite case is what choice of $\RR_{>0}$ and version infinite Toeplitz matrix (restricted or not) gives the best extension of $\ToeplitzU_{n+1}(\Rmin)$ to the infinite setting.

\section{The positive semifield $\Kdelpos$ of continuous functions}
\label{s:Kdel}

Because topology plays a greater role in the infinite setting, we will replace the field of generalised Puiseaux series whose $t$-adic topology has very restrictive convergence. Thus in this section we introduce a new ring with topological valuative positive structure that will turn out to be more convenient.  

\begin{defn}\label{d:Kdel}
Let $0<\delta<1$ be fixed and consider $\Kdel=C^0((0,\delta])$, the ring of continuous $\R$-valued functions on $(0,\delta]$.
 We define the following `positive part' for $C^0((0,\delta])$,
 \[\begin{array}{lcl}
\Kdelpos&:=&\left\{k:(0,\delta]\to \R_{> 0}\mid \text{ $k$ is continuous and there exists $K\in\R$ with $\lim_{t\to 0} t^{-K}k(t)\in\R_{>0}$}\right\}.
\end{array}
\]
\end{defn}
We will construct two topologies on $\Kdelpos$  in \cref{d:Kdeltop}. First, though, let us check that $(\Kdel,\Kdelpos)$ has all the desired algebraic properties. 
\begin{lemma}\label{l:Kdelstructure}
The pair $(\Kdel,\Kdelpos)$ has the following  properties. 
\begin{enumerate}
\item $\Kdelpos$ is a subsemifield of $\Kdel$.\item 
For any $k\in\Kdelpos$, the element $K\in\R$ for which $\lim_{t\to 0}t^{-K}k(t)\in\R_{>0}$ is \emph{unique}, and the resulting map $\Val:\Kdelpos\to\Rmin$ defined by $\Val(k):=K$
is a semifield homomorphism. 
\item 
We have a well-defined map $\lc:\Kdelpos\to\R_{>0}$ defined by $\lc(k)=\lim_{t\to 0}t^{-\Val(k)}k(t)$.  
\end{enumerate} 
\end{lemma}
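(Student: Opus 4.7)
The plan is to establish the three properties in the natural order, with uniqueness in (2) being the structural ingredient that makes the rest fit together cleanly. Throughout, I would use the shorthand $K_i := \Val(k_i)$ and $c_i := \lim_{t\to 0} t^{-K_i} k_i(t)$.

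First I would prove uniqueness of the exponent in (2). Suppose $k \in \Kdelpos$ admits two values $K_1, K_2 \in \R$ with $\lim_{t\to 0} t^{-K_i} k(t) = c_i \in \R_{>0}$. Dividing the two relations, the pure power $t^{K_2 - K_1}$ would have to converge to $c_1/c_2 \in \R_{>0}$ as $t \to 0$, which forces $K_2 = K_1$. This makes $\Val : \Kdelpos \to \R$ a well-defined function, and in turn shows that the formula in (3) defines $\lc$ unambiguously as an element of $\R_{>0}$.

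Next I would prove (1) and the homomorphism claim in (2) simultaneously by checking closure under each semifield operation and computing the effect on $\Val$. For multiplication, continuity and positivity of $k_1 k_2$ are immediate, and $\lim_{t\to 0} t^{-(K_1+K_2)} k_1(t)k_2(t) = c_1 c_2 \in \R_{>0}$ gives both $k_1 k_2 \in \Kdelpos$ and $\Val(k_1 k_2) = \Val(k_1) + \Val(k_2)$. For inversion, strict positivity of $k$ on $(0,\delta]$ ensures $1/k$ is continuous, and $\lim_{t\to 0} t^{K}/k(t) = 1/c \in \R_{>0}$ gives $\Val(k^{-1}) = -\Val(k)$. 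For addition, setting $K = \min(K_1, K_2)$ and rewriting
\[
t^{-K}(k_1+k_2)(t) = t^{K_1 - K}\bigl(t^{-K_1} k_1(t)\bigr) + t^{K_2 - K}\bigl(t^{-K_2} k_2(t)\bigr),
\]
I would consider the three cases $K_1 < K_2$, $K_1 = K_2$, $K_1 > K_2$. In each case the limit as $t \to 0$ lies in $\R_{>0}$: either exactly one of $c_1, c_2$ survives as the leading coefficient, or both add. This simultaneously gives closure under addition and the tropical identity $\Val(k_1+k_2) = \min(\Val(k_1),\Val(k_2))$, completing the semifield homomorphism property of $\Val$. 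Finally (3) is just a restatement of the existence assertion in (2) combined with its uniqueness, so no further work is required.

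The only point requiring any attention is the addition case, where in a general valuation ring one usually only has the inequality $\Val(k_1+k_2) \ge \min(\Val(k_1),\Val(k_2))$ because leading terms may cancel. The main observation is that no cancellation can happen here: every element of $\Kdelpos$ is a strictly positive function, so the same positivity that guarantees $\lc(k) \in \R_{>0}$ for each summand guarantees $\lc(k_1+k_2) \in \R_{>0}$, i.e.\ exact equality $\Val(k_1+k_2) = \min(\Val(k_1),\Val(k_2))$. Once this is recognised, the entire lemma reduces to straightforward limit manipulations.
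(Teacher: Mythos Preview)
Your proof is correct and follows essentially the same approach as the paper's. The only cosmetic difference is in the uniqueness argument: you divide the two limit relations to reduce to $t^{K_2-K_1}\to c_1/c_2\in\R_{>0}$, while the paper argues directly that $\lim_{t\to 0}t^{-M}k(t)$ is $0$ or $\infty$ whenever $M\neq K$; these are equivalent one-line observations, and the treatment of multiplication, addition, and inversion is the same in both.
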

\begin{remark} We refer to the map $\lc:\Kdelpos\to\R_{>0}$ as the `leading coefficient' map, for obvious reasons.  This lemma also implies that $\Kdelz$ is a semiring. The valuation may be extended to $\Val:\Kdelz\to\Rmininf$.
\end{remark}
\begin{proof}
Let $k\in\Kdelpos$.  We have that $\lim_{t\to 0}t^{-K}k(t)\in \R_{>0}$ for some $K\in\R$. Suppose $M\ne K$. Then it is straightforward to verify that 
\begin{equation}\label{e:MneK}
\lim_{t\to 0}t^{-M}k(t)=\begin{cases} 0, & \text{ if $M<K$,}\\
\infty, & \text{ if $M>K$. }
\end{cases}
\end{equation}
Therefore $K$ is uniquely determined by the condition $\lim_{t\to 0}t^{-M}k(t)\in\R_{>0}$, and $\Val(k)=K$ is well-defined. As a consequence, the map  $\lc$ from (3) is also well-defined. 

Consider now $k_1,k_2\in\Kdelpos$ with $\Val(k_i)=:K_i$. Then, $\lim_{t\to 0} (t^{-K_1-K_2}k_1k_2)=\lc(k_1)\lc(k_2)$ lies in $\R_{>0}$. In particular, if $k_1,k_2\in\Kdelpos$ then also $k_1k_2\in\Kdelpos$, and it follows that $\Val(k_1 k_2)=K_1+K_2$. Similarly, but using~\eqref{e:MneK},
\begin{equation}\label{e:lcadd}
\lim_{t\to 0} (t^{-\min(K_1,K_2)}(k_1+k_2))=\delta_{K_1,\min(K_1,K_2)}\lc(k_1)+ \delta_{K_2,\min(K_1,K_2)}\lc(k_2).
\end{equation}
Therefore, if $k_1,k_2\in\Kdelpos$, then $k_1+k_2\in\Kdelpos$ and we have $\Val(k_1 +k_2)=\min(K_1,K_2)$.  
For the inverse of $k\in\Kdelpos$ we have that  
\[
\lim_{t\to 0 }t^K\frac{1}{k(t)}=\lc(k)\inv,
\] 
so that $\Val(k\inv)=-K$. Thus $\Kdelpos$ is a semifield and $\Val$ is a semifield homomorphism. 
\end{proof}

\begin{defn}
Inside $\Kdelpos$ we may define 
\[\begin{array}{ccl}
\Odelpos&:=&\left\{f\in\Kdelpos\mid \Val(f)\ge 0\right\},\\
\Mdelpos&:=&\left\{g\in\Kdelpos\mid \Val(g)> 0\right\}.
\end{array}
\]
\end{defn}
\begin{lemma}
We have that $\Odelpos$ is a subsemiring (with identity) of $\Kdelpos$, and $\Mdelpos$ is a semiring ideal in $\Odelpos$. An element $k\in \Kdel$ extends to a continuous  function on $[0,\delta]$ if and only if $k\in\mathcal O_{>0}$. We denote the extension also by $k$, so that we have $k:[0,\delta]\to\R_{\ge 0}$ in this case. For $k\in\mathcal O_{>0}$ we have that $k(0)=0$ if and only if $f\in\Mdelpos$.  
\end{lemma}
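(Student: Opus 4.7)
The plan is to reduce everything to the valuation-homomorphism properties of $\Val:\Kdelpos\to\Rmin$ established in \cref{l:Kdelstructure}, together with the leading-order asymptotic $k(t)=\lc(k)\,t^{\Val(k)}(1+o(1))$ as $t\to 0^+$ that the leading-coefficient map provides.

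First I would handle the algebraic statements. By definition $\Odelpos=\{k\in\Kdelpos:\Val(k)\ge 0\}$ and $\Mdelpos=\{k\in\Kdelpos:\Val(k)>0\}$, so closure properties follow immediately from the identities $\Val(f+g)=\min(\Val f,\Val g)$ and $\Val(fg)=\Val f+\Val g$ proved in \cref{l:Kdelstructure}. Specifically: if $\Val f,\Val g\ge 0$ then $\Val(f+g)\ge 0$ and $\Val(fg)\ge 0$, showing $\Odelpos$ is a subsemiring; it contains $1$ since $\Val(1)=0$. For the ideal property, if $f\in\Odelpos$ and $g\in\Mdelpos$ then $\Val(fg)=\Val f+\Val g>0$, and if $g_1,g_2\in\Mdelpos$ then $\Val(g_1+g_2)=\min(\Val g_1,\Val g_2)>0$, so $\Mdelpos$ is stable under addition and under multiplication by elements of $\Odelpos$.

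Second I would address the continuous extension claim. For $k\in\Kdelpos$ set $K:=\Val(k)$ and write $k(t)=t^K\cdot(t^{-K}k(t))$, where the second factor tends to $\lc(k)\in\R_{>0}$ as $t\to 0^+$. If $K\ge 0$, then $t^K$ has a finite limit as $t\to 0^+$ (equal to $1$ if $K=0$ and to $0$ if $K>0$), hence $\lim_{t\to 0^+}k(t)$ exists in $\R_{\ge 0}$ and $k$ extends continuously to $[0,\delta]$. Conversely, if $K<0$ then $t^K\to+\infty$, so $k(t)\to+\infty$ and no continuous extension to $[0,\delta]$ exists. Reading off the limit in the former case, $k(0)=\lim_{t\to 0^+}k(t)=0$ precisely when $K>0$, which is the defining condition for $k\in\Mdelpos$.

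The only mild subtlety is interpreting the hypothesis ``$k\in\Kdel$'' in the extension statement: a generic element of $\Kdel$ (e.g.\ a sign-changing or strictly negative continuous function) may extend continuously without lying in $\Odelpos$, so the statement should be understood as applying to $k\in\Kdelpos$ (equivalently, to nonzero $k\in\Kdelnn$). With this reading the argument is essentially immediate from the leading-coefficient asymptotic, and no further machinery is required.
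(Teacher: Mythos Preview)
Your proof is correct and follows essentially the same approach as the paper: the algebraic closure properties come directly from $\Val$ being a semifield homomorphism (\cref{l:Kdelstructure}), and the extension statement is obtained by analyzing $\lim_{t\to 0}k(t)$ via the leading-order behaviour, which is exactly the paper's use of \eqref{e:MneK} with $M=0$. Your observation that the hypothesis ``$k\in\Kdel$'' should be read as ``$k\in\Kdelpos$'' for the extension claim to be literally true is a fair point about the statement, not a divergence in argument.
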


\begin{proof}
The first statement follows directly from the fact that $\Val$ is a semiring homomorphism. The second statement, that $k$ extends to a continuous function on $[0,\delta]$ if $K=\Val(k)\ge 0$ with $k(0)=0$ if and only if $K>0$, follows using \eqref{e:MneK} with $M=0$. The remaining statements are straightforward. 
\end{proof}

\begin{defn}[Strong and weak topologies on $\Kdelpos$]\label{d:Kdeltop} Let $\|\ \|_{\sup}$ denote the supremum norm on $C^0([0,\delta])$. Now supposing $f\in\Odelpos$, then $f$ extends to a continuous function on $[0,\delta]$, that we also denote $f$ as above. For $\ep>0$ and $f\in\Odelpos$ we define 
\[
B_\ep(f)=\{g\in\Odelpos\mid \|g-f\|_{\sup}<\ep\}.
\]
We define the \textit{strong topology} on $\Kdelpos$ to be the topology generated by the open sets $t^AB_\ep(f)$, where $f\in\Odelpos$ and $A\in\R$. 

We define the \textit{weak topology} as the topology combining pointwise convergence on $(0,\delta]$ with convergence of the valuations. Namely, let the weak topology on $\Kdelpos$ be the topology generated by the open sets $B_{x,\ep}(f)=\{g\in\Kdelpos\mid |g(x)-f(x)|<\ep\}$ and $B^{{\Val}}_{\ep}(f)=\{g\in\Kdelpos\mid |\Val(g)-\Val(f)|<\ep\}$, where $f\in\Kdelpos,\ep>0$ and $x\in (0,\delta]$.  

It is straightforward to check that the semifield operations are continuous with respect to these topologies. Therefore $\Kdelpos$ is topological semifield (in two ways). We may write $\Kdelposst$ and $\Kdelposwk$ for $\Kdelpos$ endowed with the strong or the weak topology, respectively.  We also have induced weak and strong topologies on $\Mdelpos$ and~$\Odelpos$.
\end{defn}

\begin{remark}

Note that we may embed $\Odelpos$ into $C^0([0,\delta])$, and that $C^0([0,\delta])$ is naturally a Banach-algebra via its $\|\ \|_{\sup}$-topology. The induced topology on $\Odelpos$ differs from both the weak and strong topologies on $\Odelpos$. Namely the strong topology on $\Odelpos$ is finer than the $\|\ \|_{\sup}$-topology, and the weak topology is coarser than the $\|\ \|_{\sup}$-topology. For example, the sequence $f_n(t)=t^{\frac{n+1}{n}}$ in $\Odelpos$ converges to $f(t)=t$ in terms of $\|\  \|_{\sup}$, but it does not converge in the strong topology of $\Kdelpos$. Meanwhile, the sequence $f_n(t)=t^{\frac{1}{n}}$ converges to $f(t)=1$ in the weak topology of $\Odelpos$, but doesn't converge to $1$ in terms of $\|\ \|_{\sup}$.
\end{remark}

We extend these topologies to $\Kdelnn=\Kdelpos\cup\{0\}$ as follows. 
\begin{defn}[Strong and weak topologies on $\Kdelnn$]\label{d:Kdeltop}\label{d:topCnn}
To define $\Kdelnnst$, we add the set $\{0\}$ to the open sets of $\Kdelposst$, and together let them generate the \textit{strong topology} on $\Kdelnn$. Thus, strong convergence to $0$ requires stabilisation of a sequence to $0$. This is in keeping with stabilisation of $\Val$ on convergent sequences in the strong topology that we will observe in \cref{l:valcont}. For the \textit{weak topology}, $\Kdelnnwk$, we introduce open neighbourhoods of $0$ of the form $B_N(0)=\{f\in\Kdelnn\mid \Val(f)>N\}$. Thus for a sequence to converge to $0$ in the weak toplogy its valuations must diverge to $\infty$. 
\end{defn}

\begin{remark}
Note that the semifield $\Kdelpos$ cannot be embedded into any field. Indeed,  any ring containing $\Kdelpos$ will have zero-divisors.  For example, consider two distinct elements $f_1\le f_2$ in $\Kdelpos$ that agree outside $(\frac{\delta}4,\frac{\delta}3)$ and  two further distinct elements $g_1\le g_2$ that agree outside $(\frac{\delta}2,\delta]$. Then $(f_2-f_1)(g_2-g_1)=0$.     
\end{remark}

\subsection{The strong topology}\label{s:strongtop}
We collect some properties of the strong topology on $\Kdelpos$.
\begin{lemma}\label{l:valcont}
The valuation map $\Val:\Kdelposst\to\Rmin$ is continuous for the  discrete topology on $\Rmin$. 
In particular, for any convergent sequence $(k_n)_n$ in $\Kdelposst$, the sequence $\Val(k_n)$ stabilises, that is,
\[
\Val(k_n)=\Val(k)\quad \text{ if $n>N$, for some $N\in\N$,}
\]
where $k=\lim_{n\to\infty}(k_n)$. The subsets $\Mdelpos$ and $\Odelpos$ are open and closed inside $\Kdelposst $.
\end{lemma}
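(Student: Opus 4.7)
The plan is to show continuity of $\Val:\Kdelposst\to\Rmin$ by producing, for each $k\in\Kdelpos$, a basic open neighbourhood on which $\Val$ is constant. Since $\Rmin$ is given the discrete topology, continuity is equivalent to $\Val^{-1}(\{A\})$ being open for every $A\in\R$, and this local constancy will both yield openness and immediately imply the stabilisation statement (convergence in a discrete space means eventual constancy).

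The key construction is the following. Given $k\in\Kdelpos$ with $A:=\Val(k)$, set $f:=t^{-A}k\in\Odelpos$. Then $\Val(f)=0$ and $\lc(f)=\lc(k)>0$, so $f$ extends continuously to $[0,\delta]$ with $f(0)=\lc(k)>0$. Choose $\ep:=\tfrac{1}{2}\lc(k)$ and consider the basic open set $t^A B_\ep(f)$, which contains $k=t^A\cdot f$. The heart of the argument is that for any $g\in B_\ep(f)$, the bound $\|g-f\|_{\sup}<\ep$ forces $g(0)>f(0)-\ep=\tfrac{1}{2}\lc(k)>0$, so (since $g\in\Odelpos$ extends continuously to $0$) we have $\lim_{t\to 0}t^{0}g(t)=g(0)\in\R_{>0}$, which by the uniqueness part of \cref{l:Kdelstructure} gives $\Val(g)=0$. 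Hence $\Val(t^A g)=A$ for every $t^A g\in t^A B_\ep(f)$, i.e.\ $t^A B_\ep(f)\subseteq\Val^{-1}(\{A\})$, proving that $\Val$ is locally constant and thus continuous.

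The stabilisation statement then follows formally: if $k_n\to k$ in $\Kdelposst$ and $A=\Val(k)$, then continuity of $\Val$ gives $\Val(k_n)\to A$ in the discrete topology on $\R$, so $\Val(k_n)=A$ for all sufficiently large $n$. Finally, since every subset of $\Rmin$ is clopen under the discrete topology, continuity of $\Val$ implies that $\Odelpos=\Val^{-1}([0,\infty))$ and $\Mdelpos=\Val^{-1}((0,\infty))$ are both open and closed in $\Kdelposst$.

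I do not anticipate any serious obstacle. The only subtle point is ensuring that $g(0)$ makes sense and equals $\lim_{t\to 0}g(t)$ for $g\in\Odelpos$, which is exactly the content of the preceding lemma characterising $\Odelpos$ as those elements of $\Kdel$ extending continuously to $[0,\delta]$; this is what lets the $\sup$-norm estimate on $[0,\delta]$ translate into control of the leading behaviour at $t=0$.
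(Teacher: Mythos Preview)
Your proof is correct and follows essentially the same approach as the paper's: both show that $\Val^{-1}(\{K\})$ is open by taking the basic neighbourhood $t^K B_\ep(t^{-K}k)$ with $\ep<\lc(k)$ and arguing that any $g\in B_\ep(t^{-K}k)\subset\Odelpos$ has $g(0)>0$, hence $\Val(g)=0$. The only cosmetic difference is that you invoke the continuous extension of $g$ to $[0,\delta]$ explicitly via the preceding lemma, whereas the paper bounds $\lim_{t\to 0}t^{-K}g(t)$ directly by passing to the limit in the sup-norm inequality; the deduction of stabilisation and the clopenness of $\Mdelpos,\Odelpos$ is identical.
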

\begin{proof}
The open sets are translates (in the sense of multiplying by a $t^A$) and unions of translates, of open sets in $\Odelpos$ for the sup-norm.
 Therefore, for any $f\in\Kdelposst$, the open sets $\mathcal U_{\ep,A}(f)=t^{A} B_\ep(t^{-A}f)$ where $A\in\R$ with $A\le\Val(f)$, and $\ep>0$, form a neighborhood basis of $f$.

Pick any $K\in\Rmin$ and let $\mathcal S=\Val\inv(\{K\})$. It suffices to show that $\mathcal S$ is open. Let $k\in\mathcal S$ and pick $\ep$ with $0<\ep<\lc(k)$. We now show that the open neighborhood $\mathcal U_{\ep,K}(k)$ of $k$ lies in $\mathcal S$. Let $g\in \mathcal U_{\ep,K}(k)$. This means that $t^{-K} g\in B_\ep(t^{-K}k)$, so that 
\[
\sup_{t\in{(0,\delta]}}|t^{-K}g(t)-t^{-K}k(t)|<\ep.
\] 
It follows that for all $t\in(0,\delta]$ we have
\[
t^{-K}k(t)-\ep\ <\ t^{-K}g(t)\ <\ t^{-K}k(t)+\ep.
\]
Taking the limit $t\to 0$ gives 
\begin{equation}\label{e:lc-ineq}
\lc(k)-\ep\ \le \ \lim_{t\to 0} (t^{-K}g(t))\ \le\ \lc(k)+\ep,
\end{equation}
using that $\lim_{t\to 0}(t^{-K}k(t))=\lc(k)$ since $K=\Val(k)$.
However, we chose $\ep<\lc(k)$ and therefore the left-hand side of the above inequality is strictly positive. Now \eqref{e:lc-ineq} implies that $\lim_{t\to 0}(t^{-K}g(t)$ lies in $\R_{>0}$, which this implies that $\Val(g)=K$, and so $g\in \mathcal S$. 
\end{proof}
\begin{remark}
It follows immediately from the combination of \cref{l:valcont} and \cref{d:Kdeltop} that the extended valuation map $\Val:\mathcal C^{\mathrm st}_{\ge 0}\longrightarrow \Rmininf$ is continuous for the discrete topology on $\Rmininf$. Moreover, it is then continuous also for the extended Euclidean topology on $\Rmininf$ given in  \cref{d:semifieldvaluation}. 
\end{remark}
We now turn to the  leading coefficient map $\lc:\Kdelpos\to\R_{>0}$ defined by $\lc(k):=\lim_{t\to 0}(t^{-\Val(k)}k(t))$. 
\begin{lemma}\label{l:lccont}
We have that $\lc:\Kdelposst\to\R_{>0}$ is continuous for $\R_{>0}$ with the usual Euclidean topology. 
\end{lemma}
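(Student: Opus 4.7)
The plan is to prove continuity of $\lc$ at an arbitrary point $k_0 \in \Kdelposst$ by exhibiting, for each $\ep>0$, an explicit basic open neighbourhood of $k_0$ on which $\lc$ varies by less than~$\ep$. Set $K_0 := \Val(k_0)$ and $L_0 := \lc(k_0) \in \R_{>0}$. Choose any $\eta$ with $0 < \eta < \min(\ep, L_0)$, and consider the basic open set $\mathcal U := t^{K_0} B_\eta(t^{-K_0} k_0)$. Observe that $t^{-K_0} k_0$ has valuation $-K_0 + K_0 = 0$, hence lies in $\Odelpos$, so $\mathcal U$ is a legitimate basic open set of the strong topology containing $k_0$.

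For any $k \in \mathcal U$, write $k = t^{K_0} g$ with $g \in B_\eta(t^{-K_0} k_0)$. Since both $g$ and $t^{-K_0} k_0$ lie in $\Odelpos$, they extend continuously to $[0,\delta]$, with $(t^{-K_0} k_0)(0) = \lc(k_0) = L_0$. The sup-norm bound $\|g - t^{-K_0} k_0\|_{\sup} < \eta$ is taken over $[0,\delta]$ via these extensions, so in particular at the endpoint $t = 0$ we obtain $|g(0) - L_0| < \eta < L_0$. Consequently $g(0) > 0$, which forces $\Val(g) = 0$ and $g(0) = \lc(g)$. Then $\Val(k) = K_0 + \Val(g) = K_0$ and $\lc(k) = \lim_{t \to 0} t^{-K_0}(t^{K_0} g(t)) = g(0)$, so $|\lc(k) - L_0| < \eta < \ep$, as required.

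There is no real obstacle in this argument: the essential observation is that the sup-norm in the definition of $B_\eta(f)$, applied to $f = t^{-K_0} k_0 \in \Odelpos$, transfers automatically to control at the endpoint $t = 0$ via the continuous extension, and the value at $0$ of this extension is precisely the leading coefficient. The additional constraint $\eta < L_0$ simultaneously guarantees that all elements of $\mathcal U$ share the valuation $K_0$ of $k_0$ (which is just \cref{l:valcont} applied locally), ensuring that $\lc$ can be computed uniformly on $\mathcal U$ as the pointwise value at $0$ of the rescaled function.
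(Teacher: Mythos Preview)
Your proof is correct and follows essentially the same approach as the paper's. Both arguments pick the basic open neighbourhood $t^{K_0}B_\eta(t^{-K_0}k_0)$ and use the sup-norm control to bound the leading coefficient; the only cosmetic difference is that the paper intersects this set with $\Val^{-1}(K_0)$ (invoking \cref{l:valcont}) to force the valuation to be constant, whereas you achieve the same effect directly via the constraint $\eta<L_0$, and you read off the bound at the endpoint $t=0$ rather than by taking a limit.
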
 
\begin{proof}
Let $C=(c_1,c_2)$ be an open interval in $\R_{>0}$. For any $k\in\lc\inv(C)$ we want to find an open neighborhood of $k$ that also lies in $\lc\inv(C)$. By \cref{l:valcont} we have $\mathcal U_K:=\Val\inv(K)$ is an open neighborhood of $k$.  
Now suppose $\lc(k)=c$ and pick $\ep>0$ small enough so that $c_1<c-\ep<c+\ep<c_2$. We may consider the open neighborhood of $k$ given by $\mathcal U:=t^KB_{\ep}(t^{-K} k)\cap \mathcal U_K$.  Then for any $f\in\mathcal U$ we have $\Val(f)=K$ and $\sup_{t\in (0,\delta]}|t^{-K}f(t)-t^{-K}k(t)|<\ep$. Therefore, letting $t\to 0$ we find that $|\lc(f)-\lc(k)|=|\lc(f)-c|\le\ep$. This implies that $\lc(f)\in C$.
\end{proof}

Note that $\lc$ is not a map of semifields. It is compatible with multiplication, $\lc(k_1 k_2)=\lc(k_1)\lc(k_2)$, but not with addition, compare \eqref{e:lcadd}. However, we do have a semifield homomorphism in the other direction as we will see in \cref{l:Rincl}.

\begin{lemma}\label{l:Kcontcrit}
Let $(k_n)_{n\in\N}$ be a sequence in $\Kdelposst$. Suppose that $\Val(k_n)$ stabilises to $K\in\R$ for $n> N$ and that the sequence $(t^{-K}k_n)_{n>N}$ in $ C^0([0,\delta])$ 
converges uniformly to a strictly positive function. 
Then the sequence $k_n$ is convergent in the strong topology. 
\end{lemma}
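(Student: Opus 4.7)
The natural candidate for the limit is $k := t^K g$, where $g\in C^0([0,\delta])$ is the uniform limit of $(t^{-K}k_n)_{n>N}$. First I would check $k\in\Kdelposst$: on $(0,\delta]$, $k(t)=t^Kg(t)>0$ because $g>0$ on $[0,\delta]$, and $\lim_{t\to 0}t^{-K}k(t)=g(0)\in\R_{>0}$, so $k\in\Kdelpos$ with $\Val(k)=K$ and $\lc(k)=g(0)$.

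Next I would invoke the description of a neighborhood basis of $k$ given in the proof of \cref{l:valcont}: the sets $\mathcal U_{\ep,A}(k)=t^{A}B_\ep(t^{-A}k)$ with $A\le \Val(k)=K$ and $\ep>0$ form such a basis. Thus to prove $k_n\to k$ strongly it suffices to show that for every fixed $A\le K$ and every $\ep>0$ we have $k_n\in\mathcal U_{\ep,A}(k)$ for all large $n$, i.e.\ $\sup_{t\in(0,\delta]}|t^{-A}k_n(t)-t^{-A}k(t)|<\ep$ eventually.

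For this I would write, for $n>N$,
\[
t^{-A}k_n(t)-t^{-A}k(t)\;=\;t^{K-A}\bigl(t^{-K}k_n(t)-t^{-K}k(t)\bigr),
\]
and observe that since $K-A\ge 0$, the factor $t^{K-A}$ is bounded on $(0,\delta]$ by $\max(1,\delta^{K-A})$. The hypothesis gives uniform convergence $t^{-K}k_n\to g=t^{-K}k$ on $[0,\delta]$, hence $\sup_{t\in(0,\delta]}|t^{-K}k_n(t)-t^{-K}k(t)|\to 0$, and multiplying by the bounded factor finishes the estimate. This yields $k_n\in\mathcal U_{\ep,A}(k)$ eventually, for each basic neighborhood, so $k_n\to k$ strongly.

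There is no real obstacle: the argument is a direct reduction to the hypothesis via the explicit neighborhood basis. The only mild point of care is the positivity of $g$ on all of $[0,\delta]$ (including $0$), which is what ensures the candidate limit $k$ genuinely lies in $\Kdelpos$ (and in particular has $\Val(k)=K$) rather than slipping into $\Mdelpos$ or leaving $\Kdelnn$ altogether; this is exactly why the hypothesis asks for a \emph{strictly} positive limit function.
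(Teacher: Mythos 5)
Your proposal is correct and follows essentially the same route as the paper's proof: define the candidate limit $k=t^K g$ with $g$ the uniform limit of $(t^{-K}k_n)$, verify $\Val(k)=K$ using strict positivity of $g$, and then establish convergence against the basic neighborhoods $\mathcal U_{\ep,A}(k)$ via the factorization $|t^{-A}k_n-t^{-A}k|=t^{K-A}|t^{-K}k_n-t^{-K}k|$ with $t^{K-A}\le 1$ on $(0,\delta]$.
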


\begin{proof}
  Since $\Val(k_n)=K$ for $n>N$, we have indeed that $t^{-K}k_n(t)$ extends to  a continuous function on $[0,\delta]$. Let $\ell(t)$ be the limit of $t^{-K}k_n(t)$ for $t\in [0,\delta]$. By assumption $\ell(t)>0$ for all $t$, and since $t^{-K}k_n$ converges to $\ell$ uniformly, $\ell$ is continuous. It follows that $\ell\in \Kdelpos$, and since $\ell(0)>0$ that $\Val(\ell)=0$. Therefore $k:=t^K\ell\in \Kdelpos$ with $\Val(k)=K$. 
  Now we check that $k$ is the limit of $k_n$ for the strong topology on $\Kdelpos$. Namely consider any open set $\mathcal U_{\ep,A}(k)=t^{A} B_\ep(t^{-A}k)$ from the standard neighborhood basis of $k$ (where $A\in\R$ with $A\le \Val(k)$). We have that $t^{-K}k_n\in B_\ep(t^{-K})$ for $n> N(\ep)$ by construction, and therefore 
  \[
 |t^{-A}k_n(t)-t^{-A}k(t)|= t^{K-A}|t^{-K}k_n(t)-t^{-K}k(t)|<\ep\qquad\forall t\in [0,\delta].
  \] 
for all $n>N(\ep)$, using 
that $t^{K-A}<1$. Thus $t^{-A}k_n\in B_\ep(t^{-A}k)$ and $k_n\in  \mathcal U_{\ep,A}(k)$ for $n>N(\ep)$. 
\end{proof} 

\begin{remark}
\label{r:subtleties} We  clearly obtain elements of $\Kdelpos$ from Laurent or (generalised) Puiseaux series that converge on $(0,\delta]$ with values in $\R_{>0}$. But we now also get elements from series such as  
\[
s(t)=\sum_{n=1}^\infty\frac{1}{2^n}t^{-\frac{1}{n}},
\]
where the exponents do not diverge to $\infty$. 
Here $s(t)$ is the (strongly convergent) limit of its partial sums $s_N(t)=\sum_{n=1}^N\frac{1}{2^n}t^{-\frac{1}{n}}$. The element $s(t)$ has $\Val(s)=-1$ and the sequence $s_N$ in fact converges to $s$ in $\Kdelposst$ because all $s_N$ have valuation $-1$, and $t s_N(t)=\sum_{n=1}^N\frac{1}{2^n}t^{1-\frac{1}{n}}$ is uniformly convergent on $[0,\delta]$ (for example because its terms are bounded by~$\frac{1}{2^n}$) to a strictly positive function.
\end{remark}
\begin{lemma}\label{l:Rincl} The inclusion map $\iota:\R_{>0}\to\Kdelposst$ of $\R_{>0}$ into $\Kdelpos$ as constant functions is an embedding of topological semifields, where $\R_{>0}$ is endowed with its usual Euclidean topology. 
\end{lemma}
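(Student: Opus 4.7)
The plan is to verify three things: that $\iota$ is an injective semifield homomorphism, that it is continuous, and that its inverse defined on $\iota(\R_{>0})$ (with the subspace topology from $\Kdelposst$) is continuous. The algebraic part is immediate. For any $c \in \R_{>0}$, the constant function $\iota(c)$ satisfies $\lim_{t\to 0} t^0 \cdot c = c \in \R_{>0}$, so $\iota(c) \in \Kdelpos$ with $\Val(\iota(c))=0$ and $\lc(\iota(c))=c$. Constant functions are closed under $+$, $\cdot$, and $(\cdot)^{-1}$ in the same way as $\R_{>0}$, and $\iota$ is clearly injective.

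For continuity of $\iota$, I would work directly with the neighborhood basis exhibited in the proof of \cref{l:valcont}. Since $\Val(\iota(c))=0$, the basic open neighborhoods of $\iota(c)$ in $\Kdelposst$ are $\mathcal U_{\ep,A}(\iota(c)) = t^A B_\ep(t^{-A}\iota(c))$ with $A\le 0$ and $\ep>0$. For a constant $c' \in \R_{>0}$, membership $\iota(c') \in \mathcal U_{\ep,A}(\iota(c))$ amounts to $\|t^{-A}(c' - c)\|_{\sup} < \ep$ on $[0,\delta]$, which since $A\le 0$ equals $\delta^{-A}|c'-c|$. Hence $|c'-c| < \ep\delta^{A}$ suffices, and continuity of $\iota$ at $c$ follows from the Euclidean topology on $\R_{>0}$.

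For continuity of $\iota^{-1}$ on $\iota(\R_{>0})$, I would use the very same basis: given $\ep > 0$, the open set $\mathcal U_{\ep,0}(\iota(c)) = B_\ep(\iota(c))$ in $\Kdelposst$ contains an $\iota(c')$ precisely when $|c'-c|=\|c'-c\|_{\sup}<\ep$. So $\iota^{-1}(\mathcal U_{\ep,0}(\iota(c)) \cap \iota(\R_{>0})) \subseteq (c-\ep,c+\ep)$, giving continuity of the inverse. (Alternatively, one could invoke \cref{l:lccont} directly: $\lc\circ\iota=\mathrm{id}_{\R_{>0}}$, and $\lc$ is continuous.)

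There is no substantive obstacle here; the claim is essentially a consistency check showing that the strong topology on $\Kdelpos$ was defined in a way compatible with the Euclidean topology on the subfield of constants. The only minor point worth mentioning is that one must work with the neighborhood basis $\mathcal U_{\ep,A}$ rather than only $B_\ep$, since this is what generates the topology as defined in \cref{d:Kdeltop}; but the scaling by $t^A$ with $A\le 0$ only \emph{enlarges} the allowed error $\ep\delta^A\ge \ep$ on constants, so it causes no difficulty.
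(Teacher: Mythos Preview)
Your proof is correct and takes essentially the same approach as the paper. The paper argues continuity of $\iota$ by intersecting a generic generating open set $t^{-A}B_\ep(f)$ with $\iota(\R_{>0})$, while you work with the neighborhood basis $\mathcal U_{\ep,A}(\iota(c))$ at the image point; and for continuity of $\iota^{-1}$ the paper uses only the $\lc$ route you mention as an alternative, whereas you also give a direct $B_\ep$ argument.
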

 
\begin{proof}
Consider an open set $t^{-A} B_\ep(f)$ in $\Kdelposst$; here $f\in\Odelpos$ and $A\in\R$. Suppose it contains a constant function $r$. Then we have $\ep':=\|t^Ar-f\|_{\sup}<\ep$.  This approximation implies that $A\ge 0$ and $t^A$ is bounded on $(0,\delta]$. Therefore we can find a $\nu>0$ such that $\|t^Ar-t^Ar'\|_{\sup}<\ep-\ep'$ whenever $|r-r'|<\nu$. Finally, 
then 
\[
\|t^Ar'-f\|_{\sup}=\|t^Ar'-t^Ar+t^{A}r-f\|_{\sup}\le \|t^Ar'-t^Ar\|_{\sup} +\|t^{A}r-f\|_{\sup}< \ep-\ep'+\ep'=\ep.
\]
This implies that the neighbourhood $(r-\nu,r+\nu)$ of $r$ also lies in 
$t^{-A} B_\ep(f)$, so that the intersection of this open set of $\Kdelposst$ with $\R_{>0}$ is open.  Thus any open set of $\Kdelposst$  intersects $\R_{>0}$ in an open set for the Euclidean topology.
 In other words, the inclusion map $\iota$ is continuous for $\Kdelposst$ (and therefore also for  $\Kdelposwk$). The inverse map is the restriction of $\lc$ to $\iota(\R_{>0})$. Since $\lc$ is continuous for the strong topology, by \cref{l:lccont}, so is its restriction, and  we have shown that $\Kdelposst$ induces the usual Euclidean topology on $\R_{>0}$. 
\end{proof}

\subsection{The weak topology}\label{s:weaktop}
Convergence of a sequence $(f_n)_n$ in $\Kdelposwk$, that is, with respect to the weak topology, is equivalent to the combination of  pointwise convergence of $f_n$ on $(0,\delta]$ together with convergence of the valuations $\Val(f_n)$ in $\R$. We record the following basic properties. 
\begin{lemma}\label{l:valcontwk}
The valuation map $\Val:\Kdelposwk\to\Rmin$ is continuous for the Euclidean topology on $\Rmin$. 
We have that $\Mdelpos$ is open in $\Kdelposwk$, and $\Odelpos$ is its closure. 
\end{lemma}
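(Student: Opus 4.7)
The plan is to derive both statements directly from the definition of the weak topology. For the continuity of $\Val$, I would observe that the sub-basic open sets $B^{\Val}_{\ep}(f)$ generating the weak topology on $\Kdelpos$ are, by definition, precisely the preimages $\Val\inv((\Val(f)-\ep,\Val(f)+\ep))$. As $f$ and $\ep$ vary these exhaust the preimages under $\Val$ of all symmetric open intervals in $\R$ (using that $\Val$ is surjective, since $\Val(t^a)=a$ for every $a\in\R$), and such intervals form a sub-basis for the Euclidean topology on $\R$. Hence $\Val:\Kdelposwk\to\Rmin$ is continuous with respect to the Euclidean topology.

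Openness of $\Mdelpos$ is then an immediate consequence: $\Mdelpos=\Val\inv((0,\infty))$ is the preimage of an open half-line under a continuous map.

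For the final claim, I would verify the closure equality $\overline{\Mdelpos}=\Odelpos$ by showing both inclusions. The containment $\overline{\Mdelpos}\subseteq \Odelpos$ holds because $\Odelpos=\Val\inv([0,\infty))$ is closed (its complement is the preimage of the open set $(-\infty,0)$) and contains $\Mdelpos$. For the reverse inclusion, given $f\in\Odelpos$ with $K:=\Val(f)\ge 0$, I would exhibit an approximating sequence $f_n:=t^{1/n}f$. Each $f_n$ belongs to $\Kdelpos$ (it is positive and continuous on $(0,\delta]$ with $\lim_{t\to 0}t^{-(K+1/n)}f_n(t)=\lc(f)\in\R_{>0}$), and $\Val(f_n)=K+\frac{1}{n}>0$, so in fact $f_n\in\Mdelpos$. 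To check weak convergence $f_n\to f$ it suffices to test against the two families of sub-basic neighbourhoods: $\Val(f_n)=K+\frac{1}{n}\to K=\Val(f)$, and for each fixed $x\in(0,\delta]$ one has $f_n(x)=x^{1/n}f(x)\to f(x)$ since $x^{1/n}\to 1$ as $n\to\infty$. Thus $f\in\overline{\Mdelpos}$.

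The argument is essentially routine; the only creative ingredient is the choice of approximating sequence, and multiplying by $t^{1/n}$ works because it perturbs the valuation upward by an arbitrarily small amount while leaving pointwise behaviour on $(0,\delta]$ essentially unchanged. I do not anticipate any genuine obstacle.
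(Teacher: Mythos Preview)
Your proof is correct and follows essentially the same approach as the paper: continuity of $\Val$ is immediate from the definition of the weak topology, openness of $\Mdelpos$ and closedness of $\Odelpos$ follow as preimages, and the approximating sequence $t^{1/n}f$ is exactly the one the paper uses (the paper only treats the boundary case $\Val(f)=0$ explicitly, since $f\in\Odelpos$ with $\Val(f)>0$ already lies in $\Mdelpos$).
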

\begin{proof}
That the valuation map is continuous is immediate from \cref{d:Kdeltop}. It follows from this that $\Mdelpos=\Val\inv(\R_{>0})$ is open, and $\Odelpos=\Val\inv(\R_{\ge 0})$ is closed. If $\Val(f)=0$ then $t^{\frac{1}n}f\in\Mdelpos$, and weakly converges to $f$ (i.e. converges in the weak topology) as $n\to\infty$. Thus $\Odelpos$ is the closure of $\Mdelpos$. 
\end{proof}

\begin{remark}
It now follows from the combination of \cref{l:valcontwk} and \cref{d:Kdeltop} that the extended valuation map $\Val:\mathcal C^{\mathrm wk}_{\ge 0}\longrightarrow \Rmininf$ is continuous for the extended Euclidean topology on $\Rmininf$ given in  \cref{d:semifieldvaluation}. 
\end{remark}

\begin{remark} The  leading coefficient map $\lc:\Kdelpos\to\R_{>0}$ defined by $\lc(k):=\lim_{t\to 0}(t^{-\Val(k)}k(t))$ is not continuous for the weak topology. For example the sequence $f_n(t)=\frac {1}{n}t^{\frac{1}{n+1}}+
t^{\frac{1}{n}} $ in $\Kdelpos$ converges pointwise to $f(t)=1$ and the valuations converge (correctly) to $0$. Therefore it converges to $f=1$ in the weak topology. But $\lc(f_n)=\frac{1}{n}$ converges to $0$, while $\lc(f)=1$. 
\end{remark}

\begin{lemma}\label{l:Rinclwk} The inclusion of constant functions map $\iota:\R_{>0}\to\Kdelposwk$ is an embedding of topological semifields, where $\R_{>0}$ is endowed with the standard Euclidean topology. 
\end{lemma}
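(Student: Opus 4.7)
The plan is to follow the same strategy as in the proof of \cref{l:Rincl}, but using the sub-basic open sets that define the weak topology in \cref{d:Kdeltop}. First I would verify that $\iota$ is a semifield homomorphism, which is immediate: the sum, product, and multiplicative inverse of constant functions are again the corresponding constant functions, and $\iota(1)=1$ is the multiplicative identity. So what is really at stake is that $\iota$ is a topological embedding.

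Next I would verify continuity of $\iota:\R_{>0}\to\Kdelposwk$. By definition of the weak topology it suffices to check that the $\iota$-preimage of any generating open set is open in $\R_{>0}$ for the Euclidean topology. For $B_{x,\ep}(f)=\{g\in\Kdelpos\mid |g(x)-f(x)|<\ep\}$ with $f\in\Kdelpos$, $x\in(0,\delta]$, $\ep>0$, one has
\[
\iota^{-1}(B_{x,\ep}(f))=\{r\in\R_{>0}\mid |r-f(x)|<\ep\}=(f(x)-\ep,f(x)+\ep)\cap\R_{>0},
\]
which is open. For $B^{\Val}_{\ep}(f)=\{g\in\Kdelpos\mid |\Val(g)-\Val(f)|<\ep\}$, since $\Val(\iota(r))=0$ for all $r\in\R_{>0}$, the preimage $\iota^{-1}(B^{\Val}_{\ep}(f))$ is either all of $\R_{>0}$ (if $|\Val(f)|<\ep$) or empty, hence open in either case.

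Finally I would show that the inverse $\iota^{-1}:\iota(\R_{>0})\to\R_{>0}$ is continuous, where $\iota(\R_{>0})$ carries the subspace topology from $\Kdelposwk$. Fix $r_0\in\R_{>0}$ and an Euclidean open neighbourhood $(r_0-\ep,r_0+\ep)\subset \R_{>0}$ of $r_0$. Pick any $x_0\in(0,\delta]$; then $B_{x_0,\ep}(\iota(r_0))$ is a basic weak-open neighbourhood of $\iota(r_0)$, and for any $r\in\R_{>0}$ with $\iota(r)\in B_{x_0,\ep}(\iota(r_0))$ we have $|r-r_0|=|\iota(r)(x_0)-\iota(r_0)(x_0)|<\ep$. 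Hence $B_{x_0,\ep}(\iota(r_0))\cap\iota(\R_{>0})\subseteq\iota((r_0-\ep,r_0+\ep))$, which shows that $\iota^{-1}$ is continuous at $r_0$. Combined with the previous two paragraphs, $\iota$ is a topological semifield embedding.

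There is no real obstacle here; the weak topology was designed precisely so that pointwise evaluation is continuous, which makes both directions of the homeomorphism onto image essentially trivial. The only thing to be careful about is the valuation sub-basic open sets, but since all constants have valuation $0$ these pull back tautologically.
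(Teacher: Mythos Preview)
Your proof is correct and follows essentially the same approach as the paper: both compute the intersections $B_{x,\ep}(f)\cap\R_{>0}=(f(x)-\ep,f(x)+\ep)\cap\R_{>0}$ and $B^{\Val}_{\ep}(f)\cap\R_{>0}\in\{\R_{>0},\emptyset\}$, and conclude that the subspace topology on constant functions agrees with the Euclidean one. The paper states this as a single observation that these sub-basic intersections generate exactly the Euclidean topology, whereas you split it into continuity of $\iota$ and of $\iota^{-1}$, but the underlying computation is identical.
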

 
 \begin{proof} We  identify elements of $\R_{>0}$ with their corresponding constant functions, and need to consider the subspace topology of $\R_{>0}\subset\Kdelposwk$, showing it agrees with the Euclidean topology. For $f\in\Kdelpos$ and $\ep>0$ we have
 \[
 B_{x,\ep}(f)\cap\R_{>0}=(f(x)-\ep, f(x)+\ep)\,\cap\, \R_{>0}.
\]   
On the other hand, 
\[
B^{\val}_{\ep}(f)\cap \R_{>0}=\begin{cases} \R_{>0}, & \quad\text{if $|\Val(f)|<\ep$,}\\
\emptyset, &\quad\text{if $|\Val(f)|\ge\ep$.} 
\end{cases}
\] 
These open sets generate precisely the Euclidean topology of $\R_{>0}$ and the lemma is proved. 
\end{proof}

\begin{remark}\label{r:Kdeltopposstr}
We have now shown that $(\mathcal C,\Kdelposst,\Val)$ and $(\mathcal C,\Kdelposwk,\Val)$ are  both rings with topological valuative positive structures in the sense of \cref{d:semifieldvaluation}, and that in  both cases $\R_{>0}$ with its usual Euclidean topology embeds as a topological subsemifield. Using $\Kdelpos$ will allow us to better connect the classical Edrei theorem (over $\R_{>0}$) to the tropical Edrei theorem (over $\Rmin$).
\end{remark}
\section{The $\Kdelpos$-tropicalisation of $\ToeplitzU_\infty$}\label{s:KdelTropToep}

In this section we consider totally positive Toeplitz matrices over the valued semifield $\Kdelpos$ from \cref{s:Kdel}. The basic set of `tropical infinite Toeplitz matrices' is then as  defined in \cref{d:TropInf}, or equivalently
\[
\ToeplitzU_{\infty,\Kdelpos}(\Rmin)=\ToeplitzU_{\infty}(\Kdelpos)/\sim,
\]
with $u\sim u'$ if the valuations of the standard coordinates agree. 
We view $\ToeplitzU_{\infty,\Kdelpos}(\Rmin)$ naturally as a subset of $U_+^{(\infty)}(\Rmin)$, with its tropical standard coordinates $M_{ij}$, and recall that $\ToeplitzU_{\infty,\Kdelpos}(\Rmin)\subseteq \Imin_\infty(\Rmin)$ by \cref{l:infminideal}. 
We prove two main results. 

The first main result, which is stated later on as \cref{p:KdelEdrei}, says in essence that the parameters from our tropical Edrei theorem,~\cref{t:TropEdrei}, can be interpreted as valuations of actual Schoenberg-type parameters in $\Kdelpos$. The second main result, which will rely on the first but can be stated independently, is the following. 

\begin{theorem}\label{t:KdelTrop} For tropicalisation using the valuative semifield $\Kdelpos$, the inclusion from \cref{l:infminideal} is an equality, 
\[\ToeplitzU_{\infty,\Kdelpos}(\Rmin)=\Imin_\infty(\Rmin).
\]
\end{theorem}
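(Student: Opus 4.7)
The plan is to establish the nontrivial inclusion $\Imin_\infty(\Rmin)\subseteq \ToeplitzU_{\infty,\Kdelpos}(\Rmin)$ by realising every infinite min-ideal filling as the tropicalisation of a Schoenberg-type product over $\Kdelpos$. Given $(M_{ij})\in\Imin_\infty(\Rmin)$, I would first invoke \cref{t:TropEdrei} to write $(M_{ij})=(\min(A_i,B_j))_{i,j}$ for a pair of weakly interlacing sequences $(\bigA,\bigB)\in\Omega_\star(\Rmin)$. It then suffices to exhibit a lift $(\boldalpha,\boldbeta)\in\Omega(\Kdelpos)$ with $\Val(\alpha_i)=A_i$ and $\Val(\beta_j)=B_j$, because by the commutative square from \cref{t:IntroGenR} the standard coordinates $\mathbf m_{ij}(\boldalpha,\boldbeta)$ of $\mathcal T(\boldalpha,\boldbeta)\in\ToeplitzU_\infty(\Kdelpos)$ will then satisfy $\Val(\mathbf m_{ij})=\min(A_i,B_j)=M_{ij}$.

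For the lift I would define
\[
\alpha_i := \begin{cases} 2^{-i} t^{A_i}, & A_i\in\R,\\ 0, & A_i=\infty,\end{cases}\qquad \beta_j := \begin{cases} 2^{-j}t^{B_j}, & B_j\in\R,\\ 0, & B_j=\infty,\end{cases}
\]
as elements of $\Kdelnn$. By \cref{l:Kdelstructure}, whenever $\alpha_i\ne 0$ we have $\alpha_i\in\Kdelpos$ with $\Val(\alpha_i)=A_i$, and similarly for $\beta_j$. The hypothesis $\min(A_i,B_i)<\infty$ guarantees $\alpha_i+\beta_i\ne 0$. Using $A_{i-1}\le A_i$ together with $t\in(0,\delta]$ and $\delta<1$, one finds $\alpha_i(t)/\alpha_{i-1}(t)\le 1/2$ pointwise, so $\alpha_{i-1}-\alpha_i\in\Kdelpos$; hence $\boldalpha$ is weakly decreasing in the positive partial ordering, and likewise for $\boldbeta$.

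The crucial step is verifying that $s_{i\times j}(\boldalpha)$ and $s_{i\times j}(\boldbeta)$ converge in $\Kdelpos$. For pointwise convergence on $(0,\delta]$, the bound $\sum_i\alpha_i(t)\le t^{A_1}\sum_i 2^{-i}=t^{A_1}<\infty$ together with standard bounds for Schur polynomials in terms of power sums forces the infinite series to converge to a strictly positive continuous function. For the valuation, I would show that the unique minimum-valuation monomial in $s_{i\times j}(\boldalpha)$ comes from the SSYT with constant rows, contributing $(\alpha_1\cdots\alpha_i)^j$ of valuation $K:=j(A_1+\cdots+A_i)$, while every other term has valuation $\ge K$; so the partial sums $s^{(N)}_{i\times j}(\boldalpha)$ have stabilised valuation $K$ for $N\ge i$. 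A geometric tail estimate on $t^{-K}(s^\infty-s^{(N)})$ then yields uniform convergence on $[0,\delta]$, certifying via \cref{l:Kcontcrit} that the series converges in both the strong and weak topologies of \cref{d:topCnn}, and hence that $(\boldalpha,\boldbeta)\in\Omega(\Kdelpos)$.

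The main obstacle is this Schur-function convergence step. Although the lift has a transparent monomial form, checking that $s_{i\times j}(\boldalpha)\in\Kdelpos$ in the sense of \eqref{e:IntroOmega} requires simultaneously controlling pointwise behaviour on $(0,\delta]$, leading-order behaviour as $t\to 0^+$, and stability of valuations under partial summation. It is precisely here that the topologies on $\Kdelnn$ built in \cref{s:Kdel}, which are absent for $\KK_{>0}$, make the argument work and in turn explain why $\Kdelpos$ rather than $\KK_{>0}$ is the right semifield for the infinite Edrei-type tropicalisation.
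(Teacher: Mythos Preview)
Your proposal is correct and follows essentially the same route as the paper: invoke the tropical Edrei theorem to produce $(\bigA,\bigB)$, lift via $\alpha_i=2^{-i}t^{A_i}$ and $\beta_j=2^{-j}t^{B_j}$, verify that this lift lies in the relevant parameter space, and read off the valuations of the standard coordinates from the supersymmetric Schur formula. The only organisational difference is that the paper introduces an intermediate parameter space $\Omega^\circ(\Kdelposst)$ defined by convergence of the tail sums $\sum_{i\ge k}\boldalpha_i$, proves in generality (\cref{l:slambdaconv}) that this condition implies Schur-function convergence, and then checks (\cref{l:detropK}) that the specific lift lands in $\Omega^\circ$; you instead verify Schur-function convergence directly for this particular lift via the bound $t^{-K}\prod_b\alpha_{T(b)}\le 2^{-\sum_b T(b)}$, which is valid and in fact slightly more elementary, at the cost of being tied to the explicit form of the lift rather than applying to arbitrary elements of $\Omega^\circ$.
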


The constructions in this section involve power series of the type 
\begin{equation*}
\frac{\prod_{i=1}^{\infty}(1+b_j x)}{\prod_{j=1}^{\infty}(1-a_i x)}=\sum_{k=0}^\infty c_k x^k.
\end{equation*} 
These also arise in the theory of supersymmetric functions, about which we now recall some background.

\subsection{Supersymmetric Functions} 
 Supersymmetric functions arise as limits of supersymmetric polynomials, which are characters in the representation theory of supergroups \cite{BalantekinBars,DondiJarvis,BereleRegev}. They also arise somewhat differently in \cite{Macdonald:variations,GouldenGreene94}. We refer to \cite{Macdonald:Book1} for background on symmetric functions and some standard notations including concerning partitions, Young diagrams and tableaux. 

Let $\Lambda_{\mathbf y}\subset\Z[y_1,y_2,\dotsc]$ denote the ring of symmetric functions  in the infinite set of formal variables $\mathbf y=(y_1,y_2\dotsc)$.  We now consider  functions in two sets of variables that are separately symmetric,
\[
\Lambda_{(\mathbf a;\mathbf b)}:=\Lambda_{\mathbf a}\otimes \Lambda_{\mathbf b}.
\]
For $f\in\Lambda_{\mathbf a}$ and $g\in\Lambda_{\mathbf b}$, the element $f\otimes g\in\Lambda_{(\mathbf a;\mathbf b)}$ may also be written as $f(\mathbf a)g(\mathbf b)$ or $g(\mathbf b)f(\mathbf a)$. Define the \textit{super-symmetric complete homogeneous functions} $H_m(\mathbf a||\mathbf b)$ by
\begin{equation}\label{e:generatingH}
\frac{\prod_{i=1}^{\infty}(1+b_j x)}{\prod_{j=1}^{\infty}(1-a_i x)}=1+H_1(\mathbf a||\mathbf b )x+ H_2(\mathbf a||\mathbf b)x^2+\dotsc+H_m(\mathbf a||\mathbf b)x^m
+\dotsc.
\end{equation}
Note that $H_m(\mathbf a||\mathbf b )=\sum_{k=0}^m h_k(\mathbf a)e_{m-k}(\mathbf b)$ in terms of the standard complete and elementary symmetric functions. 
\begin{defn}[Supersymmetric functions] Let $\Lambda(\mathbf a||\mathbf b)$ denote the subring of $\Lambda_{(\mathbf a;\mathbf b)}$ generated by the $H_m(\mathbf a||\mathbf b)$, and call elements of $\Lambda(\mathbf a||\mathbf b)$ `supersymmetric functions'. 
\end{defn}
This subring of $\Lambda_{(\mathbf a;\mathbf b)}$  can be  characterised in terms of a `cancellation property' following \cite{Stembridge85}, thus relating it to the ring of so-called bisymmetric functions introduced in  \cite{MetropolisNicolettiRota} (with an alternative sign convention). The `cancellation property' is that $f((c,a_2,\dotsc),(-c,b_2,\dotsc))$ is independent of $c$, which simply expresses the cancellation of the numerator and denominator in \eqref{e:generatingH} if $a_1=-b_1=c$.

We now define the supersymmetric analogue of the Schur functions. Let $s_{\eta\setminus\mu}$ denote the standard (skew) Schur function, which may be defined positively as a sum of monomials indexed by \textit{semistandard Young tableaux} on $\eta\setminus\mu$. Let $(\ )'$ denote the transpose operation. 
Then the \textit{super-symmetric Schur function} associated to the partition $\lambda$ may be defined by the formula
\begin{equation}\label{e:Sdef}
S_\lambda(\mathbf a||\mathbf b)=\sum_{\nu\subseteq\lambda}s_{\nu}(\mathbf a) s_{(\lambda\setminus\nu)'}(\mathbf b).
\end{equation} 
If $\lambda$ is a partition with $r$ parts then we also have
\begin{equation}\label{e:SJTformula}
S_\lambda(\mathbf a||\mathbf b):=\det(H_{\lambda_i+j-i}(\mathbf a||\mathbf b))_{i,j=1}^{r},
\end{equation}
by a generalised  Jacobi-Trudi formula. Moreover, the $S_\lambda(\mathbf a||\mathbf b)$ are known to be a basis of $\Lambda(\mathbf a||\mathbf b)$, see \cite{BereleRegev,BergeronGarsia,PraTho}. 
There are two combinatorial `tableaux' type formulas that follow from \cite{BereleRegev} and \cite{GouldenGreene94,Macdonald:variations}, respectively. The first one describes $S_\lambda(\mathbf a||\mathbf b)$ as a sum of  infinitely many monomials. We will use second formula, in which the summands are not monomials but are indexed by semistandard Young tableaux. 

Let $\lambda$ be a Young diagram and $T$ be a  tableau, that is, a filling of $\lambda$ with positive integers.  We write $SSYT(\lambda)$ for the set of semistandard Young tableau of shape $\lambda$, recalling that these are fillings of the Young diagram $\lambda$ by positive integers, that are strictly increasing along columns and weakly increasing along rows. Write $T(i,j)$ for the entry of $T$ in row $i$ and column $j$ (where rows and columns are enumerated from the top and left, respectively, as for a matrix). 

Associated to $T\in SSYT(\lambda)$ we define a new tableaux $T^*$ by adding to each entry $T(i,j)$ the `content' $c(i,j)=j-i$ of the $(i,j)$-box,
\begin{equation}\label{e:Tstar}
T^*(i,j):=T(i,j)+j-i.
\end{equation}
Note that $T^*$ is now strictly increasing along rows and weakly increasing along columns. In other words, the transpose of $T^*$ is an SSYT for $\lambda'$. 
Let
 \begin{equation}\label{e:supersymmetricmonomial}
(\mathbf a||\mathbf b)_T:=\prod_{(i,j)\in\lambda}(a_{T(i,j)}+b_{T^*(i,j)}).
\end{equation}
The formula of  Goulden, Greene and Macdonald \cite{GouldenGreene94,Macdonald:variations} reads
\begin{equation}\label{e:GGMformula}
S_\lambda(\mathbf a||\mathbf b)=\sum_{T\in SSYT(\lambda)} (\mathbf a||\mathbf b)_T.
\end{equation}
Here setting $\mathbf b$ to $0$ the above formula recovers the classical combinatorial formula for $s_\lambda(\mathbf a)$  as sum of monomials associated to SSYT (and recovers the formula for  $s_{\lambda'}(\mathbf b)$ if $\mathbf a$ is set to $0$).

\subsection{Construction of infinite totally positive Toeplitz matrices}\label{s:KdelEdrei}

Let $(\RR,\RR_{>0},\Val)$ be a ring with a topological valuative positive structure as in \cref{d:semifieldvaluation}. Recall that in this case $\RR_{\ge 0}$ is a topological semiring and $\Val:\RR_{\ge 0}\to\Rmininf$ is continuous. 
For a Young diagram $\lambda$ let $SSYT_{[n]}(\lambda)$ denote the set of semistandard Young tableaux of shape $\lambda$ with entries in $[1,n]$. If $T$ is a tableau of shape $\lambda$, write $T(b)$ for the entry of $T$ associated to the box $b$ of $\lambda$. 

\begin{lemma}\label{l:SchurVal}
Let $\boldalpha=(\boldalpha_1,\boldalpha_2,\dots )$ be a weakly monotonely decreasing sequence in $\RR_{\ge 0}$ and let $\lambda$ be a partition. 
Suppose that the Schur function associated to $\lambda$ can be evaluated at $\boldalpha$, meaning that the Schur polynomial values
\[ 
s_\lambda(\boldalpha_{[n]}):=s_\lambda(\boldalpha_1,\dotsc,\boldalpha_n)=\sum_{T\in SSYT_{[n]}(\lambda)}\ \prod_{b\in\lambda}\boldalpha_{T(b)},
\] 
converge in $\RR_{\ge 0}$ as $n\to\infty$.  Then setting $A_i=\Val(\boldalpha_i)$ we obtain that
\begin{equation}\label{e:ValSchur}
\Val(s_\lambda(\boldalpha))=A_\lambda:=\sum_{i=1}^r \lambda_i A_i
\end{equation}
holds in $\Rmininf$. If all of the $\boldalpha_i$ lie in $\RR_{>0}$ then so does $s_\lambda(\boldalpha)$ for any $\lambda$.
\end{lemma}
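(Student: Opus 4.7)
The plan is to reduce the assertion to a valuation computation on finite Schur polynomials and then pass to the limit by continuity of $\Val$. Write $r$ for the number of parts of $\lambda$ and set $(\boldalpha)_T:=\prod_{b\in\lambda}\boldalpha_{T(b)}$, so that
\[
s_\lambda(\boldalpha_{[n]})\ =\ \sum_{T\in SSYT_{[n]}(\lambda)} (\boldalpha)_T
\]
is a finite sum of elements of $\RR_{\ge 0}$. Since $\Val$ is a semifield homomorphism, extended to $\RR_{\ge 0}$ by $\Val(0)=\infty$, the valuation of this finite sum equals the minimum of the valuations of the summands, and $\Val((\boldalpha)_T)=\sum_{b\in\lambda}A_{T(b)}$.

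Next I would identify this minimum explicitly, exploiting the shape constraints on an SSYT. Since each column of $T$ strictly increases in positive integers, $T(i,j)\ge i$ for every box $(i,j)\in\lambda$. By \cref{l:Kdelposet} the weak decrease of $\boldalpha$ forces the sequence $A_i=\Val(\boldalpha_i)$ to be weakly increasing in $\Rmininf$. Combining these, $A_{T(i,j)}\ge A_i$, hence
\[
\sum_{b\in\lambda}A_{T(b)}\ \ge\ \sum_{i=1}^r\lambda_i A_i\ =\ A_\lambda,
\]
with equality attained by the tableau $T_{\min}(i,j):=i$, which lies in $SSYT_{[n]}(\lambda)$ precisely when $n\ge r$. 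Therefore $\Val(s_\lambda(\boldalpha_{[n]}))=A_\lambda$ for every $n\ge r$.

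To conclude I would pass to the limit. By hypothesis $s_\lambda(\boldalpha_{[n]})\to s_\lambda(\boldalpha)$ in $\RR_{\ge 0}$, and since $(\RR,\RR_{>0},\Val)$ is a ring with a topological valuative positive structure, the extended map $\Val:\RR_{\ge 0}\to\Rmininf$ is continuous by \cref{d:semifieldvaluation}. Therefore
\[
\Val(s_\lambda(\boldalpha))\ =\ \lim_{n\to\infty}\Val(s_\lambda(\boldalpha_{[n]}))\ =\ A_\lambda,
\]
which is \eqref{e:ValSchur}. The degenerate case is consistent: if $\boldalpha_i=0$ for some $i\le r$, then the weak decrease in the semifield forces $\boldalpha_{i'}=0$ for all $i'\ge i$, so every $T$ must place a zero-valued $\boldalpha$ into row $i$ of $\lambda$, giving $(\boldalpha)_T=0$ for every $T$, hence $s_\lambda(\boldalpha)=0$ and $\Val=\infty=A_\lambda$. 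Finally, if all $\boldalpha_i\in\RR_{>0}$, then every $A_i$ lies in $\R$, so $A_\lambda$ is finite, which forces $s_\lambda(\boldalpha)\ne 0$ and therefore $s_\lambda(\boldalpha)\in\RR_{>0}$.

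The argument is essentially bookkeeping with no deep obstruction; the only subtle point is carefully handling the interplay between the semifield $\RR_{>0}$ and its $0$-extension $\RR_{\ge 0}$ when applying the homomorphism property $\Val(a+b)=\min(\Val(a),\Val(b))$ to sums that may contain vanishing summands.
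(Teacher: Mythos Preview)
Your proof is correct and follows essentially the same approach as the paper's: identify the minimal-valuation tableau $T_0(i,j)=i$ via the SSYT constraint $T(i,j)\ge i$ and the monotonicity of $A_i$ from \cref{l:Kdelposet}, deduce $\Val(s_\lambda(\boldalpha_{[n]}))=A_\lambda$ for $n\ge r$, and pass to the limit by continuity of $\Val$. You give somewhat more detail (the explicit lower bound $\sum A_{T(b)}\ge A_\lambda$, the degenerate case, and the argument that finite $A_\lambda$ forces $s_\lambda(\boldalpha)\in\RR_{>0}$), but the structure is identical to the paper's.
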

\begin{proof} Note first that by \cref{l:Kdelposet} we have that the $A_i=\Val(\boldalpha_i)$ form a weakly increasing sequence.
Suppose $\lambda=(\lambda_1,\dotsc,\lambda_r)$ and $n\ge r$. Amongst the summands of  $s_\lambda(\boldalpha_1,\dotsc,\boldalpha_n)$  indexed by SSYT of shape $\lambda$, the one with minimal valuation corresponds to the  tableau  $T_0(i,j)=i$, whose $i$-th row boxes are labelled by $i$.  We therefore have $\Val(s_\lambda(\boldalpha_1,\dotsc,\boldalpha_n))=\sum_{i=1}^r \lambda_i A_i=A_\lambda$ for the Schur polynomial values. Note that the right-hand side is independent of $n$. Now since $s_\lambda(\boldalpha)$ is the limit of the $s_\lambda(\boldalpha_1,\dotsc,\boldalpha_n)$ as $n\to\infty$, and $\Val$ is continuous, the identity \eqref{e:ValSchur} follows.
The final statement is clear.
\end{proof}
\begin{lemma}\label{l:EvalSSSchur} 
Suppose $\boldalpha=(\boldalpha_1,\boldalpha_2,\dots )$ and $\boldbeta=(\boldbeta_1,\boldbeta_2,\dots)$  are weakly monotonely decreasing sequences in $\RR_{\ge 0}$ such that $\boldalpha_i+\boldbeta_i\in\RR_{>0}$ for all $i$. 
Suppose moreover that for any partition $\lambda$ the Schur function $s_\lambda$ can be evaluated on $\boldalpha$ and $\boldbeta$, meaning that $s_\lambda(\boldalpha_{[n]})$ and $s_\lambda(\boldbeta_{[n]})$ converge in $\RR_{\ge 0}$ as $n\to\infty$, as in \cref{l:SchurVal}. Then the supersymmetric Schur polynomials $S_\lambda(\boldalpha_{[n]}||\boldbeta_{[n]})$ converge  to a well-defined element  $S_\lambda(\boldalpha||\boldbeta)$ in $\RR_{>0}$ and we have
\begin{equation}\label{e:SkewSchurVal}
\Val(S_\lambda(\boldalpha||\boldbeta))=\sum_{(i,j)\in\lambda}\min(A_i,B_j),
\end{equation}
where $A_i=\Val(\boldalpha_i)$ and $ B_j=\Val(\boldbeta_j)$.
\end{lemma}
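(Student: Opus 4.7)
The plan is first to establish convergence of $S_\lambda(\boldalpha_{[n]}||\boldbeta_{[n]})$ via formula \eqref{e:Sdef}. Each summand $s_\nu(\boldalpha_{[n]})\cdot s_{(\lambda\setminus\nu)'}(\boldbeta_{[n]})$ is a product of an ordinary Schur polynomial in $\boldalpha$ (convergent by hypothesis) and a skew Schur polynomial in $\boldbeta$; the latter expands via the Littlewood--Richardson rule into a finite nonnegative integer combination of ordinary Schur polynomials in $\boldbeta$, each convergent by hypothesis. Since \eqref{e:Sdef} is a finite sum and $\RR_{\ge 0}$ is a topological semiring, the limit $S_\lambda(\boldalpha||\boldbeta):=\lim_n S_\lambda(\boldalpha_{[n]}||\boldbeta_{[n]})$ exists in $\RR_{\ge 0}$.

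For the valuation I would invoke the Goulden--Greene--Macdonald formula \eqref{e:GGMformula}. Given $T\in SSYT(\lambda)$, the summand $\prod_{(i,j)\in\lambda}(\boldalpha_{T(i,j)}+\boldbeta_{T^*(i,j)})$ has valuation $\sum_{(i,j)\in\lambda}\min(A_{T(i,j)},B_{T^*(i,j)})$ by the semifield homomorphism property of $\Val$. Since $T$ is semistandard, $T(i,j)\ge i$ and hence $T^*(i,j)=T(i,j)+j-i\ge j$; combined with the weak monotonicity of $(A_i)$ and $(B_j)$ that follows from \cref{l:Kdelposet} applied to the weakly decreasing sequences $\boldalpha,\boldbeta$, this yields the termwise lower bound $\min(A_{T(i,j)},B_{T^*(i,j)})\ge \min(A_i,B_j)$. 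The distinguished tableau $T_0(i,j):=i$ has $T_0^*(i,j)=j$ and realises equality in every box, with contribution $\prod_{(i,j)\in\lambda}(\boldalpha_i+\boldbeta_j)\in\RR_{>0}$ of valuation exactly $V_0:=\sum_{(i,j)\in\lambda}\min(A_i,B_j)$. Here each factor $\boldalpha_i+\boldbeta_j$ is nonzero, because $\boldalpha_i=\boldbeta_j=0$ would force $\boldalpha_{\max(i,j)}+\boldbeta_{\max(i,j)}=0$ by the weak decrease of the two sequences, contradicting the standing hypothesis.

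To turn this termwise lower bound into the claimed equality, I would apply the finite-variable specialisation of GGM to $S_\lambda(\boldalpha_{[n]}||\boldbeta_{[n]})$. For $n\ge\ell(\lambda)$ the tableau $T_0$ is admissible, so $S_\lambda(\boldalpha_{[n]}||\boldbeta_{[n]})$ becomes a finite sum of elements of $\RR_{>0}$, each of valuation $\ge V_0$ and at least one (the $T_0$-term) of valuation exactly $V_0$. Using $\Val(x+y)=\min(\Val(x),\Val(y))$ on $\RR_{>0}$, this gives $\Val(S_\lambda(\boldalpha_{[n]}||\boldbeta_{[n]}))=V_0$ for all large $n$. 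Continuity of $\Val$ on $\RR_{\ge 0}$, guaranteed by the topological valuative positive structure, then transports the equality to the limit, yielding $\Val(S_\lambda(\boldalpha||\boldbeta))=V_0$; finiteness of $V_0$ forces $S_\lambda(\boldalpha||\boldbeta)\in\RR_{>0}$.

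The main obstacle will be pinning down the finite-variable form of the GGM formula, since \eqref{e:GGMformula} as stated in the text applies only to infinitely many variables. A self-contained fallback that avoids this is to work directly from \eqref{e:Sdef}: one proves a skew analogue of \cref{l:SchurVal} asserting $\Val(s_{(\lambda\setminus\nu)'}(\boldbeta))=\sum_{(i,j)\in\lambda\setminus\nu}B_{j-\nu_i}$, obtained by identifying the minimal-entry semistandard filling of $\lambda'\setminus\nu'$ as $T(r,c)=r-\nu_c$; summing with $\Val(s_\nu(\boldalpha))=\sum_i\nu_i A_i$ and then minimising row-by-row over the choice $\nu_i\in[0,\lambda_i]$ yields the per-row minimum $\sum_{k=1}^{\lambda_i}\min(A_i,B_k)$, whose sum over $i$ is $V_0$, with the minimiser a genuine partition by monotonicity of $A_i$ and $\lambda_i$.
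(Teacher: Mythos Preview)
Your proposal is correct and follows essentially the same route as the paper: convergence via \eqref{e:Sdef} using that skew Schur functions are nonnegative integer combinations of Schur functions, then the valuation computation via the Goulden--Greene--Macdonald formula with the minimal tableau $T_0(i,j)=i$, carried out on the finite-variable truncation and passed to the limit by continuity of $\Val$. One small imprecision: in the finite-variable sum not every nonvanishing summand need lie in $\RR_{>0}$ a priori, but since $\RR_{\ge 0}=\RR_{>0}\cup\{0\}$ each summand is either zero or in $\RR_{>0}$ with valuation $\ge V_0$, and the $T_0$-term has valuation exactly $V_0$, so the conclusion stands; the paper handles this the same way. Your positivity argument (each factor $\boldalpha_i+\boldbeta_j\ne 0$, hence $V_0<\infty$) is a slight variant of the paper's (one of $\boldalpha,\boldbeta$ lies entirely in $\RR_{>0}$, hence one of the extreme summands $s_\lambda(\boldalpha)$ or $s_{\lambda'}(\boldbeta)$ is in $\RR_{>0}$), but both work.
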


\begin{proof}
It is well-known that skew Schur functions $s_{\mu\setminus\eta}$ are positive (integral) linear combinations of Schur functions, see \cite[A1.3.3]{Stanley:BookII}. This implies that we can  evaluate skew Schur functions $s_{\mu\setminus\eta}$ on $\boldalpha$, respectively $\boldbeta$, with values in $\RR_{\ge 0}$.  By the definition of the supersymmetric Schur function in terms of skew Schur functions from \eqref{e:Sdef} we deduce that $S_\lambda(\boldalpha_{[n]}||\boldbeta_{[n]})$ converges to a well-defined element $S_\lambda(\boldalpha||\boldbeta)\in\RR_{\ge 0}$.
Now one (at least) of the sequences $\boldalpha,\boldbeta$ must lie entirely in $\RR_{>0}$, since otherwise we would have $\boldalpha_i+\boldbeta_i=0$ for some large enough $i$. This implies that either the summand $s_{\lambda}(\boldalpha)$ or the summand $s_{\lambda'}(\boldbeta)$ of $S_\lambda(\boldalpha||\boldbeta)$ must lie in $\RR_{>0}$. Therefore $S_\lambda(\boldalpha||\boldbeta)\in \RR_{>0}$ as claimed.

It remains to compute the valuation. We use  the Goulden-Greene-Macdonald tableau formula \eqref{e:GGMformula}. Consider first the SSYT $T_0$ of shape $\lambda$ defined by $T_0((i,j))=i$. We have $T^*_0((i,j))=j$ and $S_\lambda(\boldalpha||\boldbeta)$ has the summand
\begin{equation*}
(\boldalpha||\boldbeta)_{T_0}=\prod_{(i,j)\in\lambda}(\boldalpha_i+\boldbeta_j).
\end{equation*}
The valuation of this summand is given by $\Val\left((\boldalpha||\boldbeta)_{T_0}\right)=\sum_{(i,j)\in\lambda}\min(A_i,B_j)$. 

Now consider the $n$-variable approximation of $S_\lambda(\boldalpha||\boldbeta)$, e.g. setting $\boldalpha_p=\boldbeta_p=0$ for $p>n$. Then
\begin{equation}\label{e:finiteSuperSymmetric}
S_\lambda(\boldalpha_{[n]}||\boldbeta_{[n]}) =\sum_{T} (\boldalpha_{[n]}||\boldbeta_{[n]})_T,
\end{equation}
where the sum can be taken over the finite set of tableaux $T$ for which $\min(T(i,j),T^*(i,j))\le n$, as the summand $(\boldalpha_{[n]}||\boldbeta_{[n]})_T$ otherwise automatically vanishes. Let us choose $n$ to be large enough so that $T_0$ appears in the sum and $(\boldalpha_{[n]}||\boldbeta_{[n]})_{T_0}=(\boldalpha||\boldbeta)_{T_0}$. 
 
If $T$ is an SSYT appearing in \eqref{e:finiteSuperSymmetric} different from $T_0$. Then $T(i,j)\ge i$ by the column-strict property and with that, $T^*(i,j)\ge j$. Therefore, $\boldalpha_{T(i,j)}\le \boldalpha_{i}$ and $\boldbeta_{T^*(i,j)}\le\boldbeta_j$, which implies that $A_{T(i,j)}\ge A_i$ and $B_{T^*(i,j)}\ge B_j$. Moreover, replacing $\boldalpha$ by $\boldalpha_{[n]}$ and $\boldbeta$ by $\boldbeta_{[n]}$ can only increase the valuations further.  As a consequence
\begin{equation*}
\Val\left((\boldalpha_{[n]}||\boldbeta_{[n]})_{T}\right)\ge \sum_{(i,j)\in\lambda}\min(A_{T(i,j)},B_{T^*(i,j)})\ge \sum_{(i,j)\in\lambda}\min(A_i,B_j)=\Val\left((\boldalpha||\boldbeta)_{T_0}\right).
\end{equation*} 
 We therefore find that 
\[\Val(S_\lambda(\boldalpha_{[n]}||\boldbeta_{[n]}))=\Val((\boldalpha||\boldbeta)_{T_0})=\sum_{(i,j)\in\lambda}\min(A_i,B_j).
\]
Finally, letting $n\to\infty$ we find  
$\Val(S_\lambda(\boldalpha||\boldbeta))=\sum_{(i,j)\in\lambda}\min(A_i,B_j)$, by continuity of the valuation.
\end{proof}

Now choose $(\mathcal C,\Kdelposst,\Val)$ for  $(\RR,\RR_{>0},\Val)$, see \cref{s:Kdel}, in particular \cref{r:Kdeltopposstr}. 
 We introduce the following practically convenient parameter space.

\begin{defn}\label{d:Omegacirc} Recall the strong topology defined  in \cref{d:Kdeltop}.
\[
\Omega^\circ(\Kdelposst):=\left\{(\boldalpha,\boldbeta)\in\Kdelz^\N\times \Kdelz^\N\left |\begin{array}{l} \boldalpha_1\ge\boldalpha_2\ge\boldalpha_3\ge\dotsc, \ \text{ and }\ 
\boldbeta_1\ge\boldbeta_2\ge\boldbeta_3\ge \dotsc, \\ \boldalpha_i+\boldbeta_i\ne 0 \text{ for all $i\in\N$}\\
\text{for every $k$, the series $\sum_{i=k}^\infty \boldalpha_i$ is either $=0$ or  converges in $\Kdelposst$, }\\
\text{and $\sum_{i=k}^\infty \boldbeta_i$, is either $=0$ or converges in $\Kdelposst$}.
\end{array}\right.\right\}.
\]
\end{defn}

\begin{lemma}\label{l:slambdaconv}
Suppose $\boldalpha=(\boldalpha_1,\boldalpha_2,\dots )$ is a weakly monotonely decreasing sequence in $\Kdelposst$ such that $\sum_{i=k}^n\boldalpha_i$ converges in $\Kdelposst$ for all $k\in\N$ (as $n\to\infty$). Let $\lambda=(\lambda_1,\dotsc,\lambda_r)$ be a partition. Then $s_\lambda(\boldalpha_1,\dotsc,\boldalpha_n)=\sum_{T\in SSYT_{[n]}(\lambda)}\ \prod_{b\in\lambda}\boldalpha_{T(b)}$ converges 
in $\Kdelposst$ as $n\to\infty$.
\end{lemma}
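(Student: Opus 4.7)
My plan is to apply Lemma~\ref{l:Kcontcrit}: I need to verify that (i) $\Val(s_\lambda(\boldalpha_{[n]}))$ stabilises eventually to some value $K\in\R$, and (ii) the normalised sequence $t^{-K}s_\lambda(\boldalpha_{[n]})$ is Cauchy in sup-norm on $[0,\delta]$ with strictly positive limit. First I would dispose of degenerate cases: if $\boldalpha_r=0$ where $r$ is the number of parts of $\lambda$, then by weak monotonicity $\boldalpha_j=0$ for all $j\ge r$, and column-strictness of every SSYT forces the factor $\boldalpha_{T(r,1)}=0$, so $s_\lambda(\boldalpha_{[n]})\equiv 0$; and if $\boldalpha_N=0$ for some $N>r$, the sequence stabilises since trailing zeros do not affect the Schur polynomial. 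So I may assume $\boldalpha_i\in\Kdelposst$ for all $i$, with valuations $A_i:=\Val(\boldalpha_i)$ weakly increasing by Lemma~\ref{l:Kdelposet}.

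For (i), the expansion $s_\lambda(\boldalpha_{[n]})=\sum_{T\in SSYT_{[n]}(\lambda)}\prod_b\boldalpha_{T(b)}$, together with column-strictness $T(i,j)\ge i$ and monotonicity of $A_i$, gives $\Val(\prod_b\boldalpha_{T(b)})\ge\sum_i\lambda_iA_i=:K$, with equality achieved by $T_0(i,j)=i$. Positivity of leading coefficients rules out cancellation of minimal-valuation summands, so $\Val(s_\lambda(\boldalpha_{[n]}))=K$ for all $n\ge r$.

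For (ii), I would proceed by induction on $|\lambda|$. The identity $s_\lambda(x\sqcup y)=\sum_{\mu\subseteq\lambda}s_\mu(x)s_{\lambda/\mu}(y)$ applied to $x=(\boldalpha_1,\ldots,\boldalpha_n)$, $y=(\boldalpha_{n+1},\ldots,\boldalpha_m)$ yields
\[
s_\lambda(\boldalpha_{[m]})-s_\lambda(\boldalpha_{[n]})=\sum_{\mu\subsetneq\lambda}s_\mu(\boldalpha_{[n]})\,s_{\lambda/\mu}(\boldalpha_{n+1},\ldots,\boldalpha_m).
\]
Bounding coefficient-wise $s_{\lambda/\mu}\le h_1^{|\lambda|-|\mu|}$ (each SSYT is a special filling), each term is dominated in $\Kdelnn$ by $s_\mu(\boldalpha_{[n]})(\sigma^{(n+1,m)})^{|\lambda|-|\mu|}$, where $\sigma^{(k,m)}:=\sum_{i=k}^m\boldalpha_i$. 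Writing $K_\mu:=\sum_i\mu_iA_i$ and using the split $K=K_\mu+\sum_i(\lambda_i-\mu_i)A_i$, I distribute the exponent as
\[
t^{-K}s_\mu(\boldalpha_{[n]})(\sigma^{(n+1,m)})^{|\lambda|-|\mu|}=\bigl(t^{-K_\mu}s_\mu(\boldalpha_{[n]})\bigr)\prod_i\bigl(t^{-A_i}\sigma^{(n+1,m)}\bigr)^{\lambda_i-\mu_i}.
\]
The induction hypothesis together with Lemma~\ref{l:Kcontcrit} implies that $\|t^{-K_\mu}s_\mu(\boldalpha_{[n]})\|_{\sup}$ is uniformly bounded in $n$. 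The assumption that each tail $\sum_{j\ge i}\boldalpha_j$ converges in $\Kdelposst$ translates, via Lemma~\ref{l:Kcontcrit}, into $\|t^{-A_i}(\sigma^{(i)}-\sigma^{(i,n)})\|_{\sup}=\|t^{-A_i}\sigma^{(n+1)}\|_{\sup}\to 0$ as $n\to\infty$, and since $\sigma^{(n+1,m)}\le\sigma^{(n+1)}$ pointwise we obtain $\|t^{-A_i}\sigma^{(n+1,m)}\|_{\sup}\to 0$ uniformly in $m$. Since $\mu\subsetneq\lambda$ forces at least one $i$ with $\lambda_i>\mu_i$, the product tends to $0$ in sup norm; summing over the finitely many $\mu\subsetneq\lambda$ yields the Cauchy estimate.

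The main obstacle is the bookkeeping around the $t^{-K}$ normalisation: the naive bound $s_\lambda\le h_1^{|\lambda|}\le\sigma^{|\lambda|}$ is too coarse, as $\Val(\sigma^{|\lambda|})=|\lambda|A_1$ can be strictly less than $K=\sum_i\lambda_iA_i$, so $t^{-K}\sigma^{|\lambda|}$ may blow up at $0$. The careful split $K=K_\mu+\sum_i(\lambda_i-\mu_i)A_i$, matching the factorisation of the dominating term, is what makes the estimate work and forces the induction on $|\lambda|$. Finally, the uniform limit of $t^{-K}s_\lambda(\boldalpha_{[n]})$ dominates the $T_0$-contribution $t^{-K}\prod_i\boldalpha_i^{\lambda_i}$, whose value at $t=0$ is $\prod_i\lc(\boldalpha_i)^{\lambda_i}>0$; hence the limit is strictly positive and Lemma~\ref{l:Kcontcrit} concludes that $s_\lambda(\boldalpha_{[n]})$ converges in $\Kdelposst$.
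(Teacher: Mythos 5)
Your proof is correct, but it takes a genuinely different route than the paper. The paper bounds $s_\lambda(\boldalpha_{[n]})$ \emph{directly} by the product $\prod_{k=1}^{r}(\boldalpha_k+\dotsb+\boldalpha_n)^{\lambda_k}$, obtained by relaxing the SSYT sum to the larger set of tableaux whose row-$k$ entries merely lie in $[k,n]$; the Cauchy estimate for $t^{-K}s_\lambda(\boldalpha_{[n]})$ then follows in one step from the uniform convergence of each normalised factor $t^{-\lambda_k A_k}(\boldalpha_k+\dotsb+\boldalpha_n)^{\lambda_k}$, with no auxiliary induction needed. You instead expand the increment $s_\lambda(\boldalpha_{[m]})-s_\lambda(\boldalpha_{[n]})$ via the branching rule $s_\lambda(x\sqcup y)=\sum_{\mu\subseteq\lambda}s_\mu(x)s_{\lambda/\mu}(y)$, dominate each $s_{\lambda/\mu}$ by $h_1^{|\lambda|-|\mu|}$, and close the argument by induction on $|\lambda|$ to control the factors $t^{-K_\mu}s_\mu(\boldalpha_{[n]})$. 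Both arguments confront the same obstacle — that a naive bound $s_\lambda\le h_1^{|\lambda|}$ has the wrong valuation $|\lambda|A_1<K$ — and both fix it by a valuation-matched split of the $t^{-K}$ normalisation; the paper's row-bounded-tableau bound achieves this without induction, while yours buys a more standard combinatorial identity (the branching rule) at the cost of an induction on the size of the partition. One minor point: your "degenerate case" preamble ($\boldalpha_r=0$) is vacuous here, since the hypothesis places the $\boldalpha_i$ in $\Kdelposst$, which by construction excludes $0$; this case only becomes relevant in the corollary where sequences are allowed to lie in $\Kdelnn$, and is handled there by truncation.
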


\begin{proof} 
 By our assumption $(\sum_{i=k}^{n}\boldalpha_i)_n$ converges in $\Kdelposst$ (for any fixed $k$). Let us write $\summ_{\ge k}(\boldalpha)$ for the limit. It follows that any power $((\sum_{i=k}^{n}\boldalpha_i)^\ell)_n$ also converges, namely to $\summ_{\ge k}(\boldalpha)^\ell$. Let $YT_{[\text{row},n]}(\lambda)$ denote all of the tableaux of shape $\lambda$ with entries in row $k$ lying in $[k,n]$, but otherwise no constraint. Then
\[
\sum_{T\in YT_{[\text{row},n]}(\lambda)}\left(\prod_{b\in\lambda}\boldalpha_{T(b)}\right)
=(\boldalpha_1+\dotsc+\boldalpha_n)^{\lambda_1}(\boldalpha_2+\dotsc+\boldalpha_n)^{\lambda_2}\dotsc (\boldalpha_k+\dotsc+\boldalpha_n)^{\lambda_r}.
\]
We therefore have that 
\[
s_{\lambda}(\boldalpha_1,\dotsc,\boldalpha_n)\le (\boldalpha_1+\dotsc+\boldalpha_n)^{\lambda_1}(\boldalpha_2+\dotsc+\boldalpha_n)^{\lambda_2}\dotsc (\boldalpha_r+\dotsc+\boldalpha_n)^{\lambda_r}.
\]
We note that both left and right-hand sides of this inequality have the same valuation, namely $A_\lambda=\sum \lambda_i A_i$ if $A_i=\Val(\boldalpha_i)$. The inequality also implies \textit{pointwise} convergence of $s_{\lambda}(\boldalpha_1,\dotsc,\boldalpha_n)$ on $(0,\delta]$ to a limit function that we will call $s_\lambda(\boldalpha)$. Let us now prove that $s_\lambda(\boldalpha)$ lies in $\Kdelpos$ and the convergence is in the strong topology.

We multiply our inequality from above by $t^{-A_\lambda}$, giving
\begin{equation}\label{e:multipliedineq}
t^{-A_\lambda}s_{\lambda}(\boldalpha_1,\dotsc,\boldalpha_n)\le t^{-A_\lambda}(\boldalpha_1+\dotsc+\boldalpha_n)^{\lambda_1}(\boldalpha_2+\dotsc+\boldalpha_n)^{\lambda_2}\dotsc (\boldalpha_r+\dotsc+\boldalpha_n)^{\lambda_r}.
\end{equation}
The right-hand side can be written as a product of factors that have valuation $0$ and individually converge uniformly,
\[
t^{-\lambda_kA_k}(\boldalpha_k+\dotsc+\boldalpha_n)^{\lambda_k}\, \longrightarrow\, t^{-\lambda_k A_k}\summ_{\ge k}(\boldalpha)^{\lambda_k} \quad (n\to\infty),
\]
as follows from the convergence of $(\boldalpha_k+\dotsc+\boldalpha_n)^{\lambda_k}$ to $\summ_{\ge k}(\boldalpha)^{\lambda_k}$ and the definition of  $\Kdelposst$. Note that since both sides of \eqref{e:multipliedineq} have valuation $0$, they extend to continuous functions on $[0,\delta]$ with positive value at $0$. Let us write $f_n(t)\in C^0([0,\delta])$ for the function extending $t^{-A_\lambda}s_{\lambda}(\boldalpha_1,\dotsc,\boldalpha_n)$.
 We can now estimate $\|f_n-f_m\|_{\sup}$ for $m<n$ using \eqref{e:multipliedineq},
\begin{multline*}
\|t^{-A_\lambda}s_\lambda(\boldalpha_1,\dotsc,\boldalpha_n)-t^{-A_\lambda}s_{\lambda}(\boldalpha_1,\dotsc,\boldalpha_m)\|_{\sup}=\left\| t^{-A_\lambda}\sum_{SSYT_{[n]}\setminus SSYT_{[m]}}\left( \prod_{b\in\lambda}\boldalpha_{T(b)}\right)\right\|_{\sup}\le
\\
\left \|t^{-A_\lambda}\sum_{YT_{[\text{row},n]}\setminus YT_{[\text{row},m]}}\left( \prod_{b\in\lambda}\boldalpha_{T(b)}\right)\right\|_{\sup}= 
 \left\|t^{-A_\lambda}\prod_{k=1}^r(\boldalpha_k+\dotsc+\boldalpha_n)^{\lambda_k}-t^{-A_\lambda}\prod_{k=1}^r(\boldalpha_k+\dotsc+\boldalpha_m)^{\lambda_k}\right\|_{\sup}.
\end{multline*}
From the uniform convergence of the right-hand side of \eqref{e:multipliedineq} and the above inequality it follows that $\|f_n-f_m\|_{\sup}<\ep$ for all $n,m>N$, for some  $N=N(\varepsilon)$. Thus $f_n$ is a Cauchy-sequence in $C^0([0,\delta])$ (considered as a Banach algebra via the $\sup$-norm), and so  $f_n$ converges uniformly to a continuous function $f(t)$ on $[0,\delta]$. Moreover, since $f_n$ is positive and the $f_n$ are weakly monotonely increasing in $n$ (for every $t>0$ and thus also for $t=0$), we have that $f$ is positive, and in particular $f(0)\in\R_{>0}$. It follows that $f$ determines an element of $\Kdelpos$ with valuation equal to $0$. 

Finally, pointwise for $t>0$, we have that $t^{A_\lambda}f(t)$ is the limit of the $s_\lambda(\boldalpha_1,\dotsc,\boldalpha_n)$. Therefore $s_\lambda(\boldalpha)=t^{A_\lambda}f(t)$ and lies in $\Kdelpos$ with valuation $A_\lambda$. It follows from \cref{l:Kcontcrit} that $s_\lambda(\boldalpha)$ is the limit of the $s_\lambda(\boldalpha_1,\dotsc,\boldalpha_n)$ in terms of the strong topology of $\Kdelpos$.
 \end{proof}

\begin{cor}\label{c:EvalSSSchur} Let $(\boldalpha,\boldbeta)\in\Omega^\circ(\Kdelposst)$ and $\lambda$ any partition. Then the supersymmetric Schur function $S_\lambda$ converges to a well-defined element  $S_\lambda(\boldalpha||\boldbeta)$ in $\Kdelposst$ and we have
\begin{equation}\Val(S_\lambda(\boldalpha||\boldbeta))=\sum_{(i,j)\in\lambda}\min(A_i,B_j).\end{equation}
\end{cor}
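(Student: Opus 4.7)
The plan is to deduce this directly by combining Lemmas~\ref{l:slambdaconv} and \ref{l:EvalSSSchur}, applied with $(\RR,\RR_{>0},\Val) = (\mathcal{C},\Kdelposst,\Val)$, which is a ring with a topological valuative positive structure by Remark~\ref{r:Kdeltopposstr}.

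First, I would verify that for any partition $\nu$, the Schur values $s_\nu(\boldalpha_{[n]})$ and $s_\nu(\boldbeta_{[n]})$ converge in $\Kdelnnst$ as $n\to\infty$. There are two sub-cases. If $\boldalpha_i \neq 0$ for all $i$, then $\boldalpha$ is a weakly monotone sequence in $\Kdelposst$, and by the assumption that $\sum_{i=k}^{\infty}\boldalpha_i$ converges in $\Kdelposst$, Lemma~\ref{l:slambdaconv} directly gives that $s_\nu(\boldalpha_{[n]})$ converges in $\Kdelposst$. If instead $\boldalpha$ is eventually zero, say $\boldalpha_i = 0$ for $i > N$, then $s_\nu(\boldalpha_{[n]})$ stabilises for $n \ge N$ to a fixed finite expression lying in $\Kdelnnst$ (which is $0$ if $\nu$ has more than $N$ parts). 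Either way $s_\nu(\boldalpha_{[n]})$ converges to some element $s_\nu(\boldalpha) \in \Kdelnnst$; likewise for $\boldbeta$.

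Next, I would apply Lemma~\ref{l:EvalSSSchur} with $\RR_{>0} := \Kdelposst$. The hypotheses are satisfied: the sequences $\boldalpha,\boldbeta$ are weakly decreasing in $\Kdelnnst$; the condition $\boldalpha_i + \boldbeta_i \neq 0$ for all $i$ is part of the definition of $\Omega^\circ(\Kdelposst)$; and the required Schur convergence was established in the previous paragraph. The conclusion of Lemma~\ref{l:EvalSSSchur} is then precisely that $S_\lambda(\boldalpha_{[n]}||\boldbeta_{[n]})$ converges to a well-defined element $S_\lambda(\boldalpha||\boldbeta) \in \Kdelposst$ and that
\[
\Val(S_\lambda(\boldalpha||\boldbeta)) = \sum_{(i,j)\in\lambda}\min(A_i,B_j),
\]
as desired.

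The only subtlety worth flagging is that Lemma~\ref{l:EvalSSSchur} establishes convergence of $S_\lambda(\boldalpha_{[n]}||\boldbeta_{[n]})$ via the defining formula \eqref{e:Sdef}, i.e.\ as a finite positive integer combination of products $s_\nu(\boldalpha_{[n]}) s_{(\lambda\setminus\nu)'}(\boldbeta_{[n]})$ of (skew) Schur evaluations. Since skew Schur functions expand as positive integer combinations of ordinary Schur functions, their convergence follows from the Schur case, and the continuity of multiplication and addition in the topological semifield $\Kdelposst$ then yields convergence of the full sum in $\Kdelposst$. Thus no genuine obstacle arises; the proof is essentially a direct application of the two preceding lemmas.
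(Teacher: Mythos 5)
Your proof is correct and follows exactly the same route as the paper, which simply says the corollary follows from Lemma~\ref{l:slambdaconv} combined with Lemma~\ref{l:EvalSSSchur}. Your additional care in handling the case where a sequence $\boldalpha$ or $\boldbeta$ has a vanishing tail (so that Lemma~\ref{l:slambdaconv}, stated for sequences in $\Kdelposst$, does not directly apply) is a genuine detail that the paper's one-line proof glosses over, and your resolution by stabilisation is the right one.
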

\begin{proof}
This corollary follows from \cref{l:slambdaconv} using the general result  \cref{l:EvalSSSchur}.
\end{proof}

\begin{defn}\label{d:SlAB} Recall the set $\Omega(\Rmin)$ introduced in \cref{d:Omegatrop}. For $(\mathbf A,\mathbf B)\in\Omega(\Rmin)$ with $\mathbf A=(A_1,A_2,\dotsc)$ and $\mathbf B=(B_1,B_2,\dotsc)$ we define a function
\[
\mathbf S_\lambda:\Omega(\Rmin)\to\R
\]
by setting  $\mathbf S_\lambda(\mathbf A,\mathbf B)=\sum_{(i,j)\in\lambda}\min(A_i,B_j)$.
\end{defn}
\begin{remark}
\cref{c:EvalSSSchur} now says that  $\mathbf S_\lambda(\mathbf A,\mathbf B)=\Val(S_\lambda(\boldalpha||\boldbeta))$ for any $(\boldalpha,\boldbeta)\in\Omega^\circ(\Kdelposst)$ with $A_i=\Val(\boldalpha_i)$ and $B_i=\Val(\boldbeta_i)$.
\end{remark}

We are now ready to prove the key proposition that relates the tropical Schoenberg parameters from \cref{t:TropEdrei} to the Schoenberg parameters in the classical Edrei theorem. 
\begin{prop}\label{p:KdelEdrei}
We have a well-defined map $
\mathcal T^\circ:\Omega^\circ(\Kdelposst)\to\ToeplitzU_{\infty}(\Kdelpos)$ sending $(\boldalpha,\boldbeta)$ to the infinite Toeplitz matrix 
 $\mathcal T^\circ(\boldalpha,\boldbeta)$ with entry $\mathbf c_k$ along the $k$-th diagonal, where  
\begin{equation}\label{e:Tpower}
\sum_{k=0}^\infty \mathbf c_k x^k=\frac{\prod_{i=1}^\infty (1+\boldbeta_j x)}{\prod_{j=1}^\infty (1-\boldalpha_i x)}.
\end{equation}
The standard coordinates $(m_{ij})$ of the Toeplitz matrix $\mathcal T^\circ(\boldalpha,\boldbeta)$ are computed by
\begin{equation}\label{e:mijalphabeta1}
m_{ij}=\frac{S_{i\times j}(\boldalpha||\boldbeta) S_{(i-1)\times (j-1)}(\boldalpha||\boldbeta)}{S_{(i-1)\times j}(\boldalpha||\boldbeta) S_{i\times{(j-1)}}(\boldalpha||\boldbeta)},
\end{equation}
where $s\times r$ denotes the partition $r^s=(r,\dots, r)$ with $s$ parts. Moreover, the valuations $M_{ij}=\Val(m_{ij})$ are 
\begin{equation}\label{e:MijAB1}
M_{ij}=\min(A_i,B_j),
\end{equation}
for $A_i=\Val(\boldalpha_i)$ and $B_j=\Val(\boldbeta_j)$.
\end{prop}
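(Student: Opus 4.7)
The plan is to construct $\mathcal{T}^\circ(\boldalpha,\boldbeta)$ coordinate by coordinate, identify each of its minors as a supersymmetric Schur function, verify total positivity this way, and then read off the standard coordinates and their valuations via the Jacobi-Trudi formula.

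First, I would show that the generating function \eqref{e:Tpower} makes sense in $\Kdelpos$. The coefficients $\mathbf c_k$ are exactly the supersymmetric complete homogeneous functions $H_k(\boldalpha||\boldbeta) = S_{(k)}(\boldalpha||\boldbeta)$ from \eqref{e:generatingH}, so by \cref{c:EvalSSSchur} each $\mathbf c_k$ converges in $\Kdelposst$ and lies in $\Kdelpos$. This defines an infinite upper-triangular unipotent Toeplitz matrix $u = (\mathbf c_{q-p})_{p,q \in \mathbb N}$ with entries in $\Kdelpos$.

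Next I would verify that $u \in \ToeplitzU_\infty(\Kdelpos)$, i.e., that all minors of every finite truncation lie in $\Kdelpos$. By \cref{d:infToeplitz} it suffices to show this for each finite truncation. For $u = (\mathbf c_{q-p})$ an arbitrary minor $\Minor^{[p_1,\dots,p_r]}_{[q_1,\dots,q_r]}(u) = \det(\mathbf c_{q_a - p_b})$ can, after suitable shifting, be written as $\det(H_{\lambda_a + b - a}(\boldalpha||\boldbeta))$ for a partition $\lambda$ (allowing parts equal to $0$, with $H_0 = 1$) or as a skew analogue; in particular by the generalised Jacobi--Trudi formula \eqref{e:SJTformula} every such minor is (a non-negative integer combination of) supersymmetric Schur functions $S_\mu(\boldalpha||\boldbeta)$. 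Each of these lies in $\Kdelpos$ by \cref{c:EvalSSSchur}. Specialising to the anti-principal rectangular case, \eqref{e:SJTformula} gives the identity
\[
\Minor^{[i]}_{[i]+j}(u) = \det\bigl(H_{j + b - a}(\boldalpha||\boldbeta)\bigr)_{a,b=1}^{i} = S_{i\times j}(\boldalpha||\boldbeta).
\]

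Substituting this into the formula of \cref{l:mijFormula} immediately yields \eqref{e:mijalphabeta1}. It remains to compute valuations. By \cref{c:EvalSSSchur},
\[
\Val\bigl(S_{i\times j}(\boldalpha||\boldbeta)\bigr) = \mathbf S_{i\times j}(\bigA,\bigB) = \sum_{p=1}^{i}\sum_{q=1}^{j} \min(A_p, B_q).
\]
Applying $\Val$ to \eqref{e:mijalphabeta1} and using that $\Val$ is a semifield homomorphism, I get
\[
M_{ij} = \mathbf S_{i\times j}(\bigA,\bigB) + \mathbf S_{(i-1)\times(j-1)}(\bigA,\bigB) - \mathbf S_{i\times(j-1)}(\bigA,\bigB) - \mathbf S_{(i-1)\times j}(\bigA,\bigB).
\]
A two-step telescoping of the double sums (first removing the $p=i$ row, then the $q=j$ column) collapses this to $\min(A_i, B_j)$, proving \eqref{e:MijAB1}.

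The main obstacle I anticipate is the verification that $\mathcal T^\circ(\boldalpha,\boldbeta)$ really lies in $\ToeplitzU_\infty(\Kdelpos)$, which requires establishing positivity of \emph{all} minors of every finite truncation, not merely the anti-principal ones used in \cref{l:mijFormula}. The key input is the identification of arbitrary minors of a Toeplitz matrix with (skew) supersymmetric Schur functions via Jacobi--Trudi; once this is in hand, positivity follows from \cref{c:EvalSSSchur} and the fact that skew supersymmetric Schur functions expand non-negatively in the $S_\mu(\boldalpha||\boldbeta)$ basis, inherited from the corresponding property on the Schur factors via \eqref{e:Sdef}.
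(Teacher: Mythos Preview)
Your proposal is correct and follows essentially the same arc as the paper: identify $\mathbf c_k=H_k(\boldalpha||\boldbeta)$, recognise $\Minor^{[i]}_{[i]+j}(u)=S_{i\times j}(\boldalpha||\boldbeta)$, plug into \cref{l:mijFormula}, and telescope the valuations via \cref{c:EvalSSSchur}. The one genuine difference is in how you establish $u\in\ToeplitzU_\infty(\Kdelpos)$. You propose to verify that \emph{every} minor is positive by identifying general minors with skew supersymmetric Schur functions and invoking their positive expansion in the $S_\mu$ basis. The paper bypasses this entirely: since $\phi^{\mathbf i_0}_{>0}:\Kdelpos^{\mathcal S_{\le n+1}}\to U_+^{(n+1)}(\Kdelpos)$ is a bijection onto the totally positive part, it suffices to show that the standard coordinates $m_{ij}$ themselves lie in $\Kdelpos$, and this follows immediately from \eqref{e:mijalphabeta1} together with \cref{c:EvalSSSchur} applied only to the rectangular $S_{i\times j}$. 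So the ``main obstacle'' you anticipate simply does not arise in the paper's argument; your route works but does more than is needed.
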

\begin{proof}

We start by showing that the map $\mathcal T^\circ:\Omega^\circ(\Kdelposst)\to \ToeplitzU_\infty(\Kdelpos)$ is well-defined. The first observation is that the coefficients $\mathbf c_k$ are well-defined elements of $\Kdelpos$ by the application of \cref{c:EvalSSSchur}. Namely $\bold c_k=H_k(\boldalpha||\boldbeta)$, compare in \eqref{e:generatingH}, which is the evaluation of the supersymmetric Schur function associated to the partition $\lambda=(k)$. To show that the associated infinite upper-triangular Toeplitz matrix is totally positive over $\Kdelpos$ it suffices to show that it has standard coordinates $m_{ij}$ in $\Kdelpos$. Recall that the standard coordinates are computed in terms of minors  in \cref{l:mijFormula}.  For $\bold c_k=H_k(\boldalpha||\boldbeta)$ these minors agree with supersymmetric Schur functions evaluated on $(\boldalpha,\boldbeta)$. For example, we have 
\[
\Minor^{[i]}_{[i]+j}(\bold c_{r-s})=S_{i\times j}(\boldalpha||\boldbeta).
\]
The formula from \cref{l:mijFormula} straightforwardly translates to \eqref{e:mijalphabeta1}. The standard coordinates now lie in $\mathcal C_{>0}$ by \cref{c:EvalSSSchur}. Moreover tropicalising \eqref{e:mijalphabeta1}  using the second half of \cref{c:EvalSSSchur}  gives 
\begin{equation}\label{e:valmij1}
\Val(m_{ij})=\mathbf S_{i\times j}(\mathbf A,\mathbf B)+\mathbf S_{{(i-1)}\times(j-1)}(\mathbf A,\mathbf B)-\mathbf S_{i\times(j-1)}(\mathbf A,\mathbf B)-\mathbf S_{{(i-1)}\times j}(\mathbf A,\mathbf B),
\end{equation}
where $\mathbf S_{\lambda}(\bigA,\bigB)$ is as in \cref{d:SlAB}, and explicitly $\mathbf S_{i\times j}(\mathbf A,\mathbf B)=\sum_{r\le i}\sum_{s\le j} \min(A_r,B_s)$. Now all of the terms $\min(A_r,B_s)$ in \eqref{e:valmij1} cancel apart from one: $\min(A_i,B_j)$. This proves the formula~\eqref{e:MijAB1}.
\end{proof}

\subsection{The proof of \cref{t:KdelTrop}}
We  now have some tools for constructing elements in $\ToeplitzU_\infty(\Kdelpos)$ and are able to compute the valuations of their standard coordinates. We use this together with the `tropical Edrei theorem', \cref{t:TropEdrei}, to prove \cref{t:KdelTrop}. 
 
\begin{lemma}\label{l:detropK} Let $(A_i)_{i=1}^\infty$ be a weakly increasing sequence of real numbers, $A_1\le A_2\le\dotsc $ with $\sup(\{A_i\})$ either equal to $A\in\R$ or $\infty$. Let $\boldalpha_i=\frac{1}{2^i}t^{A_i}\in\Kdelpos$. Then $\Val(\boldalpha_i)=A_i$ and for every $k\in\Z_{>0}$ the infinite series $S_{\ge k}$ defined by
\[
S_{\ge k}(t):=\sum_{i=k}^{\infty}\boldalpha_i(t)
\]
converges in $\Kdelposst$. 
\end{lemma}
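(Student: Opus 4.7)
The plan is to verify the hypotheses of \cref{l:Kcontcrit} applied to the sequence of partial sums $S^{(N)}_{\ge k}(t) := \sum_{i=k}^{N}\boldalpha_i(t)$ in $\Kdelposst$. The first assertion, $\Val(\boldalpha_i) = A_i$, is immediate from \cref{l:Kdelstructure}: we have $\lim_{t\to 0} t^{-A_i}\boldalpha_i(t) = \frac{1}{2^i}\in\R_{>0}$, so $A_i$ is the unique valuation. For the series statement, I first note that because addition in $\Kdelposst$ is strictly order-preserving on valuations with all $\boldalpha_i$ positive, the valuation of each finite sum is the minimum, namely
\[
\Val(S^{(N)}_{\ge k}) = \min\{A_k,\dotsc,A_N\} = A_k,
\]
using that $(A_i)_i$ is weakly increasing. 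Hence the valuations stabilise at $A_k$ for every $N\ge k$, which is the first hypothesis of \cref{l:Kcontcrit}.

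The second step is to show that $t^{-A_k}S^{(N)}_{\ge k}(t) = \sum_{i=k}^N \frac{1}{2^i} t^{A_i-A_k}$ converges uniformly on $[0,\delta]$ (as $N\to\infty$) to a strictly positive continuous function. Here I exploit the crucial inequality $\delta < 1$ baked into \cref{d:Kdel}: since $A_i - A_k \ge 0$ and $t \in [0,\delta] \subset [0,1)$, each term satisfies
\[
0 \le \frac{1}{2^i} t^{A_i - A_k} \le \frac{1}{2^i},
\]
so by the Weierstrass $M$-test the tail $\sum_{i=N+1}^\infty \frac{1}{2^i}t^{A_i - A_k}$ is uniformly bounded by $\frac{1}{2^N}$, yielding uniform Cauchy behaviour. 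The limit function is therefore a well-defined element of $C^0([0,\delta])$, and its value at $t=0$ is $\sum_{\{i\ge k\,:\, A_i = A_k\}}\frac{1}{2^i} \ge \frac{1}{2^k} > 0$, so the limit is strictly positive on all of $[0,\delta]$.

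With both hypotheses of \cref{l:Kcontcrit} satisfied, the sequence $S^{(N)}_{\ge k}$ converges in $\Kdelposst$ to an element $S_{\ge k}\in\Kdelpos$ with $\Val(S_{\ge k}) = A_k$. Note that this argument works uniformly in whether $\sup_i A_i$ is finite or infinite, since we only ever use $A_i \ge A_k$ and not any upper bound on the $A_i$. I do not anticipate any significant obstacle here: the geometric decay $\frac{1}{2^i}$ of the coefficients swamps any growth coming from the exponents once we normalise by $t^{-A_k}$, because $\delta < 1$ forces $t^{A_i - A_k} \le 1$.
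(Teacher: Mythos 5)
Your proof is correct and follows essentially the same route as the paper: both reduce to \cref{l:Kcontcrit}, both normalise by $t^{-A_k}$ and invoke the bound $\frac{1}{2^i}t^{A_i-A_k}\le\frac{1}{2^i}$ (valid because $\delta<1$ and $A_i\ge A_k$) to obtain uniform convergence on $[0,\delta]$, and both conclude positivity of the limit at $t=0$. One small remark: your formula $f(0)=\sum_{\{i\ge k:\,A_i=A_k\}}\frac{1}{2^i}$ is actually the more careful statement here (the paper asserts $f(0)=\frac{1}{2^k}$, which is only a lower bound when the sequence $(A_i)$ is not strictly increasing at index $k$), but since all that matters is $f(0)>0$, this does not affect the argument.
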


\begin{proof}
It is clear that $\Val(\boldalpha_i)=A_i$. It remains to prove the series $S_{\ge k}(t)$ converges in $\Kdelposst$ for every $k$.  We claim that the related series $t^{-A_k}S_{\ge k}(t)$ converges in $\Odelpos$. Let us consider $k$ fixed and drop it from our notation, and consider the partial sums of this series
\[
f_n(t):=t^{-A_k}\sum_{i=k}^{n}\frac{1}{2^{i}}t^{A_i}=\sum_{i=k}^{n}\frac{1}{2^{i}}t^{A_i-A_k}.
\]
We may consider $f_n$ as an element of $C^0([0,\delta])$ since the valuation is now $0$. Moreover, the sequence $(f_n)$ converges for the $\sup$-norm, for example since for $i> k$ we have $\frac{1}{2^{i}}t^{A_i-A_k}<\frac{1}{2^i}$ for all $t\in [0,\delta]$ (since $\delta<1$). Therefore we have a continuous function $f:[0,\delta]\to\R$ in the limit, 
\[
f(t)=t^{-A_k}\sum_{i=k}^{\infty}\frac{1}{2^{i}}t^{A_i}.
\]
It also has positive values everywhere, including at $t=0$; namely $f(0)=\frac 1{2^k}$. Thus $f(t)\in\Odelpos$ with valuation $0$. It follows that $S_{\ge k}=t^{A_k} f$ lies in $\Kdelpos$ with valuation $A_k$ and that the series $S_{\ge k}$ converges in~$\Kdelposst$ using  \cref{l:Kcontcrit}. 
\end{proof}
Recall the parameter spaces from \cref{d:Omegacirc} and \cref{d:Omegatrop}.
\begin{cor}\label{c:detropOmega} The map $\Val:\Omega^\circ(\Kdelposst)\to\Omega(\Rmin)$ given by coordinate-wise valuation is surjective. \end{cor}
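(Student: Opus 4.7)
The plan is to construct an explicit lift of any $(\bigA,\bigB)\in\Omega(\Rmin)$ to $\Omega^\circ(\Kdelposst)$ using the recipe from \cref{l:detropK}, extended so as to accommodate the values $A_i=\infty$ or $B_j=\infty$. Given $(\bigA,\bigB)\in\Omega(\Rmin)$, set
\[
\boldalpha_i=\begin{cases} \tfrac{1}{2^i}\,t^{A_i}, & A_i\in\R,\\ 0, & A_i=\infty,\end{cases}\qquad
\boldbeta_j=\begin{cases} \tfrac{1}{2^j}\,t^{B_j}, & B_j\in\R,\\ 0, & B_j=\infty.\end{cases}
\]
These lie in $\Kdelz$ and, with the extended valuation $\Val:\Kdelz\to\Rmininf$, satisfy $\Val(\boldalpha_i)=A_i$ and $\Val(\boldbeta_j)=B_j$ by construction.

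Next I would verify the defining conditions of $\Omega^\circ(\Kdelposst)$ one by one. The non-vanishing condition $\boldalpha_i+\boldbeta_i\neq 0$ follows immediately from the assumption $\min(A_i,B_i)\in\R$ built into $\Omega(\Rmin)$, which forces at least one of $\boldalpha_i,\boldbeta_i$ to be nonzero. For weak monotonicity $\boldalpha_i\ge \boldalpha_{i+1}$ in the sense of the semifield partial order, it suffices to check that $\boldalpha_i-\boldalpha_{i+1}\in\Kdelz$ whenever both are nonzero (the cases involving a zero are trivial). A direct computation gives
\[
\boldalpha_i-\boldalpha_{i+1}\;=\;\tfrac{1}{2^{i+1}}\,t^{A_i}\bigl(2-t^{A_{i+1}-A_i}\bigr),
\]
which is a continuous strictly positive function on $(0,\delta]$ since $A_{i+1}-A_i\ge 0$ and $\delta<1$ together force $t^{A_{i+1}-A_i}\le 1<2$; moreover $\lim_{t\to 0}t^{-A_i}(\boldalpha_i-\boldalpha_{i+1})=\tfrac{2-\delta_{A_i,A_{i+1}}}{2^{i+1}}\in\R_{>0}$, so it lies in $\Kdelpos$ with valuation $A_i$. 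The same argument works for $\boldbeta$.

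For the tail-convergence condition, the key step is to invoke \cref{l:detropK} directly. If $A_i\in\R$ for every $i$, that lemma applied to the sequence $\boldalpha$ (and symmetrically to $\boldbeta$) gives convergence of every tail $\sum_{i\ge k}\boldalpha_i$ in $\Kdelposst$. If instead there exists a smallest index $N$ with $A_N=\infty$ (so $A_i=\infty$ for all $i\ge N$ by monotonicity), then $\boldalpha_i=0$ for $i\ge N$, the tail $\sum_{i\ge k}\boldalpha_i$ reduces to a finite sum (equal to $0$ for $k\ge N$), and the convergence is trivial; the same analysis handles $\boldbeta$.

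Combining these verifications yields $(\boldalpha,\boldbeta)\in\Omega^\circ(\Kdelposst)$ with coordinate-wise valuation equal to $(\bigA,\bigB)$, proving surjectivity. The only step that requires any care is the monotonicity check, where one has to use the constraint $\delta<1$ to ensure $t^{A_{i+1}-A_i}\le 1$; apart from that, the construction is a direct packaging of \cref{l:Kdelposet} and \cref{l:detropK} together with the observation that the condition $\min(A_i,B_i)\in\R$ precisely matches $\boldalpha_i+\boldbeta_i\neq 0$.
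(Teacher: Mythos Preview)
Your proof is correct and follows essentially the same approach as the paper: the paper constructs the identical lift $\boldalpha_i=\tfrac{1}{2^i}t^{A_i}$ (or $0$), invokes \cref{l:detropK} for the tail convergence, and declares the monotonicity ``straightforward'' where you have spelled out the computation explicitly.
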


\begin{proof}
Let $(\mathbf A,\mathbf B)\in\Omega(\Rmin)$. We define
\begin{equation}\label{e:paramlifting}
\begin{array}{lcr}
\boldalpha_i=\begin{cases} \frac{1}{2^i}t^{A_i} &\text{ if $A_i\ne \infty$,}\\
0 & \text{ if $A_i=\infty$,}
\end{cases}&\quad\text{and}\quad&
\boldbeta_i=\begin{cases} \frac{1}{2^i}t^{B_i} &\text{ if $B_i\ne \infty$,}\\
0 & \text{ if $B_i=\infty$.}
\end{cases}
\end{array}
\end{equation}
By the definition of $\Omega(\Rmin)$ we have that $\min(A_i,B_j)\in\R$ for all $i,j\in\N$. This implies that $\boldalpha_i+\boldbeta_j\in\Kdelpos$. The inequalities $\boldalpha_i\ge\boldalpha_{i+1}$ and $\boldbeta_i\ge\boldbeta_{i+1}$ are straightforward. Moreover the sums $\sum_{i=k}^\infty\boldalpha_i$ and $\sum_{i=k}^{\infty}\boldbeta_i$ converge in $\Kdelpos$ whenever there are infinitely many nonzero terms by \cref{l:detropK}. Otherwise they may be $0$, if all summands are zero. In any case we have that $\sum_{i=k}^\infty\boldalpha_i\in\Kdelz$, and  $\sum_{i=k}^\infty\boldbeta_i\in\Kdelz$ and convergence is in the strong topology. All of the conditions for $(\boldalpha,\boldbeta)$ to lie in $\Omega^\circ(\Kdelposst)$ are met. By construction, $\Val(\boldalpha,\boldbeta)=(\bigA,\bigB)$. Thus $(\bigA,\bigB)$ lies in the image of $\Val:\Omega^\circ(\Kdelposst)\to\Omega(\Rmin)$.     
\end{proof}

We can now prove that $\ToeplitzU_{\infty,\Kdelpos}(\Rmin)$ agrees with $\Imin_{\infty}(\Rmin)$ in terms of its tropical standard coordinates. 

\begin{proof}[Proof of \cref{t:KdelTrop}]
The standard coordinates $(M_{ij})_{i,j\in\N}$ of any element of $\ToeplitzU_{\infty,\Kdelpos}(\Rmin)$ lie in $\Imin_{\infty}(\Rmin)$ by \cref{l:infminideal}. Recall that the elements of $\ToeplitzU_{\infty,\Kdelpos}(\Rmin)$ are equivalence classes in $\ToeplitzU_{\infty}(\Kdelpos)$, by definition. It remains to prove that every infinite min-ideal filling $(M_{ij})_{i,j\in\N}$ can be represented by an element of $\ToeplitzU_{\infty}(\Kdelpos)$. Now \cref{t:TropEdrei} implies that $(M_{ij})_{i,j}=(\min(A_i,B_j))_{i,j}$ for some element $(\bigA,\bigB)\in\Omega(\Rmin)$ (which can be chosen weakly interlacing, but we don't need this property for now). 
By \cref{c:detropOmega} we can find $(\boldalpha,\boldbeta)\in\Omega^\circ(\Kdelposst)$ such that $\Val(\boldalpha_i)=A_i$ and $\Val(\boldbeta_i)=B_i$ for all $i\in\N$. Applying \cref{p:KdelEdrei} we obtain an infinite Toeplitz matrix $\mathcal T^\circ(\boldalpha,\boldbeta)\in\ToeplitzU_\infty(\Kdelpos)$ whose associated tropical point has standard coordinates $(\min(A_i,B_j))_{i,j}$. Thus   $\mathcal T^\circ(\boldalpha,\boldbeta)$ represents our original min-ideal filling $(M_{ij})_{i,j\in\N}$. This proves \cref{t:KdelTrop}.
\end{proof}

\section{The tropical Edrei theorem detropicalised}\label{s:restrictedT}
Let us consider a ring with a positive structure $(\RR,\RR_{>0})$, and let us assume we have a topology on $\RR_{>0}$ that extends to $\RR_{\ge 0}$. In this case we may alongside $\Toeplitz_{\infty}(\RR_{>0})$ consider the restricted version $\Toeplitz^{res}_{\infty}(\RR_{>0})$ of infinite totally positive Toeplitz matrices, compare \cref{d:ResToepl}. We now define more generally some associated parameter spaces. 

\begin{defn}\label{d:Omegas}
Write $\boldalpha=(\boldalpha_1,\boldalpha_2,\dotsc)$ and $\boldbeta=(\boldbeta_1,\boldbeta_2,\dotsc)$ for two infinite sequences in $\RR_{\ge 0}=\RR_{>0}\cup\{0\}$. 
Our general parameter space is
\begin{equation}
\Omega(\RR_{>0}):=\left\{(\boldalpha,\boldbeta)\in\RR_{\ge 0}^\N\times \RR_{\ge 0}^\N\left |\begin{array}{l} \boldalpha_1\ge\boldalpha_2\ge\boldalpha_3\ge\dotsc, \\
\boldbeta_1\ge\boldbeta_2\ge\boldbeta_3\ge \dotsc, \\
\boldbeta_i+\boldalpha_i\ne 0 \text{ for all $i\in\N,$}\\
  \text{$S_{i\times j}(\boldalpha||\boldbeta)$ is well-defined (converges in $\RR_{>0}$) for all $i,j\in\N$}
\end{array}
\right.\right\}.
\end{equation}
 Let us set 
\begin{equation}\label{e:mij(a,b)}
\bold m_{ij}(\boldalpha,\boldbeta):=\frac{S_{i\times j}(\boldalpha||\boldbeta)S_{{(i-1)}\times{(j-1)}}(\boldalpha||\boldbeta)}{S_{i\times{(j-1)}}(\boldalpha||\boldbeta)S_{{(i-1)}\times j}(\boldalpha||\boldbeta)}
\end{equation}
for any $(\boldalpha,\boldbeta)\in\Omega(\RR_{>0})$. We may then define a restricted version of $\Omega(\mathcal R_{>0})$, 
\begin{equation*}
\begin{array}{lll}
\Omega^{res}(\RR_{>0})&:=&\left\{(\boldalpha,\boldbeta)\in\Omega(\RR_{>0}) \mid \exists
   \text{ $\lim_{k\to\infty}\mathbf m_{ik}(\boldalpha,\boldbeta)$ and $\lim_{k\to\infty}\mathbf m_{kj}(\boldalpha,\boldbeta)$  in $\RR_{\ge 0}$,  for all $k\in\N$}\right\}.
      \end{array}
\end{equation*}
\end{defn}
Recall now the tropical parameter spaces from \cref{d:Omegatrop} and their related versions of infinite min-ideal fillings, see \cref{t:TropEdrei}. In this general situation we have the following result. 
\begin{theorem}\label{t:restrtropparam}
Let $(\RR,\RR_{\ge 0},\Val)$ be a ring with a topological valuative positive structure.
Then we have the following commutative diagram.
\begin{equation}\label{e:OmValGenR}
\begin{tikzcd}
	{(\boldalpha,\boldbeta)} & {\Omega(\RR_{>0})} && {\ToeplitzU_\infty(\RR_{>0})} & {(m_{ij})_{i,j}} \\
	{(\bigA,\bigB)} & {\Omega(\Rmin)} && {\Imin_\infty}(\Rmin) & {(M_{ij})_{i,j}}.
	\arrow[maps to, from=1-1, to=2-1]
	\arrow["\mathcal T", from=1-2, to=1-4]
	\arrow["\Val"', from=1-2, to=2-2]
	\arrow["\Val"',from=1-4, to=2-4]
	\arrow[maps to, from=1-5, to=2-5]
	\arrow["\widetilde{\mathbb E}", from=2-2, to=2-4]
\end{tikzcd}
\end{equation}
Here the map $\mathcal T$ sends $(\boldalpha,\boldbeta)$ to the totally positive Toeplitz matrix with standard coordinates given by \eqref{e:mij(a,b)},  and the map $\widetilde{\mathbb E}$ takes $(\bigA,\bigB)$ to $(\min(A_i,B_j))_{i,j\in\N}$. Moreover, $\mathcal T$ restricts to a map $\mathcal T^{res}:{\Omega^{res}(\mathcal R_{>0})}\longrightarrow {\ToeplitzU^{res}_\infty(\mathcal R_{>0})}$.  
\end{theorem}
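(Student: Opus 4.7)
The plan is to mimic the proof of Proposition \ref{p:KdelEdrei}, replacing the specific topological valuative semifield $\Kdelposst$ by the general one in the theorem. There are three things to check: (i) that $\mathcal T$ actually lands inside $\ToeplitzU_\infty(\RR_{>0})$; (ii) that the diagram commutes; and (iii) that $\mathcal T$ restricts as claimed to the 'res' subsets. Each reduces to assembling earlier pieces of the paper.

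For the well-definedness of $\mathcal T$, I would fix $(\boldalpha,\boldbeta) \in \Omega(\RR_{>0})$, form the upper-triangular Toeplitz matrix $u$ with $k$-th diagonal entry $\mathbf c_k = H_k(\boldalpha||\boldbeta)$, and truncate to $u_{n+1} \in \Toeplitz_{n+1}(\RR)$. The generalised Jacobi--Trudi identity \eqref{e:SJTformula} applied to the rectangular partition $i \times j$ gives $\Minor^{[i]}_{[i]+j}(u_{n+1}) = \det(H_{j+s-r}(\boldalpha||\boldbeta))_{r,s=1}^{i} = S_{i\times j}(\boldalpha||\boldbeta)$, which lies in $\RR_{>0}$ by the defining hypothesis of $\Omega(\RR_{>0})$. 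Plugging these minors into the formula of Lemma \ref{l:mijFormula} for the standard coordinates recovers exactly the expression \eqref{e:mij(a,b)}, so the standard coordinates of $u_{n+1}$ are $\mathbf m_{ij}(\boldalpha,\boldbeta) \in \RR_{>0}$ (with nonzero denominators, since the denominators are also supersymmetric Schur functions in $\RR_{>0}$). Hence $u_{n+1} \in U_+(\RR_{>0})$ by the definition of the standard positive chart; combined with the Toeplitz property built into $u$ this gives $u_{n+1} \in \ToeplitzU_{n+1}(\RR_{>0})$. Passing to the projective limit in $n$, we obtain $\mathcal T(\boldalpha,\boldbeta) \in \ToeplitzU_\infty(\RR_{>0})$, with standard coordinates $(\mathbf m_{ij}(\boldalpha,\boldbeta))_{i,j \in \N}$.

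For commutativity, Lemma \ref{l:EvalSSSchur} gives $\Val(S_\lambda(\boldalpha||\boldbeta)) = \mathbf S_\lambda(\bigA,\bigB) = \sum_{(k,\ell)\in\lambda}\min(A_k,B_\ell)$, whence
\[
\Val(\mathbf m_{ij}(\boldalpha,\boldbeta)) = \mathbf S_{i\times j}(\bigA,\bigB) + \mathbf S_{(i-1)\times(j-1)}(\bigA,\bigB) - \mathbf S_{i\times(j-1)}(\bigA,\bigB) - \mathbf S_{(i-1)\times j}(\bigA,\bigB).
\]
The four rectangles share all cells except the corner $(i,j)$, so the expression collapses to $\min(A_i,B_j) = \widetilde{\mathbb E}(\bigA,\bigB)_{ij}$. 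This is the same telescoping used in the proof of Proposition \ref{p:KdelEdrei}.

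The restricted statement is then immediate from the definitions: by Definition \ref{d:Omegas}, $(\boldalpha,\boldbeta) \in \Omega^{res}(\RR_{>0})$ exactly when the standard coordinates $\mathbf m_{ik}(\boldalpha,\boldbeta)$ and $\mathbf m_{kj}(\boldalpha,\boldbeta)$ converge in $\RR_{\ge 0}$ as $k\to\infty$, which by the computation in the second paragraph is precisely the condition from Definition \ref{d:ResToepl} for $\mathcal T(\boldalpha,\boldbeta) \in \ToeplitzU^{res}_\infty(\RR_{>0})$. The main technical obstacle I anticipate is verifying the hypothesis of Lemma \ref{l:EvalSSSchur}, namely that the individual Schur functions $s_\lambda(\boldalpha)$ and $s_\lambda(\boldbeta)$ converge: this requires extracting convergence of each summand in the positive expansion \eqref{e:Sdef} of $S_{i\times j}(\boldalpha||\boldbeta)$, which should follow from monotonicity of partial sums in the partial order of $\RR_{>0}$ (Lemma \ref{l:Kdelposet}) together with continuity of the semiring operations on $\RR_{\ge 0}$, but this step needs to be made precise.
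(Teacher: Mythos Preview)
Your proposal is correct and follows essentially the same route as the paper's proof, which also reduces everything to Lemma~\ref{l:EvalSSSchur} and the definitions of $\Omega(\RR_{>0})$ and $\Omega^{res}(\RR_{>0})$.

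Your final concern is unnecessary, however. The theorem only requires the valuation formula~\eqref{e:SkewSchurVal} for rectangular partitions $\lambda = i\times j$, and convergence of $S_{i\times j}(\boldalpha||\boldbeta)$ is already built into Definition~\ref{d:Omegas} of $\Omega(\RR_{>0})$. The valuation computation in the proof of Lemma~\ref{l:EvalSSSchur} (from the second paragraph onward) only uses that the finite approximations $S_\lambda(\boldalpha_{[n]}||\boldbeta_{[n]})$ converge and that $\Val$ is continuous; the hypothesis on the individual $s_\lambda(\boldalpha)$ and $s_\lambda(\boldbeta)$ is used solely to deduce that convergence in the first paragraph. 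So you may apply the valuation argument directly without the detour you anticipate. The paper's own proof simply cites Lemma~\ref{l:EvalSSSchur} without further comment, relying on this observation implicitly.
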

\begin{remark}
The image $\mathcal T(\boldalpha,\boldbeta)$ in the above theorem is equivalently equal to the infinite Toeplitz matrix with entries $\mathbf c_k=H_{k}(\boldalpha||\boldbeta)$, that is, with generating function 
\begin{equation*}
\sum_{k=0}^\infty \mathbf c_k x^k=\frac{\prod_{i=1}^\infty (1+\boldbeta_j x)}{\prod_{j=1}^\infty (1-\boldalpha_i x)}.
\end{equation*}
\end{remark}
\begin{proof}[Proof of \cref{t:restrtropparam}]
Let $(\boldalpha,\boldbeta)\in\Omega(\RR_{> 0})$.  We have that the coordinate-wise valuations $\bigA$ and $\bigB$ of $\boldalpha$ and $\boldbeta$ are weakly increasing sequences in $\Rmininf$, by \cref{l:Kdelposet}. Moreover, since $\boldalpha_i+\boldbeta_i\ne 0$ we also have $\min(A_i,B_i)\in\R$.  Thus $(\bigA,\bigB)\in\Omega(\Rmin)$. We know that the valuations of the standard coordinates determine an infinite min-ideal filling in this generality by \cref{l:infminideal}. Now suppose $(m_{ij})_{i,j}=\mathcal T(\boldalpha,\boldbeta)$, that is,  $m_{ij}=\mathbf m_{ij}(\boldalpha,\boldbeta)$ as in  \eqref{e:mij(a,b)}. Then, indeed, $M_{ij}=\min(A_i,B_j)$ and the diagram commutes, as follows from \cref{l:EvalSSSchur}. The fact that $
\mathcal T^{res}:{\Omega^{res}(\mathcal R_{>0})} \longrightarrow {\ToeplitzU^{res}_\infty(\mathcal R_{>0})}$ is well-defined now follows from the definition of $\Omega^{res}(\mathcal R_{>0})$.
\end{proof}

Note that we do not necessarily have surjectivity of the vertical maps, this would depend on the choice of $\mathcal R_{>0}$. Also, $\widetilde{\mathbb E}$ is not one of the parametrisation maps from \cref{t:TropEdrei}; it is surjective but not injective.

We now recall our standard examples: the semiring $\Kdelnn$ with its two topologies from \cref{d:topCnn} and the semiring $\KK_{\ge 0}=\KK_{>0}\cup\{0\}$ of generalised Puiseaux series with its $t$-adic topology. 

\begin{remark}\label{r:Valcont}
All of these three topological semirings have the property that $\Val:\RR_{\ge 0}\to\Rmininf$ is continuous for the  topology on $\Rmininf$ generated by the  Euclidean topology on $\R$ with added neighbourhoods $(M,\infty]$ at $\infty$. In the case of $\Kdelnnst$ and $\KK_{\ge 0}$, the valuation map is even continuous for  $\Rmininf$ with the discrete topology. However, the price for the continuity of the valuation of $\Kdelnn$ at $0$ is that the inclusion of $\R_{\ge 0}$ into $\Kdelnn$ as constant functions cannot be continuous, even if the inclusion of $\R_{>0}$ is, see \cref{l:Rincl} and \cref{l:Rinclwk}. The constant functions $f_k= \frac{1}k$ can never converge to the $0$-function if the valuation map is continuous. Note that for $\KK_{\ge 0}$ even $\R_{>0}$ does not embed continuously. Rather, the induced topology on $\R_{>0}$ is the discrete topology. 
\end{remark}

\subsection{Generalised Puiseaux series}\label{s:GenPuiseauxInf} For $\mathcal R_{>0}=\KK_{>0}$, let 
\begin{equation}\label{e:Omegaconv}
\Omega^{(0)}(\KK_{>0})=\left\{
(\boldalpha,\boldbeta)\in\KK_{\ge 0}^\N\times \KK_{\ge 0}^\N\left |\begin{array}{l} \boldalpha_1\ge\boldalpha_2\ge\boldalpha_3\ge\dotsc, \\
\boldbeta_1\ge\boldbeta_2\ge\boldbeta_3\ge \dotsc, \\
\boldbeta_i+\boldalpha_i\ne 0 \text{ for all $i\in\N,$}\\
 \lim_{i\to\infty}(\boldalpha_i)=\lim_{i\to\infty}(\boldbeta_i)=0
 \end{array}
 \right.\right\}.
\end{equation}
With this parameter space we have the following proposition. 
\begin{prop}\label{p:OmegaDivInterp}
We have that $\Omega^{(0)}(\KK_{>0})$ from \eqref{e:Omegaconv} is equal to $\Omega(\KK_{>0})$ from  \cref{d:Omegas}. Define  $\ToeplitzU^{(0)}_\infty(\KK_{>0}):=\mathcal T(\Omega(\KK_{>0}))$ where $\mathcal T$ is the map from \cref{t:restrtropparam}. We  then have the following commutative diagram,
\begin{equation}\label{e:restrOmValPuis}
\begin{tikzcd}
	{\Omega(\KK_{>0})} && {\ToeplitzU^{(0)}_\infty(\KK_{>0})} &&  {\ToeplitzU_\infty(\KK_{>0})} \\
	{\Omega_\star^{div}(\Rmin)} && {\Imin^{div}_\infty}(\Rmin) && \Val(\ToeplitzU_\infty(\KK_{>0})).
	\arrow[two heads,from=1-1, to=1-3]
		\arrow[hook,from=1-3, to=1-5]
		\arrow[hook,from=2-3, to=2-5]
	\arrow["\Val"', two heads, from=1-1, to=2-1]
	\arrow["\Val"', two heads, from=1-3, to=2-3]
		\arrow["\Val"', two heads, from=1-5, to=2-5]
	\arrow["{\mathbb E^{div}}", from=2-1, to=2-3]
\end{tikzcd}
\end{equation}
where the map $\mathbb E^{div}$ is the bijection from \cref{t:TropEdrei}, and the inclusions on the right-hand side are both proper inclusions (not surjective).
\end{prop}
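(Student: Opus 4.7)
The plan has four components, reflecting the four assertions in the statement.

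First I would establish $\Omega(\KK_{>0}) = \Omega^{(0)}(\KK_{>0})$. For $\Omega^{(0)} \subseteq \Omega$, if $\boldalpha_i \to 0$ and $\boldbeta_j \to 0$ in the $t$-adic topology of $\KK$, then $\Val(\boldalpha_i),\Val(\boldbeta_j) \to \infty$, and the partial Schur evaluations $s_\lambda(\boldalpha_{[n]})$ and $s_\lambda(\boldbeta_{[n]})$ converge in $\KK_{\ge 0}$ (by an argument analogous to \cref{l:slambdaconv}, but easier because $\KK$ is $t$-adically complete, so $t$-adic convergence of a series reduces to its terms going to $0$). By the formula \eqref{e:Sdef}, each $S_{i\times j}(\boldalpha||\boldbeta)$ then converges, and it lies in $\KK_{>0}$ because $\boldalpha_i+\boldbeta_i \ne 0$ forces at least one of the two sequences to be nowhere vanishing, making at least one summand of \eqref{e:Sdef} strictly positive. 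For the reverse inclusion, note that $S_{1\times 1}(\boldalpha||\boldbeta) = \sum_i \boldalpha_i + \sum_j \boldbeta_j$, and its convergence in $\KK_{>0}$ forces each individual series to converge in $\KK_{\ge 0}$, hence $\boldalpha_i, \boldbeta_j \to 0$ in $\KK$.

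Commutativity of the diagram together with the identity $\Val(\mathbf{m}_{ij}(\boldalpha,\boldbeta)) = \min(A_i,B_j)$ on standard coordinates are immediate applications of \cref{t:restrtropparam} with $\RR_{>0} = \KK_{>0}$. For surjectivity of the left vertical map, any $(\boldalpha,\boldbeta) \in \Omega(\KK_{>0})$ has $\Val(\boldalpha_i),\Val(\boldbeta_j) \to \infty$ by Step~1, so the image pair $(\bigA,\bigB)$ lies in $\Omega^{div}_\star(\Rmin)$; conversely, given $(\bigA,\bigB) \in \Omega^{div}_\star(\Rmin)$ I would use the explicit lift $\boldalpha_i := 2^{-i} t^{A_i}$ (and $0$ if $A_i=\infty$), and similarly for $\boldbeta$, as in the proof of \cref{c:detropOmega}. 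Over $\KK$ the corresponding series converge $t$-adically precisely because the suprema of $\bigA$ and $\bigB$ are $\infty$, placing $(\boldalpha,\boldbeta)$ in $\Omega^{(0)}(\KK_{>0}) = \Omega(\KK_{>0})$ with the prescribed valuations. Surjectivity of the middle vertical map $\Val: \ToeplitzU^{(0)}_\infty(\KK_{>0}) \twoheadrightarrow \Imin^{div}_\infty(\Rmin)$ then follows by chasing the commuting square using bijectivity of $\mathbb{E}^{div}$ from \cref{t:TropEdrei}.

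For the two proper inclusions, a single example will suffice. I would take any $u \in \ToeplitzU_\infty(\R_{>0})$, for instance one coming from Schoenberg parameters $\alpha_i = 2^{-i}$, $\boldbeta = 0$, $\gamma = 0$ over $\R$. The embedding $\R_{>0} \hookrightarrow \KK_{>0}$ as constant functions places $u$ in $\ToeplitzU_\infty(\KK_{>0})$, and its standard coordinates $m_{ij}$, being positive real constants, all have $t$-adic valuation $0$. Hence $\Val(u) \in \Imin_\infty(\Rmin)$ is the constant zero filling, which has bounded suprema and so does not lie in $\Imin^{div}_\infty(\Rmin)$; this already witnesses $\Imin^{div}_\infty(\Rmin) \subsetneq \Val(\ToeplitzU_\infty(\KK_{>0}))$. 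Using the identification $\Val(\ToeplitzU^{(0)}_\infty(\KK_{>0})) = \Imin^{div}_\infty(\Rmin)$ established in Step~3, it follows that $u \notin \ToeplitzU^{(0)}_\infty(\KK_{>0})$, giving the second proper inclusion.

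The only conceptually delicate step is Step~1: one must be careful that convergence of individual $S_{i\times j}$ in $\KK_{>0}$ (as opposed to mere formal existence of the generating function) is equivalent to the parameter sequences tending to zero in the $t$-adic topology. Once this equivalence is in place, the remaining arguments reduce to an application of \cref{t:restrtropparam}, the explicit monomial lift from \cref{c:detropOmega}, and the real-constant witness.
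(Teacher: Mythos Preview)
Your proposal is correct and follows essentially the same approach as the paper's proof: both establish $\Omega^{(0)}(\KK_{>0})=\Omega(\KK_{>0})$ via convergence of $S_{(1)}(\boldalpha||\boldbeta)=\sum_i(\boldalpha_i+\boldbeta_i)$ forcing $\boldalpha_i,\boldbeta_i\to 0$ in the $t$-adic topology, then invoke \cref{t:restrtropparam} for the diagram, and finally exhibit a real totally positive Toeplitz matrix (with constant zero tropicalisation) to witness properness of the inclusions. Your treatment is slightly more explicit in places (the $2^{-i}t^{A_i}$ lift for surjectivity, and the specific Schoenberg parameters for the witness), but the structure and key observations are the same.
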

\begin{proof}
In the $t$-adic topology, convergence to $0$ is equivalent to divergence to $\infty$ of the valuations.  Therefore the vertical map $\Val$ on the left-hand side is well-defined and surjective. It is clear that if a sequence $\boldalpha=(\boldalpha_i)_i$ converges to $0$ in this topology, then the Schur polynomial values $s_\lambda(\boldalpha_1,\dotsc,\boldalpha_n)$ also converge as $n\to\infty$, since any power of $t$ will occur in at most finitely many summands (using the expansion indexed by SSYT). It follows that $\Omega^{(0)}(\KK_{>0})$ lies in $\Omega(\KK_{>0})$. 
Conversely, if $(\boldalpha,\boldbeta)\in\Omega(\KK_{>0})$ then we have that $S_{(1)}(\boldalpha ||\boldbeta)=\sum_{i=1}^\infty(\boldalpha_i+\boldbeta_i)$ converges in $\KK_{>0}$. Therefore the partial sums are a Cauchy sequence for the $t$-adic norm. Explicitly, we have that for every $\ep>0$ there is an $N_\ep\in\N$ such that 
\begin{equation}\label{e:KCauchy} \left\|\sum_{i=m}^{n}(\boldalpha_i+\boldbeta_i)\right\|_t<\ep,\qquad \forall n,m>N_\ep.
\end{equation} 
Let $A_i:=\Val(\boldalpha_i)$ and $B_i=\Val(\boldbeta_i)$.  Then the $A_i$ and the $B_i$ form weakly increasing sequences, and we denote by $A$ and $B$ their respective suprema. 
Using positivity of the $\boldalpha_i$ and $\boldbeta_i$ we have  
\begin{equation}\label{e:ValBound}
\Val\left(\sum_{i=m}^{n}(\boldalpha_i+\boldbeta_i)\right)=\min\left(\{\min(A_i,B_i)\mid m\le i\le n\}\right)\le \min(A,B).
\end{equation}
But letting $\ep$ go to $0$ in \eqref{e:KCauchy} we see that the valuations in \eqref{e:ValBound} cannot be bounded. Therefore $A=B=\infty$ and we have that 
$\lim_{i\to\infty}(\boldalpha_i)=\lim_{i\to\infty}(\boldbeta_i)=0$. This concludes the proof that $\Omega^{(0)}(\KK_{>0})=\Omega(\KK_{>0})$.

We can now consider the commutative diagram \eqref{e:OmValGenR} for $\RR_{>0}=\KK_{>0}$. We  factorise the map $\mathcal T$ to  $\ToeplitzU_\infty(\KK_{>0})$ through its image, $\ToeplitzU^{(0)}_\infty(\KK_{>0})$ to obtain the top line of \eqref{e:restrOmValPuis}. The image   $\Val(\Omega^{(0)}(\KK_{>0}))$ of the left-hand $\Val$ is clearly $\Omega_\star^{div}(\Rmin)$, and the left-hand side commutative diagram follows from \cref{t:TropEdrei}. The rest of the diagram is also clear. It remains to show that the horizontal maps on the right-hand side are not surjective. 

Any real totally positive Toeplitz matrix $u$ lies in $\ToeplitzU_\infty(\KK_{>0})$, but its valuation will be the $0$ min-ideal filling $\Val(u)=(M_{ij})_{i,j}=(0)_{i,j}$, which does not lie in $\Imin^{div}_{\infty}(\Rmin)$. Thus also $u$ cannot lie in $\ToeplitzU^{(0)}_\infty(\KK_{>0})$. It follows that the two horizontal inclusions are indeed proper.  
\end{proof}

\subsection{The semifield of continuous functions $\Kdelpos$}
In this section we show that $\mathcal T^{res}$ over $\Kdelposwk$ and $\Kdelposst$ detropicalises the parametrisation maps $\mathbb E$ and $\mathbb E^s$ from \cref{t:TropEdrei}. Namely, we have the following theorem. 
\begin{theorem}
For $\RR_{\ge 0}=\Kdelnnwk$ or $\mathcal R_{\ge 0} =\Kdelnnst$ the diagram \eqref{e:OmValGenR} restricts in the following ways,
\begin{equation}\label{e:restrOmVal}
\begin{tikzcd}
	{\Omega^{res}(\Kdelposwk)} && {\ToeplitzU^{res}_\infty(\Kdelposwk)} \\
	{\Omega_\star(\Rmin)} && {\Imin_\infty}(\Rmin),
	\arrow["{\mathcal T^{res}}", from=1-1, to=1-3]
	\arrow["\Val",two heads, from=1-1, to=2-1]
	\arrow["\Val", two heads, from=1-3, to=2-3]
	\arrow["{\mathbb E}", from=2-1, to=2-3]
\end{tikzcd} \qquad \qquad
\begin{tikzcd}
	{\Omega^{res}(\Kdelposst)} && {\ToeplitzU^{res}_\infty(\Kdelposst)} \\
	{\Omega_\star^{il}(\Rmin)} && {\Imin^{s}_\infty}(\Rmin),
	\arrow["{\mathcal T^{res}}", from=1-1, to=1-3]
	\arrow["\Val",two heads, from=1-1, to=2-1]
	\arrow["\Val",two heads, from=1-3, to=2-3]
	\arrow["{\mathbb E^s}", from=2-1, to=2-3]
\end{tikzcd}
\end{equation}
where the restrictions $\mathbb E$ and $\mathbb E^s$ of $\widetilde{\mathbb E}$ are bijections by  \cref{t:TropEdrei} and the vertical maps are all surjective.
\end{theorem}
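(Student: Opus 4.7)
The commutativity of both diagrams and the well‑definedness of the horizontal $\mathcal T^{res}$ arrows are already established by \cref{t:restrtropparam}; the two remaining tasks are (a) verifying that the vertical $\Val$ maps land in the claimed subspaces $\Omega_\star(\Rmin)$, $\Omega^{il}_\star(\Rmin)$, $\Imin_\infty(\Rmin)$, $\Imin^s_\infty(\Rmin)$, and (b) proving surjectivity of those vertical maps. Throughout, evaluation at a fixed $t\in(0,\delta]$ turns $\mathcal T(\boldalpha,\boldbeta)(t)$ into a genuine real totally positive infinite Toeplitz matrix, so the classical pointwise Edrei limits $\mathbf m_{ik}(\boldalpha,\boldbeta)(t)\to\boldalpha_i(t)$ and $\mathbf m_{kj}(\boldalpha,\boldbeta)(t)\to\boldbeta_j(t)$ from \cref{p:Edreilimits} will be available at every step.

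For (a) on the parameter side, suppose $(\boldalpha,\boldbeta)\in\Omega^{res}(\Kdelposwk)$ with limits $\ell_i=\lim_k\mathbf m_{ik}$ in $\Kdelnnwk$. Pointwise on $(0,\delta]$ the above classical limit identifies $\ell_i$ with $\boldalpha_i$, while weak convergence forces the valuation sequence $\min(A_i,B_k)$ to converge to $A_i$; since $(B_k)$ is weakly increasing with supremum $L_B$, this yields $L_B\ge A_i$ for all $i$ and hence $L_B\ge L_A$. The symmetric argument gives $L_A=L_B$, so $(\bigA,\bigB)\in\Omega_\star(\Rmin)$. In the strong setting \cref{l:valcont} upgrades valuation convergence to stabilisation $\min(A_i,B_k)=A_i$ for $k\gg 0$, i.e.\ $B_{k}\ge A_i$ eventually, and symmetrically, giving the full interlacing condition $(\bigA,\bigB)\in\Omega^{il}_\star(\Rmin)$. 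On the Toeplitz side, \cref{l:infminideal} already provides the inclusion into $\Imin_\infty(\Rmin)$; in the strong case, stabilisation of the standard coordinates in $\Kdelposst$ combined with \cref{l:valcont} forces the rows and columns of the associated min-ideal filling to stabilise, so the image lies in $\Imin^s_\infty(\Rmin)$.

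For (b) in the weak case, given $(\bigA,\bigB)\in\Omega_\star(\Rmin)$, take the lift $\boldalpha_i=\tfrac{1}{2^i}t^{A_i}$ (or $0$ if $A_i=\infty$) and analogously $\boldbeta_j$, as in \cref{c:detropOmega}. That corollary places the lift in $\Omega^\circ(\Kdelposst)\subseteq\Omega(\Kdelposwk)$. Classical Edrei gives the pointwise limit $\mathbf m_{ik}(\boldalpha,\boldbeta)(t)\to\boldalpha_i(t)$ on $(0,\delta]$, and the weak interlacing of $(\bigA,\bigB)$ delivers the valuation limit $\min(A_i,B_k)\to A_i=\Val(\boldalpha_i)$; together these yield weak convergence $\mathbf m_{ik}\to\boldalpha_i$ in $\Kdelnnwk$, and symmetrically for $\mathbf m_{kj}\to\boldbeta_j$. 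Hence the lift lies in $\Omega^{res}(\Kdelposwk)$ and $\Val(\boldalpha,\boldbeta)=(\bigA,\bigB)$, giving surjectivity of the left vertical; surjectivity of the right vertical then follows by commutativity together with the bijectivity of $\mathbb E$ from \cref{t:TropEdrei}.

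The main obstacle will be surjectivity in the strong case. The same $\tfrac{1}{2^i}$-lift must now be shown to lie in $\Omega^{res}(\Kdelposst)$, which by \cref{l:Kcontcrit} reduces, once valuation stabilisation is secured by interlacing, to proving uniform convergence of $t^{-A_i}\mathbf m_{ik}(t)$ to the constant $\tfrac{1}{2^i}$ on $[0,\delta]$ (and similarly for columns). My plan is to expand $\mathbf m_{ik}$ through formula \eqref{e:mijalphabeta1} using the Goulden--Greene--Macdonald tableau expansion \eqref{e:GGMformula} for each supersymmetric Schur factor $S_{i\times k}(\boldalpha||\boldbeta)$, identify the minimal tableau $T_0(p,q)=p$ as the unique contributor of the leading coefficient $\tfrac{1}{2^i}$, and bound the remaining tableaux uniformly on $[0,\delta]$ by a geometrically decaying tail in $k$. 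The needed uniform ratio bounds $\boldalpha_{i+1}(t)/\boldalpha_i(t)=\tfrac{1}{2}t^{A_{i+1}-A_i}\le\tfrac{1}{2}$ and $\boldbeta_k(t)/\boldalpha_i(t)\le\tfrac{1}{2^{k-i}}$ (for $k$ large enough that $B_k\ge A_i$, guaranteed by interlacing) follow from $\delta<1$ and the monotonicity of the exponent sequences; summing the resulting geometric bounds will control all non-leading tableau contributions uniformly in $t$. With the uniform estimate in hand, the lift lies in $\Omega^{res}(\Kdelposst)$, surjectivity of the left vertical in the strong diagram follows, and surjectivity of the right vertical is then inferred from commutativity and the bijectivity of $\mathbb E^s$ from \cref{t:TropEdrei}.
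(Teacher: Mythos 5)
Your treatment of the commutativity statements, the well-definedness of the vertical maps in both diagrams, and the surjectivity of the weak vertical maps follows the paper's own argument closely (pointwise Edrei limits via \cref{p:Edreilimits}, continuity of $\Val$, stabilisation via \cref{l:valcont} in the strong case, and the explicit lift $\boldalpha_i=\tfrac1{2^i}t^{A_i}$ as in \cref{c:detropOmega}). All of that is sound.

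Where you diverge is precisely the step you flag as the main obstacle: surjectivity of $\Val:\Omega^{res}(\Kdelposst)\to\Omega^{il}_\star(\Rmin)$. The paper does \emph{not} attack this by expanding the supersymmetric Schur functions. Instead it passes to the quiver quantities $v_{ik}\inv$ from \cref{p:ToeplitzviaQ}, observes that $(v_{ik})_k$ is strictly increasing in $k$ for a totally positive Toeplitz matrix (a structural positivity fact, not an estimate), and applies Dini's theorem to the normalised sequence $(t^{-A_i}v_{ik}\inv)_k$ to get uniform convergence on $[0,\delta]$; the desired uniform convergence of $t^{-A_i}m_{ik}$ is then recovered by telescoping $v_{ik}-v_{i-1,k-1}=m_{ik}\inv$. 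The monotonicity that makes Dini applicable is exactly what your GGM-expansion plan has no substitute for.

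As written, your plan for the strong case has a real gap. You need uniform convergence of a \emph{ratio} of four infinite tableau sums, and "bound the remaining tableaux by a geometrically decaying tail in $k$" does not deliver that. Two specific problems: (i) the number of SSYT of shape $i\times k$ grows with $k$, so even if each non-$T_0$ term is dominated by a factor less than one, the aggregate contribution of the non-leading tableaux to $S_{i\times k}$ is not a geometrically small "tail" — it typically converges to a nonzero quantity, and the convergence of $\mathbf m_{ik}$ relies on cancellation between numerator and denominator, not on the tails vanishing; (ii) when the sequences $\bigA$ or $\bigB$ have repeated values (which is permitted in $\Omega^{il}_\star(\Rmin)$), $T_0$ is \emph{not} the unique tableau achieving the minimal valuation, so identifying the leading coefficient $\lc(\mathbf m_{ik})$ and showing it converges to $\tfrac{1}{2^i}$ requires tracking the full set of valuation-minimal tableaux and the resulting cancellations — none of which the geometric-ratio bounds capture. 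You could try to salvage this by first handling the case of strictly increasing exponent sequences and then passing to a limit, but that adds its own continuity argument and is not in your sketch. The paper's Dini-plus-telescope route is both shorter and avoids these combinatorial complications entirely; you should either adopt it or supply the missing uniform control of the sum ratio.
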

\begin{proof}
Suppose that $(\boldalpha,\boldbeta)\in\Omega^{res}(\Kdelposwk)$. Let $\bigA=(A_i)_i$ and $\bigB=(B_j)_j$ where $A_i=\Val(\boldalpha_i)$ and $B_j=\Val(\boldbeta_j)$. Note that $(\bigA,\bigB)\in\Omega(\Rmin)$ by \cref{t:restrtropparam}. We need to show this pair is weakly interlacing. By definition of $\Omega^{res}(\Kdelposwk)$, the limits $\lim_{k\to\infty}\mathbf m_{ik}(\boldalpha,\boldbeta)$ and $\lim_{k\to\infty}\mathbf m_{kj}(\boldalpha,\boldbeta)$ exist in $\Kdelnnwk$. Now \cref{p:Edreilimits} implies that these limits equal to $\boldalpha_i$ and $\boldbeta_j$, respectively.  
Since the valuation map is continuous we therefore also have that 
$\lim_{k\to\infty}M_{ik}=A_i$ and $\lim_{k\to\infty}M_{kj}=B_j$, where $M_{ik}=\Val(\mathbf m_{ij}(\boldalpha,\boldbeta))$. This now implies that $\bigA$ and $\bigB$ have the same supremum, namely equal to $\sup(\{M_{ij}\mid i,j\in\N\})$, see also the proof of \cref{t:TropEdrei}. 
Therefore $\bigA$ and $\bigB$ are indeed weakly interlacing, and the left-hand vertical map is well-defined. The entire left-hand diagram now makes sense, and it commutes by \cref{t:restrtropparam}. 

Next we consider the diagram involving the strong topology. Here we first show that the right-hand vertical map is well-defined. Suppose that $(m_{ij})_{i,j\in\N}$ are the standard coordinates of an element in $\ToeplitzU^{res}_\infty(\Kdelposst)$. By the `restricted' property we have that  $\lim_{k\to\infty}m_{ik}$ and $\lim_{k\to\infty}m_{kj}$ exist in the strong topology. This  means that the associated valuations $M_{ik}$ and the $M_{kj}$ stabilize as $k\to\infty$, thanks to \cref{l:valcont}. Thus $(M_{ij})_{i,j\in\N}$ does indeed lie in the set $\Imin^s_{\infty}(\Rmin)$ of stable min-ideal fillings. For the left-hand vertical map, note that $\Omega_\star(\Kdelposst)$ is a subset of $\Omega_\star(\Kdelposwk)$. Thus $\Val$ takes  $(\boldalpha,\boldbeta)$ to a weakly interlacing pair $(\bigA,\bigB)\in\Omega_\star(\Rmin)$ and we have that $\mathbb E(\bigA,\bigB)=(M_{ij})_{i,j}$. But since  $(M_{ij})_{i,j}\in\Imin^s_\infty(\Rmin)$ this implies that $\bigA$ and $\bigB$ are actually interlacing, as follows from \cref{t:TropEdrei}.

Next we show that the map 
$\Val:\Omega^{res}(\Kdelposwk)\to \Omega_\star(\Rmin)$
from the left-hand side of \eqref{e:restrOmVal} is surjective. Suppose we have $(\bigA,\bigB)\in\Omega_\star(\Rmin)$. Consider any lift to $\Omega(\Kdelposwk)$. For example, as in \eqref{e:paramlifting}, we may set $\boldalpha_i:=\frac{1}{2^i}t^{A_i}$ and $\boldbeta_j=\frac{1}{2^j}t^{B_j}$  for $A_i,B_j\in\R$ (and $0$ whenever $A_i$ or $B_j$ is $\infty$). We want to show that the this lifting has the `restricted' property, that is, the sequences $(\mathbf m_{ik}(\boldalpha,\boldbeta))_k$ and $(\mathbf m_{kj}(\boldalpha,\boldbeta))_k$ converge as $k\to\infty$ for the weak topology of $\Kdelnn$. Convergence in the weak topology is equivalent to pointwise convergence together with convergence of valuations. Pointwise convergence holds by \cref{p:Edreilimits}. Convergence of the valuations follows from the assumption that $\bigA$ and $\bigB$ were weakly interlacing. Namely, the valuations $M_{ij}=\min(A_i,B_j)$ of the $\mathbf m_{ik}(\boldalpha,\boldbeta)$ recover in the limit $i\to\infty$ precisely $B_j$ and for $j\to\infty$ the $A_i$, as in the proof of \cref{t:TropEdrei}.

Let us now prove that the map 
$\Val: \Omega^{res}(\Kdelposst)\to \Omega^{il}_\star(\Rmin)$
from the  right-hand diagram in \eqref{e:restrOmVal} is surjective. Suppose $(\bigA,\bigB)\in\Omega^{il}_\star(\Rmin)$. Then in particular $(\bigA,\bigB)\in\Omega^{\R}_\star(\Rmin)$ and the components of the sequences are all in $\R$, compare \cref{r:interlacingvsR}. Setting $\boldalpha_i:=\frac{1}{2^i}t^{A_i}$ and $\boldbeta_j=\frac{1}{2^j}t^{B_j}$ as in \cref{l:detropK} we obtain a lift of $(\bigA,\bigB)$ to $\Omega^\circ(\Kdelposst)$, see \cref{c:detropOmega}, and thus a lift to $\Omega(\Kdelposst)$ by \cref{c:EvalSSSchur}. We want to show that it has the `restricted' property in the strong topology, that is, the sequences $(\mathbf m_{ik}(\boldalpha,\boldbeta))_k$ and $(\mathbf m_{kj}(\boldalpha,\boldbeta))_k$ converge as $k\to\infty$ in $\Kdelnnst$. By \cref{l:Kcontcrit}, convergence of a sequence $f_n$ in the strong topology is equivalent to the valuations $F_n$ stabilising and then uniform convergence for the $t^{-F_n}f_n$ on $[0,\delta]$ to a strictly positive function. Let us now focus on the sequence $(m_{ik})_k$ given by $m_{ik}=\mathbf m_{ik}(\boldalpha,\boldbeta)$.  The valuations $M_{ik}=\min(A_i,B_k)$ stabilise to $A_i$ as $k\to\infty$, as follows from the assumption that $\bigA$ and $\bigB$ are  interlacing. Moreover, we have pointwise convergence of $(m_{ik})_k$ to $\boldalpha_i$, by \cref{p:Edreilimits}. Let us assume $k>i$ and consider the vertex label $v_{ik}$ from \cref{p:ToeplitzviaQ}.  It turns out to be useful to consider the related sequence $(v_{ik}\inv)_k$. Let us express $v_{ik}\inv$ in terms of the $m_{ij}$ as follows,
\begin{equation}\label{e:vikinv}
v_{ik}\inv=m_{ik}\left(1+\frac{m_{ik}}{m_{i-1,k-1}}+\frac{m_{ik}}{m_{i-2,k-2}}+\dotsc +\frac{m_{ik}}{m_{1,k-i+1}}\right)\inv.
\end{equation}
Note that the term in the bracket has $i$ summands, independently of $k$.
Also, since the valuations $M_{ij}$ of the $m_{ij}$ are weakly increasing in $i$ and $j$ we have that $\Val(v_{ik}\inv)=M_{ik}$, which means that the valuations of the $v_{ik}\inv$ stabilise to $A_i$ as $k\to\infty$.  We now consider the sequence $(t^{-A_i}v_{ik}\inv)_k$ and summarise its properties.
\begin{itemize}
\item For $k$ large enough $t^{-A_i}v_{ik}\inv$ has valuation $0$ and is therefore a continuous strictly positive function on $[0,\delta]$.
\item The sequence $(v_{ik})_k$ is strictly monotonely increasing in $k$ by \cref{p:ToeplitzviaQ} and therefore the sequence $(t^{-A_i}v_{ik}\inv)_k$ is strictly monotonely decreasing. 
\item We have pointwise convergence of $(t^{-A_i}v_{ik}\inv )_k$ on $(0,\delta]$ to 
\begin{equation}\label{e:elli}
\ell_i:=t^{-A_i}\boldalpha_{i}\left(1+\frac{\boldalpha_{i}}{\boldalpha_{i-1}}+\frac{\boldalpha_{i}}{\boldalpha_{i-2}}+\dotsc +\frac{\boldalpha_{i}}{\boldalpha_{1}}\right)\inv.
\end{equation}
\end{itemize}
It now follows that the sequence $(t^{-A_i}v_{ik}\inv)_k$ is uniformly convergent (by Dini's theorem), and its limit is a continuous function $[0,\delta]$ that is positive on $(0,\delta]$ by \cref{e:elli}. Moreover, since $\Val(\frac{\boldalpha_i}{\boldalpha_{i-s}})=A_i-A_{i-s}\ge 0$ we have that 
\begin{equation}\label{e:Uniformell}
\lim_{t\to 0}\frac{\boldalpha_i(t)}{\boldalpha_{i-s}(t)}=:c_s\in \R_{\ge 0}\qquad \text{ and }\qquad
\ell_i(0)=\lim_{t\to 0}\ell_i(t)=\lc(\boldalpha_i)(1+c_1+\dotsc+c_{i-1})\inv\in \R_{>0},
\end{equation}
so that $\ell_i$ is a strictly positive function on $[0,\delta]$. Therefore we may invert $\ell_i$ in $C^0([0,\delta])$, and we note that we also have uniform convergence 
\[
\lim_{k\to\infty}(t^{A_i}v_{ik})=\ell_i\inv\quad \text{ and }\quad  \lim_{k\to\infty}(t^{A_i}v_{i-1,k-1})=t^{A_{i}-A_{i-1}}\ell_{i-1}\inv \quad \text{in $C^0([0,\delta])$,}
\]
again keeping in mind $A_i\ge A_{i-1}$. Now from \eqref{e:vikinv} and \eqref{e:elli} it follows that 
\begin{eqnarray}\label{e:row1}
t^{A_i}v_{ik}(t) - t^{A_i}v_{i-1,k-1}(t) &=&
t^{A_i}m_{ik}(t)\inv,\\ \label{e:row2}
\ell_i(t)\inv - t^{A_i-A_{i-1}}\ell_{i-1}(t)\inv &=&
t^{A_i}\boldalpha_i(t)\inv.
\end{eqnarray}
Since just above we have proved uniform convergence of \eqref{e:row1} to \eqref{e:row2}, and both are strictly positive functions on $[0,\delta]$ (for $k$ sufficiently large), we can invert once more and deduce that also $t^{-A_i}m_{ik}$ converges uniformly to $t^{-A_i}\boldalpha_i$. Now the conditions of \cref{l:Kcontcrit} are met and we see that 
$m_{ik}$ converges to $\boldalpha_i$ in the strong topology. The proof of strong convergence for $m_{kj}$ follows by symmetry, therefore, we have shown that $(\boldalpha,\boldbeta)$ lies in $\Omega^{res}(\Kdelposst)$ and lifts $(\bigA,\bigB)$. 
Finally, the remaining vertical maps are surjections using that $\mathbb E$ and $\mathbb E^s$ are bijections. 
\end{proof}

\appendix

\section{Proof of  \cref{p:ToeplitzviaQ}}\label{a:ToeplitzviaQ}

Let $(\RR,\RR_{>0})$ be a ring with a positive structure. In this Appendix we prove the quiver description of totally positive Toeplitz matrices in $U_+^{(n+1)}(\RR_{>0})$ stated in  \cref{p:ToeplitzviaQ}. Namely, we give a direct proof in  terms of standard coordinates. We note that there is also an  alternative way one could prove this proposition using planar duality and an earlier description of totally positive Toeplitz matrices~\cite{rietschNagoya} in terms of the Givental quiver~\cite{Givental:QToda}, by making use a coordinate transformation constructed in \cite{Ludenbach}.

Recall first that the standard coordinates $(m_{ij})_{(i,j)\in\mathcal S_{\le n+1}}$ for $u\in U^{(n+1)}_+(\RR_{>0})$
are given by the factorisation 
\begin{multline}\label{e:idealfactRecall}
u=x_{\mathbf i_0}((m_{ij})):=\phi^{\mathbf i_0}_{>0}((m_{ij})_{i,j\in\mathcal S_{\le n+1}})=\big(x_{n}\left({m_{n,1}}\right)x_{n-1}\left({m_{n-1,1}}\right)\cdots\cdots x_2\left({m_{2,1}}\right) x_1\left({m_{1,1}}\right)\big)\\
\big(x_{n}\left({m_{n-1,2}}\right)\cdots x_2\left({m_{1,2}}\right)\big)\big(x_{n}\left({m_{n-1,2}}\right)\cdots x_3\left({m_{1,2}}\right)\big)\dots \big(x_{n}\left({m_{2,n-1}}\right)x_{n-1}\left({m_{1,n-1}}\right)\big) x_n\left({m_{1,n}}\right),
\end{multline}
where $\mathbf i_0=\mathbf i_0^{(n)}=(n,n-1,\dotsc,1,n,\dotsc,2,\dotsc, n,n-1,n)$.  We also consider $U_+^{\mathbf i_0}(\RR^\times)$ whose elements have standard coordinates $m_{ij}\in\RR^\times$, and think of $U_+^{(n+1)}(\RR_{>0})$ as the positive part of $U_+^{\mathbf i_0}(\RR)$ (where the coordinates $m_{ij}$ lie in $\RR_{>0}$). We will prove a proposition that characterises the Toeplitz matrices in $U_+^{\mathbf i_0}(\RR^\times)$, and hence within that the totally positive Toeplitz matrices, in terms of standard coordinates. Since $(\RR,\RR_{>0})$ necessarily contains $(\Q,\Q_{>0})$, see \cref{r:RcontainsQ}, it suffices to work over $\Q$.

Consider for any $(i,j)\in\mathcal S_{\le n+1}$ the expression
\[
v_{i,j}:= \sum_{\ell=0}^{\min(i,j)-1}\frac{1}{m_{i-\ell,j-\ell}}=\frac 1{m_{i,j}}+\frac{1}{m_{i-1,j-1}}+\dotsc.
\]
in $\Q[\, m_{i,j}^{\pm 1}\mid i,j\in\mathcal S_{\le n+1}]$. Since $v_{i,j}-v_{i-1,j-1}=\frac{1}{m_{i,j}}$ we have that $\Q[\, m_{i,j}^{\pm 1}\mid (i,j)\in\mathcal S_{\le n+1}]=\Q[\, (v_{i,j}-v_{i-1,j-1})^{\pm 1}\mid (i,j)\in\mathcal S_{\le n+1}]$. We consider the quiver $Q_{n+1}=(\mathcal V,\mathcal A)$ from \cref{d:Qn+1} with vertices indexed by $\{(0,0)\}\cup \mathcal S_{n+1}$ and labelled by the $v_{ij}$ for $(i,j)\in \mathcal S_{\le n+1}$ and $0$ for $(0,0)$ as shown in \cref{f:Qn+1}. We also recall that there are two types of vertices, $\mathcal V=\mathcal V_\circ\sqcup\mathcal V_{\bullet}$. Namely, those which are sources or sinks that we call $\circ$-vertices, and all other vertices, that we call $\bullet$-vertices.  

We associate to every arrow $a$ in the labelled quiver $Q_{n+1}$ the expression  in standard coordinates given by $\kappa_a=v_{h(a)}-v_{t(a)}$, where $h(a)$ is the head and $t(a)$ is the tail of the arrow $a$. To every $\bullet$-vertex $b$ we associate
\[
f_b((m_{ij})_{i,j})=\left(\prod_{a\in\operatorname{in}(b)}\kappa_a\right)-
\left(\prod_{a\in\operatorname{out}(b)}\kappa_a\right)\qquad \in \Q\left[\{m_{ij}^{\pm 1}\mid (i, j)\in \mathcal S_{\le n+1}\}\right],
\]
where $\operatorname{in}(b)$ and $\operatorname{out}(b)$ are the sets of incoming and outgoing arrows, respectively, associated to the vertex~$b$.

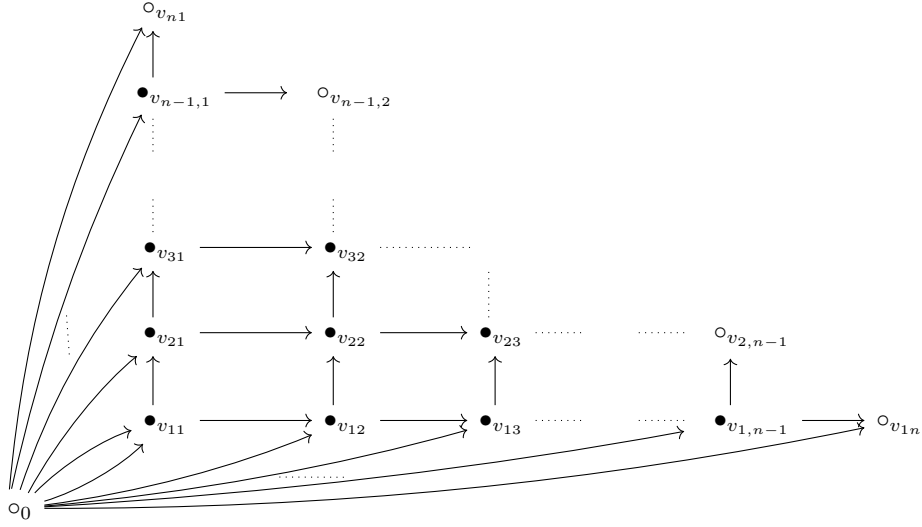
\begin{figure}
\[\begin{tikzcd}
	& {\circ_{v_{n1}}} \\
	& {\ \ \bullet_{v_{n-1,1}}} & {\ \ \circ_{v_{n-1,2}}} \\
	& {} & {} & {} \\
	& {\bullet_{v_{31}}} & {\bullet_{v_{32}}} & {} & {} \\
	& {\bullet_{v_{21}}} & {\bullet_{v_{22}}} & {\bullet_{v_{23}}} & {} & {\ \ \circ_{v_{2,n-1}}} \\
	& {\bullet_{v_{11}}} & {\bullet_{v_{12}}} & {\bullet_{v_{13}}} & {} & {\ \ \bullet_{v_{1,n-1}}} & {\circ_{v_{1n}}} \\
	{\circ_0}
	\arrow[shift left=2, from=2-2, to=1-2]
	\arrow[from=2-2, to=2-3]
	\arrow[shift right=2, shorten >=4pt, dotted, no head, from=2-2, to=3-2]
	\arrow[shift right=2, shorten >=4pt, dotted, no head, from=2-3, to=3-3]
	\arrow[shift left=2, shorten >=4pt, dotted, no head, from=4-2, to=3-2]
	\arrow[from=4-2, to=4-3]
	\arrow[shift left=2, shorten >=4pt, dotted, no head, from=4-3, to=3-3]
	\arrow[shorten >=6pt, dotted, no head, from=4-3, to=4-4]
	\arrow[shift left=2, from=5-2, to=4-2]
	\arrow[from=5-2, to=5-3]
	\arrow[shift left=2, from=5-3, to=4-3]
	\arrow[from=5-3, to=5-4]
	\arrow[shift left=2, shorten >=2pt, dotted, no head, from=5-4, to=4-4]
	\arrow[shorten >=6pt, dotted, no head, from=5-4, to=5-5]
	\arrow[shorten >=6pt, dotted, no head, from=5-6, to=5-5]
	\arrow[shift left=2, from=6-2, to=5-2]
	\arrow[from=6-2, to=6-3]
	\arrow[shift left=2, from=6-3, to=5-3]
	\arrow[from=6-3, to=6-4]
	\arrow[shift left, from=6-4, to=5-4]
	\arrow[shorten >=6pt, dotted, no head, from=6-4, to=6-5]
	\arrow[shift left=2, from=6-6, to=5-6]
	\arrow[shorten >=6pt, dotted, no head, from=6-6, to=6-5]
	\arrow[from=6-6, to=6-7]
	\arrow[shift left=2, curve={height=-12pt}, from=7-1, to=1-2]
	\arrow[""{name=0, anchor=center, inner sep=0}, shift left=2, curve={height=-6pt}, from=7-1, to=2-2]
	\arrow[""{name=1, anchor=center, inner sep=0}, shift left, curve={height=-6pt}, from=7-1, to=4-2]
	\arrow[curve={height=-6pt}, from=7-1, to=5-2]
	\arrow[curve={height=-6pt}, from=7-1, to=6-2]
	\arrow[curve={height=6pt}, from=7-1, to=6-2]
	\arrow[curve={height=6pt}, from=7-1, to=6-3]
	\arrow[""{name=2, anchor=center, inner sep=0}, curve={height=6pt}, from=7-1, to=6-4]
	\arrow[""{name=3, anchor=center, inner sep=0}, curve={height=6pt}, from=7-1, to=6-6]
	\arrow[curve={height=12pt}, from=7-1, to=6-7]
	\arrow[shorten <=6pt, shorten >=6pt, dotted, no head, from=0, to=1]
	\arrow[shorten <=6pt, shorten >=12pt, dotted, no head, from=2, to=3]
\end{tikzcd}\]
\caption{The quiver $Q_{n+1}$\label{f:Qn+1}}
\end{figure}
The following proposition implies \cref{p:ToeplitzviaQ}.  The explicit formula for the arrow-labels is in fact an incarnation of the coordinate change on arrow-labels of the dual (Givental) quiver introduced in \cite{Ludenbach}.
\begin{prop}\label{p:ToeplitzEquiv} Suppose $(m_{ij})_{i,j}\in(\RR^\times)^{\mathcal S_{\le n+1}}$. The following conditions are equivalent. 
\begin{enumerate}
\item
$u=x_{\mathbf i_0}((m_{ij}))$ is a Toeplitz matrix.
 \item
 The following identities hold for $(i,j)\in\mathcal S_{\le n+1}$,
 \begin{equation*}
 \begin{array}{ccc}
 v_{ij}-v_{i,j-1}&=&\frac 1{m_{ij}}\prod_{\ell=1}^{j-1}\frac{m_{i+1,\ell}}{m_{i,\ell}},\\
v_{ij}-v_{i-1,j}&=&\frac 1{m_{ij}}\prod_{\ell=1}^{i-1}\frac{m_{\ell,j+1}}{m_{\ell,j}},
 \end{array}
 \end{equation*}
with $v_{i,0}=v_{0,j}:=0$.
 \item
 For every $\bullet$-vertex $b$ of the quiver $Q_{n+1}$ we have $f_b((m_{ij}))=0$, that is, the product of the $\kappa_a$ associated to incoming arrows $a$ equals to  the product of the $\kappa_a$ associated to the outgoing arrows. 
\end{enumerate}
\end{prop}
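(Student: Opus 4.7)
The plan is to split the proposition into the algebraic equivalence (2) $\Leftrightarrow$ (3) and the geometric equivalence (1) $\Leftrightarrow$ (2), with the latter relying on the minor formula $\Minor_{[i]+j}^{[i]}(u)=\prod_{k=1}^{i}\prod_{r=1}^{j}m_{k,r}$ from \cref{l:mijFormula}.

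For (2) $\Rightarrow$ (3): assuming the monomial formulas in (2), every arrow label $\kappa_a$ of $Q_{n+1}$ becomes an explicit monomial in the $m_{k,\ell}$; the arrows out of $(0,0)$ have $\kappa=v_{h(a)}$, which is consistent as the formulas in (2) reduce to this under the convention $v_{i,0}=v_{0,j}=0$. At an interior $\bullet$-vertex $b=(i,j)$ with $i,j\ge 2$, a short telescoping computation gives
\[
(v_{ij}-v_{i-1,j})(v_{ij}-v_{i,j-1})=\frac{1}{m_{ij}^{2}}\prod_{\ell=1}^{i-1}\frac{m_{\ell,j+1}}{m_{\ell,j}}\prod_{\ell=1}^{j-1}\frac{m_{i+1,\ell}}{m_{i,\ell}}=(v_{i+1,j}-v_{ij})(v_{i,j+1}-v_{ij}),
\]
so $f_b((m_{ij}))=0$. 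The boundary cases $i=1$, $j=1$, and $(i,j)=(1,1)$ are handled analogously, with the arrows from $(0,0)$ supplying the matching factors (the double arrow at $(1,1)$ accounting for the $v_{1,1}^2$ on the incoming side). Conversely, (3) $\Rightarrow$ (2) goes by induction on $i+j$: the arrows out of $(0,0)$ have the form claimed in (2) tautologically, and the vertex relation $f_b=0$ at each subsequent $\bullet$-vertex determines the two outgoing arrow labels from the two incoming ones, propagating the monomial formulas throughout $Q_{n+1}$.

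For (1) $\Leftrightarrow$ (2) I would proceed directly using \cref{l:mijFormula} and its analogues for near-flag minors. The Toeplitz equalities $u_{i,j+1}=u_{1,j+2-i}$ for all admissible $i,j$ translate into ratio identities among $\Minor^{J}_{K}(u)$ for appropriately shifted index sets $J,K$, and after substituting the product expressions for these minors in terms of the $m_{k,\ell}$ they become exactly the identities in (2). Conversely, assuming (2), one shows by induction on $j-i$ that the entries of $u$ along each diagonal, computed from the factorization \eqref{e:idealfactRecall}, coincide.

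The main obstacle I anticipate is the bookkeeping in the direct verification of (1) $\Leftrightarrow$ (2), since tracking ratios of minors with non-consecutive row and column sets is delicate. A cleaner strategy I would pursue instead is to invoke \cref{t:Zcrit}, which identifies Toeplitz matrices as critical points of the superpotential $W$, and to compute the critical-point equations of $W$ directly in the standard chart $\phi_{>0}^{\mathbf{i}_0}$. Using \cref{l:Winposchart} together with the factorization $b=u_Ld\bar{w}_0 u_R$ appearing in \cref{d:GLmirror}, one expects the critical-point equation at each $m_{ij}$ to reduce precisely to the vertex relation $f_b=0$ at the corresponding $\bullet$-vertex of $Q_{n+1}$, giving (1) $\Leftrightarrow$ (3) without any explicit matrix-entry calculation.
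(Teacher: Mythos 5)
Your split into the algebraic equivalence (2)~$\Leftrightarrow$~(3) and the Toeplitz translation (1)~$\Leftrightarrow$~(2) matches the paper's organisation, and your verification of (2)~$\Rightarrow$~(3) is correct (the telescoping of the two monomial products does give $e_{ij}n_{ij}=e_{i,j+1}n_{i+1,j}$, and the boundary and origin vertices work out as you say).

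The gap is in (3)~$\Rightarrow$~(2). You assert that ``the vertex relation $f_b=0$ at each subsequent $\bullet$-vertex determines the two outgoing arrow labels from the two incoming ones,'' but this cannot be right as stated: the arrow labels $\kappa_a=v_{h(a)}-v_{t(a)}$ are \emph{already} determined by the $m_{ij}$ via the definition of $v_{ij}$, and the vertex relation is a single quadratic constraint, not a recipe producing two quantities from two others. What is actually needed, and what the paper supplies, is the structural identity $v_{ij}-v_{i-1,j-1}=\tfrac{1}{m_{ij}}$, which lets one write each unverified arrow label as $\tfrac{1}{m_{ij}}$ minus an already-verified one; only then does the vertex relation at the south-west neighbour close the linear/quadratic system and force the monomial form. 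Without that extra ingredient the induction does not propagate beyond the case $(1,1)$. You should also be explicit about the induction order: verifying, say, the east-pointing arrow into $(i,j)$ uses the relation at the $\bullet$-vertex $(i-1,j)$ (or $(i,j-1)$), so a diagonal-by-diagonal sweep with a refined total order is required, as in the paper.

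For (1)~$\Leftrightarrow$~(2), your first route is pointing in the right direction but is missing the key intermediate result: one needs a characterisation of Toeplitz matrices purely in terms of the ``anti-rectangular'' minors $\Minor^{[i]}_{[i]+j}$, and the paper supplies this via the Desnanot--Jacobi identity (Lemma~\ref{l:ToeplitzMinorChar}). Without that lemma, the ``delicate bookkeeping'' you flag is not just delicate, it is an unaddressed gap. Your second route, via Theorem~\ref{t:Zcrit} and the critical-point equations of the superpotential, is a genuinely different and in-principle workable strategy, and indeed the paper notes such an alternative; but the superpotential is naturally written in the Givental/quiver chart rather than in the standard coordinates $m_{ij}$, so the translation into vertex relations on $Q_{n+1}$ is exactly the content of the coordinate transformation from \cite{Ludenbach}. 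As proposed, ``one expects the critical-point equation \dots\ to reduce precisely to $f_b=0$'' is an expectation, not an argument, and so this route would need its own computation before it could stand on its own.
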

\begin{remark}\label{r:ArrowLabelsandqi}
The proposition implies in particular that the arrow labels $\kappa_a$ lie in $\RR^\times$. Moreover, they lie in $\RR_{>0}$ if the $m_{ij}$ all lie in $\RR_{>0}$.  The characterisation (2) together with \cref{l:mijFormula} also implies 
\[
(v_{i,n+1-i}-v_{i,n-i})(v_{i,n+1-i}-v_{i-1,n+1-i})=\frac{\Delta_{i-1}(u)\Delta_{i+1}(u)}{\Delta_i^2(u)}=q_i(u).
\]
\end{remark}
We first prove the following lemma. Recall that $\Minor^{J}_K$ denotes the minor with row set $J$ and column set $K$. We set $\Minor^{J}_K=1$ if $J=\emptyset$, and $\Minor^{J}_K=0$ if either $J$ or $K$ is not a subset of $[1,n+1]$ (for an $(n+1)\times(n+1)$-matrix). Furthermore, recall that we use the notation $[i]=[1,i]$ and let $[0]=\emptyset$. 
\begin{lemma}\label{l:ToeplitzMinorChar}
The matrix $u=x_{\mathbf i_0}((m_{ij}))$ in $U_+^{\mathbf i_0}(\RR^\times)$ is Toeplitz if and only if
\begin{equation}\label{e:MinorChar}
\left(\Minor^{[i]}_{[i]+j}(u)\right)^2=\Minor^{[i]}_{[i]+j-1}(u)\Minor^{[i]}_{[i]+j+1}(u)+\Minor^{[i+1]}_{[i+1]+j}(u)\Minor^{[i-1]}_{[i-1]+j}(u)
\end{equation}
for all $(i,j)\in\mathcal S_{\le n+1}$. \end{lemma}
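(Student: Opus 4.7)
The plan is to prove both directions via the Desnanot--Jacobi (Lewis Carroll) determinantal identity applied to a single well-chosen submatrix of $u$. Setting $D_{i, j} := \Minor^{[i]}_{[i]+j}(u)$ for brevity and introducing the ``once-shifted'' minor
\[
\tilde{D}_{i,j}(u) := \Minor^{[2, i+1]}_{[j+2, j+i+1]}(u),
\]
I would apply Desnanot--Jacobi to the $(i+1)\times(i+1)$ submatrix $M$ with rows $[1, i{+}1]$ and columns $[j{+}1, j{+}i{+}1]$. The five minors appearing in that identity are immediately identified as $\det M = D_{i+1, j}$, the two ``diagonal-corner'' minors $D_{i, j}$ and $\tilde{D}_{i, j}$, the two ``off-diagonal-corner'' minors $D_{i, j+1}$ and $\tilde{D}_{i, j-1}$, and the interior minor $\tilde{D}_{i-1, j}$. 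This gives the universal identity
\begin{equation}\label{e:DJplan}
D_{i, j} \cdot \tilde{D}_{i, j} = D_{i+1, j} \cdot \tilde{D}_{i-1, j} + D_{i, j+1} \cdot \tilde{D}_{i, j-1},
\end{equation}
valid for every upper-triangular unipotent $u$.

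The forward direction will then be immediate: if $u$ is Toeplitz, the shift-invariance $u_{a+1, b+1} = u_{a, b}$ forces $\tilde{D}_{i', j'}(u) = D_{i', j'}(u)$ for every admissible $(i', j')$, and \eqref{e:DJplan} collapses into \eqref{e:MinorChar}.

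For the converse, I would subtract the hypothesised identity \eqref{e:MinorChar} from \eqref{e:DJplan} to obtain
\[
D_{i, j}\bigl(\tilde{D}_{i, j} - D_{i, j}\bigr) = D_{i+1, j}\bigl(\tilde{D}_{i-1, j} - D_{i-1, j}\bigr) + D_{i, j+1}\bigl(\tilde{D}_{i, j-1} - D_{i, j-1}\bigr),
\]
and argue by induction on $i + j$ that $\tilde{D}_{i, j} = D_{i, j}$ for all $(i, j)$ with $i + j \le n$. The base case is supplied by $\tilde{D}_{i, 0} = D_{i, 0} = 1$ and $\tilde{D}_{0, j} = D_{0, j} = 1$ (both upper-unipotent), while the inductive step cancels the leading factor $D_{i, j}$ using that every $D_{i', j'}$ is a product of units $m_{i'j'} \in \mathcal{R}^\times$ by \cref{l:mijFormula}.

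To finish, I would observe that $\tilde{D}_{i, j}(u) = D_{i, j}(\sigma(u))$, where $\sigma(u) := (u_{a+1, b+1})_{a, b = 1}^{n}$ is the bottom-right $n \times n$ submatrix, while $D_{i, j}(u) = D_{i, j}(\pi(u))$ for the top-left submatrix $\pi(u) := (u_{a, b})_{a, b = 1}^n$. Both $\pi(u)$ and $\sigma(u)$ are $n\times n$ upper-triangular unipotent matrices with all relevant minors in $\mathcal{R}^\times$, so the $n\times n$ analogue of \cref{l:mijFormula} expresses them via the same standard coordinates, forcing $\pi(u) = \sigma(u)$. This is precisely the Toeplitz condition $u_{a, b} = u_{a+1, b+1}$ for $1 \le a, b \le n$. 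The main obstacle will be to keep careful track of the boundary indices --- in particular, where $D_{i, j+1}$ or $D_{i+1, j}$ lies on the edge of $\mathcal{S}_{\le n+1}$ --- and to confirm that the universal identity \eqref{e:DJplan} still applies in those degenerate cases without disrupting the induction; but once the ``shifted'' minor $\tilde{D}$ is set up, the rest is mechanical.
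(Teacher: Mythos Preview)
Your forward direction is exactly the paper's. For the converse, your organisation differs from the paper's: rather than inducting entry-by-entry on $u$ (the paper shows each $u_{i+1,i+j+1}$ agrees with $u_{1,j+1}$ by combining the partial Toeplitz structure already established with Desnanot--Jacobi and the hypothesis at $(i,j)$, then cancelling a unit minor), you first establish all the minor identities $\tilde D_{i,j}=D_{i,j}$ for $i+j\le n$ in one clean sweep by subtracting \eqref{e:MinorChar} from the universal Desnanot--Jacobi relation and cancelling the unit $D_{i,j}$. This part works, and your boundary worries are unfounded: for $i+j\le n$ the relevant $(i{+}1)\times(i{+}1)$ submatrix fits inside $u$, the hypothesis is available since $(i,j)\in\mathcal S_{\le n+1}$, and $D_{i,j}=\prod_{k\le i,\,r\le j} m_{kr}\in\RR^\times$ by the proof of \cref{l:mijFormula}.

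There is, however, a gap in your final step. You conclude $\pi(u)=\sigma(u)$ by saying that \cref{l:mijFormula} ``expresses them via the same standard coordinates''. But \cref{l:mijFormula} only applies to a matrix already known to lie in $U_+^{\mathbf i_0}(\RR^\times)$; it recovers the $m_{ij}$ \emph{given} that the factorisation exists. You know this for $\pi(u)$ (essentially by \cref{l:idealrestriction}), but not a priori for $\sigma(u)$. What you actually need is that an upper-unipotent $n\times n$ matrix $v$ is determined by its minors $D_{i,j}(v)$ once those with $i+j\le n-1$ are units. This is true and short --- expand $D_{i,j}(v)$ along its bottom row to get
\[
D_{i,j}(v)=v_{i,i+j}\,D_{i-1,j}(v)+(\text{terms involving only }v_{i,s}\text{ with }s<i{+}j\text{ and entries in rows }<i),
\]
then solve for $v_{i,i+j}$ and induct on $(i,j)$ --- but it is a separate lemma, not a corollary of \cref{l:mijFormula}. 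Once you supply this, your argument is complete; the paper's entry-by-entry induction sidesteps the issue by never having to identify $\sigma(u)$ as a factorisable matrix.
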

\begin{remark}
For example, if $i=1$ and $u=(a_{rs})_{r,s}$, then $\Minor^{[i]}_{[i]+j}(u)=a_{1,j+1}$, and \eqref{e:MinorChar} reads
\begin{equation}\label{e:jis1example}
a_{1,j+1}^2=a_{1,j}a_{1,j+2}+ \det\begin{pmatrix}a_{1,j+1}&a_{1,j+2}\\ a_{2,j+1}&a_{2,j+2}\end{pmatrix}.
\end{equation}
If $j=1$ then this reads $a_{12}^2=a_{13}+(a_{12}a_{23}-a_{13})=a_{12}a_{23}$. If $u\in U_+^{\mathbf i_0}(\RR^\times)$ as in the lemma, it follows that $a_{12}=m_{11}$ is invertible, thus we may deduce that $a_{12}=a_{23}$. Recursively, we may obtain that $a_{1,j-1}=a_{2,j}$ from the equations \eqref{e:jis1example} for general $j$. This is the initial part of the Toeplitz condition for $u$.  
\end{remark}
\begin{remark}\label{r:plethysm} The formula~\eqref{e:MinorChar} has a representation-theoretic interpretation. If $u=(u_{rs})_{r,s}$ is a Toeplitz matrix we may interpret its entries as complete homogeneous symmetric polynomials  $u_{ij}=h_{i-j}$ and interpret the minors above as Schur polynomials in $n$ variables. For example $\Minor^{[i]}_{[i]+j}(u)=s_{i\times j}$, the Schur polynomial associated to the partition $(j,\dotsc, j)$ with $i$ parts. Let $V_{i\times j}$ denote the irreducible representation of $GL_n$ with character $s_{i\times j}$. Then the left-hand side of \eqref{e:MinorChar} is the character of the tensor square $V_{i\times j}\otimes V_{i\times j}$, and the lemma tells us that
\[
V_{i\times j}\otimes V_{i\times j}=(V_{i\times (j-1)}\otimes V_{i\times (j+1)})\ \oplus\  (V_{(i-1)\times j}\otimes V_{(i+1)\times j})\]
Note that the tensor square also has the natural decomposition $V_{i\times j}\otimes V_{i\times j}=S^2(V_{i\times j})\oplus {\bigwedge}^2(V_{i\times j})$. However, these two decompositions do not coincide already for $i\times j=1\times 2$.
\end{remark}
\begin{proof}[Proof of \cref{l:ToeplitzMinorChar}]
Let us first show that \eqref{e:MinorChar} holds whenever $u$ is a Toeplitz matrix. Namely, this can be deduced from the Desnanot-Jacobi identity, which for any $(i+1)\times (i+1)$ matrix $U$ says that 
\begin{equation}\label{e:DesnanotJacobi}
\det(U)\Minor^{[2,i]}_{[2,i]}(U)=\Minor^{[i]}_{[i]}(U)\Minor^{[2,i+1]}_{[2,i+1]}(U) - \Minor^{[2,i+1]}_{[i]}(U)\Minor^{[i]}_{[2,i+1]}(U).
\end{equation}  
Suppose $U=u^{[i+1]}_{[i+1]+j}$, the submatrix of $u$ consisting of rows $1$ through $i+1$ and columns $j+1$ through $i+j+1$. Then if $u$ is Toeplitz we have
\begin{equation}\begin{array}{lclccl}
\Minor^{[2,i]}_{[2,i]}(U)&=&\Minor^{[1,i-1]}_{[1,i-1]}(U)&=&\Minor^{[i-1]}_{[i-1]+j}(u),
\\
\Minor^{[2,i+1]}_{[2,i+1]}(U)&=&\Minor^{[1,i]}_{[1,i]}(U)&=&
\Minor^{[i]}_{[i]+j}(u),
\end{array}
\end{equation}
 and similarly
 \begin{equation}
 \Minor^{[2,i+1]}_{[i]}(U)=\Minor^{[i]}_{[i]+j-1}(u),\quad\text{and}\quad
  \Minor^{[i]}_{[2,i+1]}(U)=\Minor^{[i]}_{[i]+j+1}(u).
 \end{equation}
Thus the Desnanot-Jacobi identity 
for $U=u^{[i+1]}_{[i+1]+j}$ implies \eqref{e:MinorChar} if $u$ is Toeplitz. 

For the converse, suppose that \eqref{e:MinorChar} holds for all $(i,j)\in\mathcal S_{\le n+1}$. Let $a_j:=u_{1,j+1}$. We may suppose we have proved $u_{rs}=a_{s-r}$ for all $(r,s)\le (i+1, i+j)$ (for the usual lexicographical order). See \eqref{e:partialu}. It remains to prove that $u_{i+1,i+j+1}=a_j$. 
\begin{equation}\label{e:partialu}
\begin{pmatrix}
1&a_1&a_2&\cdots &
\cdots&\cdots &\cdots&a_{j+i}&\cdots &a_{n}\\
0&1&a_1&\cdots &
&\ddots&& \vdots&&\\
0&\ddots&\ddots& \ddots&&&a_j& a_{j+1}&&\\
0&&&1&a_1&\cdots &a_{j-1}&u_{i+1,i+j+1}&\cdots &\\
0 &&&&1&u_{i+2,i+3}&\cdots&\cdots&&\\
0 &&&&&\ \ddots&&&&\\
\vdots &&&&&&&&&\\
\vdots &&&&&&&
&&\\
\vdots &&&&&&&&\ddots\quad&\vdots \\
0 &\cdots&&&&&&&&1
\end{pmatrix}
\end{equation} 
We can now use the assumption \eqref{e:MinorChar}. Since part of the matrix $u$ is already Toeplitz we have 
\[
\Minor^{[i-1]}_{[i-1]+j}(u)=\Minor^{[2,i]}_{[2,i]+j+1}(u)\quad\text{and}\quad
\Minor^{[i]}_{[i]+j+1}(u)=\Minor^{[2,i+1]}_{[2,i+1]+j},
\]
so that we can rewrite \eqref{e:MinorChar} and obtain 
\begin{multline*}
\left(\Minor^{[i]}_{[i]+j}(u)\right)^2
=\Minor^{[i]}_{[i]+j-1}(u)\Minor^{[2,i+1]}_{[2,i+1]+j}(u)+\Minor^{[i+1]}_{[i+1]+j}(u)\Minor^{[2,i]}_{[2,i]+j+1}(u)\\
=\Minor^{[i]}_{[i]+j}(u) \Minor^{[2,i+1]}_{[2,i+1]+j}(u),
\end{multline*}
where the second equality follows from  the Desnanot-Jacobi identity applied to the second summand. Next, note that $\Minor^{[i]}_{[i]+j}(u)$ is a monomial in terms of the $m_{ij}$ and therefore we can divide by it. Therefore it follows that 
\[
0=\Minor^{[i]}_{[i]+j}(u)-\Minor^{[2,i+1]}_{[2,i+1]+j}(u)=(a_j-u_{i+1,i+j+1})\Minor^{[i-1]}_{[i-1]+j}(u),
\]
where the second equality comes from expanding the minors along the final column. Since $\Minor^{[i-1]}_{[i-1]+j}(u)$ is again a monomial in the $m_{ij}$, we divide by it and deduce that $u_{i+1,i+j+1}=a_j$, completing the proof.   
\end{proof}

\begin{proof}[Proof of Proposition~\ref{p:ToeplitzEquiv}]  Let us set 
\begin{equation}\label{e:ne}
e_{ij}:=\frac 1{m_{ij}}\prod_{\ell=1}^{j-1}\frac{m_{i+1,\ell}}{m_{i,\ell}}\qquad\text{and} \qquad
n_{ij}:=\frac 1{m_{ij}}\prod_{\ell=1}^{i-1}\frac{m_{\ell,j+1}}{m_{\ell,j}}.
\end{equation}
Then (2) says that for the arrow $a$ pointing to the right from $(i,j-1)$ to $(i,j)$, we have $\kappa_a=e_{ij}$, and for the arrow $a$ pointing upwards from $(i-1,j)$ to $(i,j)$ we have $\kappa_a=n_{ij}$. 
It is straightforward that $e_{ij}n_{ij}=e_{i,j+1}n_{i+1,j}$ for $i+j<n+1$. Thus (2) implies (3).

Consider the total order on the $(i,j)\in\mathcal S_{\le n+1}$, which is the lexicographic order based on $(i+j,j)$. Namely,  
\[
(1,1)<(2,1)<(1,2)<(3,1)<(2,2)<(1,3)<(4,1)<(3,2)<\dotsc .
\]
We will prove that $(3)$ implies $(2)$ using induction. Let us check the first non-trivial case by explicitly. Our assumption (3) at the vertex labelled $v_{11}$ gives
\begin{equation}\label{e:v11}
(v_{12}-v_{11})(v_{21}-v_{11})=v_{11}^2 \quad \implies\quad v_{12}v_{21}=v_{11}v_{12}+v_{12}v_{11}\quad \implies \quad \frac{1}{v_{11}}=\frac{1}{v_{12}}+\frac{1}{v_{21}},
\end{equation}
noting that $v_{11}=\frac 1{m_{11}}$, $v_{21}=\frac 1{m_{21}}$ and $v_{12}=\frac{1}{m_{12}}$ are all invertible. 
In terms of standard coordinates we therefore  have $m_{11}=m_{12}+m_{21}$ which implies  
\begin{equation}\label{e:v11-arrows}
\begin{array}{ccl}
v_{12}-v_{11}&=&\frac{1}{m_{12}}-\frac{1}{m_{11}}=\frac{m_{11}-m_{12}}{m_{11}m_{12}}=\frac{1}{m_{12}}\frac{m_{21}}{m_{11}}\text{,\quad and }\\ 
v_{21}-v_{11}&=&\frac{1}{m_{21}}-\frac{1}{m_{11}}=\frac{m_{11}-m_{21}}{m_{11}m_{21}}=\frac{1}{m_{21}}\frac{m_{12}}{m_{11}}.
\end{array}
\end{equation}
Thus the arrow pointing east from $v_{11}$ to $v_{12}$ has label $v_{12}-v_{11}=e_{12}$, and the northward pointing arrow to $v_{21}$ has label $v_{21}-v_{11}=n_{21}$. This verifies (2) in the first non-trivial case. 

Suppose now that we have proved $(2)$ whenever $i+j\le k$. We have $v_{k1}=\frac{1}{m_{k1}}=e_{k1}$ for the label of the arrow from $(0,0)$ to $(k,1)$, automatically, and this can be considered the first instance of (2) with $i+j=k+1$. Therefore, we may assume that we have already proved
\[
v_{k-j+1,j}-v_{k-j+1,j-1}=e_{k-j+1,j}
\]
for some $j\ge 1$, as well as (2) for any $i+j\le k$. We consider the relevant part of the quiver as shown below. Here $x$ and $y$ are the arrow-labels for which the formulas from (2) have not yet been verified. 
\begin{equation}\label{e:Aquiversegment}\begin{tikzcd}
	{} & {} \\
	{} & {v_{k-j+1,j-1}} & {v_{k-j+1,j}} \\
	{} & {v_{k-j,j-1}} & {v_{k-j,j}} & {v_{k-j,j+1}} \\
	& {} & {v_{k-j-1,j}} & {}
	\arrow[draw=none, from=1-1, to=1-2]
	\arrow[from=2-1, to=2-2]
	\arrow[from=2-2, to=1-2]
	\arrow["{e_{k-j+1,j}}", from=2-2, to=2-3]
	\arrow[from=3-1, to=3-2]
	\arrow["{n_{k-j+1,j-1}}", from=3-2, to=2-2]
	\arrow["{e_{k-j,j}}", from=3-2, to=3-3]
	\arrow[""{name=0, anchor=center, inner sep=0}, "x", from=3-3, to=2-3]
	\arrow["y", from=3-3, to=3-4]
	\arrow[from=4-2, to=3-2]
	\arrow["{n_{k-j,j}}", from=4-3, to=3-3]
	\arrow[draw=none, from=4-4, to=3-4]
\end{tikzcd}\end{equation}
We now prove using (3) that $x=n_{k-j+1,j}$ and $y=e_{k-j,j+1}$ as expected. This is the induction step that will complete the proof of (3)$\implies$(2). 

From the definition of the $v_{ij}$ we have that $v_{ij}-v_{i-1,j-1}=\frac{1}{m_{ij}}$. On the other hand, $v_{ij}-v_{i-1,j-1}$ can be described as the sum of two arrow labels (in two ways). We use this observation to write 
\[
\begin{array}{ccccc}
v_{k-j+1,j}-v_{k-j,j-1}&=&\frac{1}{m_{k-j+1,j}}&=&x+e_{k-j,j}\\
v_{k-j,j+1}-v_{k-j-1,j}&=&\frac{1}{m_{k-j,j+1}}&=&y+n_{k-j,j}.
\end{array}
\]
Therefore we have
\[
x=\frac{1}{m_{k-j+1,j}}-e_{k-j,j}\qquad\text{and}\qquad
y=\frac{1}{m_{k-j,j+1}}-n_{k-j,j}.
\]
Now $(k-j,j)$ is a $\bullet$-vertex, and (3) combined with the above identities give us  that
\[
e_{k-j,j}n_{k-j,j}=xy=\left(\frac{1}{m_{k-j+1,j}}-e_{k-j,j}\right)\left(\frac{1}{m_{k-j,j+1}}-n_{k-j,j}\right).
\]
Multiplying out and cancelling it follows that 
\[
\frac{1}{m_{k-j+1,j}m_{k-j,j+1}}=\frac{e_{k-j,j}}{m_{k-j,j+1}}+\frac{n_{k-j,j}}{m_{k-j+1,j}},
\]
which implies the middle equality in each of the identities below
\[
\begin{array}{ccc}
x=\frac{1}{m_{k-j+1,j}}-e_{k-j,j}&=&\frac{m_{k-j,j+1}n_{k-j,j}}{m_{k-j+1,j}}=n_{k-j+1,j},\\
y=\frac{1}{m_{k-j,j+1}}-n_{k-j,j}&=&\frac{m_{k-j+1,j}e_{k-j,j}}{m_{k-j,j+1}}=e_{k-j,j+1}.
\end{array}
\]
This concludes the proof of (3)$\implies$(2).
It remains to prove the equivalence of (1) and (2). 

We first rewrite the $n_{ij}$ and $e_{ij}$ from \eqref{e:ne} in terms of minors using \cref{l:mijFormula}. It turns out that both $\prod_{\ell=1}^{i-1}m_{i,\ell}$ and $\prod_{\ell=1}^{j-1}=m_{\ell,j}$ are telescopic products, and  $e_{ij}$ and $n_{ij}$ simplify to 
\begin{equation}
e_{ij}=\frac{\Minor^{[i+1]}_{[i+1]+j-1}(u)\Minor^{[i-1]}_{[i-1]+j}(u)}{\Minor^{[i]}_{[i]+j}(u)\Minor^{[i]}_{[i]+j-1}(u)},\qquad
n_{ij}=
\frac{\Minor^{[i-1]}_{[i-1]+j+1}(u)\Minor^{[i]}_{[i]+j-1}(u)}
{\Minor^{[i]}_{[i]+j}(u)\Minor^{[i-1]}_{[i-1]+j}(u)}.
\end{equation}
Suppose (1) holds, i.e. $u$ is a Toeplitz matrix. Since (2) is trivially satisfied for $(i,j)=(1,1)$ we may assume we have proved (2) up to the diagonal given by $i+j=k$. Then we have
\begin{multline}\label{e:kappaMinor}
v_{i,j+1}-v_{i,j}=(v_{i,j+1}-v_{i-1,j})-(v_{i,j}-v_{i-1,j})\\=\frac{1}{m_{i,j+1}}-n_{i,j}=
\frac{\Minor^{[i]}_{[i]+j}(u)\Minor^{[i-1]}_{[i-1]+j+1}(u)} {\Minor^{[i]}_{[i]+j+1}(u)\Minor^{[i-1]}_{[i-1]+j}(u)}-
\frac{\Minor^{[i-1]}_{[i-1]+j+1}(u)\Minor^{[i]}_{[i]+j-1}(u)}
{\Minor^{[i]}_{[i]+j}(u)\Minor^{[i-1]}_{[i-1]+j}(u)}\\=
\frac{\Minor^{[i-1]}_{[i-1]+j+1}(u)} {\Minor^{[i]}_{[i]+j}(u)\Minor^{[i]}_{[i]+j+1}(u)\Minor^{[i-1]}_{[i-1]+j}(u)}\left(\left(\Minor^{[i]}_{[i]+j}(u)\right)^2-\Minor^{[i]}_{[i]+j-1}(u)\Minor^{[i]}_{[i]+j+1}(u)\right).
\end{multline}
Since $u$ is Toeplitz we may replace the expression in brackets using \cref{l:ToeplitzMinorChar} to obtain
\begin{equation*}
v_{i,j+1}-v_{i,j}=\frac{\Minor^{[i-1]}_{[i-1]+j+1}(u)\Minor^{[i+1]}_{[i+1]+j}(u)} {\Minor^{[i]}_{[i]+j}(u)\Minor^{[i]}_{[i]+j+1}(u)}=e_{i,j+1}.
\end{equation*}
The remaining statement, that $v_{i+1,j}-v_{i,j}=n_{i+1,j}$ follows by symmetry. Thus we have proved (1)$\implies$(2).

Finally, suppose that (2) holds and, indirectly, that $u$ is not Toeplitz matrix. Then by \cref{l:ToeplitzMinorChar} we have that the minor identity \eqref{e:MinorChar} fails for some $(i,j)$.  But using (2) we may rewrite $v_{i,j+1}-v_{i,j}$ as in \eqref{e:kappaMinor}. Now the identity $v_{i,j+1}-v_{i,j}=e_{i,j+1}$ (which is assertion of (2) for $(i,j+1)$), is seen to be false if the identity \cref{l:ToeplitzMinorChar} for $(i,j)$ doesn't hold. Thus we have arrived at a contradiction.  So (2)$\implies$(1). 
\end{proof}

\begin{remark}
Let $J$ be the ideal in $\Q[\{m_{ij}^{\pm 1}\mid (i, j)\in \mathcal S_{\le n+1}\}]$ generated by the $f_b$,
\begin{equation}\label{e:IdealFromQuiver}
J=J_{n+1}=\left(f_b(m_{ij})_{i,j}\mid b\in\mathcal V_{\bullet}\right).
\end{equation}
The proof of \cref{p:ToeplitzEquiv} in fact shows that $J$ is the ideal associated to the subvariety of Toeplitz matrices in $U_+^{\mathbf i_0}(\Q)$. 
\end{remark}
\bibliographystyle{amsalpha}
\bibliography{biblio}

\end{document}